\DeclareMathOperator{\adm}{adm}
\DeclareMathOperator{\Ann}{Ann}
\DeclareMathOperator{\BT}{BT}
\DeclareMathOperator{\DD}{DD}
\DeclareMathOperator{\diag}{diag}
\DeclareMathOperator{\DS}{DS}
\DeclareMathOperator{\End}{End}
\DeclareMathOperator{\fingen}{fg}
\DeclareMathOperator{\Hom}{Hom}
\DeclareMathOperator{\id}{id}
\DeclareMathOperator{\Ind}{Ind}
\DeclareMathOperator{\Index}{Index}
\DeclareMathOperator{\Ker}{Ker}
\DeclareMathOperator{\cmod}{-mod}
\DeclareMathOperator{\op}{op}
\DeclareMathOperator{\pr}{pr}
\DeclareMathOperator{\Res}{Res}
\DeclareMathOperator{\sep}{sep}
\DeclareMathOperator{\Simple}{Simple}
\DeclareMathOperator{\Zf}{Zf}
\DeclareMathOperator{\Coh}{Coh}
\DeclareMathOperator{\Rep}{Rep}
\DeclareMathOperator{\GL}{GL}
\DeclareMathOperator{\Oo}{O}
\newcommand{\bC}{\mathbb{C}}
\newcommand{\bH}{\mathbb{H}}
\newcommand{\bQ}{\mathbb{Q}}
\newcommand{\bR}{\mathbb{R}}
\newcommand{\bZ}{\mathbb{Z}}
\newcommand{\cA}{\mathcal{A}}
\newcommand{\cB}{\mathcal{B}}
\newcommand{\cC}{\mathcal{C}}
\newcommand{\cD}{\mathcal{D}}
\newcommand{\cL}{\mathcal{L}}
\newcommand{\cM}{\mathcal{M}}
\newcommand{\cN}{\mathcal{N}}
\newcommand{\cO}{\mathcal{O}}
\newcommand{\fa}{\mathfrak{a}}
\newcommand{\fg}{\mathfrak{g}}
\newcommand{\fh}{\mathfrak{h}}
\newcommand{\fk}{\mathfrak{k}}
\newcommand{\fl}{\mathfrak{l}}
\newcommand{\fp}{\mathfrak{p}}
\def\Dbar{\leavevmode\lower.6ex\hbox to 0pt{\hskip-.23ex \accent"16\hss}D}
\theoremstyle{plain}
\newtheorem{thm}{Theorem}[subsection]
\newtheorem{defn-prop}[thm]{Definition-Proposition}
\newtheorem{cor}[thm]{Corollary}
\newtheorem{lem}[thm]{Lemma}
\newtheorem{prop}[thm]{Proposition}
\theoremstyle{definition}
\newtheorem{cons}[thm]{Construction}
\newtheorem{conv}[thm]{Convention}
\newtheorem{defn}[thm]{Definition}
\newtheorem{ex}[thm]{Example}
\newtheorem{rem}[thm]{Remark}
\begin{document}
	\title{Rationality patterns}
	\author{Takuma Hayashi}
	\address{Department of Mathematics, Graduate School of Science,
		Osaka Metropolitan University,
		3-3-138 Sugimoto, Sumiyoshi-ku Osaka 558-8585, Japan}
	\email{takuma.hayashi.forwork@gmail.com}
	\date{}
	\subjclass[2020]{18D70,11S25,22E46 22E47}
	
	\keywords{Descent data, Loewy's classification scheme, Borel--Tits cocycle, $(\fg,K)$-modules, cohomological induction.}
	
	\begin{abstract}
		In this paper, we establish general categorical frameworks that extend Loewy's classification scheme for finite-dimensional real irreducible representations of groups and Borel--Tits' criterion for the existence of rational forms of representations of $\bar{F}\otimes_F G$ for a connected reductive algebraic group $G$ over a field $F$ of characteristic zero and its algebraic closure $\bar{F}$. We also discuss applications of these general formalisms to the theory of Harish-Chandra modules, specifically to classify irreducible Harish-Chandra modules over fields $F$ of characteristic zero and to identify smaller fields of definition of irreducible Harish-Chandra modules over $\bar{F}$, particularly in the case of cohomological irreducible essentially unitarizable modules.
		 
	\end{abstract}
	
	\maketitle
	
	\tableofcontents
	\section{Introduction}\label{sec:intro}
	
	\subsection{Background from number theory}\label{sec:background}
	
	For an extension $F'/F$ of fields, rationality problems involve relating theories or objects over $F$ and $F'$, and studying them through their connections. A central question in these problems is determining a smaller field of definition for an object defined over a larger field. This naturally arises in various areas of number theory.
	
	Among these, the rationality of special values of automorphic $L$-functions is a long-standing problem. Roughly speaking, it concerns the assertion that ratios of special $L$-values to certain periods and powers of $2\pi\sqrt{-1}$ lie in specific number fields.
	
	A standard approach to the rationality of special $L$-values involves identifying rational structures on the space of automorphic forms, on (cohomological cuspidal) automorphic representations, or on (subspaces of) certain cohomology groups. Since this is a global problem, rationality at the infinite place is one of the ingredients in a natural definition of periods. From the representation-theoretic perspective, cohomological, essentially unitarizable, irreducible Harish-Chandra modules and their cohomology play a crucial role (see \cite{MR2439563} and \cite[Section~8]{MR3770183}, for example).

	\subsection{Rationality problems in representation theory}
	
	In the historical development of rationality problems in representation theory, we are naturally led to Loewy's classification scheme, which describes finite-dimensional real irreducible representations of a group $G$ in terms of finite-dimensional complex irreducible representations of $G$ and their behavior under complex conjugation:

	\begin{thm}[\cite{MR1500635}]\label{thm:loewy_original_1}
		Let $G$ be a group.
		\begin{enumerate}
			\item We have a well-defined bijection between the set of complex conjugacy classes of isomorphism classes of finite-dimensional complex irreducible representations $V$ of $G$ and the set of isomorphism classes of finite-dimensional real irreducible representations of $G$. This bijection is given by assigning to each such $V$ any irreducible subrepresentation $V_\bR$ of the restriction of scalars to the real numbers $\Res_{\bC/\bR} V$.
			\item For a finite-dimensional complex irreducible representation $V$ of $G$, the real representation $\Res_{\bC/\bR} V$ is reducible if and only if $V$ admits a real form.
		\end{enumerate}
	\end{thm}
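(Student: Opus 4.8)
The plan is to run the classical double-centralizer argument in the form that works for an arbitrary group $G$, where no averaging over $G$ is available. Write $E(-):=\bC\otimes_\bR(-)$ for extension of scalars; it is biadjoint to $\Res:=\Res_{\bC/\bR}$, and for a real representation $W$ the module $E(W)$ carries its complex conjugation $c$, a $G$-equivariant $\bR$-linear involution with $E(W)^{c}=W$ and $\overline{E(W)}\cong E(W)$, while for a complex representation $V$ one has $E(\Res V)\cong V\oplus\bar V$ and $\Res\bar V\cong\Res V$. \textbf{Step 1 (semisimplicity and length, in both directions).} For $V$ finite-dimensional complex irreducible I would show that for every real subrepresentation $U\subseteq\Res V$ the $\bR$-subspaces $U+iU$ and $U\cap iU$ are complex $G$-submodules of $V$, hence $0$ or $V$; running through the cases forces $U=0$, $U=\Res V$, or $\Res V=U\oplus iU$ with $iU\cong U$. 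Thus every real subrepresentation of $\Res V$ is a direct summand, so $\Res V$ is semisimple, and a brief argument shows all its simple constituents are isomorphic; in particular $\Res V$ has length $1$ or $2$. Symmetrically, using Galois descent for $c$-stable $\bC$-subspaces (any such subspace equals $\bC\otimes_\bR(\text{a real subrepresentation})$), the same argument shows $E(W)$ is complex semisimple, isotypic, of length $1$ or $2$; and since $\overline{E(W)}\cong E(W)$ the multiset of its composition factors is stable under $V\mapsto\bar V$, so those factors lie in $\{V_0,\bar V_0\}$ for a single complex irreducible $V_0$.

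\textbf{Step 2 (the trichotomy).} Because $\bC$ is a finite free $\bR$-module, the functor $(-)\otimes_\bR\bC$ commutes with taking $G$-invariants, so the natural map $\End_{\bR[G]}(M)\otimes_\bR\bC\to\End_{\bC[G]}(E(M))$ is an isomorphism for every finite-dimensional real representation $M$ (write $\End_{\bR[G]}(M)=(\End_\bR M)^{G}$). Applying this with $M=\Res V$: letting $D:=\End_{\bR[G]}(W)$ for a simple summand $W$ of $\Res V$, Schur over $\bR$ gives $D\in\{\bR,\bC,\bH\}$ and $\End_{\bR[G]}(\Res V)\cong D$ or $M_2(D)$, whereas $\End_{\bC[G]}(E(\Res V))=\End_{\bC[G]}(V\oplus\bar V)$ is computed from $\End_{\bC[G]}(V)=\bC$. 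Comparing $\bR$-dimensions: if $V\not\cong\bar V$ the right-hand side is $\bC\times\bC$, of dimension $2$, which forces $\Res V=W$ and $D=\bC$; if $V\cong\bar V$ it is $M_2(\bC)$, of dimension $4$, and the only possibilities of the form $D$ or $M_2(D)$ are $M_2(\bR)$, giving $\Res V\cong W^{2}$ and $D=\bR$, or $\bH$, giving $\Res V=W$ and $D=\bH$.

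\textbf{Step 3 (assembling the statements).} For part (2): by Step 2, $\Res V$ is reducible exactly in the case $V\cong\bar V$, $\End_{\bR[G]}(\Res V)\cong M_2(\bR)$, and there $\dim_\bC E(W)=\dim_\bR W=\tfrac12\dim_\bR\Res V=\dim_\bC V$, so the map $E(W)\to V$ adjoint to $W\hookrightarrow\Res V$ (nonzero, hence surjective, hence an isomorphism by dimensions) exhibits $W$ as a real form of $V$; conversely if $V\cong E(W')$ then $\Res V\cong\Res E(W')\cong(W')^{2}$ is reducible. For part (1): the assignment $[V]\mapsto[\text{simple summand of }\Res V]$ is well defined on conjugacy classes because $\Res V$ is isotypic and $\Res\bar V\cong\Res V$; it is surjective because for $W$ real irreducible a simple submodule $V\subseteq E(W)$ satisfies $\Hom_{\bC[G]}(V,E(W))\neq0$, hence $\Hom_{\bR[G]}(\Res V,W)\neq0$ by adjunction, hence $W$ is a summand of the isotypic $\Res V$; and it is injective because if a simple real $W$ embeds in both $\Res V$ and $\Res V'$ then adjunction yields surjections $E(W)\twoheadrightarrow V$ and $E(W)\twoheadrightarrow V'$, so by Step 1 both $V$ and $V'$ lie in $\{V_0,\bar V_0\}$, i.e. $V'\cong V$ or $V'\cong\bar V$.

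I expect the main obstacle to be Step 1 together with the base-change isomorphism in Step 2: since $G$ is an arbitrary group, $\bR[G]$ need not be Noetherian and there is no averaging projector, so semisimplicity, isotypicity, and the compatibility of endomorphism algebras with extension of scalars must all be extracted from finite-dimensionality over the ground field and from the symmetries provided by multiplication by $i$ and by complex conjugation. Once those structural inputs are in place, the remainder is bookkeeping with the three finite-dimensional division algebras over $\bR$.
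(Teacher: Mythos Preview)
Your argument is correct and follows the classical direct route: the $U+iU$, $U\cap iU$ trick shows $\Res V$ is semisimple and isotypic of length at most~$2$; the dual argument with $c$-stable subspaces handles $E(W)$; and the base-change identity $\End_{\bR[G]}(\Res V)\otimes_\bR\bC\cong\End_{\bC[G]}(V\oplus\bar V)$ together with Frobenius's classification of real division algebras pins down the trichotomy. (One slip: $E(W)$ is not in general isotypic---when $\End_{\bR[G]}(W)\cong\bC$ one has $E(W)\cong V_0\oplus\bar V_0$ with $V_0\not\cong\bar V_0$---but you only use, and correctly state afterwards, that its composition factors lie in $\{V_0,\bar V_0\}$, so the proof is unaffected.) The paper takes a genuinely different path: it does not reprove this classical theorem directly but recovers it from its abstract framework. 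Example~\ref{ex:Loewy} checks that finite-dimensional complex representations of $G$ form a Loewy $\Gamma_{\bC/\bR}$-category; Theorem~\ref{thm:Loewy} then yields part~(1) via Lemma~\ref{lem:well-defined}, whose proof shows that the underlying object of a simple descent datum is the sum of Galois twists of any chosen simple subobject; and Proposition~\ref{prop:Loewy} yields part~(2) by analysing the unit map $(M,\varphi_\sigma)\hookrightarrow\Res_{F'/F}M$. Your approach is more elementary, self-contained, and already delivers the endomorphism-algebra trichotomy needed for Theorem~\ref{thm:loewy_original_2}; the paper's approach buys a uniform statement over arbitrary Galois extensions and abstract $\Gamma$-categories, at the price of setting up the descent-datum formalism first.
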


	To establish a numerical criterion for the existence of real forms, one can define a sign $\Index(V) \in \{\pm 1\}$ for a self-conjugate finite-dimensional complex irreducible representation $V$ (\cite[Section~VIII]{E1914}). This sign is referred to as the index of $V$. As explained below, this is related to the division algebra of the endomorphisms of $V_\bR$.

	\begin{thm}[\cite{MR1500635}]\label{thm:loewy_original_2}
		Let $V$ be a finite-dimensional complex irreducible representation of a group $G$. Then:
		\begin{enumerate}
		\item $V$ admits a real form if and only if $V$ is self-conjugate with $\Index(V) = 1$.
		\item The division algebra $\End_G(V_\bR)$ of endomorphisms of $V_\bR$ is isomorphic to $\bR$ (resp.~$\bC$, $\bH$) if $V$ is self-conjugate with $\Index(V) = 1$ (resp.~$V$ is not self-conjugate, $V$ is self-conjugate with $\Index(V) = -1$). For the definitions of $\bR$, $\bC$, and $\bH$, see the notation section.
		\end{enumerate}
		
	\end{thm}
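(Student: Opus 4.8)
The plan is to reduce everything to Schur's lemma over $\bC$ and over $\bR$, together with the structure of the conjugate-linear intertwiner attached to a self-conjugate representation, and Frobenius's theorem on real division algebras. First I would record the basic dictionary: a $\bC[G]$-isomorphism $V \xrightarrow{\sim} \bar{V}$ is the same datum as a conjugate-linear $G$-equivariant bijection $J \colon V \to V$, and such a $J$ exists precisely when $V$ is self-conjugate. Given one, $J^2$ is complex-linear and $G$-equivariant, hence a scalar $\lambda \in \bC^\times$ by Schur's lemma over $\bC$; comparing $J \circ J^2$ with $J^2 \circ J$ and using the conjugate-linearity of $J$ forces $\lambda \in \bR^\times$, and replacing $J$ by any other intertwiner rescales $\lambda$ by a positive real, so the sign of $\lambda$ is a well-defined invariant of $V$. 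Here I would cite the definition of $\Index(V)$ in \cite[Section~VIII]{E1914} and check that the sign of $\lambda$ agrees with it. After multiplying $J$ by $|\lambda|^{-1/2}$ we may assume $J^2 = \Index(V)$.

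For part (1), if $V$ is self-conjugate with $\Index(V) = 1$, then $J$ is a real structure on $V$: the $\bR$-subspace $V^J$ of $J$-fixed vectors is $G$-stable and satisfies $V^J \otimes_\bR \bC = V$, giving a real form. Conversely, a real form $V_0$ yields $V \cong V_0 \otimes_\bR \bC$, hence $\bar V \cong V$, so $V$ is self-conjugate, and complex conjugation on the second factor is a real structure $J$ with $J^2 = 1$, so $\Index(V) = 1$. (Alternatively one may invoke Theorem~\ref{thm:loewy_original_1}(2) and analyze the decomposition $\bC \otimes_\bR \Res_{\bC/\bR} V \cong V \oplus \bar V$ directly.)

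For part (2), by Theorem~\ref{thm:loewy_original_1}(1) the real irreducible $V_\bR$ is a subrepresentation of $\Res_{\bC/\bR} V$, and $D := \End_G(V_\bR)$ is a finite-dimensional division $\bR$-algebra by Schur's lemma, hence isomorphic to $\bR$, $\bC$, or $\bH$ by Frobenius's theorem; it remains to determine which, and for this I would compute $D \otimes_\bR \bC$. If $V$ is not self-conjugate, then $V$ has no real form, so by Theorem~\ref{thm:loewy_original_1}(2) the representation $\Res_{\bC/\bR} V$ is irreducible, i.e. $V_\bR = \Res_{\bC/\bR} V$, whence $D \otimes_\bR \bC \cong \End_{\bC[G]}(\bC \otimes_\bR \Res_{\bC/\bR} V) \cong \End_{\bC[G]}(V \oplus \bar V) \cong \bC \times \bC$; thus $\dim_\bR D = 2$ and $D \cong \bC$. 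If $V$ is self-conjugate with $\Index(V) = -1$, then by part (1) $V$ has no real form, so again $V_\bR = \Res_{\bC/\bR} V$, and $D \otimes_\bR \bC \cong \End_{\bC[G]}(V \oplus \bar V) \cong \End_{\bC[G]}(V \oplus V) \cong M_2(\bC)$, so $\dim_\bR D = 4$ and $D \cong \bH$; alternatively $\bR\cdot\id$ and the quaternionic structure $J$ with $J^2 = -1$ already span a copy of $\bH$ inside $D$. Finally, if $V$ is self-conjugate with $\Index(V) = 1$, part (1) gives a real form $V_0 \cong V_\bR$, so $D = \End_G(V_0)$ and $D \otimes_\bR \bC \cong \End_{\bC[G]}(V) \cong \bC$ by Schur over $\bC$, hence $D \cong \bR$.

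The main obstacle is the bookkeeping around the index itself: verifying that the sign of $\lambda = J^2$ coincides with $\Index(V)$ as defined in \cite[Section~VIII]{E1914}, which may be phrased there via an invariant bilinear form rather than via $J$, and checking that this is meaningful for an arbitrary group $G$ carrying no topology or unitary structure. Once the normalized conjugate-linear intertwiner $J$ is in hand, the remainder is a routine application of Schur's lemma, the identification $\bC \otimes_\bR \Res_{\bC/\bR} V \cong V \oplus \bar V$, and Frobenius's classification of finite-dimensional division algebras over $\bR$.
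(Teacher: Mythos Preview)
Your proposal is correct and matches the paper's own argument almost exactly. The paper treats Theorem~\ref{thm:loewy_original_2} as the special case $\cC=\Rep G$ of the unnumbered Corollary following Corollary~\ref{cor:BT}: part~(1) is deduced from Proposition~\ref{prop:key_computation_BT} (in the $\bC/\bR$ case the Borel--Tits cocycle is precisely the sign of $J^2$ for your conjugate-linear intertwiner $J$, which is how the paper \emph{defines} $\Index$ in the Example preceding that Corollary), and part~(2) is handled just as you do, by computing $\bC\otimes_\bR\End_{\DS(\cC)}(\Res_{\bC/\bR}M)\cong\End_\cC(M\oplus\bar M)$, reading off the dimension, and invoking Frobenius's theorem. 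Your closing worry about reconciling the sign of $J^2$ with the definition of $\Index$ is therefore a non-issue in this paper: that sign \emph{is} the definition here.
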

	
	Subsequently, in \cite[Section 12]{MR207712}, Borel--Tits generalized the notion of indices by introducing a second Galois cohomology class, denoted $\beta_V$, associated with a connected semisimple (or more generally, reductive) algebraic group $G$ over a field $F$ of characteristic zero and a self-conjugate irreducible representation $V$ of $\bar{F} \otimes_F G$, where $\bar{F}$ is an algebraic closure of $F$. This class serves as an obstruction to the existence of $F$-forms (\cite[12.8. Proposition]{MR207712}). For $V$ not self-conjugate, Tits defined $\beta_V$ by replacing $F$ with the field of rationality of $V$ (\cite[7.1. Notations]{MR277536}). Numerical aspects of $\beta_V$ are developed in \cite[Sections 2.2 and 2.3]{MR4627704} and \cite{hayashierror}, based on the Borel--Weil theorem.

	In a related development, Tits established a bijection analogous to Theorem \ref{thm:loewy_original_1} in the setting of connected semisimple algebraic groups above (\cite[7.2. Th\'eor\`eme]{MR277536}). He also verified an analog of Theorem \ref{thm:loewy_original_2} using Serre's canonical isomorphism between the second Galois cohomology group and the Brauer group of $F$ (\cite[3.2. Lemme]{MR277536}).

	In \cite{MR3770183}, Januszewski brought the insight of Theorem \ref{thm:loewy_original_2} (1) into the theory of $(\fg,K)$-modules to give its analog for quadratic extensions. The index is called the Frobenius--Schur indicator in his paper, following their work on a formula for the indices in terms of characters of complex irreducible representations of a finite group $G$ in \cite{zbMATH02646565}.
	
	For computational aspects, Januszewski verified the local-global principle of the index (\cite[Theorem 5.5]{MR3770183}). This motivates us to develop the theory of $(\fg,K)$-modules over non-Archimedean local fields of characteristic zero, which we do not address in this paper but which could be an interesting topic for future work.

	\subsection{Towards a categorical and axiomatic approach}\label{sec:towards}
	
	One can readily see that the arguments of Loewy, Borel--Tits, and Tits are applicable in broader contexts. Indeed, the core idea underlying their work is the analysis of descent data. In particular, Loewy's and Tits' results may be interpreted, respectively, as the classification of simple descent data and their identification with real irreducible representations, and with irreducible representations over $F$, via Galois descent. For modern expositions in the setting of real Lie algebras, following Cartan \cite{E1914}, see \cite{MR102534} and \cite[\S8]{MR2041548}.

	Leaving aside the issue of continuity in the infinite setting, descent data (or systems) can be formulated whenever one has a category equipped with an action of a group $\Gamma$. This leads us to the notion of a $\Gamma$-category. Descent systems were introduced in this general context (see \cite{MR0711065,MR0861356,MR1126178}). In dealing with infinite Galois extensions, we must take into account the continuity of descent systems, from the perspective of classical Galois descent in algebra and algebraic geometry.

	\subsection{Results of this paper, I}\label{sec:results}
	
	As observed in the previous section, $\Gamma$-categories and descent systems serve as a basis for a generalization of the works of \cite{MR1500635,MR207712,MR277536} on rationality problems. Based on this idea, we introduce general categorical frameworks in which the arguments of Loewy, Borel--Tits, and Tits can be applied.
	
	The simplest way to generalize the bijection in Theorem \ref{thm:loewy_original_1} (1) is to consider a full subcategory $\DD(\cC)$ of the category $\DS(\cC)$ of descent systems in a $\Gamma$-category $\cC$. To obtain a simple object of $\cC$ from a simple descent datum, let us note a key condition that every object of $\cC$ is assumed to be of finite length. This setting enables us to carry out arguments similar to those in the classical case:

	\begin{thm}[Theorem \ref{thm:Loewy}]\label{thm:Loewy_intro}
		Let $(\cC, \DD(\cC))$ be a Loewy pair (Definition \ref{def:lp} (1)). Then, there is a bijection between the sets of isomorphism classes of simple objects in $\DD(\cC)$ and the Galois conjugacy classes of isomorphism classes of simple objects in $\cC$.
	\end{thm}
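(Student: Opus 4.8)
The plan is to construct the bijection by hand, in the spirit of Theorem~\ref{thm:loewy_original_1}: send a simple descent system to the Galois conjugacy class of any simple constituent of its underlying object of $\cC$, and go back by taking a simple subobject of the image of a simple object of $\cC$ under the categorical analogue of $\Res_{\bC/\bR}$. I will use the structural properties built into the notion of a Loewy pair (Definition~\ref{def:lp}): $\cC$ and $\DD(\cC)$ are abelian with all objects of finite length, $\DD(\cC)$ is closed under subobjects and quotients in $\DS(\cC)$, and the forgetful functor $U\colon\DD(\cC)\to\cC$ admits a right adjoint $I\colon\cC\to\DD(\cC)$ with $U\circ I(X)\cong\bigoplus_{\gamma}\gamma^*X$ (a finite sum over the Galois orbit, with suitable multiplicities after accounting for continuity); in the classical case $U=\bC\otimes_\bR-$ and $I=\Res_{\bC/\bR}$.

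\emph{The forward map.} Let $(V,\phi)$ be a simple object of $\DD(\cC)$, with underlying $V\in\cC$ and descent isomorphisms $\phi_\gamma\colon\gamma^*V\xrightarrow{\sim}V$. Each $\gamma^*$ is an autoequivalence of $\cC$, hence preserves the socle and the isotypic decomposition; so $\operatorname{soc}V$ is carried to itself by every $\phi_\gamma$ and therefore underlies a subobject of $(V,\phi)$ in $\DS(\cC)$, in fact in $\DD(\cC)$. Since $V\neq 0$ has finite length, $\operatorname{soc}V\neq 0$, and simplicity forces $\operatorname{soc}V=V$, so $V$ is semisimple. Likewise the $\phi_\gamma$ permute the isotypic components of $V$ (preserving their multiplicities), and the sum over any $\Gamma$-orbit of components underlies a subobject of $(V,\phi)$ in $\DD(\cC)$; simplicity forces $\Gamma$ to act transitively on the components. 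Hence the isomorphism classes of the simple constituents of $V$ form a single Galois conjugacy class $\Phi(V,\phi)$, clearly depending only on the isomorphism class of $(V,\phi)$.

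\emph{The backward map and $\Phi\circ\Psi=\id$.} For a simple $S\in\cC$, the object $I(S)\in\DD(\cC)$ is nonzero of finite length, hence has a simple subobject $T$. From $\gamma^*\circ U\cong U$ (an object of $\DD(\cC)$ comes with isomorphisms $\gamma^*(UX)\cong UX$) and uniqueness of adjoints one gets $I\circ\gamma^*\cong I$, so $[T]$ depends only on the Galois conjugacy class of $S$; set $\Psi([S])=[T]$. The inclusion $T\hookrightarrow I(S)$ gives $UT\hookrightarrow UI(S)\cong\bigoplus_{\gamma}\gamma^*S$, so every constituent of $UT$ lies in the Galois orbit of $S$; by the transitivity established above these constituents fill out the whole orbit, whence $\Phi\circ\Psi=\id$.

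\emph{$\Psi\circ\Phi=\id$, and the main obstacle.} Let $(V,\phi)$ be simple in $\DD(\cC)$ with $S$ a constituent of $V$; I must show $(V,\phi)\cong T$. Since $V$ is semisimple with $S$ as a summand, the adjunction gives $\Hom_{\DD(\cC)}\big((V,\phi),I(S)\big)\cong\Hom_{\cC}(V,S)\neq 0$, so $(V,\phi)$ is a simple subobject, hence a direct summand, of $I(S)$; the same holds for $T$. It therefore suffices to prove that $I(S)$ is isotypic in $\DD(\cC)$, equivalently that $R:=\End_{\DD(\cC)}(I(S))$ is a simple ring. The adjunction identifies $R$ with $\Hom_{\cC}\big(\bigoplus_{\gamma}\gamma^*S,\,S\big)=\bigoplus_{\gamma\colon\gamma^*S\cong S}\End_{\cC}(S)$, where $\End_{\cC}(S)$ is a division ring by Schur's lemma and the indexing set is, after continuity reductions, a finite group; the descent data make $R$ a crossed product of this division ring by that finite group acting through the semilinear Galois structure, and such a crossed product is simple. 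Granting this, $I(S)\cong T^{\oplus m}$ for a unique simple $T$, so $(V,\phi)\cong T$ and we are done. The main obstacle is precisely this last point — that in the abstract setting the crossed product $R$ is simple; classically it is the elementary computation that $\End_{\bR}(\Res_{\bC/\bR}V)$ is one of $\bR$, $\bC$, $M_2(\bR)$, $\bH$, but in general it requires unwinding the descent datum on $I(S)$ together with the nontriviality of the $\Gamma$-action on $\End_{\cC}(S)$ guaranteed by the Loewy-pair axioms, and, in the profinite case, the reduction to a finite quotient of $\Gamma$ that makes $R$ finite over its center and Artinian; a minor auxiliary point is the exactness of $I$, which lets subobject, quotient and summand coincide for semisimple objects.
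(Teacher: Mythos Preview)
Your forward map is essentially the paper's Lemma~\ref{lem:well-defined}, and that part is fine. The genuine gap is in the backward direction and in injectivity.

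You assume that the forgetful functor $U\colon\DD(\cC)\to\cC$ admits a right adjoint $I$ with $U\circ I(X)\cong\bigoplus_\gamma\gamma^*X$. This is \emph{not} part of the definition of a Loewy pair. Proposition~\ref{prop:res} constructs such a right adjoint only for $\DS(\cC)\to\cC$, and only as a \emph{product} $\prod_\sigma{}^\sigma X$; when $\Gamma_{F'/F}$ is infinite this object is generally not in $\cC$ (it has infinite length), and even when it is, $\Res_{F'/F}X$ need not lie in $\DD(\cC)$---this is exactly the effectivity problem discussed after Proposition~\ref{prop:res}. The axiom \ref{con:surj} is the paper's replacement for your $I$: it \emph{postulates} that every simple $Q\in\cC$ receives a nonzero map from the underlying object of some simple descent datum. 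That is how surjectivity is obtained, in one line, without ever building $I$.

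For injectivity you reduce to showing that $\End_{\DD(\cC)}(I(S))$ is a simple ring via a crossed-product argument, which you yourself flag as the main obstacle and do not complete. The paper bypasses this entirely: given two simple descent data $(M,\varphi_\sigma)$ and $(N,\psi_\sigma)$ whose underlying objects share a simple constituent up to Galois twist, one has $\Hom_\cC(M,N)\neq 0$, and then axiom~\ref{con:bcthm} gives
\[
F'\otimes_F\Hom_{\DD(\cC)}((M,\varphi_\sigma),(N,\psi_\sigma))\cong\Hom_\cC(M,N)\neq 0,
\]
so Schur's lemma in $\DD(\cC)$ yields the isomorphism immediately. You never invoke \ref{con:bcthm} or \ref{con:surj}, which are precisely the two axioms doing the work; instead you are trying to reprove special cases of them from structure that a general Loewy pair does not have.
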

	
	We refer to the objects in $\DD(\cC)$ as descent data. When working with a finite Galois extension, the category $\DS(\cC)$ provides a typical example of $\DD(\cC)$ (see Definition \ref{def:lp} (2) and Definition \ref{defn:descent_system}).
	
	Note that Theorem \ref{thm:Loewy_intro} recovers Theorem \ref{thm:loewy_original_1} (2) as a special instance. More generally, a similar statement holds true for Loewy pairs $(\cC,\DS(\cC))$ with respect to quadratic Galois extensions (Propositions \ref{prop:res} and \ref{prop:Loewy}).

	To obtain $\DD(\cC)$ as the union of descent systems over finite intermediate Galois extensions $F''/F$ in $F'$, we introduce the notion of a Loewy datum. A Loewy datum consists of certain $F''$-linear abelian categories---referred to as reference categories---equipped with actions of the corresponding Galois groups and compatible base change functors (Definition \ref{defn:Ldatum}). In this framework, the bijection in Theorem \ref{thm:Loewy_intro} is obtained objectwise from the bijections for the reference categories (see Theorem \ref{thm:basic_Loewy_datum}). For instance, categories of finite-dimensional representations of affine group schemes over fields provide typical examples of Loewy data (Example \ref{ex:repG}).

	We next construct the Borel--Tits cocycle $\beta^{\BT}_M$ for a (self-conjugate) simple object $M$. In the finite Galois case, we define $\beta^{\BT}_M$ in a canonical way (Definition-Proposition \ref{def-prop:bt_finite}). This definition agrees with the facile one (Proposition \ref{prop:key_computation_BT}). In the infinite Galois case, we reduce to the finite case by taking an $F''$-form of $M$ for some finite Galois extension $F''/F$ within the given infinite Galois extension $F'/F$.

	We introduce the notion of Borel--Tits data, which makes this construction possible (Definition \ref{defn:BTdatum}). The main result for them is:

	\begin{thm}[Definition-Proposition \ref{defprop:BT}]\label{thm:BT_intro}
		Suppose that we are given a Borel--Tits datum on a category $\cC$. Let $M \in \cC$ be a self-conjugate simple object whose endomorphisms are only scalars. Then, we have an obstruction class $\beta^{\BT}_M$ to the existence of a descent datum on $M$.
	\end{thm}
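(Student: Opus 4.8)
The plan is to transcribe the classical Borel--Tits construction into the categorical framework: choose isomorphisms identifying $M$ with each of its Galois conjugates, measure by a scalar-valued $2$-cochain the failure of these isomorphisms to compose strictly, and pass to the associated cohomology class. The hypothesis that $\End_{\cC}(M)$ consists only of scalars plays the role of Schur's lemma. The one new point over an infinite extension is that the construction must be carried out compatibly along the tower of finite subextensions, so I would first treat the finite Galois case and then pass to the colimit.

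\emph{The finite Galois case} (the setting of Definition-Proposition \ref{def-prop:bt_finite}). Let $\Gamma$ denote the finite Galois group acting on $\cC$, and let $c_{\sigma,\tau}$ be the coherence isomorphisms of the $\Gamma$-category relating $\sigma^{*}\tau^{*}$ and $(\sigma\tau)^{*}$. Since $M$ is self-conjugate, choose for every $\sigma\in\Gamma$ an isomorphism $\phi_{\sigma}\colon\sigma^{*}M\xrightarrow{\ \sim\ }M$, normalising $\phi_{1}$ to be the structural identification. The automorphism
\[
\phi_{\sigma}\circ\sigma^{*}(\phi_{\tau})\circ (c_{\sigma,\tau})_{M}^{-1}\circ\phi_{\sigma\tau}^{-1}\in\Aut_{\cC}(M)
\]
is then multiplication by a unit $\beta^{\BT}_{M}(\sigma,\tau)$. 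I would check: (a) $\beta^{\BT}_{M}$ is a $2$-cocycle, a formal consequence of the associativity coherence of the $c_{\sigma,\tau}$ and the functoriality of $\sigma^{*}$; (b) replacing $\phi_{\sigma}$ by $u_{\sigma}\phi_{\sigma}$ with $u_{\sigma}$ a scalar alters $\beta^{\BT}_{M}$ by the coboundary $\delta(u)$, so that its class is well defined; and (c) a descent datum on $M$ is precisely a family $(\phi_{\sigma})$ making all the scalars $\beta^{\BT}_{M}(\sigma,\tau)$ trivial, and a coboundary can be absorbed into a renormalisation $\phi_{\sigma}\mapsto u_{\sigma}^{-1}\phi_{\sigma}$. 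Hence $M$ admits a descent datum if and only if $\beta^{\BT}_{M}$ vanishes; this proves the finite case and shows that the ``facile'' description matches the canonical one.

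\emph{Reduction of the infinite case.} By the axioms of a Borel--Tits datum (Definition \ref{defn:BTdatum}), $M$ descends to a finite level: there exist a finite Galois subextension $F''/F$ and an $F''$-form $N$ of $M$ in the corresponding reference category, still simple with only scalar endomorphisms. Enlarging $F''$, a spreading-out argument---using that $M$ is self-conjugate and that the $\Gamma$-action on $\cC$ restricts compatibly to the reference categories---lets one assume $N$ is self-conjugate over $\mathrm{Gal}(F''/F)$. Then $\beta^{\BT}_{N}$ is defined by the finite case, and I would set $\beta^{\BT}_{M}$ to be its image under inflation in the continuous cohomology of $\Gamma$. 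Well-definedness reduces to the compatibility of $\beta^{\BT}$ with further base change: if $N'$ is the image of $N$ in the reference category for a larger finite $F'''\supseteq F''$, then $\beta^{\BT}_{N'}$ is the inflation of $\beta^{\BT}_{N}$---this is Proposition \ref{prop:key_computation_BT}, proved by transporting the chosen $\phi_{\sigma}$ along the base change functor, which is exact and preserves simplicity and scalar endomorphisms. Any two choices of $(F'',N)$ are then compared through a common refinement and yield the same class. Finally, the obstruction property descends: $M$ carries a descent datum exactly when some finite form $N$ does, i.e.\ exactly when $\beta^{\BT}_{N}$, equivalently $\beta^{\BT}_{M}$, is trivial.

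\emph{Expected main obstacle.} The cocycle manipulations in the finite case are routine; the delicate part is the passage to the limit. One must know that the reference categories, their Galois actions and the base change functors interlock with the ambient $\Gamma$-category precisely as prescribed by the definition of a Borel--Tits datum, so that (i) a self-conjugate $F''$-form of $M$ exists after enlarging $F''$, (ii) the scalar $2$-cocycles at the finite levels are natural in $F''$ and so glue to a single continuous class, and (iii) the existence of a descent datum on $M$ is genuinely detected at a finite level. I expect the cleanest route is to isolate the lemma that base change along a finite extension sends the Borel--Tits cocycle to its inflation, and to adopt the convention that a descent datum over the infinite extension is a colimit of descent systems over the finite subextensions; with these in hand, (i)--(iii) become bookkeeping.
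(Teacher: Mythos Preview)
Your approach is essentially the one the paper takes, and the finite case is handled correctly. Two points of comparison are worth noting. First, the paper's \emph{definition} of $\beta^{\BT}_M$ in the finite case (Definition-Proposition \ref{def-prop:bt_finite}) is the canonical one via the central simple algebra $\End_{\DS(\cC;F'/F(M))}(\Res_{F'/F(M)} M)$ and the Brauer group; your explicit cocycle is then shown to agree with it in Proposition \ref{prop:key_computation_BT}. This order buys the paper a clean invariance statement for free, whereas you must verify invariance by hand (your step (b)). Second, your spreading-out to force the $F''$-form to be self-conjugate is unnecessary: Lemma \ref{lem:Galois_descent_of_self-conjugacy} shows self-conjugacy descends automatically once endomorphisms are scalar, without enlarging $F''$.

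There is one step you gloss over. In the final sentence of your reduction you assert that $\beta^{\BT}_N$ is trivial ``equivalently'' to $\beta^{\BT}_M$ being trivial. The forward direction is immediate, but the backward one is not: knowing that the inflation of $\beta^{\BT}_N$ vanishes in $H^2(\Gamma_{F'/F},(F')^\times)$ does not formally give $\beta^{\BT}_N=0$ in $H^2(\Gamma_{F''/F},(F'')^\times)$. The paper handles this (proof of Definition-Proposition \ref{defprop:BT} (2)) by invoking the injectivity of inflation, which follows from Hilbert's Theorem 90 via the inflation--restriction sequence. Alternatively one could enlarge $F''$ using the colimit description of continuous $H^2$, but either way this step needs to be made explicit; your proposed ``bookkeeping'' does not quite cover it.
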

	
	\begin{ex}[Example \ref{ex:affinegroupscheme}, Proposition \ref{prop:key_computation_BT}]\label{ex:comparison}
		Let $G$ be an affine group scheme over a field $F$, and $F'$ be a possibly infinite Galois extension of $F$. Then the categories of representations of $F''\otimes_F G$ form a Borel--Tits datum, where $F''$ runs through $F'$ and finite Galois extensions of $F$ in $F'$. Moreover, the Borel--Tits cocycles $\beta^{\BT}_V$ in this paper agrees with the original one $\beta_V$ in \cite[Section~12]{MR207712}.
	\end{ex}
	
	Let us also consider its converse, i.e., we discuss whether $F' \otimes_F M$ is simple for $M \in \cC_F$ simple. This condition is known as one of the possible definitions of pseudo-absolute simplicity when $F'$ is a separable closure of $F$ (\cite[Definition-Proposition 3.30]{hayashisuper}). If $F$ is perfect and $F'$ is algebraically closed, this is one of the possible definitions of absolute simplicity, which we address later. We verify that some possible definitions of pseudo-absolute simplicity are equivalent:

	\begin{thm}[Theorem \ref{thm:abs_simple}]\label{thm:abs_simple_intro}
	Let $\cC$ be an $F'$-linear abelian category with the structure of a strong Borel--Tits datum. Then, for a simple object $M$ of $\cC_F$, the following conditions are equivalent:
	\begin{enumerate}
		\renewcommand{\labelenumi}{(\alph{enumi})}
		\item $F' \otimes_F M$ is simple,
		\item $F'' \otimes_F M$ is simple for every finite Galois extension $F''/F$ in $F'$, and
		\item $M$ only has scalar endomorphisms.
	\end{enumerate}
	
	\end{thm}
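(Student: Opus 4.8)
The plan is to establish the two equivalences $(a)\Leftrightarrow(b)$ and $(b)\Leftrightarrow(c)$ separately, the first being an ``infinite-to-finite'' bookkeeping argument and the second a computation carried out at a single finite level. Throughout I will freely use the structural features of a (strongly) Borel--Tits datum (Definition \ref{defn:BTdatum}): the base change functors $\cC_{F''}\to\cC_{F'''}$ are exact and faithful and compose transitively; $\Hom$ is compatible with base change for objects of finite length; $\cC$ is, in a suitable sense, the filtered colimit of the reference categories $\cC_{F''}$ over finite Galois $F''/F$ in $F'$; each $\operatorname{Gal}(F''/F)$ acts semilinearly on $\cC_{F''}$; and Galois descent at finite levels is effective.

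For $(a)\Rightarrow(b)$, let $F''/F$ be finite Galois in $F'$ and suppose $0\neq N\subsetneq F''\otimes_F M$. Applying the exact, faithful functor $F'\otimes_{F''}(-)$ to $0\to N\to F''\otimes_F M\to (F''\otimes_F M)/N\to 0$ and using $F'\otimes_{F''}(F''\otimes_F M)\cong F'\otimes_F M$ yields a nonzero proper subobject $F'\otimes_{F''}N$ of $F'\otimes_F M$, contradicting $(a)$. For $(b)\Rightarrow(a)$, a nonzero proper subobject of $F'\otimes_F M$ would, by the colimit description of $\cC$, be of the form $F'\otimes_{F''}N_0$ for some finite Galois $F''/F$ in $F'$ and some subobject $N_0$ of $F''\otimes_F M$ with $0\neq N_0\subsetneq F''\otimes_F M$, contradicting $(b)$.

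The core of $(b)\Leftrightarrow(c)$ is the following finite-level claim: for $F''/F$ finite Galois in $F'$, the object $F''\otimes_F M$ is \emph{semisimple}, and it is simple if and only if $F''\otimes_F\End_{\cC_F}(M)$ is a division ring. Semisimplicity holds because the socle of $F''\otimes_F M$ is intrinsic, hence stable under the semilinear action of $\operatorname{Gal}(F''/F)$; by effectivity of descent it descends to a subobject of $M$, which is nonzero (a nonzero finite-length object has nonzero socle) and hence equal to $M$ since $M$ is simple, so the socle is all of $F''\otimes_F M$. Writing $F''\otimes_F M\cong\bigoplus_j N_j^{\oplus m_j}$ with the $N_j$ pairwise non-isomorphic simple objects, base-change compatibility of $\Hom$ gives $F''\otimes_F\End_{\cC_F}(M)\cong\End_{\cC_{F''}}(F''\otimes_F M)\cong\prod_j M_{m_j}(\End_{\cC_{F''}}(N_j))$, and a finite product of matrix rings over division rings is itself a division ring exactly when there is one factor, of size one, i.e.\ exactly when $F''\otimes_F M$ is simple. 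Now $(c)\Rightarrow(b)$ is immediate, since $\End_{\cC_F}(M)=F$ makes $F''\otimes_F\End_{\cC_F}(M)\cong F''$ a field. For $(b)\Rightarrow(c)$: $(b)$ gives (via $(b)\Rightarrow(a)$ and base-change compatibility of $\Hom$, or by passing to the filtered colimit of the division rings $F''\otimes_F\End_{\cC_F}(M)$) that $\End_{\cC_{F'}}(F'\otimes_F M)\cong F'\otimes_F\End_{\cC_F}(M)$ is a division ring; a defining feature of a \emph{strongly} Borel--Tits datum --- ensuring that $F'$ is large enough, e.g.\ that the simple objects of $\cC$ itself have only scalar endomorphisms --- then forces this to be $F'$, whence $\End_{\cC_F}(M)=F$.

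The step I expect to be the main obstacle is pinning down and correctly invoking the property of a strongly Borel--Tits datum used in $(b)\Rightarrow(c)$: passing from ``$F'\otimes_F\End_{\cC_F}(M)$ is a division ring'' to ``$\End_{\cC_F}(M)=F$'' genuinely requires it. (It fails in general: if $\cC_F$ were the category of modules over a division algebra $D$ not split by $F'$, the regular module $D$ would be simple with $\End_{\cC_F}(D)\cong D^{\op}\neq F$ and would stay simple over every $F''$ and over $F'$, violating the theorem.) A secondary technical point is making the ``defined over a finite level'' and ``effectivity of descent'' steps rigorous in the infinite Galois case, and confirming that the base-change compatibility of $\Hom$ and the existence of the semilinear Galois actions apply in the generality of a Borel--Tits datum; these should follow from the axioms but must be cited with care.
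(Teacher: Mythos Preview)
Your proof is correct and follows essentially the same route as the paper's proof of Theorem \ref{thm:abs_simple}: semisimplicity of $F''\otimes_F M$ via Galois descent (the paper packages this as Lemma \ref{lem:pseudosimple_finite} (3), proved through Lemma \ref{lem:well-defined}, which is your socle argument in different clothing), the endomorphism-ring criterion at each finite level, and the strong condition \ref{con:Schur} to pass from ``division ring'' to ``scalar'' (the paper routes this last step through Proposition \ref{prop:existence_DD}). Regarding your flagged concern on $(b)\Rightarrow(a)$: the paper does not invoke a ``colimit description'' but instead first produces an $F''$-form $N'_{F''}$ of the putative subobject $N'\subset F'\otimes_F M$ via \ref{con:rationality}, then descends the inclusion morphism itself to the $F''$-level using \ref{con:bcthm_BT} (enlarging $F''$ if needed so that the preimage of $i$ is already defined there), and finally checks it is a monomorphism into $F''\otimes_F M$ via Lemma \ref{lem:conservative}; this is exactly the care your last paragraph anticipates.
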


	We also introduce the notion of (strong) Loewy--Borel--Tits data to generalize Theorem \ref{thm:loewy_original_2} (Definition \ref{defn:LBT}). Roughly speaking, a Loewy--Borel--Tits datum is a combination of Loewy and Borel--Tits data. The strong condition requires that simple objects have only scalar endomorphisms. We give an algebraic interpretation of $\beta^{\BT}_M$:

	\begin{thm}[Corollary \ref{cor:LBT}]\label{thm:LBT_intro}
		Suppose that we are given a strong Loewy--Borel--Tits datum on a category $\cC$. Let $M$ be a simple object of $\cC$, and $S$ be the corresponding simple descent datum. Then:
		\begin{enumerate}
			\item $M$ is self-conjugate if and only if the division algebra $\End_{\DD(\cC)}(S)$ of endomorphisms of $S$ is central.
			\item If the equivalent conditions of (1) are satisfied, then $(\beta^{\BT}_M)^{-1}$ represents the similarity class of $\End_{\DD(\cC)}(S)$ in the Brauer group of $F'/F$.
		\end{enumerate}
	\end{thm}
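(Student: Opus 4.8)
The plan is to reduce at once to the case where $F'/F$ is a finite Galois extension, with $\Gamma=\operatorname{Gal}(F'/F)$, and then to study the object $N$ of $\cC$ underlying $S$. Under the equivalence $\DD(\cC)\simeq\cC_F$ coming from Galois descent, $S$ corresponds to a simple object $W$ of the reference category $\cC_F$ over $F$, with $N\cong F'\otimes_F W$ and $D:=\End_{\DD(\cC)}(S)\cong\End_{\cC_F}(W)$; by Schur's lemma $D$ is a finite-dimensional division $F$-algebra. If $F'/F$ is infinite, one first descends $W$ (hence $D$) and a finite form of $M$ exhibiting $\beta^{\BT}_M$ to a common finite Galois subextension $F''/F$ and uses the compatibility of these constructions with enlarging $F''$ and with $\Br(F''/F)\hookrightarrow\Br(F'/F)$; so from now on $F'/F$ is finite. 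By the base-change axiom of the datum, $D\otimes_F F'\xrightarrow{\sim}\End_\cC(N)$. Since $F'/F$ is separable, $N\cong F'\otimes_F W$ is semisimple; and because $M$ is the object of $\cC$ attached to $S$ by Theorem~\ref{thm:Loewy_intro}, the isotypic components of $N$ are exactly the Galois conjugates ${}^{\gamma}M$ with $\gamma$ running over $\Gamma/\Gamma_M$, where $\Gamma_M$ is the stabiliser of $[M]$; each occurs with one and the same multiplicity $m$, and each satisfies $\End_\cC({}^{\gamma}M)=F'$ because the datum is \emph{strongly} Loewy--Borel--Tits. Hence $D\otimes_F F'\cong\prod_{j=1}^{r}M_m(F')$ with $r=[\Gamma:\Gamma_M]$.

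For part~(1): taking centres gives $Z(D)\otimes_F F'\cong Z(D\otimes_F F')\cong(F')^{r}$, and as $F'/F$ is separable $Z(D)\otimes_F F'$ is reduced with exactly $[Z(D):F]$ primitive idempotents, so $[Z(D):F]=r$. Therefore $D$ is central over $F$ if and only if $r=1$, i.e.\ if and only if the $\Gamma$-orbit of $[M]$ is a single point, i.e.\ if and only if $M$ is self-conjugate.

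For part~(2): assume $M$ self-conjugate, so $N\cong M^{\oplus m}$ and $D$ is a central simple $F$-algebra split by $F'$. Choose isomorphisms $\phi_\gamma\colon{}^{\gamma}M\xrightarrow{\sim}M$; by Proposition~\ref{prop:key_computation_BT} and Definition-Proposition~\ref{def-prop:bt_finite}, $\beta^{\BT}_M\in\Br(F'/F)=H^2(\Gamma,(F')^\times)$ is represented by the $2$-cocycle $c$ defined by $\phi_\gamma\circ{}^{\gamma}\phi_\delta=c(\gamma,\delta)\,\phi_{\gamma\delta}$. I would then write the descent datum $\psi_\gamma\colon{}^{\gamma}N\xrightarrow{\sim}N$ in the coordinates $N=M^{\oplus m}$, ${}^{\gamma}N=({}^{\gamma}M)^{\oplus m}$ as $\psi_\gamma=P_\gamma\circ\phi_\gamma^{\oplus m}$ with $P_\gamma\in\GL_m(F')=\Aut_\cC(M^{\oplus m})$; using the $F'$-linearity of the morphisms of $\cC$ to move $\phi_\gamma^{\oplus m}$ past matrices over $F'$, the strict cocycle identity $\psi_{\gamma\delta}=\psi_\gamma\circ{}^{\gamma}\psi_\delta$ translates into $P_{\gamma\delta}=c(\gamma,\delta)\,P_\gamma\cdot{}^{\gamma}P_\delta$. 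The same manipulation identifies $D=\End_{\DD(\cC)}(S)$, as a sub-$F$-algebra of $\End_\cC(N)=M_m(F')$, with the fixed subalgebra of the semilinear $\Gamma$-action $f\mapsto P_\gamma\cdot{}^{\gamma}f\cdot P_\gamma^{-1}$; thus $D$ is the $F'/F$-form of $M_m(F)$ classified by the class of $\gamma\mapsto\overline{P_\gamma}$ in $H^1(\Gamma,\PGL_m(F'))$. Applying to this class the connecting map of $1\to\bG_m\to\GL_m\to\PGL_m\to1$ gives the Brauer class of $D$ as $[P_\gamma\cdot{}^{\gamma}P_\delta\cdot P_{\gamma\delta}^{-1}]=[c^{-1}]$, which, after matching conventions, is $(\beta^{\BT}_M)^{-1}$; this is part~(2).

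The conceptual content here is modest and the real work is bookkeeping. The hard part will be (i) justifying the coordinate formula $\psi_\gamma=P_\gamma\circ\phi_\gamma^{\oplus m}$ and, above all, deriving the twisted cocycle relation $P_{\gamma\delta}=c(\gamma,\delta)\,P_\gamma\cdot{}^{\gamma}P_\delta$, which hinges on a careful analysis of how the $\Gamma$-action interacts with the $F'$-linear structure packaged in the (strongly) Loewy--Borel--Tits datum; and (ii) reconciling the two independent sign conventions in play --- the one built into the definition of $\beta^{\BT}_M$ in Definition-Proposition~\ref{def-prop:bt_finite} (whether it is $[c]$ or $[c]^{-1}$) and the standard one relating a crossed-product $2$-cocycle to the Brauer class of the corresponding algebra, together with the left/right convention for the action of $\End_{\DD(\cC)}(S)$ on $S$ --- so that the outcome is $(\beta^{\BT}_M)^{-1}$ and not $\beta^{\BT}_M$. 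A smaller but necessary step is to make the reduction to the finite-Galois case in the first paragraph fully rigorous.
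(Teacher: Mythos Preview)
Your approach is correct and the computations you outline go through, but it differs from the paper's in an instructive way.

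The paper's argument for part (2) is much shorter because of how $\beta^{\BT}_M$ is defined. In Definition-Proposition~\ref{def-prop:bt_finite}, $\beta^{\BT}_M$ is \emph{by definition} the inverse of the Brauer class of $\End_{\DS(\cC)}(\Res_{F'/F}M)$; the explicit cocycle description via $\phi_\sigma\circ{}^\sigma\phi_\tau\circ\mu_{\sigma,\tau}^{-1}\circ\phi_{\sigma\tau}^{-1}=\beta(\sigma,\tau)\id_M$ is then \emph{derived} from this in Proposition~\ref{prop:key_computation_BT} by an explicit coordinate computation on $\prod_\sigma{}^\sigma M$. With this definition in hand, the proof of Theorem~\ref{thm:End} (the finite case of the present statement) reduces to the single observation that $\Res_{F'/F}M\cong S^n$ for some $n$, which follows from Lemma~\ref{lem:well-defined} and Theorem~\ref{thm:Loewy}; hence $\End_{\DS(\cC)}(\Res_{F'/F}M)\cong M_n(\End_{\DS(\cC)}(S))$ and the two algebras are Brauer equivalent. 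No $\PGL_m$-cocycle or connecting map appears.

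Your route instead works directly with $N=\pi(S)$: you decompose $N\cong M^{\oplus m}$, write the descent datum in these coordinates as $\psi_\gamma=P_\gamma\circ\phi_\gamma^{\oplus m}$, extract a $\PGL_m(F')$-valued $1$-cocycle, and identify $[D]$ via the connecting map of $1\to\bG_m\to\GL_m\to\PGL_m\to 1$. This is the standard Galois-descent computation for forms of matrix algebras, and it is perfectly valid; your worry~(i) about the twisted relation $P_{\gamma\delta}=c(\gamma,\delta)P_\gamma\cdot{}^\gamma P_\delta$ is resolved exactly by the semilinearity axiom (which gives ${}^\gamma(a_{ij}\id_M)=\gamma(a_{ij})\id_{{}^\gamma M}$, so conjugation by $\phi_\gamma^{\oplus m}$ turns ${}^\gamma P_\delta$ into the entrywise-twisted matrix). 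Your worry~(ii) about signs is legitimate but is precisely the content of Remark~\ref{rem:relation} and the convention fixed in Definition-Proposition~\ref{def-prop:bt_finite}.

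For part~(1) the two arguments are essentially the same: both compute $F'\otimes_F\End(S)$ (you via $\End_\cC(N)$, the paper via $\End_\cC(\pi(\Res_{F'/F}M))$) and read off the centre. The reduction from infinite to finite $F'/F$ is handled in the paper by Corollary~\ref{cor:LBT}, which picks an $F''$-form $M_{F''}$, identifies $S\cong F'\otimes_{F''}S_{F''}$ via Theorem~\ref{thm:basic_Loewy_datum}, and invokes the fully faithful embedding $\DS(\cC_{F''})\hookrightarrow\DD(\cC)$ (Remark~\ref{rem:Galois_descent}) to transport $\End(S_{F''})\cong\End(S)$; your sketch of this step is along the same lines.

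In short: your proof is a correct alternative. The paper's approach buys brevity by front-loading the work into the definition of $\beta^{\BT}_M$ and the computation of Proposition~\ref{prop:key_computation_BT} on $\Res_{F'/F}M$; your approach trades that for a self-contained $\PGL_m$-cocycle argument on $\pi(S)$ that may feel more concrete but requires the bookkeeping you flagged.
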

	
	This is immediate from our definition of Borel--Tits cocycles if $\Gamma$ is finite. The infinite case is obtained by building up the results of Theorems \ref{thm:Loewy_intro} and \ref{thm:LBT_intro} for the finite case.

	\subsection{Towards applications to $(\fg,K)$-modules}\label{sec:apptoHC}
	
	As explained in Section~\ref{sec:background}, the rationality of Harish-Chandra modules is a fundamental problem in number theory. To generalize the results of \cite{MR3770183}, we plan to apply the results from Section~\ref{sec:results}. For this purpose, we use the base change functor for $(\fg,K)$-modules introduced in \cite{MR3770183} (see also \cite{MR3853058}). 
	
	In order to obtain a Loewy--Borel--Tits datum for certain familiar categories of $(\fg,K)$-modules, a key step is to verify Harish-Chandra's finiteness theorem, which provides equivalent conditions for the finite-length property in the present algebraic setting (see, for example, \cite[4.2.1. Theorem]{MR929683} and \cite[Corollary 7.207]{MR1330919}). These equivalences are stated in \cite[Sections 3.1 and 3.2]{MR3770183}. 
	
	However, the proof in \cite{MR3770183} reduces the argument to the complex case by embedding the ground field into $\bC$. To the best of the author's knowledge, existing proofs of Harish-Chandra's finiteness theorem over $\bC$ in the literature are analytic. For our purposes, however, an algebraic proof valid in more general settings is required. We resolve it in this paper.
	
	To establish the local-global principle for Borel--Tits cocycles, it is essential that irreducibility is preserved under base change of algebraically closed fields. This is a special case of Januszewski's notion of absolute irreducibility, as defined in \cite[Definition 3.3]{MR3770183}. At first glance, this condition appears quite strong. 
	
	On the other hand, we have introduced the notion of pseudo-absolute simplicity (Theorem \ref{thm:abs_simple_intro}). A fundamental question is whether these two notions are equivalent. In fact, various notions of absolute simplicity (or irreducibility) appear across different branches of algebra. They are often known to be equivalent by virtue of Jacobson's density theorem.

	For applications to cohomological, essentially unitarizable, irreducible Harish-Chandra modules, we note that such modules can be realized as cohomologically induced modules (\cite[Theorem 5.6]{MR762307}; see also \cite[Sections 5.2 and 5.4]{hayashijanuszewski} for the disconnected case). In particular, rational forms can be obtained by choosing rational forms of the corresponding parabolic subalgebras and characters, since the cohomological induction functor satisfies the base change property (\cite[Theorems D and H]{MR3853058}). Further aspects of the self-conjugacy property can be explored through the explicit construction.

	In preparation for the computation of the Borel--Tits cocycles, we note a result of Januszewski concerning indices:
	
	\begin{prop}[{\cite[Proposition 5.8]{MR3770183}}]\label{prop:Ktype_intro}
		Let $(\fg,K)$ be a Harish-Chandra pair $(\fg,K)$ over a field $F$ of characteristic zero with $\fg$, $K$ reductive and $(\fg,\fk)$ symmetric, where $\fk$ is the Lie algebra of $K$.
		
		Let $F'/F$ be a quadratic extension of fields. Let $X$ be a self-conjugate absolutely irreducible Harish-Chandra module, and $\tau$ be a self-conjugate $F'\otimes_FK$-type with multiplicity one in $X$. Then the indices of $X$ and $\tau$ are equal.
	\end{prop}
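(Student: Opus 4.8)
The plan is to reduce the statement to an equality of local invariants by transporting everything through the base change functor and the explicit description of indices in terms of descent data. First I would recall that, for a quadratic extension $F'/F$ with $\Gamma = \Gal(F'/F) = \{1, \sigma\}$, the index $\Index(X)$ of a self-conjugate absolutely irreducible object $X$ is, up to sign conventions, determined by whether a descent datum exists on $X$, equivalently by the class $\beta^{\BT}_X \in H^2(\Gamma, \bar{F}'^\times) = \Br(F'/F) \cong \{\pm 1\}$ (this is the content of Theorems \ref{thm:loewy_original_2}, \ref{thm:LBT_intro}, and the quadratic-case Propositions \ref{prop:res}, \ref{prop:Loewy}). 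So the goal becomes: show $\beta^{\BT}_X = \beta^{\BT}_\tau$, where on the right $\tau$ is regarded as an object of $\Rep(F' \otimes_F K)$ with its own Borel--Tits datum (the one from Example \ref{ex:comparison} applied to the group scheme $K$). The hypotheses — $(\fg,\fk)$ symmetric, $\fg$ and $K$ reductive, $\tau$ self-conjugate of multiplicity one — are precisely what is needed to make the $\tau$-isotypic functor $X \mapsto \Hom_{F'\otimes_FK}(\tau, X)$ behave well.

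The key steps, in order, would be: (1) Set up the $K$-equivariant structure: restrict $X$ along $\Rep(\fg,K) \to \Rep(F'\otimes_F K)$ and use that this restriction functor is $\Gamma$-equivariant for the relevant Galois actions (the base change functor of \cite{MR3770183,MR3853058} is compatible with restriction to $K$). Since $X$ is self-conjugate, choose an isomorphism $c_X \colon \sigma^* X \xrightarrow{\sim} X$; by Schur $c_X \circ \sigma^*(c_X)$ is a scalar $\lambda_X \in F^\times$, well-defined modulo norms from $F'$, and $\beta^{\BT}_X$ is the image of $\lambda_X$ in $F^\times / N_{F'/F}(F'^\times) \cong \Br(F'/F)$. (2) Show that the multiplicity-one hypothesis forces the $\tau$-isotypic component $X_\tau \subseteq X$ to be a $c_X$-stable subobject carrying $\tau$ with multiplicity one, hence $X_\tau \cong \tau$ as $F'\otimes_FK$-modules, and $c_X$ restricts to an isomorphism $\sigma^* \tau \xrightarrow{\sim} \tau$ witnessing self-conjugacy of $\tau$ — here one uses that $\tau$ is self-conjugate, so $\sigma^* X$ has the same $\tau$-isotypic component, matched by $c_X$. (3) Observe that the scalar $c_X \circ \sigma^*(c_X)$ computed on all of $X$ agrees with the scalar computed on the subobject $X_\tau \cong \tau$, because restriction of an endomorphism to an invariant subobject preserves the scalar by which it acts (again Schur, multiplicity one). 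This gives $\lambda_X = \lambda_\tau$ in $F^\times/N_{F'/F}(F'^\times)$, hence $\beta^{\BT}_X = \beta^{\BT}_\tau$, hence $\Index(X) = \Index(\tau)$.

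The main obstacle I anticipate is step (2): making precise that the canonical identification $c_X \colon \sigma^* X \xrightarrow{\sim} X$, when restricted to the $\tau$-isotypic piece, is the same intertwiner that computes $\Index(\tau)$, rather than differing from it by some correction. The subtlety is that the $K$-type $\tau$ is self-conjugate as an abstract $K$-representation, so its own ``conjugation'' isomorphism $\sigma^*\tau \xrightarrow{\sim}\tau$ is a priori only canonical up to $F'^\times$; one must check that the ambient $c_X$ induces a choice in that same $F'^\times$-orbit, and that the product-of-conjugates scalar is invariant under the remaining $F'^\times$-ambiguity (which it is, since replacing the intertwiner by a scalar multiple $\mu$ multiplies the product by $\mu \cdot \sigma(\mu) = N_{F'/F}(\mu)$). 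So the well-definedness modulo norms is exactly what saves the argument. I would also need to invoke that both Borel--Tits cocycles are genuinely computed by this ``scalar of $c \circ \sigma^*c$'' recipe — this is Proposition \ref{prop:key_computation_BT} (the facile description agreeing with the canonical one) for the $K$-side, and the quadratic specialization of Definition-Proposition \ref{def-prop:bt_finite} for the $(\fg,K)$-side — after which the identification of indices is formal, using the identification of $\Br(F'/F)$ with $\{\pm1\}$ and the translation between $\beta^{\BT}$ and $\Index$ furnished by Theorem \ref{thm:loewy_original_2}(2) in the quadratic case.
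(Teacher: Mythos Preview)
Your proposal is correct and takes essentially the same approach as the paper. The paper's own proof (of the generalization, Proposition~\ref{prop:mult_one}) is the two-line observation that any isomorphism ${}^\sigma V\cong V$ restricts to an isomorphism ${}^\sigma\tau\cong\tau$ on the multiplicity-one isotypic component, whence the Borel--Tits cocycles agree by their very construction; your steps (1)--(3) unpack exactly this, with the added care about well-definedness modulo norms that the paper leaves implicit.
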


	\subsection{Results of this paper, II}\label{sec:resultsII}
	
	For simplicity of presentation, we consider a Harish-Chandra pair $(\fg,K)$ over $F$ in Proposition \ref{prop:Ktype_intro}.
	
	The main result of Section \ref{sec:(g,K)mod} is:
	
	\begin{thm}[Theorem \ref{thm:hc_setting}, Proposition \ref{prop:fl->Zf}]\label{thm:main}
		For a Galois extension $F'/F$, $(F'' \otimes_F \fg, F'' \otimes_F K)$-modules of finite length form a Loewy--Borel--Tits datum, where $F''$ runs through $F'$ and finite Galois extensions of $F$ in $F'$.
	\end{thm}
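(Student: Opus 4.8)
The plan is to unwind the definition of a Loewy--Borel--Tits datum (Definition~\ref{defn:LBT}), which combines a Loewy datum (Definition~\ref{defn:Ldatum}) with a Borel--Tits datum (Definition~\ref{defn:BTdatum}), and to check each required piece for the assignment sending a finite Galois extension $F''/F$ inside $F'$, as well as $F'$ itself, to the category $\cC_{F''}$ of $(F''\otimes_F\fg,\,F''\otimes_F K)$-modules of finite length. Concretely I must supply: the $F''$-linear abelian structure on each $\cC_{F''}$; an action of the Galois group $\operatorname{Gal}(F''/F)$ on $\cC_{F''}$; base change functors $F'''\otimes_{F''}(-)\colon\cC_{F''}\to\cC_{F'''}$ for $F''\subseteq F'''$, with coherence isomorphisms along towers and equivariance for the Galois actions; the finite-length property; and the Borel--Tits property that every object of $\cC_{F'}$ is defined over a finite level. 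The two substantive inputs are the base change functor for $(\fg,K)$-modules of \cite{MR3770183} (see also \cite{MR3853058}) and an algebraic form of Harish-Chandra's finiteness theorem, isolated as Proposition~\ref{prop:fl->Zf}; the latter is the genuinely new ingredient and, as noted below, the main obstacle.

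I would first set up the formal data. The category of all $(F''\otimes_F\fg,F''\otimes_F K)$-modules is $F''$-linear and abelian, and $F'''\otimes_{F''}(-)$ is exact by flatness, carries $U(F''\otimes_F\fg)$-structures to $U(F'''\otimes_F\fg)$-structures, respects the $K$- and $Z(\fg)$-actions, and composes coherently along $F''\subseteq F'''\subseteq F''''$, all as recorded in \cite{MR3853058}. The Galois action is the semilinear twist: for $\sigma\in\operatorname{Gal}(F''/F)$, let ${}^\sigma M$ be $M$ with its $F''$-module structure and its $\fg$- and $K$-actions precomposed with $\sigma$; then ${}^\sigma(-)$ is a $\sigma$-semilinear autoequivalence, $\sigma\mapsto{}^\sigma(-)$ is an action of $\operatorname{Gal}(F''/F)$, and for a lift $\tilde\sigma\in\operatorname{Gal}(F'''/F)$ of $\sigma$ there is a natural isomorphism ${}^{\tilde\sigma}(F'''\otimes_{F''}M)\cong F'''\otimes_{F''}{}^{\sigma}M$. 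This is routine bookkeeping and, granting the finiteness discussion below, provides all the data of a Loewy datum.

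The crux is Proposition~\ref{prop:fl->Zf}: for the Harish-Chandra pair $(\fg,K)$ of Proposition~\ref{prop:Ktype_intro}, a $(\fg,K)$-module is of finite length if and only if it is finitely generated over $U(\fg)$, $Z(\fg)$-finite, and admissible. Over $\bC$ the standard proofs (e.g.\ \cite[4.2.1. Theorem]{MR929683}, \cite[Corollary 7.207]{MR1330919}) are analytic; the plan is to give a purely algebraic proof, valid over any base field of characteristic zero, using the Noetherianity of $U(\fg)$, the fact that a finitely generated $Z(\fg)$-finite module is killed by an ideal of finite codimension in $Z(\fg)$, and the PBW filtration together with admissibility to force both chain conditions on $(\fg,K)$-submodules---replacing the unitarian trick and elliptic regularity by Hilbert-basis and Artin--Rees type arguments. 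Granting this, finite length is stable under base change in both directions: for $X\in\cC_{F''}$, the module $F'''\otimes_{F''}X$ stays finitely generated, stays $Z$-finite since the center of the enveloping algebra commutes with field base change in characteristic zero, and stays admissible since $\Hom_{F''\otimes_F K}$ commutes with flat base change on the relevant finitely generated pieces (the multiplicity invariance underlying Proposition~\ref{prop:Ktype_intro}); conversely, a strictly monotone chain of $(F''\otimes_F\fg,F''\otimes_F K)$-submodules of $X$ base changes, by faithful flatness of any field extension, to such a chain in $F'''\otimes_{F''}X$, so finite length descends along $F'''/F''$ (and, by the same argument, along $F'/F''$). Hence each $\cC_{F''}$ is a full abelian $F''$-linear subcategory closed under subquotients and extensions, the base change functors restrict to exact functors $\cC_{F''}\to\cC_{F'''}$, and the Loewy-datum axioms hold; the objectwise bijections of Theorem~\ref{thm:basic_Loewy_datum}, and hence Theorem~\ref{thm:Loewy_intro}, then apply.

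Finally I would verify the extra Borel--Tits axiom: every $X\in\cC_{F'}$ is defined over a finite level, i.e.\ lies in the essential image of $F'\otimes_{F''}(-)$ for some finite Galois $F''/F$ in $F'$. By the finiteness theorem, $X$ is a finitely presented module over a Noetherian quotient of $U(F'\otimes_F\fg)$, generated by vectors lying in a finite-dimensional $F'\otimes_F K$-submodule, with finitely many relations; all of this data involves only finitely many elements of $F'$, so a finite Galois extension $F''/F$ inside $F'$ carrying it yields an $(F''\otimes_F\fg,F''\otimes_F K)$-module $X_0$ with $F'\otimes_{F''}X_0\cong X$, and $X_0\in\cC_{F''}$ by the descent established above. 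This is precisely the feature that distinguishes a Borel--Tits datum from a bare Loewy datum: it lets one compute the Borel--Tits cocycle of a self-conjugate simple object after descending to a finite level, reducing to Definition-Proposition~\ref{def-prop:bt_finite}. Assembling these pieces gives the Loewy--Borel--Tits datum of Theorem~\ref{thm:hc_setting}. I expect the genuine difficulty to be concentrated in Proposition~\ref{prop:fl->Zf}: producing an algebraic proof of Harish-Chandra's finiteness theorem that is uniform in the characteristic-zero base field and transparently compatible with field base change in both directions, after which every remaining step is formal.
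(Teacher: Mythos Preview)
Your overall architecture is sound, but it diverges from the paper's in two significant ways.

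First, the paper does \emph{not} use Harish-Chandra's finiteness theorem to prove that base change preserves finite length. Instead, it first establishes a Borel--Tits datum on \emph{finitely generated admissible} modules (Lemma~\ref{lem:BT_HC}), observes that the finite-length admissible category is locally finite-dimensional because endomorphisms of a simple object inject into $\End_K(V(\tau))$ for any $K$-type $\tau$ (Lemma~\ref{lem:loc_fin_HC}), and then invokes the purely categorical Corollary~\ref{cor:semisimple_preservation}: for any Borel--Tits datum whose bottom category is locally finite-dimensional, base change preserves semisimplicity and hence finite length. Only afterward is Proposition~\ref{prop:fl->Zf} used to drop the word ``admissible'' under the symmetric hypothesis. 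Your route---characterize finite length as fg $+$ $Z(\fg)$-finite $+$ admissible, then check each survives base change---also works, but it needs the symmetric hypothesis from the outset and is less modular; the paper's approach proves Theorem~\ref{thm:hc_setting} in the broader setting where $(\fg,\fk)$ need not be symmetric.

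Second, and more seriously, your sketch of an algebraic proof of the hard direction of Harish-Chandra's finiteness theorem (``PBW filtration together with admissibility\ldots Hilbert-basis and Artin--Rees type arguments'') is not the argument the paper gives and is not obviously salvageable as stated. The paper's proof (Theorem~\ref{thm:fin_length}, attributed to Kitagawa) is algebro-geometric: after reducing to a single infinitesimal character $\chi_\lambda$ with $\lambda$ integrally anti-dominant, one passes to $K$-equivariant coherent $\cD_\lambda$-modules on the flag variety via the Beilinson--Bernstein equivalence $\Coh^e(\cD_\lambda,K)\simeq(\fg,K)\cmod_{\mathrm{fg},\chi_\lambda}$, and uses the finiteness of $K$-orbits on $\cB$ to force holonomicity and hence finite length. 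If you intend to avoid localization, you would need to say much more precisely how admissibility plus the PBW filtration yields the ascending chain condition on $(\fg,K)$-submodules; the standard arguments here still go through either analysis or D-modules.
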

	
	The essential step is to verify that the functor $F' \otimes_F (-)$ preserves the finite length property. This is accomplished by providing an algebraic—and partially algebro-geometric—proof of Harish-Chandra's finiteness theorem in Section~\ref{sec:rem}.
	
	Note that in Theorem \ref{thm:hc_setting}, we consider admissible modules of finite length in an attempt to remove the symmetric assumption of $(\fg, \fk)$ (see Definition \ref{defn:HCmod} for the definition of admissible $(\fg,K)$-modules). When the symmetric assumption is in place, the $(\fg, K)$-modules of finite length are admissible by Harish-Chandra's finiteness theorem (Proposition \ref{prop:fl->Zf}), but without it, they may fail to be admissible.

	As a consequence, we obtain the notion of Borel--Tits cocycles when $F' = \bar{F}$. Recall that these cocycles represent obstruction classes to the existence of $F$-forms. As an application, we can determine minimal fields of definition:

	\begin{cor}[Definition \ref{defn:fld_rat}, Proposition \ref{prop:min_field}, Remark \ref{rem:existence}]\label{cor:min}
		Let $(\fg, K)$ be as before, and $V$ be an irreducible $(\bar{F} \otimes_F \fg, \bar{F} \otimes_F K)$-module. Let $\Gamma_F$ be the absolute Galois group of $F$. Set
		$\Gamma_V = \{\sigma \in \Gamma_F : \, {}^\sigma V \cong V \}$,
		where ${}^\sigma V$ is the $\sigma$-twist of $V$. Let $F(V) \subset \bar{F}$ be the fixed point subfield by $\Gamma_V$. Then:
		\begin{enumerate}
			\item $\Gamma_V$ is open in $\Gamma_F$. In particular, $F(V)$ is finite over $F$.
			\item Let $F'/F(V)$ be a finite extension in $\bar{F}$ with the property that the image of $\beta^{\BT}_M \in H^2 (\Gamma_V, \bar{F}^\times)$ in $H^2(\Gamma_{\bar{F}/F'}, \bar{F}^\times)$ is trivial. Then $V$ admits an $F'$-form.
			\item The field $F'$ in (2) does exist.
			\item If $V$ admits an $F'$-form for a finite extension $F'/F$ in $\bar{F}$, then we have $F(M) \subset F'$.
		\end{enumerate}
		
	\end{cor}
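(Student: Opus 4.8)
The plan is to reduce the four assertions to the finite‑Galois machinery of Theorems \ref{thm:main}, \ref{thm:BT_intro} and \ref{thm:LBT_intro}, by first realizing $V$ as the base change of an object living over a finite subextension of $\bar F/F$. Throughout I write $\Gamma_{\bar F/E}$ for the Galois group of $\bar F$ over an intermediate field $E$, and, following Tits' convention used in the paper, regard the Borel--Tits cocycle $\beta^{\BT}_M=\beta^{\BT}_V$ of the simple object $V$ as formed over its field of rationality $F(V)$, so that it lies in $H^2(\Gamma_{\bar F/F(V)},\bar F^\times)=H^2(\Gamma_V,\bar F^\times)$; here $F(M)$ means $F(V)$.

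\emph{Parts (1) and (4).} Since $V$ has finite length, the Loewy datum structure of Theorem \ref{thm:main} yields a finite Galois extension $F''/F$ in $\bar F$ and a finite‑length $(F''\otimes_F\fg,F''\otimes_F K)$‑module $V''$ with $\bar F\otimes_{F''}V''\cong V$. For $\sigma\in\Gamma_{\bar F/F''}$ the $\sigma$‑twist commutes with base change along $F''\hookrightarrow\bar F$ (as $\sigma$ fixes $F''$), so ${}^\sigma V\cong{}^\sigma(\bar F\otimes_{F''}V'')\cong\bar F\otimes_{F''}V''\cong V$; hence $\Gamma_{\bar F/F''}\subseteq\Gamma_V$. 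As $\Gamma_V$ is a subgroup of $\Gamma_F$ containing the open subgroup $\Gamma_{\bar F/F''}$, it is open, hence closed, so $F(V)=\bar F^{\Gamma_V}$ is finite over $F$ and $\Gamma_{\bar F/F(V)}=\Gamma_V$ by Galois theory; this is (1). The same computation shows that if $V\cong\bar F\otimes_{F'}W$ for a finite extension $F'/F$ and an $(F'\otimes_F\fg,F'\otimes_F K)$‑module $W$, then $\Gamma_{\bar F/F'}\subseteq\Gamma_V=\Gamma_{\bar F/F(V)}$, and the order‑reversing Galois correspondence yields $F(V)\subseteq F'$; this is (4).

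\emph{Parts (2) and (3).} Fix a finite extension $F'/F(V)$ in $\bar F$. Then $\Gamma_{\bar F/F'}\subseteq\Gamma_V$, so $V$ is self‑conjugate when viewed over $F'$, and $\End_{\bar F\otimes_F\fg}(V)=\bar F$ by Schur's lemma together with Dixmier's lemma (the endomorphism division algebra is countable‑dimensional over the algebraically closed field $\bar F$, as $V$ is cyclic over $\bar F\otimes_F U(\fg)$). Base changing the Loewy--Borel--Tits datum of Theorem \ref{thm:main} along the finite extension $F'/F$, Theorem \ref{thm:BT_intro} supplies an obstruction class $\beta'\in H^2(\Gamma_{\bar F/F'},\bar F^\times)$ whose triviality is equivalent to the existence of a descent datum on $V$ over $F'$, and hence—by Galois descent in the Loewy pair of Theorem \ref{thm:main}, cf.\ Theorem \ref{thm:Loewy_intro}—to the existence of an $F'$‑form of $V$. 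Since the Borel--Tits cocycle is compatible with enlarging the base field, $\beta'$ equals the image of $\beta^{\BT}_V$ under the restriction map $H^2(\Gamma_V,\bar F^\times)\to H^2(\Gamma_{\bar F/F'},\bar F^\times)$ (this is visible from the explicit formula of Proposition \ref{prop:key_computation_BT}). Thus the hypothesis of (2)—vanishing of that image—forces $\beta'=0$ and therefore produces an $F'$‑form, which is (2). For (3): a continuous $2$‑cocycle representing $\beta^{\BT}_V$ is locally constant on the compact group $\Gamma_V$ and takes values in the discrete module $\bar F^\times$, hence is inflated from $H^2(\Gamma_V/N,(\bar F^N)^\times)$ for a suitable normal open subgroup $N$ of $\Gamma_V$; taking $F'=\bar F^N$, Hilbert 90 ($H^1(\Gamma_{\bar F/F'},\bar F^\times)=0$) and exactness of the inflation–restriction sequence at degree $2$ show that the image of $\beta^{\BT}_V$ in $H^2(\Gamma_{\bar F/F'},\bar F^\times)$ vanishes, so such an $F'$ exists. (Equivalently, by Theorem \ref{thm:LBT_intro}(2) applied over $F(V)$, $(\beta^{\BT}_V)^{-1}$ is the Brauer class over $F(V)$ of the central division algebra $\End_{\DD(\cC)}(S)$ attached to the simple descent datum $S$ corresponding to $V$, and every element of $\Br(\bar F/F(V))=\varinjlim_{E}\Br(E/F(V))$—the colimit over finite $E/F(V)$ in $\bar F$—is split by some finite $E$.)

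The substantive input is the opening step of Part (1): a finite‑length $(\bar F\otimes_F\fg,\bar F\otimes_F K)$‑module is already defined over a finite subextension of $\bar F/F$, which is part of Theorem \ref{thm:main} and ultimately rests on the algebraic proof of Harish‑Chandra's finiteness theorem in Section \ref{sec:rem}. Granting that, the only delicate point left is bookkeeping: keeping careful track of the base field over which each Borel--Tits cocycle is formed, and confirming that $\beta^{\BT}$ restricts correctly when that field is enlarged.
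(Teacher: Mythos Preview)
Your overall strategy matches the paper's: reduce to the finite case via the rationality condition of the Borel--Tits datum (Lemma~\ref{lem:BT_HC}), use Definition-Proposition~\ref{defprop:BT} over the enlarged base field $F'$ for part~(2), and invoke the direct-limit/inflation description of continuous Galois cohomology for part~(3). Parts~(1) and~(4) are handled exactly as in the paper (see Remark~\ref{rem:F(M)_finite} and the one-line proof of Proposition~\ref{prop:min_field}~(1)).

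There is, however, one genuine error in your justification for part~(2). You invoke Dixmier's lemma to conclude $\End_{(\bar F\otimes_F\fg,\bar F\otimes_F K)}(V)=\bar F$, arguing that $V$ is countable-dimensional. But Dixmier's lemma requires the ground field to be \emph{uncountable}, and $\bar F$ need not be: for $F=\bQ$ (the case of primary number-theoretic interest, cf.\ Section~\ref{sec:background}) we have $\bar F=\bar{\bQ}$, which is countable, and the argument collapses. The paper instead uses that, under the standing hypotheses ($\fg$ reductive, $(\fg,\fk)$ symmetric), an irreducible $(\bar F\otimes_F\fg,\bar F\otimes_F K)$-module is automatically admissible (Proposition~\ref{prop:fl->Zf}~(3), which is part of the algebraic Harish-Chandra finiteness package in Section~\ref{sec:rem}); then Lemma~\ref{lem:loc_fin_HC} gives $\dim_{\bar F}\End(V)<\infty$, and since $\bar F$ is algebraically closed this forces $\End(V)=\bar F\id_V$. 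Alternatively, the paper cites \cite[Proposition~5.4.6~(1)]{hayashijanuszewski} directly at the start of Section~\ref{sec:more}. Once you replace the Dixmier step by this admissibility argument, the rest of your proof goes through and coincides with the paper's.
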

	
	In fact, it follows from (4) and Theorem~\ref{thm:BT_intro} that every field of definition arises in this way. To obtain a minimal field of definition, it suffices to find a minimal $F'$ among those appearing in (2).
	
	For the analysis of the Borel--Tits cocycles, we verify the following:

	\begin{thm}[Theorem \ref{thm:abs_irr_hc}]\label{thm:mainB}
		For an irreducible $(\fg,K)$-module $V$, $F' \otimes_F V$ is irreducible for every extension $F'/F$ of fields if and only if $\bar{F} \otimes_F V$ is irreducible.
	\end{thm}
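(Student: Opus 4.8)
The plan is to reduce the statement to a density argument. The ``only if'' direction is immediate (take $F' = \bar{F}$). For the converse, I would assume $\bar{F}\otimes_F V$ is irreducible and fix an arbitrary field extension $F'/F$, aiming to show $F'\otimes_F V$ is irreducible. Since $V$ is irreducible it has finite length, so by Theorem \ref{thm:main} the category of finite-length $(\bar{F}\otimes_F\fg,\bar{F}\otimes_F K)$-modules underlies a Loewy--Borel--Tits datum over $\bar{F}/F$; in particular Theorem \ref{thm:abs_simple_intro} applies, and applied to the simple object $V$ of the base-field category it gives that $\bar{F}\otimes_F V$ is irreducible if and only if $V$ has only scalar endomorphisms, i.e. $\End_{(\fg,K)}(V) = F$. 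So it suffices to prove: \emph{if $\End_{(\fg,K)}(V) = F$ then $F'\otimes_F V$ is irreducible for every field extension $F'/F$.}

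Next I would pass to the algebra picture. The category of $(\fg,K)$-modules is equivalent to the category of nondegenerate modules over an $F$-algebra $A$ — the Hecke algebra $R(\fg,K)$ of the pair, which has local units; under this equivalence $V$ becomes a faithful simple $A$-module with $AV = V$, base change along $F'/F$ corresponds to the action of $A' := F'\otimes_F A = R(F'\otimes_F\fg, F'\otimes_F K)$ on $F'\otimes_F V$, and $\End_A(V) = \End_{(\fg,K)}(V) = F$. Writing $B$ for the image of $A$ in $\End_F(V)$, the Jacobson density theorem then tells us that $B$ is dense in $\End_F(V)$ for the finite topology: given $F$-linearly independent $v_1,\dots,v_m\in V$ and arbitrary $u_1,\dots,u_m\in V$, there is $a\in A$ with $av_i = u_i$ for all $i$.

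Then I would run the standard argument in the base change. Let $0\neq w\in F'\otimes_F V$ and write $w = \sum_{i=1}^m\lambda_i\otimes v_i$ with $m$ minimal; minimality forces the $v_i$ to be $F$-linearly independent and every $\lambda_i$ to be nonzero. Choose $a\in A$ with $av_1 = v_1$ and $av_i = 0$ for $i\geq 2$; then $(1\otimes a)\cdot w = \lambda_1\otimes v_1$ lies in the $A'$-submodule $A'w$ generated by $w$. Since $Av_1 = V$, we get $A'w\supseteq A'\cdot(\lambda_1\otimes v_1)\supseteq\lambda_1\otimes V$, and because $\lambda_1\in(F')^\times$ the $F'$-span of $\lambda_1\otimes V$ is all of $F'\otimes_F V$. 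As $A'w$ is an $F'$-subspace, $A'w = F'\otimes_F V$; since $w$ was an arbitrary nonzero vector, $F'\otimes_F V$ is simple, i.e. the $(F'\otimes_F\fg, F'\otimes_F K)$-module $F'\otimes_F V$ is irreducible.

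The main obstacle I expect is the first step rather than the density manipulation: the equivalence ``$\bar{F}\otimes_F V$ irreducible $\iff \End_{(\fg,K)}(V) = F$'' is genuinely representation-theoretic input, resting on Theorem \ref{thm:abs_simple_intro} and hence on Theorem \ref{thm:main}, which in turn depends on the algebraic proof of Harish-Chandra's finiteness theorem in Section \ref{sec:rem} (so that finite length is preserved under base change and a Dixmier-type Schur lemma holds over $\bar{F}$). Once that is available the density step is routine, and — crucially — it treats every extension $F'/F$ uniformly, including transcendental and infinite algebraic ones, rather than via a delicate spreading-out/specialization argument. The only points requiring care are the base-change compatibility $A' = F'\otimes_F A$ and the use of the density theorem in the correct form for the non-unital algebra $R(\fg,K)$ acting nondegenerately; neither is hard, but both should be stated precisely.
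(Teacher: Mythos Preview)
Your proof is correct and takes a genuinely different route from the paper's. Both arguments first establish the equivalence of ``$\bar{F}\otimes_F V$ irreducible'' with ``$\End_{(\fg,K)}(V)=F$'' via the Loewy--Borel--Tits machinery, and both then invoke Jacobson density for the remaining implication to arbitrary $F'$; but the density is applied to different objects. You work directly with $V$ as a nondegenerate module over the full Hecke algebra $R(\fg,K)$ and run the classical minimal-length-tensor argument over $F'$, which handles every extension $F'/F$ in one stroke and never requires passing to $\bar{F}$. The paper instead first reduces to $F=\bar{F}$ (and even to $F'/F$ an extension of algebraically closed fields), then applies density not to $V$ but to the finite-dimensional space $\Hom_K(\tau,V)$ for a fixed $K$-type $\tau$, viewed as a $U(\fg)^K$-module; the crucial inputs there are Lemma~\ref{lem:K-type} (simplicity of $\Hom_K(\tau,V)$ over $U(\fg)^K$) and Lemma~\ref{lem:bc_bij} (bijection of $K$-types under base change of algebraically closed fields), after which any nonzero submodule $W\subset F'\otimes_F V$ is shown to contain a full isotypic component and hence to generate everything. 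Your approach is more direct and bypasses the $K$-type bookkeeping entirely, at the price of invoking the density theorem for an approximately unital algebra acting nondegenerately---which, as you note, is straightforward but should be stated. The paper's approach only ever applies density to finite-dimensional simple modules (where it is just surjectivity onto $\End$), but pays for this by needing the structural Lemma~\ref{lem:K-type}, whose proof in turn constructs the Hecke algebra anyway.
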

	
	For the proof, we may assume that $F = \bar{F}$. The key fact is that the Hom space of each (nonzero) $K$-isotypic component of $V$ is simple as a module over the $K$-invariant part $U(\fg)^K$ of the universal enveloping algebra of~$\fg$ (\cite[3.5.4. Proposition]{MR929683}). The assertion then easily follows from applying Jacobson's density theorem to that Hom space.

	As a consequence, we deduce the local-global principle for Borel--Tits cocycles:

\begin{cor}[Proposition \ref{prop:lg}]
	Assume that $F$ is a number field. Let $V$ be a self-conjugate irreducible $(\bar{F} \otimes_F \fg, \bar{F} \otimes_F K)$-module. For each place $v$ of $F$, we fix an embedding $\bar{F} \hookrightarrow \bar{F}_v$, where $F_v$ is the completion of $F$ at $v$.
	\begin{enumerate}
		\item For each place $v$, $\bar{F}_v \otimes_{\bar{F}} V$ is irreducible and self-conjugate. 
		\item The cocycle $\beta^{\BT}_V$ is trivial if and only if so are $\beta^{\BT}_{\bar{F}_v \otimes_{\bar{F}} V}$ for all places $v$ of $F$.
	\end{enumerate}
	\end{cor}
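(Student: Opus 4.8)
The plan is to prove the two assertions separately: (1) is deduced from Theorem~\ref{thm:mainB} together with the compatibility of base change with Galois twists, and (2) from the description of $\beta^{\BT}_V$ as a class in $\Br(F)$ furnished by Corollary~\ref{cor:LBT}, combined with the Albert--Brauer--Hasse--Noether theorem.

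For (1) I would first fix the embedding $\iota\colon\bar F\hookrightarrow\bar F_v$. Since $\bar F$ is algebraically closed, $\iota(\bar F)\cdot F_v$ is an algebraic, algebraically closed extension of $F_v$, hence equals $\bar F_v$; thus restriction along $\iota$ identifies $\Gamma_{F_v}$ with the decomposition subgroup of $\Gamma_F$ at $v$. Viewing $V$ as a simple object of the $\bar F$-linear category $\cC$ of finite-length $(\bar F\otimes_F\fg,\bar F\otimes_F K)$-modules and noting that $\bar F$ is its own algebraic closure, Theorem~\ref{thm:mainB}, applied with $\bar F$ as base field, gives that $E\otimes_{\bar F}V$ is irreducible for \emph{every} field extension $E/\bar F$; in particular $\bar F_v\otimes_{\bar F}V$ is irreducible. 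For self-conjugacy I would use the canonical isomorphism ${}^\tau(\bar F_v\otimes_{\bar F}V)\cong\bar F_v\otimes_{\bar F}{}^\sigma V$ for $\tau\in\Gamma_{F_v}$ with restriction $\sigma=\iota^{-1}\tau\iota\in\Gamma_F$ (compatibility of the $(\fg,K)$-module base-change functor with Galois twists): since $V$ is self-conjugate, ${}^\sigma V\cong V$, so ${}^\tau(\bar F_v\otimes_{\bar F}V)\cong\bar F_v\otimes_{\bar F}V$. In particular $\beta^{\BT}_{\bar F_v\otimes_{\bar F}V}$ is defined.

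For (2) I would invoke Theorem~\ref{thm:main} to get a (strongly) Loewy--Borel--Tits datum over $F$ with reference field $\bar F$, and the same statement over $F_v$ with reference field $\bar F_v$, legitimate since $F_v$ is again of characteristic zero and $(F_v\otimes_F\fg,F_v\otimes_F K)$ is a Harish-Chandra pair of the same type. Let $W\in\cC_F$ be the simple object attached to $V$ by Theorem~\ref{thm:Loewy_intro}, $S$ the corresponding simple descent datum, and $D=\End_{\cC_F}(W)=\End_{\DD(\cC)}(S)$; by Corollary~\ref{cor:LBT} the algebra $D$ is central over $F$ and $(\beta^{\BT}_V)^{-1}=[D]$ in $H^2(\Gamma_F,\bar F^\times)\cong\Br(F)$. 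Similarly, writing $U\in\cC_{F_v}$ for the simple object attached to $\bar F_v\otimes_{\bar F}V$ and $\Delta=\End_{\cC_{F_v}}(U)$, one has $(\beta^{\BT}_{\bar F_v\otimes_{\bar F}V})^{-1}=[\Delta]$ in $\Br(F_v)$. Comparing base changes to $\bar F_v$ — one has $\bar F_v\otimes_F W\cong(\bar F_v\otimes_{\bar F}V)^{\oplus n}$ on one side, while $\bar F_v\otimes_{F_v}U$ is $(\bar F_v\otimes_{\bar F}V)$-isotypic on the other — shows that $F_v\otimes_F W$ is $U$-isotypic, so $F_v\otimes_F D=\End_{\cC_{F_v}}(F_v\otimes_F W)$ is Brauer-equivalent to $\Delta$. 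Hence $[\Delta]=[F_v\otimes_F D]=\operatorname{res}_{F_v/F}[D]$, i.e.\ $\beta^{\BT}_{\bar F_v\otimes_{\bar F}V}$ is the image of $\beta^{\BT}_V$ under $\Br(F)\to\Br(F_v)$ for every place $v$. Since the Albert--Brauer--Hasse--Noether theorem asserts that $\Br(F)\to\bigoplus_v\Br(F_v)$ is injective, $\beta^{\BT}_V$ is trivial if and only if every $\operatorname{res}_{F_v/F}(\beta^{\BT}_V)=\beta^{\BT}_{\bar F_v\otimes_{\bar F}V}$ is, which is (2).

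The substantive inputs are Theorem~\ref{thm:mainB} (used in (1)) and the identification in Corollary~\ref{cor:LBT} of $\beta^{\BT}$ with the class of the endomorphism division algebra; granting these, what remains is bookkeeping. I expect the main technical nuisance to be verifying that the Loewy--Borel--Tits formalism is compatible with the change of base field $F\rightsquigarrow F_v$ — concretely, that $F_v\otimes_F(-)$ is exact on the relevant Hom spaces and injective on isomorphism classes of simple objects, so that $F_v\otimes_F W$ is genuinely $U$-isotypic with $\End_{\cC_{F_v}}$ base-changing from $\End_{\cC_F}$. A cleaner alternative, if one prefers to avoid this comparison, is to check directly that the construction of the Borel--Tits cocycle is functorial for pullback along the decomposition embedding $\Gamma_{F_v}\hookrightarrow\Gamma_F$ and the inclusion $\bar F^\times\hookrightarrow\bar F_v^\times$ — as is the case for the original cocycle of \cite[Section~12]{MR207712}, cf.\ Example~\ref{ex:comparison} — and then invoke Albert--Brauer--Hasse--Noether as above.
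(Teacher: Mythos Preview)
Your argument for (1) matches the paper's: irreducibility via Theorem~\ref{thm:mainB} (which is Theorem~\ref{thm:abs_irr_hc}), self-conjugacy by compatibility of base change with Galois twists.

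For (2) your primary route is genuinely different from the paper's. You pass through Corollary~\ref{cor:LBT}, identifying $\beta^{\BT}_V$ with the Brauer class of $D=\End_{\cC_F}(W)$ and $\beta^{\BT}_{\bar F_v\otimes_{\bar F}V}$ with that of $\Delta=\End_{\cC_{F_v}}(U)$, and then compare $F_v\otimes_F D$ with $\Delta$ by analyzing how $F_v\otimes_F W$ decomposes. This works, but as you yourself flag, it forces you to verify compatibility of the Loewy--Borel--Tits machinery with the non-Galois base change $F\to F_v$ (Hom base change along $F\to F_v$, semisimplicity of $F_v\otimes_F W$, etc.), none of which is part of the formal axiom list---though all of it does hold for $(\fg,K)$-modules via \cite[Theorem~3.1.6]{MR3853058} and Corollary~\ref{cor:rationality_Z(g)-fin,finlength}. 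The paper instead takes exactly your ``cleaner alternative'': it uses the explicit cocycle description (Construction~\ref{cons:BT} together with Proposition~\ref{prop:key_computation_BT}) to see directly that restricting $\beta^{\BT}_V$ along $\Gamma_{F_v}\hookrightarrow\Gamma_F$ and $\bar F^\times\hookrightarrow\bar F_v^\times$ gives $\beta^{\BT}_{\bar F_v\otimes_{\bar F}V}$, and then invokes Albert--Brauer--Hasse--Noether. Your approach buys a more conceptual, Brauer-group-level comparison at the cost of extra bookkeeping outside the axiomatic framework; the paper's approach is a two-line cocycle check.
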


	We further generalize Proposition \ref{prop:Ktype_intro}, establishing a relation between the Borel--Tits cocycles of Harish-Chandra modules and those of certain $K$-types:

	\begin{prop}[Proposition \ref{prop:mult_one}]\label{prop:mult-one}
		Let $V$ be an irreducible $(\bar{F} \otimes_F \fg, \bar{F} \otimes_F K)$-module, and $\tau$ be a self-conjugate $\bar{F} \otimes_F K$-type of multiplicity one in $V$. Then we have $\beta^{\BT}_{V} = \beta^{\BT}_{\tau}$.
		
	\end{prop}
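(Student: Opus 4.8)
The plan is to reduce at once to a finite Galois sub-extension and then transport the self-conjugacy datum of $V$ to one of $\tau$ through the functor ``take the $\tau$-isotypic component''. Observe first that $\beta^{\BT}_V$ presupposes (Theorem~\ref{thm:BT_intro}) that $V$ is self-conjugate with only scalar endomorphisms; granting this, $\tau$ is automatically self-conjugate, being the multiplicity-one $\bar F\otimes_F K$-type of $V\cong {}^\sigma V$. By the construction of $\beta^{\BT}$ in the infinite Galois case (Definition-Proposition~\ref{defprop:BT}) each of $V$ and $\tau$ admits a form over some finite Galois extension of $F$ in $\bar F$; after enlarging, fix one finite Galois $F''/F$ and forms $V_0$ of $V$ and $\tau_0$ of $\tau$ over $F''$. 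Since $\Aut(V)=\bar F^\times=\Aut(\tau)$ and $H^1(\Gamma_{\bar F/F''},\bar F^\times)=0$ (Hilbert~90), the forms $V_0$ and $\tau_0$ are again absolutely irreducible and self-conjugate for $\Delta:=\Gamma_{F''/F}$, and $\beta^{\BT}_V,\beta^{\BT}_\tau$ are the inflations to $\Gamma_F$ of $\beta^{\BT}_{V_0},\beta^{\BT}_{\tau_0}\in H^2(\Delta,F''^\times)$. The multiplicity-one condition is inherited by $V_0$ (invariants under the linearly reductive $K$ commute with flat base change along $F''\hookrightarrow\bar F$), so the $\tau_0$-isotypic component $V_0[\tau_0]$, formed inside $V_0$ regarded as an $F''\otimes_F K$-module, is isomorphic to $\tau_0$. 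It thus suffices to show $\beta^{\BT}_{V_0}=\beta^{\BT}_{\tau_0}$.

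Next I would introduce the functor $R$ on $(F''\otimes_F\fg,F''\otimes_F K)$-modules given by restriction to $F''\otimes_F K$ followed by passage to the $\tau_0$-isotypic component, $R(W)=W[\tau_0]$. It is additive and $F''$-linear, it sends $V_0$ to $V_0[\tau_0]\cong\tau_0$, and it commutes with Galois twists: for $\sigma\in\Delta$ the subobject ${}^\sigma(W[\tau_0])$ of ${}^\sigma W$ coincides with $({}^\sigma W)[\tau_0]$, since twisting carries $\tau_0$-isotypic submodules to ${}^\sigma\tau_0$-isotypic ones and ${}^\sigma\tau_0\cong\tau_0$. (This is the step where self-conjugacy of $\tau$ is used.) Hence $R\circ {}^\sigma(-)= {}^\sigma(-)\circ R$ for all $\sigma\in\Delta$.

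Then I would transport the data. Choose isomorphisms $\phi_\sigma\colon {}^\sigma V_0\xrightarrow{\ \sim\ }V_0$, $\sigma\in\Delta$, normalized (Proposition~\ref{prop:key_computation_BT}) so that $\phi_\sigma\circ {}^\sigma\phi_\rho=\beta^{\BT}_{V_0}(\sigma,\rho)\cdot\phi_{\sigma\rho}$ with $\beta^{\BT}_{V_0}(\sigma,\rho)\in\End(V_0)^\times=F''^\times$; the sign/inverse convention of the paper is immaterial here, as the same normalization is applied to $\tau_0$. Fix $\iota\colon\tau_0\xrightarrow{\ \sim\ }V_0[\tau_0]$ and set $\psi_\sigma:=\iota^{-1}\circ R(\phi_\sigma)\circ {}^\sigma\iota\colon {}^\sigma\tau_0\xrightarrow{\ \sim\ }\tau_0$. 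Applying the additive, $F''$-linear $R$ to the displayed identity and conjugating by $\iota$ (the $\iota$'s and ${}^\sigma\iota$'s telescope, and the central scalar passes through the $F''$-linear $\iota$) yields $\psi_\sigma\circ {}^\sigma\psi_\rho=\beta^{\BT}_{V_0}(\sigma,\rho)\cdot\psi_{\sigma\rho}$, the scalar now lying in $\End(\tau_0)^\times=F''^\times$. Thus $\{\psi_\sigma\}$ is a system of self-conjugacy isomorphisms for $\tau_0$ with defect cocycle $\beta^{\BT}_{V_0}$, whence $\beta^{\BT}_{\tau_0}=\beta^{\BT}_{V_0}$ in $H^2(\Delta,F''^\times)$; inflating to $\Gamma_F$ gives $\beta^{\BT}_\tau=\beta^{\BT}_V$.

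I expect the main obstacle to lie in the first paragraph rather than in the transport argument: assembling, from Definition-Proposition~\ref{defprop:BT} and Hilbert~90, a common finite $F''$ over which both $V$ and $\tau$ have forms, checking these forms remain absolutely irreducible and self-conjugate for $\Delta$, and confirming that $\beta^{\BT}_V$ and $\beta^{\BT}_\tau$ are genuinely inflated from level $F''$ independently of the chosen level and forms. It is also worth recording that multiplicity one is used precisely to obtain $V_0[\tau_0]\cong\tau_0$ (rather than $\tau_0^{\oplus n}$ with $n>1$): otherwise $R$ would equip only $\tau_0^{\oplus n}$ with self-conjugacy data, whose defect---still scalar-valued---could differ from $\beta^{\BT}_{\tau_0}$ by a class pulled back from $H^1(\Delta,\PGL_n)$, and the identity would break. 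In particular, this argument recovers and generalizes the index statement of Proposition~\ref{prop:Ktype_intro} by a functorial rather than character-theoretic method.
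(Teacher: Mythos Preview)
Your proof is correct and follows essentially the same idea as the paper's proof, which consists of the single observation that any isomorphism ${}^\sigma V\cong V$ restricts to an isomorphism ${}^\sigma\tau\cong\tau$ on the (one-dimensional) $\tau$-isotypic piece, whence the defining cocycle of Proposition~\ref{prop:key_computation_BT} is literally the same for $V$ and for $\tau$. You have simply made explicit the reduction to a finite Galois level that is implicit in Construction~\ref{cons:BT}, packaged the restriction map as a functor $R$, and carried the cocycle identity through $R$ and the fixed identification $\iota$; the added discussion of why multiplicity one is essential (the $n>1$ case introducing an $H^1(\Delta,\PGL_n)$ ambiguity) is a useful remark beyond what the paper records.
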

	
	We conclude this paper with applications to cohomological, essentially unitarizable, irreducible Harish-Chandra modules:

	\begin{thm}[Propositions \ref{prop:form}, \ref{prop:rat_coh}, \ref{prop:BT}]
		Let $F$ be a subfield of $\bR$, and let $G$ be a connected reductive algebraic group over $F$ equipped with an involution $\theta$, with associated symmetric subgroup $K$. Assume that $\bR \otimes_F \theta$ is a Cartan involution. Let $\bar{F}$ denote the algebraic closure of $F$ in $\bC \supset \bR$.
		
		Let $V$ be an irreducible representation of $\bC \otimes_F G$ with regular highest weight. Let $X$ be a cohomological, essentially unitarizable, irreducible $(\bC \otimes_F \fg, \bC \otimes_F K)$-module with coefficient $V$, where $\fg$ is the Lie algebra of $G$.
		
		\begin{enumerate}
			\item The Harish-Chandra module $X$ admits a unique $\bar{F}$-form $X_{\bar{F}}$, up to isomorphism.
			\item Assume that $X_{\bar{F}}$ is self-conjugate. Then its Borel--Tits cocycle is determined by that of the $\bar{F}$-form $X_{\min, \bar{F}}$ of the minimal $\bC \otimes_F K$-type of $X$.
		\end{enumerate}
	\end{thm}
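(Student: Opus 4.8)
The plan is to realize $X$ as a cohomologically induced module in the good range, descend all the relevant data to $\bar F$, and then extract the Borel--Tits cocycle from the minimal $K$-type by means of the multiplicity-one principle of Proposition~\ref{prop:mult-one}.

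For (1), by \cite[Theorem~5.6]{MR762307} (and \cite[Sections~5.2 and~5.4]{hayashijanuszewski} when $K$ is disconnected), $X$ is cohomologically induced from a $\theta$-stable parabolic subalgebra $\fq$ of $\bC\otimes_F\fg$ and a character $\lambda$ of the associated Levi subpair; since the highest weight of $V$ is regular we are in the good range, so this cohomologically induced module is nonzero and irreducible. I would first observe that $V$ has a unique $\bar F$-form $V_{\bar F}$, because $\bar F$ is algebraically closed and finite-dimensional representations of $\bar F\otimes_F G$ are rigid; the character $\lambda$ is then $\bar F$-rational, being determined by the highest weight of $V_{\bar F}$. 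Next I would check that $\fq$ can be chosen $\bar F$-rational: the scheme of $\theta$-stable parabolic subalgebras of a fixed type is defined over $F$ because $\theta$ is, and the conjugacy class attached to $X$ contains a representative cut out by a dominant $\theta$-fixed cocharacter of a maximal torus of $\bar F\otimes_F G$, which lies in a character lattice and is therefore automatically $\bar F$-rational. Carrying out cohomological induction over $\bar F$ on this $\bar F$-rational pair $(\fq,\lambda)$ and invoking the base change property \cite[Theorems~D and~H]{MR3853058} yields an $\bar F$-form $X_{\bar F}$ with $\bC\otimes_{\bar F}X_{\bar F}\cong X$.

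Uniqueness of $X_{\bar F}$ I would deduce from absolute irreducibility together with rigidity of the construction. On one hand, the datum $(\fq,\lambda)$ producing the given $X$ is determined up to conjugacy and may be taken $\bar F$-rational as above, so the construction is canonical. On the other hand, by Theorem~\ref{thm:mainB} the module $X_{\bar F}$ stays irreducible under every field extension, and, $(\fg,\fk)$ being symmetric (as $\bR\otimes_F\theta$ is a Cartan involution), finite-length $(\fg,K)$-modules are admissible (Proposition~\ref{prop:fl->Zf}) with finite-dimensional $\Hom$ spaces compatible with the flat base change to $\bC$; hence a nonzero morphism in $\Hom_\bC(X,X')$ between the base changes of two $\bar F$-forms $X_{\bar F}, X'_{\bar F}$ descends to a nonzero, and therefore bijective, morphism $X_{\bar F}\to X'_{\bar F}$.

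For (2), recall from Vogan's analysis of cohomologically induced modules in the good range that $X$ has a unique minimal $\bC\otimes_F K$-type $X_{\min}$, occurring in $X$ with multiplicity one. Since minimality of a $K$-type is phrased as a purely algebraic length inequality on highest weights, $X_{\min}$ is preserved by the descent isomorphisms relating $X$ and $\bC\otimes_{\bar F}X_{\bar F}$; thus $X_{\min}$ descends to an $\bar F$-form $X_{\min,\bar F}$, which is the unique minimal $\bar F\otimes_F K$-type of $X_{\bar F}$ and still has multiplicity one. When $X_{\bar F}$ is self-conjugate, the conjugation isomorphism must carry the minimal $\bar F\otimes_F K$-type to itself by uniqueness, so $X_{\min,\bar F}$ is self-conjugate; applying Proposition~\ref{prop:mult-one} with $V=X_{\bar F}$ and $\tau=X_{\min,\bar F}$ then gives $\beta^{\BT}_{X_{\bar F}}=\beta^{\BT}_{X_{\min,\bar F}}$, which is the claimed determination of the Borel--Tits cocycle. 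The step I expect to be the main obstacle is the rationality bookkeeping in (1)---verifying that the $\theta$-stable parabolic $\fq$ realizing the given $X$, equivalently its defining cocharacter, can genuinely be chosen over $\bar F$, and that this choice is compatible with the conjugation action invoked in (2); the remaining ingredients are all supplied by the results assembled earlier in the paper.
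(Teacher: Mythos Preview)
Your proposal is correct and follows essentially the same route as the paper: realize $X$ via Vogan--Zuckerman cohomological induction, descend the data $(Q,L,\pi)$ to $\bar F$ and invoke the base change property of cohomological induction for existence, use Schur's lemma together with the Hom base change isomorphism for uniqueness, and apply the multiplicity-one principle (Proposition~\ref{prop:mult_one}) for part~(2). Two minor points: the paper's uniqueness argument is slightly leaner (just Schur plus \cite[Theorem~3.1.6]{MR3853058}, without invoking admissibility or Theorem~\ref{thm:mainB}), and your anticipated ``main obstacle'' of choosing $\fq$ over $\bar F$ is not one---since $\bar F$ is algebraically closed, the parabolic, Levi, and one-dimensional character are all determined by root and weight data that descend automatically (cf.\ Lemma~\ref{lem:bc_bij}).
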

	
	In fact, part (1) is an easy consequence of the Vogan--Zuckerman theory \cite{MR762307} (see Section \ref{sec:apptoHC}).
	
	Part (2) follows from Proposition \ref{prop:mult-one}. The regularity assumption on $V$ ensures that $X$ admits a unique minimal $\bC \otimes_F K$-type, which appears with multiplicity one.
	
	To analyze self-conjugacy, we consider the Galois twists of the corresponding cohomological induction over $\bar{F}$. The field of rationality $F(X_{\bar{F}})$ coincides with the field of definition of $X_{\bar{F}}$ (see Proposition \ref{prop:rat_coh} (2)). Even without the regularity assumption on $V$, this argument allows us to estimate $F(X_{\bar{F}})$ (Proposition \ref{prop:rat_coh}).
	
	Finally, we note two ways to compute the Borel--Tits cocycle $\beta^{\BT}_{X_{\min, \bar{F}}}$ of $X_{\min, \bar{F}}$ over the field of rationality of the cohomological induction. Notably, this cocycle is easier to compute than $\beta^{\BT}_{X_{\bar{F}}}$.
	
	It follows from \cite[THEOREM 5.3]{MR762307} (or \cite[Theorem 5.4.7]{hayashijanuszewski} for the disconnected case) that $X_{\min, \bar{F}}$ is the induction of a certain character $\lambda^\sharp$ of $Q^-\cap \bar{F}\otimes_F K$ to $\bar{F}\otimes_F K$, where $Q^-$ is an $\bar{F}\otimes_F \theta$-stable parabolic subgroup of $\bar{F}\otimes_F G$. Consequently, $\beta^{\BT}_{X_{\min, \bar{F}}}$ can be expressed analogously to \cite[Lemma 2.2.4]{MR4627704} and \cite{hayashierror} (Proposition \ref{prop:BT}). This gives the first of the two methods of computation. We omit the precise formulation here since the explicit cocycle requires additional care due to continuity arguments (cf.~\cite{hayashierror}).

	We next state the other way:
	
	\begin{prop}[Proposition \ref{prop:K^0}]
		Let $Q^-$ be as above. Assume that $Q^-\cap (\bar{F}\otimes_F K)$ meets every component of $\bar{F}\otimes_F K$. Then, $X_{\min, \bar{F}}$ is irreducible as a representation of $\bar{F} \otimes_F K^0$, where $K^0$ denotes the identity component of $K$. Moreover, $\beta^{\BT}_{X_{\min, \bar{F}}}$ coincides with the Borel--Tits cocycle of the representation $X_{\min, \bar{F}}$ of $\bar{F} \otimes_F K^0$.
	\end{prop}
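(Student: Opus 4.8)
The plan is to isolate two assertions: first, that $X_{\min, \bar{F}}$ remains irreducible under restriction to $\bar{F} \otimes_F K^0$; and second, that this restriction is compatible with descent data, so that the Borel--Tits cocycles agree. For the first assertion I would use the explicit description recalled just above: $X_{\min, \bar{F}}$ is the $\bar{F}\otimes_F K$-module induced from a character $\lambda^\sharp$ of $Q^- \cap (\bar{F}\otimes_F K)$ up to $\bar{F}\otimes_F K$. Restriction to $\bar{F}\otimes_F K^0$ is computed by a Mackey-type formula: the components of $\bar{F}\otimes_F K$ are indexed by $(\bar{F}\otimes_F K)/(\bar{F}\otimes_F K^0)$, and the induced module restricted to $K^0$ decomposes over the double cosets $(Q^-\cap (\bar{F}\otimes_F K))\backslash (\bar{F}\otimes_F K)/(\bar{F}\otimes_F K^0)$. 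The hypothesis that $Q^- \cap (\bar{F}\otimes_F K)$ meets every component of $\bar{F}\otimes_F K$ means there is exactly one such double coset, so the restriction is itself an induced module, namely the induction of $\lambda^\sharp|_{Q^-\cap(\bar{F}\otimes_F K^0)}$ from $Q^-\cap(\bar{F}\otimes_F K^0)$ to $\bar{F}\otimes_F K^0$. One then checks this is irreducible as a $(\bar{F}\otimes_F \fg, \bar{F}\otimes_F K^0)$-module; this should again follow from Vogan--Zuckerman theory applied to the connected group $\bar{F}\otimes_F G$ with the $\theta$-stable parabolic $Q^-$, or directly from the fact that $X_{\min,\bar{F}}$ was already the lowest $K$-type packaged as a single $K^0$-orbit of $Q^-$-characters.

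For the second assertion, the point is that restriction along $K^0 \hookrightarrow K$ is a functor of Borel--Tits data (in the sense of the general formalism of Section~\ref{sec:results}): it is $\bar{F}$-linear, commutes with the Galois action and with base change $F''\otimes_F(-)$, and sends simple objects to simple objects here by the first part. Such a functor carries a descent datum $S$ on $X_{\min,\bar{F}}$ (over $\bar{F}\otimes_F K$) to a descent datum on its restriction (over $\bar{F}\otimes_F K^0$), and conversely any descent datum on the restriction comes from one on $X_{\min,\bar{F}}$, because restriction detects $F$-forms once irreducibility is preserved. Consequently the obstruction class is unchanged: by the defining property in Theorem~\ref{thm:BT_intro} (and its finite-level construction in Definition-Proposition~\ref{def-prop:bt_finite}), $\beta^{\BT}$ only depends on the pair (simple object, Galois action, base change compatibilities), all of which are preserved. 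A clean way to organize this is to reduce to a finite Galois extension $F''/F(X_{\min,\bar{F}})$ over which an $F''$-form exists, observe that restriction of the $F''$-form to $F''\otimes_F K^0$ is an $F''$-form of the restriction by the first part, and then note that the $2$-cocycle extracted from comparing Galois twists of this $F''$-form is literally the same whether computed in $\bar{F}\otimes_F K$ or $\bar{F}\otimes_F K^0$, since the isomorphisms involved are the same underlying $\bar{F}$-linear maps.

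The main obstacle I anticipate is the first assertion, specifically making the Mackey decomposition rigorous in the $(\fg,K)$-module setting with possibly infinite-dimensional modules rather than finite groups: one must be careful that $\Ind$ from $Q^-\cap(\bar{F}\otimes_F K)$ is well behaved (it is the algebraic, or $K$-finite, induction appearing in \cite{MR762307}), and that the single-double-coset hypothesis really collapses the restriction to one induced piece rather than merely identifying its composition factors. Once the restriction is identified with $\Ind_{Q^-\cap(\bar{F}\otimes_F K^0)}^{\bar{F}\otimes_F K^0}\lambda^\sharp$, its irreducibility and the cocycle comparison are formal consequences of the machinery already in place, so the only real work is this Mackey-theoretic bookkeeping and the verification that the relevant $\fg$-action is respected throughout.
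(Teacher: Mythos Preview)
Your plan is correct and matches the paper's (very terse) proof, which simply cites the formula \eqref{eq:min} for the first assertion and the numerical description of the cocycle in Proposition~\ref{prop:key_computation_BT} for the second. Your anticipated obstacles are not real: $X_{\min,\bar F}$ is only a $\bar F\otimes_F K$-module (not a $(\fg,K)$-module), so there is no $\fg$-action to track, and since $\bar F\otimes_F K^0$ is normal of finite index in $\bar F\otimes_F K$, the hypothesis $(Q^-\cap(\bar F\otimes_F K))\cdot(\bar F\otimes_F K^0)=\bar F\otimes_F K$ gives the restriction identity $\Res_{K^0}\Ind^{K}_{Q^-\cap K}\lambda^\sharp\cong \Ind^{K^0}_{Q^-\cap K^0}\lambda^\sharp$ directly, with irreducibility then coming from Borel--Weil for the parabolic $Q^-\cap(\bar F\otimes_F K^0)\subset \bar F\otimes_F K^0$.
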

	
	This provides an alternative method for computing the Borel--Tits cocycle. Indeed, results of \cite{MR277536,MR4627704,hayashierror} may be applied to $K^0$ in concrete examples.

	\subsection{Future direction}
	
	Harris proposed the study of twisted D-modules on flag varieties over fields $F$ of characteristic zero as a method to construct and classify rational forms of Harish-Chandra modules (\cite[Section~2]{MR3053412}). Januszewski observed that this task involves addressing rationality problems of related geometric objects. To resolve this issue, Harris amended his work by considering a finite extension of the base field (\cite{MR4073199}).
	
	In a forthcoming work, we will demonstrate that our general theories for rationality problems can be applied to equivariant holonomic twisted D-modules:

	\begin{thm}
		Let $X$ be a smooth algebraic variety over a field $F$ of characteristic zero, equipped with a linear algebraic group $K$. Let $\cA$ be a $K$-equivariant tdo over $X$. Then the categories of $(F' \otimes_F K)$-equivariant holonomic left $(F' \otimes_F \cA)$-modules form a strong Loewy--Borel--Tits datum, where $F'$ runs through an algebraic closure $\bar{F}$ of $F$ and finite Galois extensions of $F$ in $\bar{F}$.
		
	\end{thm}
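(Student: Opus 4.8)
The plan is to verify the axioms of a Loewy--Borel--Tits datum (Definition~\ref{defn:LBT}), i.e.\ to exhibit a Loewy datum (Definition~\ref{defn:Ldatum}) together with a compatible Borel--Tits datum (Definition~\ref{defn:BTdatum}) on the prescribed family of categories. For each finite Galois extension $F''/F$ inside $\bar{F}$, and for $F''=\bar{F}$ itself, one takes as reference category the category of $(F''\otimes_F K)$-equivariant holonomic left $(F''\otimes_F\cA)$-modules; this is an $F''$-linear abelian category. Because $X$, $K$ and $\cA$ are all defined over $F$, the Galois group of $F''/F$ acts on this category by $\sigma$-twist: the $\sigma$-twist of an equivariant module over $F''\otimes_F\cA$ is canonically again such a module. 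For $F''\subset F'''$ the structural functor is the extension of scalars $F'''\otimes_{F''}(-)$, which, using $F'''\otimes_{F''}(F''\otimes_F\cA)=F'''\otimes_F\cA$, is a flat (hence exact) functor visibly compatible with the Galois actions and with composition. This is entirely parallel to Example~\ref{ex:repG} for affine group schemes; once the two substantive points below are settled, the assembly into a Loewy--Borel--Tits datum is formal via Theorem~\ref{thm:basic_Loewy_datum}.

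The first substantive point is that every object has finite length. Forgetting the $K$-equivariance (which only shrinks the lattice of subobjects), this reduces to the statement that a holonomic module over a tdo $\cA$ on a smooth variety of finite type over a field of characteristic zero has finite length; and since the order filtration on any tdo has associated graded $\cO_{T^*X}$, the characteristic-variety formalism is identical to the untwisted case. One then uses Bernstein's inequality (which admits purely algebraic proofs, via the involutivity/coisotropy of characteristic varieties, valid over any field of characteristic zero, in the spirit of the algebraic treatment of Harish-Chandra's finiteness theorem in Section~\ref{sec:rem}) together with the additivity of the multiplicity of a coherent $\cA$-module along the top-dimensional part of its characteristic variety: holonomicity forces $\dim\operatorname{Ch}(M)=\dim X$, the multiplicity is then a positive integer for $M\neq0$, additive in short exact sequences, hence strictly decreasing along proper subobjects; so every holonomic module is both Noetherian and Artinian, and the holonomic modules form a Serre subcategory (sub- and quotient modules of holonomic are holonomic by Bernstein's inequality applied to their characteristic varieties). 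The second substantive point, preservation of holonomicity and finite length under the structural functors, follows from flatness: $F'''\otimes_{F''}(-)$ is exact and preserves coherence, the characteristic variety commutes with the flat base change $\Spec F'''\to\Spec F''$, and the dimension of a scheme of finite type over a field is unchanged under extension of the base field (including infinite extensions, so the case $F'''=\bar{F}$ is included), whence holonomicity is preserved; equivariance is preserved by functoriality.

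It then remains to check the remaining axioms of Definitions~\ref{defn:Ldatum} and~\ref{defn:BTdatum}: faithfulness of the structural functors (automatic, since a field extension is faithfully flat), the fact that the base change of a simple object is semisimple of finite length (once finite length is known, one needs only that no new nontrivial self-extensions appear, which again follows from flatness together with a descent argument for equivariant holonomic modules), and the compatibilities that let the Borel--Tits cocycle construction of Definition-Proposition~\ref{defprop:BT} go through. I expect the main obstacle to be not the homological bookkeeping but the careful handling of the twisted and equivariant structures at the infinite level $\bar{F}$: writing $\bar{F}$ as the filtered colimit of the finite Galois extensions $F_\alpha$ of $F$, one must check that every coherent equivariant $(\bar{F}\otimes_F\cA)$-module, its $\cA$-module structure, its $K$-equivariant structure, its characteristic variety, and every morphism between two such objects descend compatibly to a common finite level $F_\alpha$, and that this colimit presentation is strict enough that holonomicity and simplicity are detected at finite level. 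With this in hand, the objectwise construction of Theorem~\ref{thm:basic_Loewy_datum} applies and yields the asserted Loewy--Borel--Tits datum.
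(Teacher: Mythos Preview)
The paper does not prove this theorem: it appears in the ``Future direction'' subsection of the introduction and is explicitly attributed to the forthcoming work \cite{hayashiclassification} (listed as ``In preparation'' in the bibliography). There is therefore no proof in the paper to compare against; what follows is an assessment of your sketch on its own terms.

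Your identification of the two substantive points---finite length of equivariant holonomic $\cA$-modules via Bernstein's inequality and additivity of multiplicity, and preservation of holonomicity under flat base change via invariance of $\dim\Ch(M)$---is correct and is indeed the heart of the matter. The descent issue you flag at the end (every equivariant holonomic module over $\bar{F}$, together with all its structure, descends to some finite level) is exactly Condition \ref{con:rationality_LBT}, and your colimit argument is the right shape.

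There are, however, two places where your account of the remaining axioms drifts. First, neither ``faithfulness of the structural functors'' nor ``base change of a simple object is semisimple'' is an axiom of a Loewy--Borel--Tits datum (Definition~\ref{defn:LBT}); the latter is a \emph{consequence} (cf.\ Lemma~\ref{lem:pseudosimple_finite} and Corollary~\ref{cor:semisimple_preservation}), not an input, and your proposed justification via ``no new nontrivial self-extensions'' is not the right mechanism in any case. Second, and more seriously, you do not address Condition \ref{con:bcthm_LBT}, the base-change isomorphism $F'\otimes_{F''}\Hom_{\cC_{F''}}(M,N)\cong\Hom_{\cC}(F'\otimes_{F''}M,F'\otimes_{F''}N)$. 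This is not automatic from flatness alone for infinite-dimensional objects; it requires knowing that Hom spaces between equivariant holonomic modules are finite-dimensional over the base field, which is a genuine (though standard) consequence of holonomicity that deserves to be stated and used. Finally, Theorem~\ref{thm:basic_Loewy_datum} does not ``assemble'' a Loewy--Borel--Tits datum from anything---it computes the Loewy bijection once a Loewy datum is already in hand; the assembly is simply the direct verification of \ref{con:fin_length_LBT}--\ref{con:rationality_LBT}.
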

	
	As an application, we will revisit Harris' rationality results: We follow the idea of Corollary \ref{cor:min} to provide a general method for finding minimal fields of definition of equivariant holonomic twisted D-modules on flag varieties.

	\section{Organization of this paper}

	In Section~\ref{sec:lbt}, we establish general categorical frameworks for Loewy's classification scheme and Borel--Tits' criterion. We review the notions of $\Gamma$-categories and descent systems in Section~\ref{sec:gamma_cat}. Then, we formulate Loewy pairs in Section~\ref{sec:Loewy_pair}. For constructing examples of Loewy pairs, we also introduce the notion of Loewy data for a Galois extension $F'/F$ of fields. 
	
	In Section~\ref{sec:Loewy}, we prove Theorem \ref{thm:Loewy_intro}. For the computation of this bijection for Loewy data, we show that it is essentially reduced to the finite case (Theorem \ref{thm:basic_Loewy_datum}). We discuss this bijection in the case of quadratic Galois extensions to recover Loewy's original description of the bijection (Proposition \ref{prop:Loewy}). 
	
	Section~\ref{sec:BTcocycle} is devoted to the theory of Borel--Tits data and Borel--Tits cocycles. In particular, we verify Theorem \ref{thm:BT_intro}. As an example of the computation on the bijection in Theorem \ref{thm:Loewy_intro}, we relate the simplicity of the underlying object of a simple descent datum with the algebra of its endomorphisms at the ends of these two sections (Proposition \ref{prop:existence_DD}, Theorem \ref{thm:abs_simple}). They provide general frameworks for (pseudo-)absolute simplicity.
	
	In Section~\ref{sec:relation}, we introduce Loewy--Borel--Tits data to establish Theorem \ref{thm:LBT_intro}.

	In Section~\ref{sec:(g,K)mod}, we discuss applications of our rationality theory from Section~\ref{sec:lbt} to Harish-Chandra modules and provide further results. In Section~\ref{sec:goal}, we prove Theorem \ref{thm:main}. In Section~\ref{sec:abs_irr}, we show Theorem \ref{thm:mainB}. We discuss generalizations of \cite[Theorem 5.5 and Proposition 5.8]{MR3770183} and minimal fields of definition in Section~\ref{sec:more}. We make remarks on finiteness conditions in Section~\ref{sec:rem}. Finally, in Section~\ref{sec:coh}, we combine the Vogan--Zuckerman theory \cite{MR762307} with our results to discuss cohomological irreducible essentially unitarizable Harish-Chandra modules.

	\section{Notation}
	
	\subsection{Elementary concept}
	Let $\bZ$ be the ring of integers. Let $\bQ$ (resp.~$\bR$, $\bC$) denote the field of rational (resp.~real, complex) numbers. Let $\bH$ be the real division algebra of quaternions.
	
	For a number field $F$ and a place $v$, let $F_v$ denote the $v$-adic completion of $F$.
	
	For a group $\Gamma$, let $e$ denote its identity element. For a $\Gamma$-set $S$, we denote its fixed-point subset by $S^\Gamma$.
	
	For a field $F$, let $F^{\sep}$ (resp.~$\bar{F}$) denote a separable (resp.~algebraic) closure of $F$, which we fix unless specified otherwise. Let $F^\times$ denote the group of units of $F$.
	
	Let $F'/F$ be a possibly infinite Galois extension of fields. We denote its Galois group by $\Gamma_{F'/F}$. If $F' = F^{\sep}$, we omit $F'/$ and write $\Gamma_F$ instead. Throughout this paper, let $\Lambda_{F'/F}$ denote the set of finite Galois extensions of $F$ in $F'$.
	
	For a quadratic Galois extension, the induced Galois involutions on objects will be denoted by $\bar{}$.

	For a commutative ring $A$ and an $A$-module $M$, we denote its annihilator by $\Ann_A(M)$. For $a\in A$, we set $\Ann_M(a)\coloneqq\{m\in M:~am=0\}$.
	
	For a finite-dimensional vector space $V$ over a field $F$, let $F[V]$ denote the symmetric power of the dual of $V$, which will be regarded as the algebra of polynomial functions on $V$ in an appropriate sense.

	\subsection{Categories}
	
	For an object $X$ of a category, we write $\id_X$ for its identity map.
	
	For a category $\cC$, we write $\cC^{\op}$ for its opposite category. We denote the Hom sets of $\cC$ by $\Hom_{\cC}(-,-)$ (if $\cC$ is locally small). We write $\End_{\cC}(X) = \Hom_{\cC}(X, X)$ for $X \in \cC$. If $\cC$ is the category of modules over a ring $A$, we write $\Hom_{A}(-, -)$. We define $\End_A(-)$ in a similar way.
	
	For a natural transformation $\alpha: F \Rightarrow G$ of functors $F, G: \cC \rightrightarrows \cD$ and $X \in \cC$, we denote the corresponding morphism $F(X) \to G(X)$ by $\alpha_X$.

	If we are given natural transformations of functors depicted as
	\[
	\begin{tikzcd}[column sep=huge]
		\cC_1
		\arrow[bend left=50]{r}[name=U,label=above:$F_1$]{}
		\arrow[bend right=50]{r}[name=D,label=below:$G_1$]{} &
		\cC_2
		\arrow[shorten <=10pt,shorten >=10pt,Rightarrow,to path={(U) -- node[label=right:$\alpha$] {} (D)}]{}
		\arrow[bend left=50]{r}[name=U2,label=above:$F_2$]{}
		\arrow[bend right=50]{r}[name=D2,label=below:$G_2$]{}
		&\cC_3,\arrow[shorten <=10pt,shorten >=10pt,Rightarrow,to path={(U2) -- node[label=right:$\beta$] {} (D2)}]{}
	\end{tikzcd}
	\]
	we define a natural transformation $\beta \circ \alpha: F_2 \circ F_1 \Rightarrow G_2 \circ G_1$ by
	\[
	(\beta \circ \alpha)_X = \beta_{G_1(X)} \circ F_2(\alpha_X) = G_2(\alpha_X) \circ \beta_{F_1(X)}: F_2(F_1(X)) \to G_2(G_1(X))
	\]
	for $X \in \cC$ (the horizontal composition).
	If we are given natural transformations of functors depicted as
	\[
	\begin{tikzcd}[column sep=huge]
		\cC_1
		\arrow[bend left=50]{r}[name=U,label=above:$F$]{}
		\arrow[bend right=50]{r}[name=D,label=below:$H$]{} 
		\arrow{r}[name=M]{}\ar[r,"G" description]
		&\cC_2,\arrow[shorten <=2pt,shorten >=-1pt,Rightarrow,to path={(U) -- node[label=right:$\alpha$] {} (M)}]{}
		\arrow[shorten <=4pt,shorten >=-3pt,Rightarrow,to path={(M) -- node[label=right:$\beta$] {} (D)}]{}
	\end{tikzcd}
	\]
	we define a natural transformation $\beta \cdot \alpha : F \Rightarrow H$ by
	\[
	(\beta \cdot \alpha)_X = \beta_X \circ \alpha_X : F(X) \to H(X)
	\]
	for $X \in \cC$ (the vertical composition).
	
	For an essentially small abelian category $\cC$, let $\Simple(\cC)$ denote the set of isomorphism classes of simple objects in $\cC$.

	\subsection{Algebraic varieties}
	
	Let $X$ be an algebraic variety over a field $F$, equipped with an action of a linear algebraic group $G$. Then we denote the structure sheaf of $X$ by $\cO_X$. Let $\Coh(X, G)$ denote the category of $G$-equivariant coherent $\cO_X$-modules.

	\subsection{Lie algebras}

	Let $\fg$ be a Lie algebra over a field. Then the enveloping algebra of $\fg$ will be denoted by $U(\fg)$. We write $Z(\fg)$ for the center of $U(\fg)$.
	
	For an involution $\theta$ of a Lie algebra $\fg$ over a field of characteristic not two, let $\fg^{\theta}$ be the $\theta$-fixed point subset of $\fg$.
	
	For a reductive Lie algebra $\fg$ over a field of characteristic zero (\cite[Chapter I, \S 6, 4]{MR1728312}), let $Z_\fg$ and $\fg^{\mathrm{ss}}$ denote the center and the semisimple Lie subalgebra of $\fg$, respectively.
	
	Let $\fg$ be a reductive Lie algebra over an algebraically closed field, and $\fh$ be a Cartan subalgebra of $\fg$ (\cite[Chapter VII, \S2.1 DEFINITION 1]{MR2109105}). Fix a positive system of $(\fg,\fh)$ (see \cite[Chapter VIII, \S2.2 REMARK 4]{MR2109105} for the notion of roots of $(\fg,\fh)$ if necessary). Then for $\lambda\in\fh^\vee$, we define a character $\chi_\lambda:Z(\fg)\to\bC$ as in \cite[(5.41)]{MR1920389}.
	
	\subsection{Algebraic groups}

	For an affine group scheme $G$ over a field $F$, let $\Rep(G)$ be the category of finite-dimensional representations of $G$. For (possibly infinite-dimensional) representations $V$ and $W$ of $G$, we denote the space of homomorphisms from $V$ to $W$ as representations of $G$ by $\Hom_G(V, W)$. For a representation $V$ of $G$, we denote its dual as a representation by $V^c$. That is, we put the standard symmetric monoidal structure on the category of representations of $G$. Then $V^c$ is the internal Hom representation from $V$ to the trivial representation $F$.
	
	For representations $V$ and $\tau$ of a reductive algebraic group $K$ over a field $F$ of characteristic zero, with $\tau$ irreducible, let $V(\tau)$ denote the $\tau$-isotypic component of $V$. That is, $V(\tau)$ is the image of the injective map $\Hom_K(\tau, V) \otimes_F \tau \to V$ defined by evaluation.

	For a smooth linear algebraic group $G$, we denote its unit component by $G^0$. Write $Z_G$ for the center of $G$. Let $G^{\mathrm{ss}}$ denote the derived subgroup of $G$.
	
	For a linear algebraic group $G$ over a field of characteristic zero with an involution $\theta$, let $G^\theta$ be the $\theta$-fixed point subgroup of $G$.
	
	For a smooth linear algebraic group, we denote its Lie algebra by the corresponding German letter.
	
	For a representation $V$ of a linear algebraic group $G$, let $V^G$ denote its $G$-invariant part.
	
	For a homomorphism $L \to G$ of linear algebraic groups, we write $\Ind^G_L$ for the right adjoint functor to the forgetful functor from the category of representations of $G$ to that of $L$.
	
	For a connected reductive algebraic group $G$ over an algebraically closed field of characteristic zero and a maximal torus $H\subset G$, we regard coroots of $(G,H)$ as elements of $\fh$ by differential.

	\subsection{Linear algebra and matrices}
	For a vector space $V$, let $V^\vee$ denote its dual. We denote the canonical pairing of $V$ with $V^\vee$ by $\langle-,-\rangle$.
	
	For a vector space $V$ over a field $F$, let $\dim_F V$ denote the dimension of $V$ over $F$.
	
	For a field $F$ and a finite set $S$, let $M_S(F)$ denote the $F$-algebra of square matrices indexed by elements of $S$ with entries in $F$.

	A block diagonal matrix with blocks $(A_1, A_2, \dots, A_n)$ from the top-left to the bottom-right will be denoted by $\diag(A_1, A_2, \dots, A_n)$.

	\section{Loewy's classification scheme and Borel--Tits' criterion}\label{sec:lbt}
	
	Let $F'/F$ be a (possibly infinite) Galois extension of fields. In this section, we present an axiomatic approach to the work of \cite{MR1500635} and \cite[Section~12]{MR207712}.

	\subsection{$\Gamma$-categories and descent systems}\label{sec:gamma_cat}

	Throughout this section, let $\Gamma$ be a group. Here, we review generalities on $\Gamma$-categories and systems of Galois descent following Grothendieck.

	\begin{defn}
		\begin{enumerate}
			\item A $\Gamma$-category consists of a category $\mathcal{C}$, an endofunctor ${}^\sigma(-)$ of $\mathcal{C}$ for each $\sigma \in \Gamma$, an isomorphism $u : {}^e(-) \cong \mathrm{id}_{\mathcal{C}}$, and a natural isomorphism $\mu_{\sigma,\tau} : {}^\sigma(-) \circ {}^\tau(-) \cong {}^{\sigma\tau}(-)$ for each pair $(\sigma,\tau) \in \Gamma^2$, satisfying the following properties:
			\begin{enumerate}[label=(GC\arabic*)]
				\item\label{con:ass} $\mu_{\sigma,\tau\rho} \cdot (\id_{{}^\sigma(-)} \circ \mu_{\tau,\rho}) = \mu_{\sigma\tau,\rho} \cdot (\mu_{\sigma,\tau} \circ \id_{{}^\rho(-)})$
				for $\rho, \sigma, \tau \in \Gamma$,
				\item\label{con:unit} $\mu_{e,\sigma} = u \circ \id_{{}^\sigma(-)}$ and $\mu_{\sigma,e} = \id_{{}^\sigma(-)} \circ u$ for $\sigma \in \Gamma$.
			\end{enumerate}
			We will sometimes say that $\cC$ is a $\Gamma$-category for short.
			\item An object $X \in \cC$ of a $\Gamma$-category $(\cC, {}^\sigma(-), u, \mu_{\sigma,\tau})$ is called \emph{self-conjugate} if there exist isomorphisms ${}^\sigma X \cong X$ for all $\sigma \in \Gamma$.
			\item We say that a $\Gamma$-category $(\mathcal{C}, {}^\sigma(-), u, \mu_{\sigma,\tau})$ is \emph{$F$-linear} (resp.\ \emph{$F'/F$-linear}) if $\mathcal{C}$ is equipped with the structure of an $F$-linear (resp.\ $F'$-linear) category and the endofunctors ${}^\sigma(-)$ are $F$-linear.
			\item An $F'/F$-linear $\Gamma_{F'/F}$-category $(\mathcal{C}, {}^\sigma(-), u, \mu_{\sigma,\tau})$ is called \emph{semilinear} if
			\[
			{}^\sigma (cf) = \sigma(c) \, {}^\sigma f
			\]
			for $\sigma \in \Gamma$, $c \in F'$, and a morphism $f$ of $\mathcal{C}$.
			\item We say that a $\Gamma$-category is \emph{locally small} (resp.\ \emph{essentially small}) if its underlying category is locally small (resp.\ essentially small).
			\item We say that a $\Gamma$-category $(\mathcal{C}, {}^\sigma(-), u, \mu_{\sigma,\tau})$ is \emph{abelian} if $\mathcal{C}$ is abelian and ${}^\sigma(-)$ are additive.
			\item We say that an abelian $\Gamma$-category is \emph{Noetherian} (resp.\ \emph{Artinian}) if its underlying abelian category is Noetherian (resp.\ Artinian).
		\end{enumerate}
	\end{defn}
	
	\begin{rem}
		The definitions of $\Gamma$-categories depend on the literature for the coherent structures (e.g.\ \cite{MR0711065,MR0861356,MR1126178}). We adopt the most relaxed version here.
	\end{rem}
	
		\begin{ex}
		Let $(\fg,K)$ be a (Harish-Chandra) pair over $F$ in the sense of \cite{MR3853058}. Then the category of $(F' \otimes_F \fg, F' \otimes_F K)$-modules is a locally small semilinear abelian $\Gamma_{F'/F}$-category for base changes by $\sigma \in \Gamma$ (see \cite[Section~3.1]{MR3853058}).
		
	\end{ex}
	
	Let us record an elementary result that we will use later:

	\begin{lem}\label{lem:self-conjugacy}
		Let $\cC$ be a $\Gamma$-category. Then the self-conjugacy property is stable under the formation of isomorphisms.
		
	\end{lem}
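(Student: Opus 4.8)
The plan is to unwind the definition of self-conjugacy and exploit the only nontrivial structural fact available, namely that each endofunctor ${}^\sigma(-)$, being a functor, sends isomorphisms to isomorphisms. No coherence data (the $u$ and $\mu_{\sigma,\tau}$) should be needed; this is a purely formal consequence of functoriality.

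Concretely, suppose $X \in \cC$ is self-conjugate and $f : X \xrightarrow{\sim} Y$ is an isomorphism in $\cC$. Fix $\sigma \in \Gamma$. First I would apply the functor ${}^\sigma(-)$ to $f$ to obtain an isomorphism ${}^\sigma f : {}^\sigma X \xrightarrow{\sim} {}^\sigma Y$, with inverse ${}^\sigma(f^{-1})$ (functoriality of ${}^\sigma(-)$ gives ${}^\sigma f \circ {}^\sigma(f^{-1}) = {}^\sigma(f \circ f^{-1}) = {}^\sigma(\id_Y) = \id_{{}^\sigma Y}$, and symmetrically). Next, by self-conjugacy of $X$ there is an isomorphism $g_\sigma : {}^\sigma X \xrightarrow{\sim} X$. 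Composing these,
\[
{}^\sigma Y \xrightarrow{\ ({}^\sigma f)^{-1}\ } {}^\sigma X \xrightarrow{\ g_\sigma\ } X \xrightarrow{\ f\ } Y
\]
is an isomorphism ${}^\sigma Y \cong Y$. Since $\sigma \in \Gamma$ was arbitrary, $Y$ is self-conjugate.

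There is essentially no obstacle here: the statement is a formal triviality, and the only thing to be careful about is that ``isomorphism'' in the definition of self-conjugacy is required for \emph{every} $\sigma$, so the construction above must be performed uniformly in $\sigma$ — which it is. I would simply remark that the symmetry of the isomorphism relation, combined with functoriality, yields the claim, and keep the written proof to a line or two.
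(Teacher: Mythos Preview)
Your proof is correct and essentially identical to the paper's own proof: both construct the isomorphism ${}^\sigma Y \cong Y$ as $f \circ g_\sigma \circ {}^\sigma(f^{-1})$ using only functoriality of ${}^\sigma(-)$.
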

	
	\begin{proof}
		Let $M, N \in \cC$ be objects with an isomorphism $f: M \cong N$. Suppose that $M$ is self-conjugate. For each $\sigma \in \Gamma$, fix an isomorphism $\varphi_\sigma: {}^\sigma M \cong M$. Then $f \circ \varphi_\sigma \circ {}^\sigma f^{-1}$ gives an isomorphism ${}^\sigma N \cong N$. This shows that $N$ is self-conjugate.
		
	\end{proof}
	
	\begin{defn}\label{defn:descent_system}

		Let $(\cC, {}^\sigma(-), u, \mu_{\sigma,\tau})$ be a $\Gamma$-category.
		\begin{enumerate}
			\item A pair consisting of an object $M \in \cC$ and a family of isomorphisms 
			$\varphi_\sigma: {}^\sigma M \cong M$ indexed by $\sigma \in \Gamma$ is called a descent system if
			\begin{enumerate}[label=(DS)]
				\item \label{con:DS} the equalities $\varphi_{\sigma\tau} \circ \mu_{\sigma,\tau,M} = \varphi_\sigma \circ {}^\sigma \varphi_\tau$ hold for $\sigma, \tau \in \Gamma$.
			\end{enumerate}
			\item A morphism $(M, \varphi_\sigma) \to (N, \psi_\sigma)$ of descent systems is a map $f: M \to N$ in $\cC$ such that $f \circ \varphi_\sigma = \psi_\sigma \circ {}^\sigma f$ for $\sigma \in \Gamma$.
			\item We denote the category of descent systems of $\cC$ by $\DS(\cC)$. We remark that if $\cC$ is $F$-linear, then $\DS(\cC)$ is naturally endowed with the structure of an $F$-linear category.
		\end{enumerate}
		
	\end{defn}
	
	\begin{ex}

		Let $(\cC, {}^\sigma(-), u, \mu_{\sigma,\tau})$ be a $\Gamma$-category, and let $\Gamma'$ be a subgroup of $\Gamma$. Then $\cC$ is naturally endowed with the structure of a $\Gamma'$-category by restriction. If $\Gamma = \Gamma_{F'/F}$ and $\Gamma' = \Gamma_{F'/F''}$ for a finite extension $F''/F$ in $F'$, then we denote the category of descent systems of $\cC$ as a $\Gamma_{F'/F''}$-category by $\DS(\cC; F'/F'')$.
		
	\end{ex}

	\begin{ex}\label{ex:Gamma={e}}
		Suppose that $\Gamma$ is trivial. Then $\pi:\DS(\cC)\to\cC$ is an isomorphism. Its inverse is given by $X\mapsto (X,u_X)$.
	\end{ex}
	
	\begin{prop}\label{prop:(co)limit}
		Let $\cC$ be a $\Gamma$-category. 
		\begin{enumerate}
			\item The forgetful functor $\pi:\DS(\cC)\to \cC$ is conservative, i.e., $\pi$ reflects isomorphisms.
			\item (Co)limits in $\DS(\cC)$ are computed in $\cC$ if they exist in $\cC$.
			\item If $\cC$ is abelian, so is $\DS(\cC)$.
		\end{enumerate}
	\end{prop}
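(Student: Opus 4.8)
The plan is to handle the three statements in order, leveraging each for the next. Part (1) is a direct diagram chase. For part (2) the one genuine input is the observation that the base-change functors are equivalences, not merely functors; granting that, both the induced descent datum on a (co)limit and its universal property come out by routine manipulation of universal properties. Part (3) then follows by checking the standard characterization of abelian categories, transporting the ``coimage $=$ image'' isomorphism from $\cC$ via the conservativity established in (1).

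\emph{Part (1).} Suppose $f\colon (M,\varphi_\sigma)\to(N,\psi_\sigma)$ is a morphism of descent systems whose underlying morphism $\pi(f)\colon M\to N$ is an isomorphism in $\cC$. Since each ${}^\sigma(-)$ is a functor, ${}^\sigma f$ is again an isomorphism and ${}^\sigma(f^{-1})=({}^\sigma f)^{-1}$. Rewriting $f\circ\varphi_\sigma=\psi_\sigma\circ{}^\sigma f$ as $\varphi_\sigma\circ{}^\sigma(f^{-1})=f^{-1}\circ\psi_\sigma$ shows $f^{-1}$ is a morphism $(N,\psi_\sigma)\to(M,\varphi_\sigma)$; as $\pi$ is visibly faithful and composition in $\DS(\cC)$ is computed in $\cC$, $f^{-1}$ is a two-sided inverse of $f$ in $\DS(\cC)$.

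\emph{Part (2).} The preliminary point is that each ${}^\sigma(-)$ is an \emph{equivalence} of categories, with quasi-inverse ${}^{\sigma^{-1}}(-)$: the isomorphisms $\mu_{\sigma,\sigma^{-1}}$, $\mu_{\sigma^{-1},\sigma}$ and $u$ supply the unit and counit. Hence each ${}^\sigma(-)$ preserves and reflects all limits and colimits existing in $\cC$. Now let $D\colon I\to\DS(\cC)$, $D(i)=(M_i,\varphi^i_\sigma)$, and suppose $L=\lim_i M_i$ exists in $\cC$ with projections $p_i$. Using the morphism-of-descent-systems condition for the arrows of $D$, one checks that $(\varphi^i_\sigma\circ{}^\sigma(p_i))_i$ is a cone on $\pi\circ D$, hence induces a unique $\varphi_\sigma\colon{}^\sigma L\to L$ with $p_i\circ\varphi_\sigma=\varphi^i_\sigma\circ{}^\sigma(p_i)$; since ${}^\sigma(-)$ carries $L$ with $({}^\sigma p_i)$ to a limit of $({}^\sigma M_i)$ and the $\varphi^i_\sigma$ are isomorphisms of diagrams, $\varphi_\sigma$ is an isomorphism. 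Condition \ref{con:DS} for $(L,\varphi_\sigma)$ is verified by composing both sides with each $p_i$ and invoking naturality of $\mu_{\sigma,\tau}$ together with \ref{con:DS} for $(M_i,\varphi^i_\sigma)$. The $p_i$ are morphisms of descent systems by construction, and the universal property of $(L,\varphi_\sigma)$ in $\DS(\cC)$ follows by the same ``compose with $p_i$, use joint monicity'' device, uniqueness being inherited from $\cC$ through faithfulness of $\pi$. The colimit case is dual (or: apply the limit case to $\cC^{\op}$, which is again a $\Gamma$-category).

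\emph{Part (3).} Assume $\cC$ abelian. Then $\DS(\cC)$ is additive: $\Hom_{\DS(\cC)}((M,\varphi_\sigma),(N,\psi_\sigma))$ is the subset of $\Hom_\cC(M,N)$ cut out by $f\circ\varphi_\sigma=\psi_\sigma\circ{}^\sigma f$, a subgroup because each ${}^\sigma(-)$ is additive and composition in $\cC$ is biadditive, and a zero object and finite biproducts exist by (2). Kernels and cokernels exist by (2) and are computed in $\cC$, so $\pi$ is exact. Finally, for any morphism $f$ of $\DS(\cC)$ the canonical comparison $\operatorname{coim}(f)\to\operatorname{im}(f)$ is, by (2), a morphism of descent systems whose underlying morphism in $\cC$ is the canonical comparison $\operatorname{coim}(\pi f)\to\operatorname{im}(\pi f)$; the latter is an isomorphism since $\cC$ is abelian, hence the former is an isomorphism in $\DS(\cC)$ by (1). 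This verifies the standard characterization of an abelian category (additive, with kernels and cokernels, and with bijective coimage--image comparison). The main obstacle, and essentially the only nonformal step, is the one flagged in Part (2): one must first promote the a priori bare functors ${}^\sigma(-)$ to equivalences so that the endomorphism $\varphi_\sigma$ induced on a (co)limit is invertible and the cocycle identity descends; the rest is bookkeeping with universal properties.
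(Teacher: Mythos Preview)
Your proof is correct and follows essentially the same approach as the paper's (very terse) argument: the paper also constructs the descent datum on a limit by observing that $({}^\sigma N,\psi^i_\sigma\circ{}^\sigma q_i)$ is again a terminal cone---which, as you make explicit, amounts to using that ${}^\sigma(-)$ is an equivalence and hence preserves limits---and then deduces (3) from (1) and (2). You have simply written out the details the paper leaves to the reader.
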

		
	\begin{proof}
		Part (1) is elementary.
		
		For (2), let $\mathscr{I}$ be a category, and $F:\mathscr{I}\to\DS(\cC)$ be a functor. For $i\in\mathscr{I}$, write $F(i)=(\pi(F(i)),\psi^i_\sigma)$. Suppose that $F$ admits a limit in $\cC$, that is, there is a terminal object $(N,q_i)$ in the category of cones for $\pi\circ F$ (\cite[Chapter III, Section~3]{MR1712872}). Then it is easy to check that $({}^\sigma N,\psi^i_{\sigma}\circ {}^\sigma q_i)$ is a terminal cone for $\sigma\in\Gamma$. We thus obtain an isomorphism $\psi_\sigma:{}^\sigma N\cong N$ satisfying $q_i\circ\psi_\sigma=\psi^i_{\sigma}\circ {}^\sigma q_i$ for each $i\in \mathscr{I}$. It is easy to show that $(N,\psi_\sigma)$ exhibits a limit descent system of $F$. The assertion for colimits is verified in the dual way.
		
		Part (3) is an easy consequence of (1) and (2).
	\end{proof}

	The notion of descent systems is fine if $\Gamma$ is finite:
	
	\begin{ex}
		Let $G$ be an affine group scheme over a field $F$, and $F'$ be a finite Galois extension of $F$. Then the base change $F'\otimes_F(-)$ determines an equivalence
		\[\Rep G\simeq \DS(\Rep(F'\otimes_FG).\]
		A similar equivalence holds true for the categories of possibly infinite-dimensional representations. We have similar equivalences for a flat affine group scheme $G$ over a commutative ring $k$ and a Galois extension $k'/k$ of commutative rings in the sense of \cite[Definition 1.3.1]{MR4627704}.
	\end{ex}
	
	If $\Gamma$ is infinite, we have to be careful with the effectivity (continuity) in applications. 
	
	\begin{ex}
		Consider the category of $F'$-vector spaces. It is a $\Gamma_{F'/F}$-category for the usual Galois twists. The effectivity of a descent system $(V',\varphi_\sigma)$ means that $(V',\varphi_\sigma)$ is isomorphic to the descent system obtained by the base change of an $F$-vector space. This is equivalent to asking whether the semilinear action of $\Gamma_{F'/F}$ on $V'$ corresponding to $(\varphi_\sigma)$ is continuous. However, this does not hold true in general if $\Gamma_{F'/F}$ is infinite.
	\end{ex}
	
	We find another effectivity issue from the following result:
	
	\begin{prop}\label{prop:res}
		Let $\cC$ be a $\Gamma$-category. Suppose that $\cC$ admits products indexed by $\Gamma$. Then the forgetful functor $\DS(\cC)\to\cC$ admits a right adjoint functor. Moreover, if $\cC$ is an $F$-linear $\Gamma$-category, then the adjunction is $F$-linear.
	\end{prop}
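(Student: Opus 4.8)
The plan is to construct the right adjoint explicitly, by the usual coinduction formula from Galois descent. Write $\pi\colon\DS(\cC)\to\cC$ for the forgetful functor. The first observation is that each endofunctor ${}^\sigma(-)$ is an equivalence of categories: a quasi-inverse is ${}^{\sigma^{-1}}(-)$, with unit and counit supplied by $\mu_{\sigma,\sigma^{-1}}$, $\mu_{\sigma^{-1},\sigma}$ and $u$. Consequently every ${}^\sigma(-)$ preserves all limits existing in $\cC$; in particular it preserves the $\Gamma$-indexed products we are given, so ${}^\tau(\prod_\sigma Y_\sigma)\cong\prod_\sigma{}^\tau Y_\sigma$ canonically. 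For $X\in\cC$ I set the underlying object of $R(X)$ to be $P\coloneqq\prod_{\sigma\in\Gamma}{}^\sigma X$, with projections $\pr_\sigma\colon P\to{}^\sigma X$, and I equip it with descent isomorphisms $\psi_\tau\colon{}^\tau P\xrightarrow{\ \sim\ }P$ characterised by
\[\pr_{\sigma'}\circ\psi_\tau=\mu_{\tau,\tau^{-1}\sigma',X}\circ{}^\tau(\pr_{\tau^{-1}\sigma'})\qquad(\sigma',\tau\in\Gamma);\]
that is, $\psi_\tau$ is the composite of the canonical isomorphism ${}^\tau P\cong\prod_\sigma{}^\tau({}^\sigma X)$, the isomorphisms $\mu_{\tau,\sigma,X}$ on each factor, and the reindexing $\sigma\mapsto\tau\sigma$. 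On morphisms I set $R(g)=\prod_\sigma{}^\sigma g$.

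First I would check that $(P,\psi_\bullet)$ is a descent system and that $R(g)$ is a morphism of descent systems. Composing the two sides of the cocycle identity \ref{con:DS} for $(P,\psi_\bullet)$ with each projection $\pr_{\sigma'}$ and using naturality of the $\mu$'s, that identity reduces to the associativity coherence axiom \ref{con:ass} evaluated at $X$ (applied to the three elements $\tau_1,\tau_2,(\tau_1\tau_2)^{-1}\sigma'$); compatibility of $R(g)$ with the $\psi$'s is again a naturality check on $\mu$. Next I would produce the universal arrow. Define the counit $\varepsilon_X\colon\pi R(X)=P\to X$ by $\varepsilon_X=u_X\circ\pr_e$. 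Given a descent system $N=(M,\varphi_\bullet)$ and a morphism $f\colon M=\pi(N)\to X$ in $\cC$, I claim that the unique morphism $\tilde f\colon N\to R(X)$ in $\DS(\cC)$ with $\varepsilon_X\circ\pi(\tilde f)=f$ is the morphism $M\to P$ with $\pr_\sigma\circ\tilde f={}^\sigma(f)\circ\varphi_\sigma^{-1}$. For uniqueness: from \ref{con:DS} and \ref{con:unit} (plus naturality of $u$) one first gets $\varphi_e=u_M$, so the counit condition forces $\pr_e\circ\tilde f=u_X^{-1}\circ f$; then the descent-compatibility equation $\tilde f\circ\varphi_\tau=\psi_\tau\circ{}^\tau\tilde f$, read off through $\pr_{\sigma'}$ with $\tau=\sigma'$ and using $\mu_{\sigma',e,X}={}^{\sigma'}(u_X)$, forces $\pr_{\sigma'}\circ\tilde f={}^{\sigma'}(f)\circ\varphi_{\sigma'}^{-1}$. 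For existence: substitute this formula into $\pr_{\sigma'}\circ(\tilde f\circ\varphi_\tau)=\pr_{\sigma'}\circ(\psi_\tau\circ{}^\tau\tilde f)$; rewriting $\varphi_{\sigma'}^{-1}\circ\varphi_\tau=\mu_{\tau,\tau^{-1}\sigma',M}\circ{}^\tau(\varphi_{\tau^{-1}\sigma'}^{-1})$ by \ref{con:DS} and moving ${}^{\sigma'}(f)$ past $\mu_{\tau,\tau^{-1}\sigma'}$ by naturality, both sides collapse to $\mu_{\tau,\tau^{-1}\sigma',X}\circ{}^\tau\bigl({}^{\tau^{-1}\sigma'}(f)\circ\varphi_{\tau^{-1}\sigma'}^{-1}\bigr)$.

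By the standard characterisation of adjoints (a functor $L$ admits a right adjoint iff for every object $X$ of the codomain there is a universal arrow from $L$ to $X$), $\pi$ thus admits a right adjoint $R$, acting on objects and morphisms as above, with counit $\varepsilon$. Finally, for $F$-linearity: when $\cC$ is an $F$-linear $\Gamma$-category, the $\Gamma$-indexed product is $F$-linear in its arguments and each ${}^\sigma(-)$ is $F$-linear, so $P$, $\psi_\bullet$ and $\varepsilon_X$ depend $F$-linearly on $X$; hence the adjunction bijection $\Hom_{\DS(\cC)}(N,R(X))\xrightarrow{\ \sim\ }\Hom_{\cC}(\pi N,X)$, $g\mapsto\varepsilon_X\circ\pi(g)$, is $F$-linear and natural in both variables, i.e.\ the adjunction is $F$-linear. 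I expect the only real work to lie in the two coherence diagram chases — verifying \ref{con:DS} for $(P,\psi_\bullet)$ and the existence half of the universal property — both of which, after projecting onto the factors of $P$, unwind to the axioms \ref{con:ass}, \ref{con:unit} together with naturality of the structure isomorphisms $\mu_{\sigma,\tau}$ and $u$.
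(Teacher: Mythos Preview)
Your proposal is correct and follows essentially the same approach as the paper: both construct the right adjoint as $R(X)=\prod_{\sigma\in\Gamma}{}^\sigma X$ with the descent structure given by reindexing via $\mu_{\sigma,\tau}$, and both identify the adjunction via the formula $\pr_\sigma\circ\tilde f={}^\sigma f\circ\varphi_\sigma^{-1}$ (the paper writes the Hom bijection directly as $(f_\sigma)\mapsto f_e$ with inverse $f\mapsto({}^\sigma f\circ\varphi_\sigma^{-1})_\sigma$, which is your counit formulation unpacked). Your version is more explicit in carrying out the coherence checks that the paper leaves implicit.
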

	
	If $\Gamma=\Gamma_{F'/F}$ then we will denote this right adjoint functor by $\Res_{F'/F}$.
	
	\begin{proof}
		For $N\in\cC$, we define a descent system $(\psi^0_\sigma)$ on $\prod_{\sigma\in\Gamma} {}^\sigma N$ by switching the factors. In fact, ${}^\sigma(-)$ commutes with products since ${}^\sigma(-)$ is an auto-functor. Then use $\mu_{\sigma,\tau,N}$ to obtain an isomorphism
		${}^\sigma\prod_{\tau\in\Gamma}{}^\tau N
		\cong \prod_{\tau\in\Gamma} {}^{\sigma\tau}N$.
		
		Let $(M,\varphi_\sigma)\in\DS(\cC)$. Regard 
		$\Hom_{\DS(\cC)}\left((M,\varphi_\sigma),\left(\prod_{\sigma\in\Gamma}{}^\sigma N,\varphi^0_\sigma\right)\right)$ as a subset of $\prod_{\sigma\in\Gamma}\Hom_\cC \left(M,\prod_{\sigma\in\Gamma}{}^\sigma N\right)$ by the forgetful map
		\[\Hom_{\DS(\cC)}\left((M,\varphi_\sigma),\left(\prod_{\sigma\in\Gamma}{}^\sigma N,\varphi^0_\sigma\right)\right)
		\hookrightarrow \Hom_{\cC}\left(M,\prod_{\sigma\in\Gamma}{}^\sigma N\right)\]
		and the universality of the product.
		Then we define a bijection
		\[\Hom_{\DS(\cC)}\left((M,\varphi_\sigma),
		\left(\prod_{\sigma\in\Gamma}{}^\sigma N,\varphi^0_\sigma\right)\right)
		\cong\Hom_{\cC}(M,N)\]
		by $(f_{\sigma})\mapsto f_e$ and $f\mapsto (f_\sigma)$, where $f_\sigma$ is given by
		the composition
		$M\overset{\varphi^{-1}_\sigma}{\cong}
		{}^\sigma M\overset{{}^\sigma f}{\to} {}^\sigma N$.
		This gives the structure of the adjunction.
		
		The last statement follows by construction.
	\end{proof}
	
	Observe that the canonical map 
	$F'\otimes_F F'\to \prod_{\sigma\in\Gamma_{F'/F}} F';~a\otimes b\mapsto (\sigma(a)b)$
	is not an isomorphism unless $\Gamma$ is finite. This is evidence why descent systems of $\cC$ should not be effective when $\Gamma$ is infinite and $\cC$ admits infinite products.
	
	\subsection{Loewy pairs and examples}\label{sec:Loewy_pair}
	
	In this section, we introduce some notion which enables us to perform Loewy's arguments possibly in the setting of infinite Galois extensions.
	
	Informally speaking, if one wishes to study a certain $F$-linear category $\cC_F$ through ``Galois descent'' from a $\Gamma_{F'/F}$-category $\cC$, we need a full subcategory of $\DS(\cC)$ which is expected to be equivalent to $\cC_F$ because of the effectivity issue discussed at the end of the previous section.
		
	\begin{defn}\label{def:lp}
		\begin{enumerate}
			\item A Loewy pair consists of
			\begin{itemize}
				\item a locally small and essentially small $F'/F$-linear abelian $\Gamma_{F'/F}$-category $\cC$ and
				\item an abelian full subcategory $\DD(\cC)$ of $\DS(\cC)$
			\end{itemize}
			such that
			\begin{enumerate}[label=(LP\arabic*)]
				\item \label{con:fin_length} every object of $\cC$ is of finite length,
				\item \label{con:subobject_inv} $\DD(\cC)$ is closed under formation of subobjects in $\DS(\cC)$,
				\item \label{con:bcthm} for $(M,\varphi_\sigma),(N,\psi_\sigma)\in\DD(\cC)$, the canonical map
				\[F'\otimes_F \Hom_{\DD(\cC)}((M,\varphi_\sigma),(N,\psi_\sigma))
				\to \Hom_{\cC}(M,N)\]
				is an isomorphism, and
				\item \label{con:surj} for every simple object $Q\in\cC$, there exists a simple object $(M,\varphi_\sigma)$ in $\DD(\cC)$ such that $\Hom_{\cC}(M,Q)\neq 0$.
			\end{enumerate}
			\item An $F'/F$-linear abelian $\Gamma_{F'/F}$-category $\cC$ is called \emph{Loewy} if $\Gamma_{F'/F}$ is finite and $(\cC,\DS(\cC))$ is a Loewy pair.
		\end{enumerate}
	\end{defn}
	
	\begin{conv}
		If we are given a group $\Gamma$, an abelian $\Gamma$-category $\cC$, and an abelian full subcategory $\DD(\cC)$ of $\DS(\cC)$, objects of $\DD(\cC)$ will be called descent data.
	\end{conv}

	For the definitions, let us note:

	\begin{prop}\label{prop:gamma_action}
		Let $\cC$ be a locally small $\Gamma_{F'/F}$-category. Consider descent systems $(M,\varphi_\sigma)$ and $(N,\psi_\sigma)$.
		\begin{enumerate}
			\item The group $\Gamma_{F'/F}$ acts on $\Hom_{\cC}(M,N)$ by
			$\sigma\cdot f \coloneqq\psi_\sigma\circ {}^\sigma f\circ \varphi^{-1}_\sigma$ for $\sigma\in\Gamma_{F'/F}$ and $f\in \Hom_{\cC}(M,N)$.
			\item A morphism $f:M\to N$ respects the descent data if and only if it is $\Gamma_{F'/F}$-invariant for the action in (1).
			\item If $\cC$ is $F$-linear, so is the action in (1).
			\item Assume that $\cC$ is semilinear. For descent systems $(M,\varphi_\sigma)$ and $(N,\psi_\sigma)$,
			the canonical map
			\begin{equation}
				F'\otimes_F \Hom_{\DS(\cC)}((M,\varphi_\sigma),(N,\psi_\sigma))
				\to \Hom_{\cC}(M,N)
				\label{eq:bcmap}	
			\end{equation}
			is $\Gamma_{F'/F}$-equivariant, where $\Gamma_{F'/F}$ acts on the domain by $\sigma\cdot (c\otimes f)=\sigma(c)\otimes f$.
		\end{enumerate}	
	\end{prop}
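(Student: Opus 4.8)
The plan is to establish all four parts by direct diagram chases from the definition of a $\Gamma$-category and the descent-system axiom \ref{con:DS}.

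For part (1), I first observe that $\sigma\cdot f$ is a legitimate morphism $M\to N$, being the composite $M \xrightarrow{\varphi_\sigma^{-1}} {}^\sigma M \xrightarrow{{}^\sigma f} {}^\sigma N \xrightarrow{\psi_\sigma} N$. The main point for the group-action axioms is a preliminary identity: $\varphi_e = u_M$ for any descent system $(M,\varphi_\sigma)$. To see this, specialize \ref{con:DS} to $\sigma=\tau=e$ and cancel the isomorphism $\varphi_e$ to get $\mu_{e,e,M} = {}^e\varphi_e$; on the other hand \ref{con:unit} unwinds (via the horizontal-composition formula) to $\mu_{e,e,M} = u_{{}^eM}$; combining these and applying naturality of $u\colon{}^e(-)\Rightarrow\id_{\cC}$ along $\varphi_e$ gives $u_M\circ u_{{}^eM} = \varphi_e\circ u_{{}^eM}$, whence $\varphi_e=u_M$ and likewise $\psi_e=u_N$. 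Then $e\cdot f = u_N\circ{}^ef\circ u_M^{-1} = f$ by naturality of $u$ along $f$. For the relation $\sigma\cdot(\tau\cdot f)=(\sigma\tau)\cdot f$, I would expand the left side using functoriality of ${}^\sigma(-)$, rewrite $\psi_\sigma\circ{}^\sigma\psi_\tau = \psi_{\sigma\tau}\circ\mu_{\sigma,\tau,N}$ and $\varphi_\sigma\circ{}^\sigma\varphi_\tau = \varphi_{\sigma\tau}\circ\mu_{\sigma,\tau,M}$ via \ref{con:DS}, and finally invoke naturality of $\mu_{\sigma,\tau}$ along $f$ to cancel the two coherence isomorphisms, leaving exactly $\psi_{\sigma\tau}\circ{}^{\sigma\tau}f\circ\varphi_{\sigma\tau}^{-1}$.

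Parts (2) and (3) are then immediate. For (2), the condition defining a morphism of descent systems, $f\circ\varphi_\sigma=\psi_\sigma\circ{}^\sigma f$, becomes $f=\psi_\sigma\circ{}^\sigma f\circ\varphi_\sigma^{-1}=\sigma\cdot f$ after right-composing with $\varphi_\sigma^{-1}$, so it holds for all $\sigma$ precisely when $f$ is $\Gamma_{F'/F}$-invariant. For (3), when $\cC$ is $F$-linear the functors ${}^\sigma(-)$ are $F$-linear and composition in $\cC$ is $F$-bilinear, so $f\mapsto\psi_\sigma\circ{}^\sigma f\circ\varphi_\sigma^{-1}$ is $F$-linear.

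For part (4), recall that under semilinearity the canonical map \eqref{eq:bcmap} realizes $\Hom_{\DS(\cC)}((M,\varphi_\sigma),(N,\psi_\sigma))$ as the $F$-subspace of invariants of the $F'$-vector space $\Hom_{\cC}(M,N)$ and sends $c\otimes g$ to the scalar multiple $cg$. For $g$ invariant I would compute $\sigma\cdot(cg) = \psi_\sigma\circ{}^\sigma(cg)\circ\varphi_\sigma^{-1}$, use semilinearity to replace ${}^\sigma(cg)$ by $\sigma(c)\,{}^\sigma g$, pull the scalar $\sigma(c)$ out of the composition, and use $\sigma\cdot g=g$ to obtain $\sigma\cdot(cg)=\sigma(c)\,g$, which is precisely the image of $\sigma(c)\otimes g=\sigma\cdot(c\otimes g)$; this gives the asserted equivariance. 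The only delicate step is the coherence bookkeeping in part (1) — keeping track that the instances of \ref{con:unit} are evaluated on ${}^eM$ rather than on $M$, and that naturality of $\mu_{\sigma,\tau}$ is applied along $f$ and not along an identity; everything else is formal.
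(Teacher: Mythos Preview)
Your proof is correct and follows essentially the same approach as the paper. The paper carries out the same diagram chase for part (1) (using \ref{con:DS}, \ref{con:unit}, and naturality of $u$ and $\mu_{\sigma,\tau}$ in the same way you do, though organized in a slightly different order), and for parts (2)--(4) simply says ``the rest follows by unwinding the definitions,'' which is exactly what you have made explicit.
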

	
	\begin{proof}
		Part (1) is straightforward:
		\[\begin{split}
			\psi_\sigma\circ {}^\sigma (\psi_\tau\circ {}^\tau f\circ \varphi^{-1}_\tau)\circ \varphi^{-1}_\sigma
			&=\psi_\sigma\circ {}^\sigma\psi_{\tau}\circ {}^\sigma({}^\tau f)\circ 
			{}^\sigma \varphi^{-1}_\tau\circ \varphi^{-1}_\sigma\\
			&\overset{\mathrm{\ref{con:ass}}}{=}\psi_{\sigma\tau}\circ\mu_{\sigma,\tau,M}
			\circ {}^\sigma({}^\tau f)\circ \mu^{-1}_{\sigma,\tau,M}\circ \varphi^{-1}_{\sigma\tau}\\
			&\overset{\mathrm{\ref{con:DS}}}{=}\psi_{\sigma\tau}
			\circ {}^{\sigma\tau} f\circ \varphi^{-1}_{\sigma\tau}
		\end{split}\]
		for $f\in \Hom_{\cC}(M,N)$ and
		$\sigma,\tau\in\Gamma_{F'/F}$;
		\[\begin{split}
			\varphi_e&=u_{M} \circ {}^e\varphi_e \circ u^{-1}_{{}^e M}\\
			&\overset{\mathrm{\ref{con:unit}}}{=}u_{M} \circ {}^e\varphi_e \circ \mu^{-1}_{e,e,M}\\
			&=u_{M} \circ \varphi_e^{-1}\circ (\varphi_e\circ {}^e\varphi_e) \circ \mu^{-1}_{e,e,M}\\
			&\overset{\mathrm{\ref{con:DS}}}{=}u_M,
		\end{split}\]
		\[\psi_e\circ{}^ef\circ \varphi^{-1}_e
		=\psi_e \circ u^{-1}_N\circ f\circ u_M\circ \varphi^{-1}_e =f\]
		for $f\in \Hom_{\cC}(M,N)$.
		The rest follows by unwinding the definitions.
	\end{proof}
	
	\begin{cor}\label{cor:bc_criterion}
		Let $\cC$ be a locally small semilinear $\Gamma_{F'/F}$-category. Then the map \eqref{eq:bcmap} is an isomorphism if and only if the action of $\Gamma_{F'/F}$ on $\Hom_{\cC}(M,N)$ is continuous. In particular, \eqref{eq:bcmap} is an isomorphism if $\Gamma_{F'/F}$ is finite.
	\end{cor}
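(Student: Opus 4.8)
The plan is to strip away the categorical language and recognize \eqref{eq:bcmap} as the classical Galois-descent comparison map for vector spaces. Write $W\coloneqq\Hom_\cC(M,N)$. Since $\cC$ is semilinear, $W$ is an $F'$-vector space carrying the semilinear action $\sigma\cdot f=\psi_\sigma\circ{}^\sigma f\circ\varphi_\sigma^{-1}$ of $\Gamma\coloneqq\Gamma_{F'/F}$ from Proposition \ref{prop:gamma_action}(1),(3). By Proposition \ref{prop:gamma_action}(2) the subspace $\Hom_{\DS(\cC)}((M,\varphi_\sigma),(N,\psi_\sigma))$ is exactly $W^\Gamma$, and by Proposition \ref{prop:gamma_action}(4) the map \eqref{eq:bcmap} is $\Gamma$-equivariant and is identified with the canonical map $c_W\colon F'\otimes_F W^\Gamma\to W$, where $\Gamma$ acts on the source by $\sigma(c\otimes w)=\sigma(c)\otimes w$. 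So it suffices to prove: for an $F'$-vector space $W$ with a semilinear $\Gamma$-action, $c_W$ is an isomorphism if and only if the action is continuous (Krull topology on $\Gamma$, discrete topology on $W$), and that the finite case always satisfies continuity.

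For the ``only if'' direction, if $c_W$ is an isomorphism then $W\cong F'\otimes_F W^\Gamma$ as semilinear $\Gamma$-modules, so it is enough to observe that the action on $F'\otimes_F V$ (for any $F$-vector space $V$) given by $\sigma(c\otimes v)=\sigma(c)\otimes v$ is continuous: any element is a finite sum $\sum_i c_i\otimes v_i$ with $c_i\in F'$, so choosing a finite subextension $F''/F$ in $F'$ containing all the $c_i$, it is fixed by the open subgroup $\Gamma_{F'/F''}$. Since $\Gamma$ is finite exactly when $\Gamma_{F'/F''}$ can be taken trivial, this also covers the ``in particular'' clause once the converse is known there.

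For the ``if'' direction, assume the action is continuous and set $W_{F''}\coloneqq W^{\Gamma_{F'/F''}}$ for each $F''\in\Lambda_{F'/F}$. Semilinearity (together with normality of $\Gamma_{F'/F''}$ in $\Gamma$, which holds since $F''/F$ is Galois) shows $W_{F''}$ is a $\Gamma$-stable $F''$-subspace carrying a semilinear action of $\Gamma_{F''/F}=\Gamma/\Gamma_{F'/F''}$ with fixed points $(W_{F''})^{\Gamma_{F''/F}}=W^\Gamma$. Continuity says precisely that every $w\in W$ has open stabilizer, hence lies in some $W_{F''}$, so $W=\bigcup_{F''\in\Lambda_{F'/F}}W_{F''}=\varinjlim_{F''}W_{F''}$ along the (directed) system of inclusions. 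By finite Galois descent for vector spaces---equivalently Speiser's theorem, or faithfully flat descent along the finite \'etale cover $F''/F$ using $F''\otimes_F F''\cong\prod_{\Gamma_{F''/F}}F''$---the comparison map $F''\otimes_F W^\Gamma\to W_{F''}$ is an isomorphism for each $F''$. Passing to the filtered colimit over $\Lambda_{F'/F}$ and using $F'=\varinjlim_{F''}F''$ together with the commutation of tensor products with filtered colimits gives $F'\otimes_F W^\Gamma=\varinjlim_{F''}(F''\otimes_F W^\Gamma)\xrightarrow{\ \sim\ }\varinjlim_{F''}W_{F''}=W$; since the transition maps are compatible with the ``include and multiply'' maps into $W$, this colimit isomorphism is $c_W$ itself. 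This also establishes the finite case directly (take $F''=F'$).

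The main obstacle is the ``if'' direction, and within it the input of finite Galois descent for possibly infinite-dimensional vector spaces, which I will cite (or record as a short lemma deduced from the normal basis theorem / Dedekind's independence of characters); the remaining work is bookkeeping---checking that $W_{F''}$ is genuinely a semilinear $\Gamma_{F''/F}$-module with the right fixed points, that the $W_{F''}$ exhaust $W$ under the continuity hypothesis, and that the colimit of the finite-level descent isomorphisms is literally the map \eqref{eq:bcmap}. Once finite descent is taken as known, none of these steps presents any difficulty.
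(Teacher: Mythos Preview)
Your proof is correct and matches the paper's intended argument. The paper states this as a corollary with no explicit proof, relying on the reader to see that Proposition~\ref{prop:gamma_action} reduces the question to the standard Galois descent statement for $F'$-vector spaces with semilinear $\Gamma_{F'/F}$-action; your write-up is precisely this reduction followed by the standard filtered-colimit-over-$\Lambda_{F'/F}$ proof of that descent statement.
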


	\begin{prop}\label{prop:nosimple}
		A pair of an abelian $\Gamma_{F'/F}$-category $\cC$ with Condition \ref{con:fin_length} and 
		an abelian full subcategory $\DD(\cC)\subset\DS(\cC)$ satisfies \ref{con:surj} if and only if for every simple object $Q\in\cC$, there exists a descent datum $(M,\varphi_\sigma)\in\DD(\cC)$ such that $\Hom_{\cC}(M,Q)\neq 0$.
	\end{prop}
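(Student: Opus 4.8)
The ``only if'' direction is immediate, since a simple object of $\DD(\cC)$ is in particular an object of $\DD(\cC)$. For the converse I would fix a simple object $Q\in\cC$ and, by hypothesis, a descent datum $(M,\varphi_\sigma)\in\DD(\cC)$ together with a nonzero morphism $f\colon M\to Q$ in $\cC$; since $Q$ is simple, $f$ is an epimorphism, so in particular $(M,\varphi_\sigma)\neq 0$. The goal is then to replace $(M,\varphi_\sigma)$ by a \emph{simple} object of $\DD(\cC)$ that still admits a nonzero morphism onto $Q$.

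First I would check that $(M,\varphi_\sigma)$ has finite length inside $\DD(\cC)$. The forgetful functor $\pi\colon\DS(\cC)\to\cC$ is exact by Proposition \ref{prop:(co)limit}, and it is faithful because, by Definition \ref{defn:descent_system}, a morphism of descent systems is just a morphism of $\cC$ satisfying an extra identity. As $\DD(\cC)$ is an abelian full subcategory of $\DS(\cC)$, the inclusion is exact, so the composite $\pi'\colon\DD(\cC)\to\cC$ is exact and faithful. An exact faithful additive functor sends a strictly increasing chain of subobjects to a strictly increasing chain of subobjects of the same length, since it preserves monomorphisms and sends each nonzero successive quotient to a nonzero object; hence every chain of subobjects of $(M,\varphi_\sigma)$ in $\DD(\cC)$ has length at most $\mathrm{length}_\cC(M)$, which is finite by \ref{con:fin_length}. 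Thus $(M,\varphi_\sigma)$ is of finite length in $\DD(\cC)$.

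Then I would argue by induction on $\ell\coloneqq\mathrm{length}_{\DD(\cC)}(M,\varphi_\sigma)\geq 1$. If $\ell=1$, then $(M,\varphi_\sigma)$ is simple and $f$ witnesses $\Hom_\cC(M,Q)\neq 0$, so \ref{con:surj} holds. If $\ell\geq 2$, choose a nonzero proper subobject $N\subsetneq(M,\varphi_\sigma)$ in $\DD(\cC)$; then $\pi N$ is a subobject of $M$ in $\cC$, so $f(\pi N)$ equals $Q$ or $0$ by simplicity of $Q$. If it equals $Q$, then $f$ restricts to a nonzero morphism $\pi N\to Q$ and $N$ has length $<\ell$, so the inductive hypothesis applies to $N$. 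If it equals $0$, then $f$ kills $\pi N$, hence factors through $\pi M/\pi N\cong\pi\bigl((M,\varphi_\sigma)/N\bigr)$ (using exactness of $\pi$ and of the inclusion $\DD(\cC)\hookrightarrow\DS(\cC)$), yielding a nonzero morphism $\pi\bigl((M,\varphi_\sigma)/N\bigr)\to Q$ from the descent datum $(M,\varphi_\sigma)/N$ of length $<\ell$, to which the inductive hypothesis applies. In either case we obtain a simple $S\in\DD(\cC)$ with $\Hom_\cC(\pi S,Q)\neq 0$, which is exactly \ref{con:surj}.

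The hard part will be the finite-length step: it relies on reading ``abelian full subcategory $\DD(\cC)\subset\DS(\cC)$'' as requiring the inclusion to be exact (in accordance with Definition \ref{def:lp}), together with the exactness of $\pi$ from Proposition \ref{prop:(co)limit}. Everything else is the routine observation that a nonzero map from a finite-length object onto a simple object factors through a simple subquotient, carried out inside $\DD(\cC)$; I do not expect any further obstacle.
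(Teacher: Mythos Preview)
Your proof is correct and follows essentially the same strategy as the paper: both arguments first observe that $(M,\varphi_\sigma)$ has finite length in $\DD(\cC)$ via the exact faithful forgetful functor to $\cC$, and then extract a simple subquotient of $(M,\varphi_\sigma)$ in $\DD(\cC)$ through which the nonzero map to $Q$ factors. The only difference is cosmetic: the paper builds an explicit ascending chain $0=M_0\subset M_1\subset\cdots$ with simple successive quotients and locates the first index where $p|_{M_i}\neq 0$, whereas you package the same idea as an induction on $\mathrm{length}_{\DD(\cC)}(M,\varphi_\sigma)$, splitting on whether an arbitrary proper subobject already surjects onto $Q$.
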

	
	\begin{proof}
		The ``only if'' direction is clear. Suppose that we are given a simple object $Q\in\cC$, $(M,\varphi_\sigma)\in\DD(\cC)$, and a nonzero morphism $p:M\to Q$. We construct an increasing sequence
		\begin{equation}
			(M_0,\varphi_\sigma)\hookrightarrow (M_2,\varphi_\sigma)\hookrightarrow\cdots
			\label{eq:increasing_seq}
		\end{equation}
		of subobjects of $(M,\varphi_\sigma)$ in $\DD(\cC)$ with the following properties:
		\begin{enumerate}
			\item[(i)] $M_0=0$,
			\item[(ii)] the sequence \eqref{eq:increasing_seq} stops if and only if $p|_{M_i}\neq 0$,
			\item[(iii)] the successive quotients of \eqref{eq:increasing_seq} are simple or 0.
		\end{enumerate}
		
		We set $M_0=0$. This determines a subobject of $(M,\varphi_\sigma)$. Let $i\geq 1$. We set $M_i=M_{i-1}$ if $p|_{M_{i-1}}\neq 0$. Suppose that $p|_{M_{i-1}}=0$. Since $p$ is nonzero, so is $M/M_{i-1}$. Since $M/M_{i-1}$ is of finite length, one can find a simple subobject $(\bar{M}_i,\bar{\varphi}_\sigma)\in\DD(\cC)$ of the quotient datum $(M_i/M_{i-1},\bar{\varphi}_i)\coloneqq (M_i,\varphi_i)/(M_{i-1},\varphi_\sigma)$.
		Let $(M_i,\varphi_\sigma)$ be its preimage in $(M,\varphi_\sigma)$. It is evident by construction that the resulting sequence satisfies the properties (i) - (iii) mentioned above.
		
		The sequence \eqref{eq:increasing_seq} does stop since $M$ is of finite length. In view of (i) and (ii), one can find a (unique) positive integer $n\geq 1$ such that $p|_{M_{n-1}}=0$ and $p|_{M_n}\neq 0$. This implies $\Hom_{\cC}(M_n/M_{n-1},Q)\neq 0$. In particular, $M_n/M_{n-1}$ is nonzero. Property (iii) now implies that $(M_n/M_{n-1},\bar{\varphi}_{\sigma})$ is simple. This completes the proof.
	\end{proof}

	\begin{cor}\label{cor:finite_loewy}
		Suppose that $\Gamma_{F'/F}$ is finite. A locally small essentially small semilinear abelian $\Gamma_{F'/F}$-category is Loewy if and only if every object of $\cC$ is of finite length.
	\end{cor}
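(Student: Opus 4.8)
The plan is to unwind Definition \ref{def:lp} in the special case $\DD(\cC)=\DS(\cC)$ and to verify the four axioms \ref{con:fin_length}--\ref{con:surj} one at a time, quoting the preliminary results above. The ``only if'' direction requires nothing: if $\cC$ is Loewy, then by definition $(\cC,\DS(\cC))$ is a Loewy pair, and \ref{con:fin_length} is exactly the assertion that every object of $\cC$ has finite length. So the content is the converse.

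For the converse I would assume that every object of $\cC$ has finite length and set $\DD(\cC)\coloneqq\DS(\cC)$. Since $\cC$ is abelian, $\DS(\cC)$ is abelian by Proposition \ref{prop:(co)limit}(3), and it is an abelian full subcategory of itself that is trivially closed under formation of subobjects; this gives the structural requirements on $\DD(\cC)$ together with \ref{con:subobject_inv}, while \ref{con:fin_length} is the hypothesis. Axiom \ref{con:bcthm} is immediate from Corollary \ref{cor:bc_criterion}: $\cC$ is semilinear and $\Gamma_{F'/F}$ is finite, so the canonical map \eqref{eq:bcmap} is an isomorphism for all descent systems.

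The only axiom needing a short argument is \ref{con:surj}. Here I would first observe that, $\Gamma_{F'/F}$ being finite, $\cC$ admits $\Gamma_{F'/F}$-indexed products, so Proposition \ref{prop:res} applies and the forgetful functor $\DS(\cC)\to\cC$ has a right adjoint $\Res_{F'/F}$ whose value on an object $Q$ has underlying object $\prod_{\sigma\in\Gamma_{F'/F}}{}^\sigma Q$. For a simple $Q\in\cC$ the composite $\prod_{\sigma}{}^\sigma Q\xrightarrow{\pr_e}{}^e Q\xrightarrow{u_Q}Q$ is a (split) epimorphism in $\cC$, hence nonzero since $Q\neq 0$; thus $\Hom_{\cC}(\prod_{\sigma}{}^\sigma Q,Q)\neq 0$, so $\Res_{F'/F}(Q)$ is a descent system mapping nontrivially onto $Q$. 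Applying Proposition \ref{prop:nosimple} with $\DD(\cC)=\DS(\cC)$ (closure under subobjects being automatic) then yields \ref{con:surj}. With all four axioms in hand, $(\cC,\DS(\cC))$ is a Loewy pair, and since $\Gamma_{F'/F}$ is finite, $\cC$ is Loewy by Definition \ref{def:lp}(2).

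I do not expect a genuine obstacle: the proof is bookkeeping assembled from Propositions \ref{prop:(co)limit}, \ref{prop:res}, \ref{prop:nosimple} and Corollary \ref{cor:bc_criterion}. The one place that calls for a little care is \ref{con:surj}, where one must exhibit the auxiliary descent system feeding into Proposition \ref{prop:nosimple}; the natural choice is $\Res_{F'/F}(Q)$, and this is available precisely because finiteness of $\Gamma_{F'/F}$ guarantees the products used to build $\Res_{F'/F}$.
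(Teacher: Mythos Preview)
Your proof is correct and follows essentially the same route as the paper: reduce to checking \ref{con:surj}, exhibit $\Res_{F'/F}Q$ as the required descent system (available since $\Gamma_{F'/F}$ finite makes the needed products exist in the abelian category $\cC$), and invoke Proposition~\ref{prop:nosimple}. The map you write down, $\pr_e$ followed by $u_Q$, is precisely the counit of the adjunction in Proposition~\ref{prop:res}, which is what the paper uses.
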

	
	\begin{proof}
		The problem is to verify Condition \ref{con:surj}. Recall that the underlying object of $\Res_{F'/F} Q$ is $\prod_{\sigma\in\Gamma_{F'/F}} {}^\sigma Q$ for an object $Q\in \cC$. In particular, the underlying object of $\Res_{F'/F} Q$ is nonzero if and only if so is $Q$. The assertion now follows since the counit satisfies the equivalent conditions in Proposition \ref{prop:nosimple}. 
	\end{proof}
	
	\begin{rem}
		We see from the construction of $\Res_{F'/F} Q$ that its underlying object is of finite length. Hence the argument of Corollary \ref{cor:finite_loewy} works without the finite length condition. However, we still use the Artin condition later (Lemma \ref{lem:well-defined}). Hence for our results on Loewy $\Gamma_{F'/F}$-categories in this paper, we could replace \ref{con:fin_length} with `$\cC$ Artinian'. Similarly, we could slightly weaken the definition of Loewy pairs. We leave them to the readers who want a weaker definition.
	\end{rem}
	
	\begin{rem}
		For a finite Galois extension $k'/k$ of commutative rings of Galois group $\Gamma_{k'/k}$, one could define the notion of $k$-linear (resp.~$k'/k$-linear, semilinear) $\Gamma_{k'/k}$-categories along the same line. Then Propositions \ref{prop:res} and \ref{prop:gamma_action} - Corollary \ref{cor:finite_loewy} hold true in this setting.
	\end{rem}

	\begin{ex}[\cite{MR1500635}]\label{ex:Loewy}
		Let $G$ be a group. Then the category of finite-dimensional complex representations of $G$ with the complex conjugate action is Loewy. The category of descent systems is equivalent to that of finite-dimensional real representations of $G$.
	\end{ex}
	
	\begin{ex}\label{ex:hc_fin}
		Put $F'/F=\bC/\bR$. Let $(\fg,K)$ be a reductive pair in the sense of \cite[Definition 4.30]{MR1330919}. Recall that a $(\fg,K)$-module is called finitely generated (resp.~$Z(\fg)$-finite) if it is so as a $U(\fg)$-module (resp.~$\Ann_{Z(\fg)}(V)$ is of finite codimension in $Z(\fg)$). Let $(\fg,K)\cmod$ be the category of $(\fg,K)$-modules. We denote its full subcategory consisting of finitely generated and $Z(\fg)$-finite $(\fg,K)$-modules by
		$(\fg,K)\cmod_{\fingen,\Zf}$.
		It is evident by definition that $(\fg,K)\cmod_{\fingen,\Zf}$ is closed under formations of subquotient in $(\fg,K)\cmod$, in particular, it is an abelian subcategory of $(\fg,K)\cmod$. Moreover, $(\fg,K)\cmod_{\fingen,\Zf}$ is a Loewy $\Gamma_{\bC/\bR}$-category for the complex conjugation since every finitely generated and $Z(\fg)$-finite $(\fg,K)$-module is of finite length by Harish-Chandra's finiteness theorem (see \cite[Corollary 7.223]{MR1330919} and \cite[3.4.1. Theorem]{MR929683}). Let $\fg_0$ denote the given real form of $\fg$. Then we can naturally define the notion of finitely generated and $Z(\fg_0)$-finite $(\fg_0,K)$-modules. Moreover, the category of descent systems of $(\fg,K)\cmod_{\fingen,\Zf}$ is equivalent to that of finitely generated and $Z(\fg_0)$-finite $(\fg_0,K)$-modules. One could define the notion of $U(\fg)^K$-finiteness in the obvious way as a variant, but we remark that for a finitely generated $(\fg,K)$-module, the following conditions are equivalent by \cite[3.4.1. Theorem]{MR929683} and a similar argument to \cite[Chapter VII, Section~2, EXAMPLES 2)]{MR1330919}:
		\begin{enumerate}
			\item[(a)] $V$ is $Z(\fg)$-finite;
			\item[(b)] $V$ is $Z(\fg)^K$-finite;
			\item[(c)] $V$ is admissible.
		\end{enumerate}
		We find as a consequence that every irreducible $(\fg,K)$-module $V$ is finitely generated and $Z(\fg)$-finite since $Z(\fg)^K$ acts on $V$ as a character (cf.~\cite[3.3.2. Lemma]{MR929683}).
	\end{ex}
	
	\begin{cor}\label{cor:fl}
		Suppose that $\Gamma_{F'/F}$ is finite. Let $\cC$ be a locally small semilinear abelian $\Gamma_{F'/F}$-category. Then its full subcategory $\cC_{\mathrm{fl}}$ consisting of objects of finite length is Loewy for the action of $\Gamma_{F'/F}$ on $\cC$ if $\cC_{\mathrm{fl}}$ is essentially small. Moreover, we have $\DS(\cC_{\mathrm{fl}})=\DS(\cC)_{\mathrm{fl}}$.
	\end{cor}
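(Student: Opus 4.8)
The plan is to deduce the first assertion directly from Corollary \ref{cor:finite_loewy}, and to deduce the equality $\DS(\cC_{\mathrm{fl}})=\DS(\cC)_{\mathrm{fl}}$ from elementary properties of the forgetful functor $\pi\colon\DS(\cC)\to\cC$, which is exact, faithful, and conservative (Proposition \ref{prop:(co)limit}). For the first assertion, I would check that $\cC_{\mathrm{fl}}$ is a locally small, essentially small, semilinear abelian $\Gamma_{F'/F}$-category with every object of finite length, so that Corollary \ref{cor:finite_loewy} applies. The only points needing a word are: $\cC_{\mathrm{fl}}$ is an abelian (in fact Serre) subcategory of $\cC$, since objects of finite length are stable under subobjects, quotients, and extensions; the endofunctors ${}^\sigma(-)$ restrict to $\cC_{\mathrm{fl}}$, being additive autoequivalences (with quasi-inverse ${}^{\sigma^{-1}}(-)$ via the $\mu_{\sigma,\tau}$) they preserve length, and $u$, $\mu_{\sigma,\tau}$ restrict as well; local smallness is inherited and essential smallness is the hypothesis. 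Every object of $\cC_{\mathrm{fl}}$ has finite length by definition, so $\cC_{\mathrm{fl}}$ is Loewy.

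For the equality, first note that $\DS(\cC_{\mathrm{fl}})$ is exactly the full subcategory of $\DS(\cC)$ on the descent systems whose underlying object lies in $\cC_{\mathrm{fl}}$, and it is a Serre subcategory of $\DS(\cC)$ because $\pi$ carries sub-/quotient objects and extensions in $\DS(\cC)$ to the corresponding notions in $\cC$. The inclusion $\DS(\cC_{\mathrm{fl}})\subseteq\DS(\cC)_{\mathrm{fl}}$ is immediate: since $\pi$ is exact, faithful, and conservative, it sends a strictly monotone chain of sub-descent-systems of $(M,\varphi_\sigma)$ to a strictly monotone chain of subobjects of $M$, so if $M$ has finite length in $\cC$ then $(M,\varphi_\sigma)$ is Noetherian and Artinian, hence of finite length, in $\DS(\cC)$.

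The substantial inclusion is $\DS(\cC)_{\mathrm{fl}}\subseteq\DS(\cC_{\mathrm{fl}})$. Given $(M,\varphi_\sigma)\in\DS(\cC)_{\mathrm{fl}}$, I would choose a composition series in $\DS(\cC)$ and apply the exact functor $\pi$ to exhibit $M$ as a finite iterated extension in $\cC$ of the underlying objects $\pi S$ of the simple composition factors $S$; since finite-length objects of $\cC$ are stable under extensions, it suffices to prove that $\pi S$ has finite length for every simple object $S=(N,\psi_\sigma)$ of $\DS(\cC)$. To this end, for any nonzero subobject $N'\subseteq N$ in $\cC$ one checks, using \ref{con:DS} and the exactness and naturality of ${}^\sigma(-)$ and of $u$, that $\sum_{\sigma}\psi_\sigma({}^\sigma N')$ underlies a sub-descent-system of $S$; it is nonzero (it contains $N'$), so by simplicity of $S$ it is all of $S$, whence $N=\sum_{\sigma}\psi_\sigma({}^\sigma N')$, and as each summand is isomorphic in $\cC$ to ${}^\sigma N'$, hence of the same length as $N'$, we get
\[
\mathrm{length}_{\cC}(N)\ \le\ |\Gamma_{F'/F}|\cdot\mathrm{length}_{\cC}(N')\qquad\text{for every nonzero subobject }N'\subseteq N .
\]
On the other hand $D\coloneqq\End_{\DS(\cC)}(S)$ is a division $F$-algebra by Schur's lemma, and by Corollary \ref{cor:bc_criterion} the canonical map identifies $\End_{\cC}(N)$ with $F'\otimes_F D$; since $F'/F$ is finite and separable, $F'\otimes_F D$ is finitely generated as a left $D$-module and is therefore a semisimple Artinian ring. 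Consequently $N$ decomposes in $\cC$ as a finite direct sum $N\cong\bigoplus_j N_j^{\oplus n_j}$ over the blocks, with each $\End_{\cC}(N_j)$ a division ring, $\Gamma_{F'/F}$ permutes the blocks, and the displayed inequality applied to the $N_j$ forces this action to be transitive, so there are at most $|\Gamma_{F'/F}|$ blocks.

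The main obstacle is the last step: converting the combination of the displayed Galois-generation bound and the semisimplicity of $\End_{\cC}(N)$ into finiteness of $\mathrm{length}_{\cC}(N)$; purely formal properties of $\pi$ and of $\Res_{F'/F}$ are not enough here, since everything one builds from $N$ by those functors has length a fixed multiple of $\mathrm{length}_{\cC}(N)$. I would carry this out by a Clifford-type reduction to the case of a single block, in which $\Gamma_{F'/F}$ preserves $N_j$ and $S$ is built from a smaller descent system on $N_j$, and then show that $N_j$ must be simple in $\cC$: for a division ring $\End_{\cC}(N_j)$ one has $\Hom_{\cC}(N_j/W,N_j)=0=\Hom_{\cC}(N_j,W)$ for every nonzero proper $W\subset N_j$ (a nonzero such map composes to a nonzero, hence invertible, endomorphism of $N_j$ that factors through a non-iso mono or epi, a contradiction), and combining these vanishings with the Galois-generation identity $N_j=\sum_{\sigma}\psi_\sigma({}^\sigma W)$ rules out a proper nonzero $W$. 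It then follows that $N=\pi S$ is semisimple of length at most $|\Gamma_{F'/F}|$ in $\cC$, in particular of finite length, which completes the proof of the Claim and hence of $\DS(\cC)_{\mathrm{fl}}\subseteq\DS(\cC_{\mathrm{fl}})$.
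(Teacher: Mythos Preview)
Your treatment of the first assertion and of the easy inclusion $\DS(\cC_{\mathrm{fl}})\subseteq\DS(\cC)_{\mathrm{fl}}$ is correct and matches the paper's argument.

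For the hard inclusion $\DS(\cC)_{\mathrm{fl}}\subseteq\DS(\cC_{\mathrm{fl}})$, however, your approach has a genuine gap, and you have also dismissed precisely the tool that makes the paper's proof work. You write that ``purely formal properties of $\pi$ and of $\Res_{F'/F}$ are not enough here''; in fact they are. The paper observes that for $(M,\varphi_\sigma)\in\DS(\cC)$ there is an isomorphism $\Res_{F'/F} M\cong\bigoplus_{\tau\in\Gamma_{F'/F}}(M,\varphi_\sigma)$ in $\DS(\cC)$ (one way to see this: for an $F$-basis $(a_\tau)$ of $F'$, the maps $(M,\varphi_\sigma)\to\Res_{F'/F}M$ adjoint to $a_\tau\cdot\id_M\in\End_\cC(M)$ assemble to an isomorphism, the underlying matrix $(\sigma(a_\tau))_{\sigma,\tau}$ being invertible by independence of characters). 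Hence if $(M,\varphi_\sigma)$ has finite length in $\DS(\cC)$, so does $\Res_{F'/F}M$; since $\Res_{F'/F}\colon\cC\to\DS(\cC)$ is exact and conservative (its underlying object is the finite product $\prod_\tau{}^\tau M$), it reflects finite length, whence $M\in\cC_{\mathrm{fl}}$. This is a two-line argument once the isomorphism is noted.

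By contrast, your route through a Clifford-type reduction is both more involved and incomplete. First, the reduction itself is only sketched: the stabilizer of a block need not be normal in $\Gamma_{F'/F}$, so ``a smaller descent system on $N_j$'' is not a descent system for a Galois extension in the sense used here, and you do not explain how simplicity of $S$ transfers. Second, and more seriously, the final step does not go through as stated: from $\End_\cC(N_j)$ a division ring you correctly deduce $\Hom_\cC(N_j/W,N_j)=0=\Hom_\cC(N_j,W)$ for proper nonzero $W$, but there is no evident way to combine these vanishings with $N_j=\sum_\sigma\psi_\sigma({}^\sigma W)$ to force $W=0$ or $W=N_j$. (For $|\Gamma|=2$ one can argue via a direct-sum decomposition, but for larger $\Gamma$ the intersection $\bigcap_\sigma\psi_\sigma({}^\sigma W)=0$ and sum $=N_j$ do not by themselves produce a nontrivial idempotent.) Your displayed length inequality is also vacuous at this stage, since you have not yet established that any of the lengths involved are finite.
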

	
	\begin{proof}
		Since the functors ${}^\sigma(-)$ and ${}^{\sigma^{-1}}(-)$ are quasi-inverse to each other, ${}^\sigma(-)$ respect objects of finite length. This shows the first part. 
		
		Since $\pi:\DS(\cC)\to\cC$ is exact and conservative, $\pi$ reflects the finite length property. This shows $\DS(\cC_{\mathrm{fl}})\subset \DS(\cC)_{\mathrm{fl}}$. To prove the converse containment, consider a descent system $(M,\varphi_\sigma)$ of finite length. We define an isomorphism
		$\Res_{F'/F} M\cong \oplus_{\tau\in\Gamma_{F'/F}} (M,\varphi_\sigma)$
		summand-wisely by $(\varphi_\tau)$. In particular, $\Res_{F'/F} M$ is of finite length. Since $\Res_{F'/F}$ is exact and conservative by construction, we conclude that $M$ is of finite length. This completes the proof.
	\end{proof}

	It remains difficult to check Condition \ref{con:surj} directly in examples if $\Gamma_{F'/F}$ is infinite. Informally speaking, this should be reduced to Corollary \ref{cor:finite_loewy} by showing that any (simple) object $M$ of $\cC$ is defined over $F''\in\Lambda_{F'/F}$.
	
	Recall that we have the restriction map $(-)|_{F''}:\Gamma_{F'/F}\twoheadrightarrow \Gamma_{F''/F}$.
	
	\begin{cons}\label{cons:basechange}
		Let $F''\in\Lambda_{F'/F}$, $\cC_{F''}$ be a $\Gamma_{F''/F}$-category, and \[F'\otimes_{F''}(-):\cC_{F''}\to\cC\]
		be a functor. Suppose that we are given an isomorphism
		\[\alpha_{\sigma}:{}^\sigma(F'\otimes_{F''}(-))\cong F'\otimes_{F''} {}^{\sigma|_{F''}}(-).\]
		
		Let $M\in\cC_{F''}$ and $\varphi_{\sigma|_{F''}}:{}^{\sigma|_{F''}} M\cong M$. Then we set
		\[\varphi_{\sigma}\coloneqq (F'\otimes_{F''}\varphi_{\sigma|_{F''}})\circ \alpha_{\sigma,M}:{}^\sigma (F'\otimes_{F''} M)\cong F'\otimes_{F''} M.\]
	\end{cons}
	
	\begin{lem}\label{lem:baseupds}
		Consider the setting of Construction \ref{cons:basechange}. Suppose that $\alpha_\sigma$ is associative in $\sigma$, i.e., the following diagram commutes for any $\sigma,\tau\in\Gamma_{F'/F}$:
		\begin{equation}
			\begin{tikzcd}
				&F'\otimes_{F''}{}^{(\sigma\tau)|_{F''}}(-)&\\
				F'\otimes_{F''}{}^{\sigma|_{F''}}({}^{\tau|_{F''}}(-))
				\ar[ru, "F'\otimes_{F''}\mu_{\sigma|_{F''},\tau|_{F''}}"]
				&&{}^{\sigma\tau}(F'\otimes_{F''}(-))
				\ar[lu, "\alpha_{\sigma\tau}"']\\
				{}^\sigma(F'\otimes_{F''} {}^{\tau|_{F''}}(-))
				\ar[u, "\alpha_{\sigma,{}^{\tau|_{F''}} (-)}"]	
				&&{}^\sigma({}^\tau(F'\otimes_{F''}(-)))
				\ar[ll, "{}^\sigma\alpha_{\tau}"']\ar[u, "\mu_{\sigma,\tau}"'].
			\end{tikzcd}\label{diag:associative}
		\end{equation}
		Then Construction \ref{cons:basechange} determines a functor $F'\otimes_{F''}(-):\DS(\cC_{F''})\to \DS(\cC)$.
	\end{lem}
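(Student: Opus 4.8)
The plan is to treat the object assignment and the morphism assignment of the putative functor in turn, the latter being essentially formal once the former is established.

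\emph{Objects.} Fix an object $(M,(\varphi_\rho)_{\rho\in\Gamma_{F''/F}})$ of $\DS(\cC_{F''})$, i.e.\ a descent system in the $\Gamma_{F''/F}$-category $\cC_{F''}$. Pulling back along the (surjective) restriction homomorphism $(-)|_{F''}\colon\Gamma_{F'/F}\to\Gamma_{F''/F}$ supplies the isomorphisms $\varphi_{\sigma|_{F''}}$ fed into Construction \ref{cons:basechange}, and each $\varphi_\sigma=(F'\otimes_{F''}\varphi_{\sigma|_{F''}})\circ\alpha_{\sigma,M}$ is an isomorphism since $F'\otimes_{F''}(-)$ preserves isomorphisms and $\alpha_\sigma$ is one. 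The only nontrivial point is Condition \ref{con:DS} for $(F'\otimes_{F''}M,\varphi_\sigma)$, namely $\varphi_{\sigma\tau}\circ\mu_{\sigma,\tau,F'\otimes_{F''}M}=\varphi_\sigma\circ{}^\sigma\varphi_\tau$. Starting from the right-hand side, I would: (i) expand $\varphi_\sigma$ and $\varphi_\tau$ by their definitions; (ii) use naturality of $\alpha_\sigma$ at the morphism $\varphi_{\tau|_{F''}}$ to rewrite $\alpha_{\sigma,M}\circ{}^\sigma(F'\otimes_{F''}\varphi_{\tau|_{F''}})$ as $(F'\otimes_{F''}{}^{\sigma|_{F''}}\varphi_{\tau|_{F''}})\circ\alpha_{\sigma,{}^{\tau|_{F''}}M}$, so that the right-hand side becomes $(F'\otimes_{F''}(\varphi_{\sigma|_{F''}}\circ{}^{\sigma|_{F''}}\varphi_{\tau|_{F''}}))\circ\alpha_{\sigma,{}^{\tau|_{F''}}M}\circ{}^\sigma\alpha_{\tau,M}$; (iii) apply Condition \ref{con:DS} for $(M,\varphi_\rho)$ in $\cC_{F''}$, together with the fact that $(-)|_{F''}$ is a group homomorphism (so $(\sigma\tau)|_{F''}=\sigma|_{F''}\cdot\tau|_{F''}$), to replace the inner composite $\varphi_{\sigma|_{F''}}\circ{}^{\sigma|_{F''}}\varphi_{\tau|_{F''}}$ by $\varphi_{(\sigma\tau)|_{F''}}\circ\mu_{\sigma|_{F''},\tau|_{F''},M}$; and (iv) invoke the commutativity of \eqref{diag:associative} evaluated at $M$ to rewrite $(F'\otimes_{F''}\mu_{\sigma|_{F''},\tau|_{F''},M})\circ\alpha_{\sigma,{}^{\tau|_{F''}}M}\circ{}^\sigma\alpha_{\tau,M}$ as $\alpha_{\sigma\tau,M}\circ\mu_{\sigma,\tau,F'\otimes_{F''}M}$. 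The resulting expression is exactly $\varphi_{\sigma\tau}\circ\mu_{\sigma,\tau,F'\otimes_{F''}M}$, which is the left-hand side, so $(F'\otimes_{F''}M,\varphi_\sigma)\in\DS(\cC)$.

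\emph{Morphisms and functoriality.} For a morphism $f\colon(M,\varphi_\rho)\to(N,\psi_\rho)$ in $\DS(\cC_{F''})$, I would check that $F'\otimes_{F''}f$ intertwines $\varphi_\sigma$ and $\psi_\sigma$, i.e.\ $(F'\otimes_{F''}f)\circ\varphi_\sigma=\psi_\sigma\circ{}^\sigma(F'\otimes_{F''}f)$: expand both $\varphi_\sigma$ and $\psi_\sigma$ by their definitions, use $f\circ\varphi_{\sigma|_{F''}}=\psi_{\sigma|_{F''}}\circ{}^{\sigma|_{F''}}f$ on the left, and use naturality of $\alpha_\sigma$ at $f$ to commute $\alpha_{\sigma,N}$ past $F'\otimes_{F''}({}^{\sigma|_{F''}}f)$; the two sides then agree. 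Since a morphism in $\DS(\cC_{F''})$ or $\DS(\cC)$ is just an underlying morphism subject to an extra identity, and the composition law and identities in $\DS(\cC)$ are computed on underlying morphisms, preservation of composition and of identities is inherited verbatim from the functor $F'\otimes_{F''}(-)\colon\cC_{F''}\to\cC$. This yields the functor $F'\otimes_{F''}(-)\colon\DS(\cC_{F''})\to\DS(\cC)$.

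The argument is purely diagrammatic, so there is no conceptual obstacle; the only delicate point is the bookkeeping in step (iv), where one must align the hypothesised square \eqref{diag:associative}---which relates $\mu$ for $\cC_{F''}$, the components $\alpha_\sigma$, and $\mu$ for $\cC$---with the rest of the computation correctly, and keep $\sigma$ carefully distinct from $\sigma|_{F''}$ throughout when writing down the various $\mu$'s and $\varphi$'s.
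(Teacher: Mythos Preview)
Your proof is correct and is exactly the argument the paper has in mind: the paper's own proof is the single sentence ``For a descent system on $M\in\cC_{F''}$, it is easy to show that the maps naturally form a descent system on $F'\otimes_{F''}M$,'' and your steps (i)--(iv) are precisely the diagram chase that unpacks this. Your treatment of morphisms and functoriality likewise fills in what the paper leaves implicit.
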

	
	\begin{proof}
		For a descent system on $M\in\cC_{F''}$, it is easy to show that the maps naturally form a descent system on $F'\otimes_{F''}M$.
	\end{proof}
	
	\begin{defn}\label{defn:Ldatum}
		A Loewy datum consists of
		\begin{itemize}
			\item a locally small and essentially small semilinear abelian $\Gamma_{F'/F}$-category $\cC$,
			\item an abelian full subcategory $\DD(\cC)\subset\DS(\cC)$,
			\item a semilinear locally small and essentially small abelian $\Gamma_{F''/F}$-category $\cC_{F''}$
			for each $F''\in\Lambda_{F'/F}$,
			\item an $F''$-linear functor $F'\otimes_{F''}(-):\cC_{F''}\to \cC$
			for $F''\in\Lambda_{F'/F}$,
			\item a natural transformation $\alpha^{F''}_{\sigma}:{}^\sigma(F'\otimes_{F''}(-))\cong F'\otimes_{F''} {}^{\sigma|_{F''}}(-)$ for each field $F''\in\Lambda_{F'/F}$ and $\sigma\in\Gamma_{F'/F}$,
		\end{itemize}
		such that
		\begin{enumerate}[label=(LD\arabic*)]
			\item \label{con:fin_length_Ld} every object of $\cC$ is of finite length,
			\item \label{con:subobject_inv_Ld} $\DD(\cC)$ is closed under formation of subobjects in $\DS(\cC)$,
			\item \label{con:bcthm_Ld} for $F''\in\Lambda_{F'/F}$ and $M,N\in\cC_{F''}$, the canonical map
			\begin{equation}
				F'\otimes_{F''} \Hom_{\cC_{F''}}(M,N)
				\to \Hom_{\cC}(F'\otimes_{F''}M,F'\otimes_{F''}N)\label{eq:bcmap_Ld}
			\end{equation}
			is an isomorphism,
			\item \label{con:associative_Ld} $\alpha^{F''}_\sigma$ is associative in $\sigma$ in the sense of Lemma \ref{lem:baseupds} for each $F''\in\Lambda_{F'/F}$,
			\item \label{con:continuity_Ld} the functor $\DS(\cC_{F''})\to \DS(\cC)$ of Lemma \ref{lem:baseupds} is essentially surjective onto $\DD(\cC)$, and that
			\item \label{con:ess_surj_Ld} for every simple object $Q\in\cC$, there exist $F''\in\Lambda_{F'/F}$ and $Q_{F''}\in\cC_{F''}$ such that
			$F'\otimes_{F''} Q_{F''}\cong Q$.
		\end{enumerate}
		If there is no risk of confusion, we say that a Loewy datum on a semilinear abelian $\Gamma_{F'/F}$-category $\cC$ is given. In this case, we follow the above for the symbols.
	\end{defn}
	
	\begin{ex}[{\cite[Section~12]{MR207712}}]\label{ex:repG}
		Let $G$ be an affine group scheme over $F$. Define $\DD(\Rep(F'\otimes_FG))\subset \DS(\Rep(F'\otimes_FG))$ as the full subcategory consisting of descent systems with the property that the corresponding semilinear actions of $\Gamma_{F'/F}$ are continuous.
		Then $\Rep(F''\otimes_F G)$ with $F''\in\Lambda_{F'/F}\cup\{F'\}$ and $\DD(\Rep(F'\otimes_FG))$ form a Loewy datum in the standard manner.
	\end{ex}
	
	According to Corollary \ref{cor:fl}, preservation of the finite length property by base change along finite Galois extensions is expected. However, to verify that for infinite Galois extensions may be hard. This will be an obstacle for finding more examples of Loewy data.

	\begin{thm}\label{thm:Ld->Lp}
		Let $(\cC,\DD(\cC),\cC_{F''},F'\otimes_{F''}(-),\alpha^{F''}_\sigma)$ be a Loewy datum.
		\begin{enumerate}
			\item For $F''\in\Lambda_{F'/F}$, the functor $F'\otimes_{F''}(-):\DS(\cC_{F''})\to \DD(\cC)$ is fully faithful.
			\item The pair $(\cC,\DD(\cC))$ is Loewy.
		\end{enumerate}
	\end{thm}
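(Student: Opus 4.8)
The plan is to prove (1) first and then deduce (2) from it together with the remaining axioms of the Loewy datum. For (1): by \ref{con:associative_Ld} and Lemma \ref{lem:baseupds} the functor $F'\otimes_{F''}(-)\colon\DS(\cC_{F''})\to\DS(\cC)$ is defined, and by \ref{con:continuity_Ld} it factors through $\DD(\cC)$. For full faithfulness I would invoke Proposition \ref{prop:gamma_action}(2), identifying a morphism of descent systems with a $\Gamma$-invariant morphism of underlying objects, so that $\Hom_{\DS(\cC_{F''})}((M_0,\varphi_0),(N_0,\psi_0))=\Hom_{\cC_{F''}}(M_0,N_0)^{\Gamma_{F''/F}}$ and $\Hom_{\DS(\cC)}(F'\otimes_{F''}(M_0,\varphi_0),F'\otimes_{F''}(N_0,\psi_0))=\Hom_{\cC}(F'\otimes_{F''}M_0,F'\otimes_{F''}N_0)^{\Gamma_{F'/F}}$. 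By \ref{con:bcthm_Ld} the space on the right is $F'\otimes_{F''}\Hom_{\cC_{F''}}(M_0,N_0)$; unwinding $\varphi_\sigma=(F'\otimes_{F''}\varphi_{0,\sigma|_{F''}})\circ\alpha^{F''}_{\sigma,M_0}$ by means of the naturality of $\alpha^{F''}_\sigma$, the identity $\varphi_{0,e}=u$, and semilinearity, one checks that the $\Gamma_{F'/F}$-action on it is $\sigma\cdot(c\otimes f)=\sigma(c)\otimes(\sigma|_{F''}\cdot f)$, where $\sigma|_{F''}\cdot f$ is the action of $\Gamma_{F''/F}$ induced by $(\varphi_0,\psi_0)$. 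One then applies the Galois-descent (Speiser) fact that for $F''\in\Lambda_{F'/F}$ and an $F''$-semilinear representation $W$ of $\Gamma_{F''/F}$ the canonical map $F''\otimes_F W^{\Gamma_{F''/F}}\to W$ is an isomorphism, whence $F'\otimes_{F''}W\cong F'\otimes_F W^{\Gamma_{F''/F}}$ and $(F'\otimes_{F''}W)^{\Gamma_{F'/F}}=W^{\Gamma_{F''/F}}$; taking $W=\Hom_{\cC_{F''}}(M_0,N_0)$ yields the required bijection, which one checks is induced by $F'\otimes_{F''}(-)$.

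For (2) I would verify the four conditions of Definition \ref{def:lp}(1). Conditions \ref{con:fin_length} and \ref{con:subobject_inv} are literally \ref{con:fin_length_Ld} and \ref{con:subobject_inv_Ld}. For \ref{con:surj}, by Proposition \ref{prop:nosimple} it suffices to exhibit, for each simple $Q\in\cC$, some $(M,\varphi)\in\DD(\cC)$ with $\Hom_\cC(M,Q)\neq 0$. Using \ref{con:ess_surj_Ld}, write $Q\cong F'\otimes_{F''}Q_{F''}$ with $F''\in\Lambda_{F'/F}$ and $Q_{F''}\in\cC_{F''}$ nonzero (nonzero because $F'\otimes_{F''}(-)$ is faithful by \ref{con:bcthm_Ld}). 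Since $\Gamma_{F''/F}$ is finite, Proposition \ref{prop:res} furnishes $\Res_{F''/F}\colon\cC_{F''}\to\DS(\cC_{F''})$, with underlying object on $Q_{F''}$ equal to $\bigoplus_{\tau\in\Gamma_{F''/F}}{}^{\tau}Q_{F''}$ and a counit which is a split epimorphism $\bigoplus_{\tau}{}^{\tau}Q_{F''}\to Q_{F''}$, hence nonzero. Then $(M,\varphi):=F'\otimes_{F''}\Res_{F''/F}Q_{F''}$ lies in $\DD(\cC)$ by \ref{con:continuity_Ld}, and applying $F'\otimes_{F''}(-)$ to the counit and composing with $F'\otimes_{F''}Q_{F''}\cong Q$ gives a nonzero morphism $M\to Q$. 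Finally, for \ref{con:bcthm}: since $\DD(\cC)$ is full, $\Hom_{\DD(\cC)}=\Hom_{\DS(\cC)}$, so by Corollary \ref{cor:bc_criterion} it is enough to show the $\Gamma_{F'/F}$-action on $\Hom_\cC(M,N)$ is continuous for $(M,\varphi),(N,\psi)\in\DD(\cC)$. Reducing via \ref{con:continuity_Ld} to the case $(M,\varphi)\cong F'\otimes_{F''}(M_0,\varphi_0)$ and $(N,\psi)\cong F'\otimes_{F''}(N_0,\psi_0)$ for a common $F''\in\Lambda_{F'/F}$, the computation in Part (1) identifies $\Hom_\cC(M,N)$ with $F'\otimes_{F''}\Hom_{\cC_{F''}}(M_0,N_0)$ carrying the action $\sigma\cdot(c\otimes f)=\sigma(c)\otimes(\sigma|_{F''}\cdot f)$, which is continuous since $\Gamma_{F''/F}$ is finite and every $c\in F'$ is fixed by an open subgroup of $\Gamma_{F'/F}$.

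I expect the main obstacle to be the bookkeeping in Part (1): matching, under the isomorphism of \ref{con:bcthm_Ld}, the $\Gamma_{F'/F}$-module structure on $\Hom_\cC(F'\otimes_{F''}M_0,F'\otimes_{F''}N_0)$ coming from the base-changed descent systems $\varphi_\sigma,\psi_\sigma$ with the ``diagonal'' structure $\sigma(c)\otimes(\sigma|_{F''}\cdot f)$; this requires chasing the naturality square that defines $\alpha^{F''}_\sigma$ together with the associativity \ref{con:associative_Ld}. A secondary point is the reduction to a common base field $F''$ in the verification of \ref{con:bcthm}, which uses the directedness of $\Lambda_{F'/F}$ and the compatibility of the essential images of the functors $\DS(\cC_{F''})\to\DD(\cC)$; this is immediate in the examples of interest (e.g.\ Example \ref{ex:repG}), and in general one can instead argue continuity directly from the given presentations of $(M,\varphi)$ and $(N,\psi)$.
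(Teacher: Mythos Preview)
Your approach is essentially the same as the paper's. Part~(1) is verbatim the paper's argument: identify $\Hom_{\DS}$ with $\Gamma$-invariants via Proposition~\ref{prop:gamma_action}(2), transport the $\Gamma_{F'/F}$-action through the isomorphism of \ref{con:bcthm_Ld}, and take invariants. (Minor point: the equivariance check only needs naturality of $\alpha^{F''}_\sigma$ and semilinearity; the associativity \ref{con:associative_Ld} is used solely to define the functor via Lemma~\ref{lem:baseupds}, not in the chase itself.) Part~(2) for \ref{con:surj} is also the paper's idea: descend $Q$ to $Q_{F''}$, hit it with $\Res_{F''/F}$, and base-change back.

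The one place where you diverge from the paper is your verification of \ref{con:bcthm}, and here you are making life harder than necessary. You flag the need for a \emph{common} $F''$ for the two descent data $(M,\varphi)$ and $(N,\psi)$, and suggest appealing to directedness of $\Lambda_{F'/F}$ or compatibility of essential images---structures that a bare Loewy datum does not provide (there are no transition functors between the $\cC_{F''}$). The paper sidesteps this entirely: since $F\in\Lambda_{F'/F}$, condition \ref{con:continuity_Ld} already asserts that $\DS(\cC_F)\to\DD(\cC)$ is essentially surjective, and $\DS(\cC_F)\cong\cC_F$ by Example~\ref{ex:Gamma={e}}. So \emph{every} descent datum comes from $\cC_F$, and the common field is always $F$. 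With this observation your worry evaporates and the argument for \ref{con:bcthm} becomes a one-liner: write both data as $F'\otimes_F(-)$ of objects in $\cC_F$ and invoke Part~(1) together with \ref{con:bcthm_Ld}.
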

	
	In the below, we write a given element of $\Gamma_{F''/F}$ as $\bar{\sigma}$.

	\begin{proof}
		Let $(M,\varphi_{\bar{\sigma}}),(N,\psi_{\bar{\sigma}})\in\DS(\cC_{F''})$. Let
		\[\begin{array}{cc}
			(F'\otimes_{F''}M,\varphi_\sigma),&
			(F'\otimes_{F''}N,\psi_\sigma)
		\end{array}\]
		denote the descent data in $\cC$ obtained by Construction \ref{cons:basechange}.
		We put actions of $\Gamma_{F''/F}$ and $\Gamma_{F'/F}$ on $\Hom_{\cC_{F''}}(M,N)$ and 
		$\Hom_{\cC}(F'\otimes_{F''}M,F'\otimes_{F''}N)$ respectively by Proposition \ref{prop:gamma_action} (1). Then the map \eqref{eq:bcmap_Ld} is $\Gamma_{F'/F}$-equivariant.
		Take the $\Gamma_{F'/F}$-invariant parts to obtain an $F$-linear isomorphism
		\begin{equation}
			\Hom_{\DS(\cC_{F''})}((M,\varphi_{\bar{\sigma}}),(N,\psi_{\bar{\sigma}}))
			\cong \Hom_{\DD(\cC)}((F'\otimes_{F''}M,\varphi_\sigma),(F'\otimes_{F''}N,\psi_\sigma)).
			\label{eq:DD_ff}
		\end{equation}
		This proves (1).
		
		We next prove (2). Condition \ref{con:bcthm} follows from \ref{con:ess_surj_Ld}, \ref{con:bcthm_Ld}, and \eqref{eq:DD_ff} (put $F''=F$). It remains to verify \ref{con:surj}. Let $Q$ be a simple object of $\cC$. Choose a field $F''\in\Lambda_{F'/F}$, $Q'\in \cC_{F''}$, and an isomorphism $F'\otimes_{F''}Q'\cong Q$. In particular, $Q$ is nonzero. We now deduce
		\[\begin{split}
			\Hom_\cC(F'\otimes_{F''}(F''\otimes_F\Res_{F''/F}Q'),Q)
			&\cong F'\otimes_{F''}\Hom_{\cC_{F''}}(F''\otimes_F \Res_{F''/F} Q',Q')\\
			&\cong F'\otimes_{F''}\End_{\DS(\cC_{F''})}(\Res_{F''/F}Q')\\
			&\neq 0
		\end{split}\]
		(recall the construction of $\Res_{F''/F}$ for the nonvanishing of $\Res_{F''/F}Q'$).
	\end{proof}
	
	\begin{rem}\label{rem:overcondition}
		For the proof of (1), we only used \ref{con:bcthm_Ld} and \ref{con:associative_Ld}.			
	\end{rem}
	
	\begin{rem}
		We notice that Conditions \ref{con:fin_length_Ld} and \ref{con:subobject_inv_Ld} are not used in the proof of Theorem \ref{thm:Ld->Lp} except checking \ref{con:fin_length} and \ref{con:subobject_inv} respectively.
	\end{rem}

	The aim of Section~\ref{sec:lbt} is to construct a bijection \[\Simple(\DD(\cC))\cong\Gamma_{F'/F}\backslash\Simple(\cC).\]
	We also discuss existence of descent data on simple objects of $\cC$. Finally, we study the division algebras of endomorphisms of simple descent data, based on these two results.
	
	\subsection{Classification of simple objects}\label{sec:Loewy}
	
	Consider a Loewy pair $(\cC,\DD(\cC))$. The goal of this section is to prove:
	
	\begin{thm}\label{thm:Loewy}
		There is a unique bijection
		$\Simple(\DD(\cC))\cong\Gamma_{F'/F}\backslash \Simple(\cC)$ with the following property:
		\begin{enumerate}[label=(Lb)]
			\item \label{con:Loewy_bij} simple objects $(M,\varphi_\sigma)\in \DD(\cC)$ and $M'\in\cC$ are in correspondence if and only if $\Hom_{\cC}(M,M')$ is nonzero.
		\end{enumerate}
	\end{thm}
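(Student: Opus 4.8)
The plan is to realize the bijection concretely: a simple descent datum $(M,\varphi_\sigma)$ will correspond to the $\Gamma_{F'/F}$-orbit of the simple constituents of its underlying object $M$. Two preliminaries are used throughout. First, by Proposition \ref{prop:(co)limit} the forgetful functor $\pi\colon\DS(\cC)\to\cC$ is exact and conservative, so a $\varphi$-stable subobject of $M$ (together with the restricted $\varphi_\sigma$) is the same datum as a subobject of $(M,\varphi_\sigma)$ in $\DS(\cC)$. Second, since $\DD(\cC)$ is abelian, Schur's lemma holds there: a nonzero morphism between simple objects of $\DD(\cC)$ is an isomorphism. I would then extract surjectivity, injectivity and the characterization \ref{con:Loewy_bij} from the four axioms of a Loewy pair.

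The structural heart is the claim: \emph{if $(M,\varphi_\sigma)\in\DD(\cC)$ is simple, then $M$ is semisimple in $\cC$, and its (necessarily finite) set of simple constituents is a single $\Gamma_{F'/F}$-orbit in $\Simple(\cC)$.} For semisimplicity, the socle $\mathrm{soc}(M)$ is defined intrinsically --- as the sum of the simple subobjects of $M$, which exists by \ref{con:fin_length} --- so ${}^\sigma\mathrm{soc}(M)=\mathrm{soc}({}^\sigma M)$ and each $\varphi_\sigma$ restricts to an isomorphism ${}^\sigma\mathrm{soc}(M)\cong\mathrm{soc}(M)$; since \ref{con:DS} and the naturality of $\mu_{\sigma,\tau}$ pass to subobjects, $(\mathrm{soc}(M),\varphi_\sigma|)$ is a subobject of $(M,\varphi_\sigma)$ in $\DS(\cC)$, hence lies in $\DD(\cC)$ by \ref{con:subobject_inv}; being nonzero, it equals $(M,\varphi_\sigma)$, so $\mathrm{soc}(M)=M$. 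Writing the simple constituents of $M$ as $[S_1],\dots,[S_r]$ (finite by \ref{con:fin_length}), the fact that ${}^\sigma(-)$ is an equivalence together with the isomorphism $\varphi_\sigma\colon{}^\sigma M\cong M$ forces $\{[{}^\sigma S_i]\}=\{[S_i]\}$ for all $\sigma$; thus $\{[S_1],\dots,[S_r]\}$ is $\Gamma_{F'/F}$-stable. If it split into at least two orbits, I would fix a proper nonempty sub-orbit $O'$ and set $M_{O'}\subseteq M$ to be the sum of the simple subobjects of $M$ with class in $O'$; this is intrinsic, so ${}^\sigma(M_{O'})=({}^\sigma M)_{O'}$ (using $\sigma\cdot O'=O'$) and $\varphi_\sigma$ maps it onto $M_{O'}$, so $(M_{O'},\varphi_\sigma|)$ is a proper nonzero subobject of $(M,\varphi_\sigma)$ in $\DD(\cC)$ --- contradicting simplicity. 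Hence there is exactly one orbit.

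Granting the claim, define $b\colon\Simple(\DD(\cC))\to\Gamma_{F'/F}\backslash\Simple(\cC)$ by sending $[(M,\varphi_\sigma)]$ to the orbit of the constituents of $M$; this is well defined since $\pi$ takes isomorphisms to isomorphisms. Surjectivity of $b$ is precisely \ref{con:surj}: given simple $Q\in\cC$, pick a simple $(M,\varphi_\sigma)\in\DD(\cC)$ with $\Hom_\cC(M,Q)\ne0$; $M$ being semisimple, $Q$ is a constituent of $M$, so $b([(M,\varphi_\sigma)])=[Q]$. For injectivity, if $(M,\varphi_\sigma),(N,\psi_\sigma)$ are simple with $b([(M,\varphi_\sigma)])=b([(N,\psi_\sigma)])$, then $M$ and $N$ share a simple constituent, so $\Hom_\cC(M,N)\ne0$; hence $\Hom_{\DD(\cC)}((M,\varphi_\sigma),(N,\psi_\sigma))\ne0$ by \ref{con:bcthm}, and by Schur's lemma any nonzero such morphism is an isomorphism. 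Finally \ref{con:Loewy_bij} holds: for a simple $(M,\varphi_\sigma)$ and a simple $M'$, semisimplicity of $M$ and Schur's lemma give $\Hom_\cC(M,M')\ne0\iff M'$ is a constituent of $M\iff[M']=b([(M,\varphi_\sigma)])$. Uniqueness is immediate, since \ref{con:Loewy_bij} forces any bijection with that property to send $[(M,\varphi_\sigma)]$ to $[M']$ exactly when $\Hom_\cC(M,M')\ne0$, and the preceding chain of equivalences pins this down to $b$.

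The one step that needs genuine care is the structural claim: one must verify that the intrinsic subobjects $\mathrm{soc}(M)$ and $M_{O'}$ are carried by the twists $\varphi_\sigma$ to themselves and that the restricted families still satisfy \ref{con:DS}, so that \ref{con:subobject_inv} applies. Once this bookkeeping is settled the rest is formal; in particular the possible infinitude of $\Gamma_{F'/F}$ causes no difficulty, because the orbit at issue is always finite --- it consists of composition factors of the finite-length object $M$.
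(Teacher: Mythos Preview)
Your proof is correct and follows the same overall strategy as the paper: first establish that the underlying object of a simple descent datum is semisimple with constituents forming a single $\Gamma_{F'/F}$-orbit (the paper's Lemma~\ref{lem:well-defined}), then define the map by sending $(M,\varphi_\sigma)$ to that orbit, derive injectivity from \ref{con:bcthm} plus Schur, and surjectivity from \ref{con:surj}.

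The one genuine difference is in how you prove the structural claim. The paper argues by choosing a simple subobject $M'\subset M$, forming finite partial sums $M'_I=\sum_{\sigma\in I}{}^\sigma M'$, and using Noetherianity (or finiteness of $\Gamma$) to find a maximal such sum, which is then $\varphi$-stable and hence all of $M$. You instead observe that $\mathrm{soc}(M)$ is an intrinsic subobject, hence automatically $\varphi$-stable, and conclude directly. Your route is a bit cleaner in that the Noetherian bookkeeping is absorbed into the existence of the socle; the paper's route has the mild advantage of making explicit that $M$ is generated by the twists of a \emph{single} simple subobject, which is how parts (2) and (3) of Lemma~\ref{lem:well-defined} are phrased. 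For the single-orbit statement you give an independent argument via isotypic components, which is equivalent. Either way, the remaining steps are identical to the paper's.
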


	\begin{lem}\label{lem:well-defined}
		Let $\cC$ be a locally small $F'/F$-linear Artinian abelian $\Gamma_{F'/F}$-category and $\DD(\cC)$ be an abelian full subcategory of $\DS(\cC)$ satisfying Condition \ref{con:subobject_inv}. Assume that $\Gamma_{F'/F}$ is finite, or that $\cC$ is Noetherian. Let $(M,\varphi_\sigma)$ be a simple descent datum.
		\begin{enumerate}
			\item The object $M\in\cC$ is nonzero and semisimple.
			\item The simple direct summands of $M$ are unique up to isomorphism and twists by $\Gamma_{F'/F}$.
			\item If $M'$ is a simple direct summand of $M$, so are its twists by $\Gamma_{F'/F}$.
		\end{enumerate}
	\end{lem}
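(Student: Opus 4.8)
The plan is to run the standard Galois-descent argument: exhibit a nonzero $\varphi_\sigma$-stable semisimple subobject of $M$, conclude from simplicity of the descent datum that $M$ itself is semisimple, and then read off (2) and (3) from the behaviour of the isotypic decomposition under the twisting functors. Throughout I would use that each ${}^\sigma(-)$ is an auto-equivalence (with ${}^{\sigma^{-1}}(-)$ a quasi-inverse via $u$ and $\mu$), so it is exact and preserves simplicity, monomorphisms, sums of subobjects, and hence the socle and the isotypic components.

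First, $M \neq 0$, since otherwise $(M,\varphi_\sigma)$ would be the zero object of $\DD(\cC)$, which is not simple. For part (1), because $\cC$ is Artinian, $M$ has a simple subobject $S$. If $\Gamma_{F'/F}$ is finite, put $N \coloneqq \sum_{\sigma \in \Gamma_{F'/F}} \varphi_\sigma({}^\sigma S) \subseteq M$: this is a finite sum of simple subobjects, hence a nonzero semisimple subobject, and using the cocycle relation \ref{con:DS} together with naturality of $\mu_{\sigma,\tau}$ along $S \hookrightarrow M$ one checks $\varphi_\tau({}^\tau N) = N$, so $(N,\varphi_\sigma|_N)$ is a descent system. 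If instead $\cC$ is Noetherian, then (being also Artinian) it is of finite length, and one takes $N \coloneqq \operatorname{soc}(M)$, a nonzero semisimple subobject; it is $\varphi_\sigma$-stable because the socle is a subfunctor of the identity and commutes with the equivalences ${}^\sigma(-)$, and \ref{con:DS} restricts for the same naturality reason. In either case Condition \ref{con:subobject_inv} places $(N,\varphi_\sigma|_N)$ in $\DD(\cC)$, so simplicity of $(M,\varphi_\sigma)$ and $N \neq 0$ force $N = M$; thus $M$ is semisimple, and it is a \emph{finite} direct sum of simple objects because an infinite one would violate the Artinian hypothesis.

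For (2) and (3), write $M = \bigoplus_{[S] \in \Theta} M_{[S]}$ for the isotypic decomposition, where $\Theta$ is the finite set of isomorphism classes of simple summands of $M$ and $M_{[S]}$ the $[S]$-isotypic component. Since ${}^\sigma(-)$ is an exact equivalence, it sends $M_{[S]}$ to the $[{}^\sigma S]$-isotypic component of ${}^\sigma M$; post-composing with $\varphi_\sigma\colon {}^\sigma M \cong M$ gives $M_{[{}^\sigma S]} \cong {}^\sigma M_{[S]}$, so $\Theta$ is stable under the twisting action of $\Gamma_{F'/F}$ on isomorphism classes of simple objects — this already yields (3). For (2), fix $[S_0] \in \Theta$ with $\Gamma_{F'/F}$-orbit $O \subseteq \Theta$ and set $M_O \coloneqq \bigoplus_{[S]\in O} M_{[S]}$; the same computation shows $\varphi_\sigma$ carries ${}^\sigma M_O$ onto $M_{\sigma\cdot O} = M_O$, so $(M_O,\varphi_\sigma|_{M_O})$ is a nonzero descent datum in $\DD(\cC)$ by \ref{con:subobject_inv}, whence $M_O = M$ and $\Theta = O$ is a single orbit. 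Therefore any two simple direct summands of $M$ differ by an isomorphism and a $\Gamma_{F'/F}$-twist, which is (2).

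The one genuinely delicate point is the construction of $N$ in part (1): when $\Gamma_{F'/F}$ is infinite the naive ``sum over the Galois orbit of $S$'' need not be a legitimate subobject, which is exactly why the hypothesis demands either that $\Gamma_{F'/F}$ be finite or that $\cC$ be Noetherian (so that finite length makes $\operatorname{soc}(M)$ available). The rest is routine bookkeeping with the coherence isomorphisms $u$, $\mu_{\sigma,\tau}$ and the relation \ref{con:DS}; in particular one must check in each step that restricting the $\varphi_\sigma$ to the subobject in question again satisfies \ref{con:DS}, which follows from the naturality of $\mu_{\sigma,\tau}$ along the inclusion of that subobject.
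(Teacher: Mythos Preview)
Your argument is correct and follows the same underlying strategy as the paper: exhibit a nonzero $\varphi_\sigma$-stable semisimple subobject of $M$, invoke simplicity of the descent datum together with \ref{con:subobject_inv} to conclude $N=M$, and then analyze the simple summands. The only difference is organizational. The paper treats the two hypotheses uniformly by forming $M'_I\coloneqq\sum_{\sigma\in I}\varphi_\sigma({}^\sigma M')$ for finite $I\subset\Gamma_{F'/F}$ and observing that either finiteness of $\Gamma_{F'/F}$ or the Noetherian hypothesis forces this ascending family to stabilize; the resulting equality $M=M'_I$ then yields (1), (2), and (3) simultaneously, since $M$ is visibly a finite sum of twists of the single simple $M'$. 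You instead split into two cases (the orbit sum when $\Gamma_{F'/F}$ is finite, the socle when $\cC$ is Noetherian) to get semisimplicity, and then run a separate isotypic-decomposition argument for (2) and (3). Both routes are clean; the paper's buys (2)--(3) for free, while your isotypic argument is perhaps more transparently reusable.
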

	
	\begin{proof}
		Since a zero object in $\cC$ with a (unique) descent datum is a zero object of $\DD(\cC)$ (recall Proposition \ref{prop:(co)limit} (2)), $M$ is nonzero. Since $M$ is Artinian, we can and do choose a simple subobject $M'$ of $M$. For $\sigma\in\Gamma_{F'/F}$, we regard ${}^\sigma M'$ as a subobject of $M$ by ${}^\sigma M'\hookrightarrow {}^\sigma M\overset{\varphi_\sigma}{\cong} M$. Therefore it will suffice to show that $M$ is the sum of $M'$ and its finitely many Galois twists.
		
		For a finite subset $I\subset\Gamma_{F'/F}$, we set $M'_{I}=\sum_{\sigma\in I} {}^\sigma M'$. Thanks to our hypothesis, one can find a finite subset $I$ such that $M'_{I}=M'_{J}$ for any $J\supset I$. Then $M'_I$ is closed under formation of $\varphi_{\sigma}$ ($\sigma\in\Gamma_{F'/F}$). Therefore $(\varphi_\sigma)$ restricts to a descent system on $M'_I$. It lies in $\DD(\cC)$ by Condition \ref{con:subobject_inv}. Since $(M,\varphi_\sigma)$ is simple,
		we have $M'_I=M$. This completes the proof.
	\end{proof}

	\begin{proof}[Proof of Theorem \ref{thm:Loewy}]
		We can define a unique map
		\begin{equation}
			\Simple(\DD(\cC))\to\Gamma_{F'/F}\backslash \Simple(\cC)\label{eq:loewy_map}
		\end{equation}
		satisfying \ref{con:Loewy_bij} by assigning any simple subobject $M'$ of $M$ for each simple descent datum $(M,\varphi_\sigma)$ (Lemma \ref{lem:well-defined}).
		
		To see that \eqref{eq:loewy_map} is injective, let $(N,\psi_\sigma)$ be another simple descent datum such that $M'\cong {}^\sigma N'$ for some $\sigma\in\Gamma_{F'/F}$. Then choose a nonzero morphism $M\to M'$ and consider the inclusion map $N'\hookrightarrow N$ to get
		\[0\neq \Hom_{\cC}(M',{}^\sigma N')
		\hookrightarrow \Hom_{\cC}(M,N)
		\cong F'\otimes_F \Hom_{\DD(\cC)}((M,\varphi_\sigma),(N,\psi_\sigma))\]
		(use Condition \ref{con:bcthm} for the last isomorphism).
		Schur's lemma now implies an isomorphism
		$(M,\varphi_\sigma)\cong (N,\psi_\sigma)$.
		
		Finally, we prove that \eqref{eq:loewy_map} is surjective. Let $N$ be a simple object of $\cC$. Choose a simple descent datum $(M,\varphi_\sigma)$ and a nonzero morphism $M\to N$ (Condition \ref{con:surj}). Since $M$ is semisimple and $N$ is simple, $N$ is a direct summand of $M$. This shows that $N$ lies in the image.
	\end{proof}

	We note that computing the bijection of Theorem \ref{thm:Loewy} will be simplified if we are given a Loewy datum in the sense that it is essentially reduced to the finite case:
	
	\begin{thm}\label{thm:basic_Loewy_datum}
		Let $(\cC,\DD(\cC),\cC_{F''},F'\otimes_{F''}(-),\alpha^{F''}_\sigma)$ be a Loewy datum.
		\begin{enumerate}
			\item For every $F''\in\Lambda_{F'/F}$, $\cC_{F''}$ is a Loewy $\Gamma_{F''/F}$-category.
			\item Let $M\in\cC$ be a simple object, and $M_{F''}\in\cC_{F''}$ be an object with an isomorphism $F'\otimes_{F''} M_{F''}\cong M$. Then the simple descent datum corresponding to $M$ is obtained from that corresponding to $M_{F''}$ by Lemma \ref{lem:baseupds}.
		\end{enumerate}
	\end{thm}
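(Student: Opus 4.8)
The plan is to prove part~(1) first and then bootstrap part~(2) from it, using Theorems~\ref{thm:Loewy} and~\ref{thm:Ld->Lp} together with a small lemma on objects with a division endomorphism ring.

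\emph{Part (1).} Since $F''\in\Lambda_{F'/F}$, the group $\Gamma_{F''/F}$ is finite, so by Corollary~\ref{cor:finite_loewy} it suffices to show that every object of $\cC_{F''}$ has finite length. The base-change functor $F'\otimes_{F''}(-)\colon\cC_{F''}\to\cC$ is exact and, by \ref{con:bcthm_Ld}, faithful; hence it reflects the finite-length property (a strictly increasing chain of subobjects of $M_{F''}$ maps to one of $F'\otimes_{F''}M_{F''}$). Since $F'\otimes_{F''}M_{F''}$ has finite length by \ref{con:fin_length_Ld}, so does $M_{F''}$.

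\emph{Part (2).} We may assume $F'\otimes_{F''}M_{F''}=M$. Then $M_{F''}$ is simple in $\cC_{F''}$: a proper nonzero subobject would produce one of $M$ under the exact faithful functor $F'\otimes_{F''}(-)$. By part~(1) we may apply Theorem~\ref{thm:Loewy} to $(\cC_{F''},\DS(\cC_{F''}))$; let $S_{F''}\in\DS(\cC_{F''})$ be the simple descent datum corresponding to $[M_{F''}]$. Since $(\cC,\DD(\cC))$ is Loewy by Theorem~\ref{thm:Ld->Lp}(2), let $S_M\in\DD(\cC)$ correspond to $[M]$. Put $X\coloneqq F'\otimes_{F''}S_{F''}\in\DD(\cC)$ (Lemma~\ref{lem:baseupds} and \ref{con:continuity_Ld}); the goal is $X\cong S_M$, since the simple descent datum attached to $M$ is then exactly the image of $S_{F''}$ under the functor of Lemma~\ref{lem:baseupds}. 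By Theorem~\ref{thm:Ld->Lp}(1), $\End_{\DD(\cC)}(X)\cong\End_{\DS(\cC_{F''})}(S_{F''})$, which is a division ring by Schur's lemma. Next I would show $\Hom_{\DD(\cC)}(X,S_M)\neq 0$ and $\Hom_{\DD(\cC)}(S_M,X)\neq 0$. From \ref{con:bcthm_Ld} and \ref{con:Loewy_bij}, $\Hom_{\cC}(\pi(X),M)\cong F'\otimes_{F''}\Hom_{\cC_{F''}}(\pi(S_{F''}),M_{F''})\neq 0$. By Lemma~\ref{lem:well-defined} and \ref{con:Loewy_bij}, $\pi(S_M)$ is semisimple with $M$ a simple quotient, so $M$ is a direct summand of $\pi(S_M)$; likewise $M=F'\otimes_{F''}M_{F''}$ is a direct summand of $\pi(X)=F'\otimes_{F''}\pi(S_{F''})$ because $M_{F''}$ is a summand of the semisimple object $\pi(S_{F''})$. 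Composing with the resulting inclusions and projections gives $\Hom_{\cC}(\pi(X),\pi(S_M))\neq 0\neq\Hom_{\cC}(\pi(S_M),\pi(X))$, and \ref{con:bcthm} for $(\cC,\DD(\cC))$ transfers this to the $\Hom$ groups in $\DD(\cC)$. Finally, choose nonzero $g\colon X\to S_M$ and $h\colon S_M\to X$ in $\DD(\cC)$; as $S_M$ is simple, $g$ is an epimorphism and $h$ a monomorphism, so $h\circ g\in\End_{\DD(\cC)}(X)$ is nonzero (else $g$ being epic forces $h=0$), hence invertible, whence $g$ is a monomorphism and therefore an isomorphism. Thus $X\cong S_M$.

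The main obstacle is the last step of part~(2): one cannot argue directly that $X=F'\otimes_{F''}S_{F''}$ is simple in $\DD(\cC)$, because its underlying object, though semisimple, may carry multiplicities greater than one (as already happens for quaternionic-type real representations), and a division endomorphism ring by itself does not force simplicity in a finite-length abelian category. The way around this is the elementary fact that an object $Y$ with $\End(Y)$ a division ring which admits both a monomorphism from and an epimorphism onto a simple object $S$ is isomorphic to $S$; hence the crux is to extract \emph{both} a monomorphism $S_M\hookrightarrow X$ and an epimorphism $X\twoheadrightarrow S_M$, which is what the computation with \ref{con:bcthm} and the summand relations above accomplishes. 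A secondary point requiring care is the bookkeeping of the $\Gamma_{F'/F}$- and $\Gamma_{F''/F}$-actions and the isomorphisms $\alpha^{F''}_\sigma$: one must verify that $F'\otimes_{F''}(-)$ intertwines the forgetful functors and preserves finite direct sums, so that $\pi(X)$ and $\pi(S_M)$ are correctly identified as direct sums of $\Gamma_{F'/F}$-twists of $M$.
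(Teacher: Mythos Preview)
Your proof is correct, and Part~(1) coincides with the paper's argument. For Part~(2), however, you take a detour that the paper avoids. You write that ``one cannot argue directly that $X=F'\otimes_{F''}S_{F''}$ is simple in $\DD(\cC)$,'' but in fact one can: by Theorem~\ref{thm:Ld->Lp}(1) the functor $F'\otimes_{F''}(-)\colon\DS(\cC_{F''})\to\DD(\cC)$ is fully faithful, and by \ref{con:continuity_Ld} it is essentially surjective, hence an equivalence of abelian categories; equivalences preserve simple objects, so $X$ is simple. From there, the single nonvanishing $\Hom_{\cC}(\pi(X),M)\neq 0$ (which you already established via \ref{con:bcthm_Ld} and \ref{con:Loewy_bij}) suffices by \ref{con:Loewy_bij} to conclude $X\cong S_M$. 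This is exactly the paper's route.

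Your alternative---showing $\End_{\DD(\cC)}(X)$ is a division ring, producing nonzero maps in both directions via \ref{con:bcthm}, and using the epi/mono trick---is valid and self-contained, but it duplicates work already encoded in the equivalence. The advantage of your approach is that it never invokes essential surjectivity \ref{con:continuity_Ld}, so it would survive in a weaker axiomatic setting where only full faithfulness is available; the cost is the extra bookkeeping with summands and the somewhat delicate final isomorphism argument. In the paper's framework, though, the equivalence is already at hand and makes the argument a two-line affair.
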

	
	For its proof, let us record a general statement:
	
	\begin{lem}\label{lem:conservative}
		Let $F'/F$ be any extension of fields. Then any exact $F$-linear functor $F'\otimes_F(-)$
		from a locally small $F$-linear abelian category $\cC$ to a locally small $F'$-linear abelian category $\cD$ with the base change property of Hom spaces (i.e., the canonical map $F'\otimes_F \Hom_{\cC}(M,N)\to \Hom_{\cD}(F'\otimes_F M,F'\otimes_F N)$ is an isomorphism for every pair $M,N\in\cC$) reflects zero objects, isomorphisms, the finite length property, simple objects, and semisimple objects.
	\end{lem}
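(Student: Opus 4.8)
The plan is to derive all five reflection properties from two soft facts: $\Phi\coloneqq F'\otimes_F(-)$ is \emph{faithful} and \emph{exact}. Faithfulness holds because the induced map on morphism sets is $f\mapsto 1\otimes f$ followed by the base-change isomorphism, and $f\mapsto 1\otimes f$ is injective since $F'$ is a nonzero free, hence faithfully flat, $F$-module. First I would record the consequences for zero objects and isomorphisms: a faithful functor carries nonzero objects to nonzero objects, and if $\Phi M=0$ then $\id_{\Phi M}=\Phi(\id_M)=0$ forces $\id_M=0$, i.e.\ $M=0$; and if $\Phi f$ is an isomorphism, then by exactness $\Phi(\Ker f)=\Ker(\Phi f)=0$ and $\Phi(\Coker f)=\Coker(\Phi f)=0$, so $\Ker f=\Coker f=0$ and $f$ is an isomorphism.

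Next I would handle finite length and simplicity, using that $\Phi$ is exact and both reflects and preserves nonzero objects. A strictly increasing chain $0=M_0\subsetneq\cdots\subsetneq M_n=M$ maps to a chain of the same length in $\Phi M$, since each $M_{i+1}/M_i\neq0$ has $\Phi(M_{i+1}/M_i)=\Phi M_{i+1}/\Phi M_i\neq0$; hence $\operatorname{length}(M)\le\operatorname{length}(\Phi M)$, and $\Phi M$ of finite length forces $M$ of finite length. If $\Phi M$ is simple, then $M\neq0$, and any $0\neq M'\subsetneq M$ would give $0\neq\Phi M'\subsetneq\Phi M$ (properness from $\Phi(M/M')\neq0$), contradicting simplicity; so $M$ is simple.

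The one step I expect to require genuine care is semisimplicity. Given $\Phi M$ semisimple and a subobject $\iota\colon M'\hookrightarrow M$, applying the exact functor $\Phi$ to $0\to M'\to M\to M/M'\to0$ yields a short exact sequence in $\cD$ that splits, so there is a retraction $r\in\Hom_{\cD}(\Phi M,\Phi M')$ with $r\circ\Phi(\iota)=\id_{\Phi M'}$. The key point is to descend $r$ to $\cC$: fixing an $F$-basis $(e_\alpha)$ of $F'$ with $e_0=1$ and writing $r=\sum_\alpha e_\alpha\otimes r_\alpha$ under the base-change isomorphism (with $r_\alpha\in\Hom_{\cC}(M,M')$), compatibility of the base-change isomorphisms with composition turns $r\circ\Phi(\iota)=\id_{\Phi M'}$ into $\sum_\alpha e_\alpha\otimes(r_\alpha\circ\iota)=e_0\otimes\id_{M'}$ in $F'\otimes_F\End_{\cC}(M')$; comparing $e_0$-components gives $r_0\circ\iota=\id_{M'}$, so $M'$ is already a direct summand of $M$ in $\cC$. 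Thus every subobject of $M$ is a direct summand, from which $M$ is semisimple by the standard argument. The only delicate points are this rational-retraction extraction and the fact that the final implication needs every nonzero object of $\cC$ to contain a simple subobject --- automatic in the finite-length settings in which the lemma is applied (indeed $\Phi M$, hence $M$, is then of finite length). The remaining four assertions are formal.
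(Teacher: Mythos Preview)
Your proof is correct and matches the paper's approach for the first four reflection properties (zero objects, isomorphisms, finite length, simple objects): the paper also derives these as elementary consequences of exactness plus the observation that $\End_{\cC}(M)\hookrightarrow F'\otimes_F\End_{\cC}(M)\cong\End_{\cD}(\Phi M)$, which is your faithfulness.

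For semisimplicity the two arguments diverge. The paper defers to an external reference (\cite[Lemma 3.20 (1)]{hayashisuper}) and proceeds ``through the induction by length''. Your direct retraction-extraction --- writing the splitting $r\in\Hom_{\cD}(\Phi M,\Phi M')$ as $\sum_\alpha e_\alpha\otimes r_\alpha$ under the base-change isomorphism and reading off $r_0\circ\iota=\id_{M'}$ from the coefficient of $e_0=1$ --- is more self-contained and bypasses induction entirely. The compatibility of the base-change isomorphisms with composition, which you use implicitly, is genuine but routine to check (both sides are $F'$-bilinear and agree on pure tensors $1\otimes f$). Your caveat that the passage from ``every subobject splits off'' to ``semisimple'' needs $M$ to have finite length is correctly identified and correctly dismissed: in every application of the lemma in the paper, $\Phi M$ (hence $M$, by the part already proved) has finite length. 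Your route is arguably cleaner here, since it isolates exactly where the base-change hypothesis does work beyond mere faithfulness.
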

	
	The reflection of semisimplicity will be used in Section~\ref{sec:relation}.
	
	\begin{proof}
		The assertion for zero objects follows from the base change property. In fact, an object of a preadditive category with the zero endomorphism ring is zero. It is elementary that an exact functor of abelian categories reflects isomorphisms, the finite length property, and simple objects if it reflects zero objects. The assertion for semisimple objects is verified in a similar way to \cite[Lemma 3.20 (1)]{hayashisuper} through the induction by length.
	\end{proof}
	
	\begin{proof}[Proof of Theorem \ref{thm:basic_Loewy_datum}]
		Corollary \ref{cor:finite_loewy} and Lemma \ref{lem:conservative} imply (1) (consider the functor $F'\otimes_{F''}(-)$ for the finite length property). 
		
		Let $M$ and $M_{F''}$ be as in (2). Let $S_{F''}$ be the simple descent system corresponding to $M_{F''}$. Then $F'\otimes_{F''} S_{F''}$ is a simple descent datum by Theorem \ref{thm:Ld->Lp} and Condition \ref{con:ess_surj_Ld}. Moreover, we have
		\[\begin{split}
			0&\neq F'\otimes_{F''}\Hom_{\cC_{F''}}(\pi(S_{F''}),M_{F''})\\
			&\overset{\mathrm{\ref{con:bcthm_Ld}}}{\cong}
			\Hom_{\cC}(F'\otimes_{F''} \pi(S_{F''}),F'\otimes_{F''} M_{F''})\\
			&\cong \Hom_{\cC}(\pi(F'\otimes_{F''}S_{F''}),M)
		\end{split}
		\]
		since the forgetful functor $\pi$ commutes with $F'\otimes_{F''}(-)$. This shows that $F'\otimes_{F''}S_{F''}$ corresponds to $M$. This completes the proof of (2).
	\end{proof}

	We also give explicit computation on Lemma \ref{lem:well-defined}:
	
	\begin{prop}[\cite{MR1500635}]\label{prop:Loewy}
		Let $\cC$ be Loewy $\Gamma_{F'/F}$-category with $F'/F$ quadratic. Then a simple object $M$ of $\cC$ admits a descent datum if and only if $\Res_{F'/F} M$ is not simple. In particular, the simple descent datum corresponding to $M$ is $(M,\varphi_\sigma)$ (resp.~$\Res_{F'/F} M$) if $M$ admits (resp.~does not admit) a descent datum $(\varphi_\sigma)$.
	\end{prop}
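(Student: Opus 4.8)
\textit{Proof proposal.} The plan is to combine the explicit description of $\Res_{F'/F}$ from Proposition \ref{prop:res} with the bijection of Theorem \ref{thm:Loewy}. Write $\Gamma_{F'/F} = \{e, \sigma\}$. Since $\Gamma_{F'/F}$ has order two, finite products and coproducts coincide, so the underlying object of $\Res_{F'/F} M$ is $\pi(\Res_{F'/F} M) \cong M \oplus {}^\sigma M$; as ${}^\sigma(-)$ is an equivalence, ${}^\sigma M$ is again simple and nonzero, so this object has length two. In particular $\Res_{F'/F} M \neq 0$, and whenever $\Res_{F'/F} M$ happens to be simple it corresponds under the bijection of Theorem \ref{thm:Loewy} to the Galois orbit of $M$, because the projection witnesses $\Hom_\cC(M \oplus {}^\sigma M, M) \neq 0$, which is property \ref{con:Loewy_bij}.

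Next I would dispose of the direction ``$M$ admits a descent datum $\Rightarrow \Res_{F'/F} M$ is not simple.'' If $\varphi_\sigma$ is a descent datum on $M$, then $(M, \varphi_\sigma)$ is a descent system whose underlying object is the simple object $M$; since the forgetful functor $\pi : \DS(\cC) \to \cC$ is exact and conservative (Proposition \ref{prop:(co)limit}), $(M, \varphi_\sigma)$ is itself simple, and by \ref{con:Loewy_bij} it corresponds to the orbit of $M$. If $\Res_{F'/F} M$ were simple it would also correspond to that orbit, hence be isomorphic to $(M, \varphi_\sigma)$ in $\DS(\cC)$ by the injectivity in Theorem \ref{thm:Loewy}; applying $\pi$ would force $M \oplus {}^\sigma M \cong M$, contradicting the length computation. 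This proves the implication and, at the same time, identifies the simple descent datum attached to $M$ with $(M, \varphi_\sigma)$.

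For the converse I would show that if $\Res_{F'/F} M$ is not simple, then $M$ admits a descent datum. Being nonzero and not simple, $\Res_{F'/F} M$ has a proper nonzero subobject $(N', \psi'_\sigma)$ in $\DS(\cC)$. Then $N' = \pi(N')$ is a proper nonzero subobject of the length-two object $M \oplus {}^\sigma M$, hence simple, so $N'$ is one of the Jordan--Hölder factors $M$ or ${}^\sigma M$; in the second case the quotient descent datum $\Res_{F'/F} M / (N', \psi'_\sigma)$ has underlying object isomorphic to $M$. Either way we obtain a descent system whose underlying object is isomorphic to $M$, and transporting its descent datum along that isomorphism (a routine verification of \ref{con:DS}, analogous to Lemma \ref{lem:self-conjugacy}) yields a descent datum on $M$. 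Having proved both implications, the final ``in particular'' follows: when $M$ has no descent datum, $\Res_{F'/F} M$ is simple by what we just showed and corresponds to the orbit of $M$ by the first paragraph, so it is the simple descent datum attached to $M$.

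The only genuinely delicate point is this converse step, where one must make sure the proper subobject produced really descends the datum to $M$ itself and not merely to ${}^\sigma M$; passing to the quotient (rather than the subobject) in the ``$N' \cong {}^\sigma M$'' case sidesteps having to compare descent data on $M$ and on ${}^\sigma M$. Everything else is bookkeeping with the exactness and conservativity of $\pi$ and the finite-length hypothesis \ref{con:fin_length}.
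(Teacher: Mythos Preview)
Your proof is correct, and the converse direction follows essentially the same outline as the paper: pick a proper nonzero subobject $(M_0,\varphi^0_\sigma)$ of $\Res_{F'/F}M$, observe that $M_0$ is simple inside the length-two object $M\oplus{}^\sigma M$, and transport the descent structure to $M$. The paper handles the case $M_0\cong{}^\sigma M$ slightly more efficiently than your quotient trick: since $(M_0,\varphi^0_\sigma)$ is itself a descent system, the isomorphism $\varphi^0_\sigma:{}^\sigma M_0\cong M_0$ already forces $M_0\cong{}^\sigma M_0\cong{}^\sigma({}^\sigma M)\cong M$, so no passage to the quotient is needed.

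The genuine difference is in the forward direction. You invoke the injectivity of the bijection in Theorem~\ref{thm:Loewy} to derive a contradiction from $\Res_{F'/F}M\cong(M,\varphi_\sigma)$. The paper instead gives a one-line argument independent of Theorem~\ref{thm:Loewy}: the unit $(\varphi_\sigma^{-1}):(M,\varphi_\sigma)\to\Res_{F'/F}M$ of the adjunction in Proposition~\ref{prop:res} is monic but not epic (its underlying map is $M\hookrightarrow M\oplus{}^\sigma M$), so $\Res_{F'/F}M$ is visibly not simple. Your route is perfectly valid but uses more machinery; the paper's is more self-contained and shows that this direction does not actually rely on the classification theorem.
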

	
	\begin{proof}
		If $M$ admits a descent datum $(\varphi_\sigma)$ then the unit $(\varphi^{-1}_\sigma):(M,\varphi_\sigma)\to\Res_{F'/F} M$ of Proposition \ref{prop:res} is not epic but monic.
		
		Conversely, suppose that $\Res_{F'/F} M$ is not simple. It follows by construction of $\Res_{F'/F} M$ that $\pi(\Res_{F'/F} M)$ is nonzero. In particular, $\Res_{F'/F} M$ is nonzero. We choose a nonzero proper subobject
		$(M_0,\varphi^0_\sigma)\subset\Res_{F'/F} M$. Since
		\[\pi(\Res_{F'/F} M)=M\oplus \bar{M},\]
		$M_0$ is isomorphic to $M$ or $\bar{M}$. Use $\varphi^0_\sigma$ if necessary to see that $M_0\cong M$. We now obtain a descent datum on $M$ by transferring that on $M_0$.
	\end{proof}

	Concerning existence of descent data, let us record another consequence of Lemma \ref{lem:well-defined}:
	
	\begin{prop}\label{prop:existence_DD}
		Consider the setting of Lemma \ref{lem:well-defined}. Assume that $(\cC,\DD(\cC))$ satisfies Conditions \ref{con:fin_length} and \ref{con:bcthm}. Then for a simple descent datum $(M,\varphi_\sigma)$, the following conditions are equivalent:
		\begin{enumerate}
			\item[(a)] $M$ is simple in $\cC$;
			\item[(b)] $\End_{\cC}(M)$ is a division algebra over $F$.
		\end{enumerate}
		Moreover, if all the endomorphisms of simple objects of $\cC$ are scalar, these are also equivalent to:
		\begin{enumerate}
			\item[(c)] $\End_{\cC}(M)=F'\id_{M}$;
			\item[(d)] $\End_{\DD(\cC)}((M,\varphi_\sigma))=F\id_{(M,\varphi_\sigma)}$.
		\end{enumerate}
	\end{prop}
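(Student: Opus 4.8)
The plan is to first dispose of $(a)\Leftrightarrow(b)$ using only Lemma~\ref{lem:well-defined} and Schur's lemma, and then, under the additional scalar hypothesis, to fold in $(c)$ and $(d)$ by applying the base-change isomorphism \ref{con:bcthm} to endomorphism spaces together with a dimension count over $F'$.

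For $(a)\Rightarrow(b)$: since $\cC$ is $F'/F$-linear, $\End_{\cC}(M)$ is an $F'$-algebra, hence in particular an $F$-algebra; if $M$ is simple, Schur's lemma makes every nonzero endomorphism invertible, so $\End_{\cC}(M)$ is a division algebra over $F$. For $(b)\Rightarrow(a)$: by Lemma~\ref{lem:well-defined}~(1) the object $M$ is nonzero and semisimple, say $M\cong\bigoplus_{i=1}^n S_i$ with each $S_i$ simple; if $n\geq 2$ the projection onto $S_1$ along $\bigoplus_{i\geq 2}S_i$ is an idempotent endomorphism different from $0$ and $\id_M$, which cannot occur in a division ring, so $n=1$ and $M$ is simple. (Equivalently: a division ring has no idempotents other than $0$ and $1$, so a nonzero semisimple object with division endomorphism ring is indecomposable, hence simple.) This is the only place where the Artinian/Noetherian hypotheses and Condition~\ref{con:subobject_inv} are used, through Lemma~\ref{lem:well-defined}; without semisimplicity of $M$ the implication $(b)\Rightarrow(a)$ would not go through, and this is the only step that is not purely formal.

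Now assume that every endomorphism of a simple object of $\cC$ is a scalar, i.e.\ $\End_{\cC}(S)=F'\id_S$ for each simple $S\in\cC$. Then $(a)\Rightarrow(c)$ is immediate from this hypothesis, and $(c)\Rightarrow(b)$ holds because $F'$ is a field. It remains to identify $(c)$ with $(d)$. Condition~\ref{con:bcthm}, applied with $(N,\psi_\sigma)=(M,\varphi_\sigma)$, says that the canonical $F'$-algebra map
\[
F'\otimes_F\End_{\DD(\cC)}((M,\varphi_\sigma))\longrightarrow\End_{\cC}(M),\qquad c\otimes f\mapsto c\,\pi(f),
\]
is an isomorphism; in particular it sends $1\otimes\id_{(M,\varphi_\sigma)}$ to $\id_M$. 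If $(d)$ holds, then $\End_{\cC}(M)\cong F'\otimes_F F\id_{(M,\varphi_\sigma)}=F'\id_M$, which is $(c)$. Conversely, if $(c)$ holds, then $F'\otimes_F\End_{\DD(\cC)}((M,\varphi_\sigma))$ is one-dimensional over $F'$; since $F'$ is faithfully flat over $F$ and $\End_{\DD(\cC)}((M,\varphi_\sigma))$ is a unital $F$-algebra containing $F\id_{(M,\varphi_\sigma)}$, it is one-dimensional over $F$, hence equals $F\id_{(M,\varphi_\sigma)}$, which is $(d)$.

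In summary, apart from the single use of Lemma~\ref{lem:well-defined} to get semisimplicity of $M$ in $(b)\Rightarrow(a)$, the proof uses only Schur's lemma, the faithfulness of the forgetful functor $\pi$, and the base-change isomorphism \ref{con:bcthm}; I do not anticipate any further obstacle.
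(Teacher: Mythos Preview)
Your proof is correct and follows essentially the same approach as the paper's: the equivalence $(a)\Leftrightarrow(b)$ via Lemma~\ref{lem:well-defined} and Schur, $(c)\Leftrightarrow(d)$ via Condition~\ref{con:bcthm}, and the implications $(a)\Rightarrow(c)$ (from the scalar hypothesis) and $(c)\Rightarrow(b)$ (trivial) to close the cycle. You have simply unpacked in detail what the paper states tersely.
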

	
	\begin{proof}
		The equivalence of (a) and (b) is immediate from Lemma \ref{lem:well-defined}. We obtain (c) $\iff$ (d) from Condition \ref{con:bcthm}. The implication (c) $\Rightarrow$ (b) is clear. The additional condition in the statement guarantees (a) $\Rightarrow$ (c). This completes the proof.
	\end{proof}
	
	\begin{ex}\label{ex:End}
		Let $F$ be an algebraically closed field, and $\cC$ be a locally small $F$-linear abelian category. If we are given a simple object $M\in\cC$ with $\End_{\cC}(M)$ finite-dimensional, then we have $\End_{\cC}(M)=F\id_M$. More generally, consider the following conditions:
		\begin{enumerate}
			\item[(a)] $\cC$ is locally finite-dimensional, i.e., $\Hom_{\cC}(M,N)$ is finite-dimensional for every pair $M,N$ of objects of $\cC$;
			\item[(b)] $\End_{\cC}(M)$ is of finite dimension for every simple object $M\in\cC$;
			\item[(c)] $\End_{\cC}(M)=F\id_M$ for every simple object $M\in\cC$.
		\end{enumerate} 
		It is clear that (a) $\Rightarrow$ (b) $\iff$ (c). The converse direction (b) $\Rightarrow$ (a) holds true (without the algebraically closed condition) if every object of $\cC$ is of finite length.
	\end{ex}
	
	We will revisit this result in the next section for the pseudo-absolute simplicity (Theorem \ref{thm:abs_simple}).

	\subsection{Existence of descent data}\label{sec:BTcocycle}
	
	In this section, we discuss descent data on simple objects. For arguments below, we work with slightly more general objects for broader applications (Example \ref{ex:line_bundle}).
	
	\begin{prop}\label{prop:dd_unique}
		Let $(\cC,\DD(\cC))$ be a pair of a locally small $F'/F$-linear abelian $\Gamma_{F'/F}$-category and an abelian full subcategory $\DD(\cC)\subset\DS(\cC)$ with Condition \ref{con:bcthm}. Let $M$ be an object of $\cC$ with $\End_{\cC}(M)$ being a division algebra. Then descent data on $M$ are at most unique up to isomorphism in $\DD(\cC)$.
	\end{prop}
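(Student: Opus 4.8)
The plan is to take two descent data $(M,\varphi_\sigma)$ and $(M,\psi_\sigma)$ belonging to $\DD(\cC)$ and exhibit an isomorphism between them in $\DD(\cC)$. Set $A=\Hom_{\DD(\cC)}((M,\varphi_\sigma),(M,\psi_\sigma))$; by fullness of $\DD(\cC)$ in $\DS(\cC)$ this coincides with $\Hom_{\DS(\cC)}((M,\varphi_\sigma),(M,\psi_\sigma))$, and since $\cC$ (hence $\DS(\cC)$) is $F$-linear it is an $F$-vector space, mapping into $\End_\cC(M)$ through the forgetful functor $\pi\colon\DS(\cC)\to\cC$. The first step is to observe that $A\neq 0$: Condition \ref{con:bcthm} supplies a canonical isomorphism $F'\otimes_F A\xrightarrow{\sim}\End_\cC(M)$, whose target is a division algebra and hence nonzero, while the unit map $A\to F'\otimes_F A$ is injective because $F'/F$ is a field extension; moreover this isomorphism is compatible with $\pi$, so the composite $A\hookrightarrow F'\otimes_F A\xrightarrow{\sim}\End_\cC(M)$ is exactly the forgetful map and is injective.

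Next I would choose any nonzero $f\in A$. Its image $\pi(f)\in\End_\cC(M)$ is then nonzero, hence invertible since $\End_\cC(M)$ is a division algebra, so $\pi(f)$ is an isomorphism in $\cC$. Because the forgetful functor reflects isomorphisms (Proposition \ref{prop:(co)limit} (1)), $f$ is an isomorphism in $\DS(\cC)$, and by fullness an isomorphism in $\DD(\cC)$; this yields $(M,\varphi_\sigma)\cong(M,\psi_\sigma)$ in $\DD(\cC)$, as claimed. One can also dispense with the conservativity of $\pi$ and produce a two-sided inverse directly: inverting the defining relation $f\circ\varphi_\sigma=\psi_\sigma\circ{}^\sigma f$ and using that ${}^\sigma(-)$ is a functor gives $\varphi_\sigma\circ{}^\sigma(\pi(f)^{-1})=\pi(f)^{-1}\circ\psi_\sigma$, so $\pi(f)^{-1}$ is a morphism $(M,\psi_\sigma)\to(M,\varphi_\sigma)$ of descent systems inverse to $f$; compare Proposition \ref{prop:gamma_action}.

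I do not anticipate a genuine obstacle: the whole content is that Condition \ref{con:bcthm} forces $\Hom_{\DD(\cC)}((M,\varphi_\sigma),(M,\psi_\sigma))$ to be nonzero, and the division-algebra hypothesis turns any nonzero such morphism into an isomorphism. The only points requiring a line of care are the bookkeeping ones---that $A$ embeds into $F'\otimes_F A$ (which uses only that $F'/F$ is a field extension and $A$ is an $F$-module), and that under the isomorphism of Condition \ref{con:bcthm} this embedding matches the forgetful map to $\End_\cC(M)$---and both are immediate from the constructions.
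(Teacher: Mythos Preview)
Your proof is correct and follows exactly the same approach as the paper: use Condition \ref{con:bcthm} to see that $\Hom_{\DD(\cC)}((M,\varphi_\sigma),(M,\psi_\sigma))$ is nonzero, pick any nonzero element, observe it is invertible in $\End_\cC(M)$ by the division-algebra hypothesis, and invoke conservativity of the forgetful functor (Proposition \ref{prop:(co)limit} (1)). The paper's proof is terser but identical in substance; your additional remarks on why $A$ embeds in $F'\otimes_F A$ and your alternative direct construction of the inverse are correct elaborations rather than a different route.
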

	
	Typical examples of $M$ satisfying the condition are simple objects.
	
	\begin{proof}
		Let $(\varphi_\sigma)$ and $(\varphi'_\sigma)$ be descent data on $M$. Then 
		\[\Hom_{\DD(\cC)}((M,\varphi_\sigma),(M,\varphi'_\sigma))\]
		is nonzero from Condition \ref{con:bcthm}. Pick any nonzero element $f$. This is an isomorphism in $\cC$ by the hypothesis on $\End_{\cC}(M)$. The assertion now follows from Proposition \ref{prop:(co)limit} (1).
	\end{proof}

	\begin{cor}
		Let $(\cC,\DD(\cC))$ be a pair of a locally small and essentially small $F'/F$-linear abelian $\Gamma_{F'/F}$-category and an abelian full subcategory $\DD(\cC)\subset\DS(\cC)$ with Condition \ref{con:bcthm}.
		Then $(M,\varphi_\sigma)\mapsto M$ determines a well-defined bijection between the sets of simple descent data whose underlying objects in $\cC$ are simple and of simple objects of $\cC$ admitting descent data.
	\end{cor}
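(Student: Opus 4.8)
The plan is to establish well-definedness together with surjectivity, and then to deduce injectivity from Proposition~\ref{prop:dd_unique} (equivalently, from Condition~\ref{con:bcthm} and Schur's lemma). The one fact I will use repeatedly is that the forgetful functor $\pi$ restricts to an exact, faithful and conservative functor $\DD(\cC)\to\cC$: this follows from Proposition~\ref{prop:(co)limit} (limits and colimits in $\DS(\cC)$ are computed in $\cC$, and $\pi$ is conservative) together with the exactness and fullness of the inclusion $\DD(\cC)\subset\DS(\cC)$.

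For surjectivity, let $M$ be a simple object of $\cC$ admitting a descent datum, that is, some $(\varphi_\sigma)$ with $(M,\varphi_\sigma)\in\DD(\cC)$; I would show that $(M,\varphi_\sigma)$ is then automatically simple in $\DD(\cC)$. It is nonzero because $M\neq 0$ and $\pi$ reflects zero objects. Given any subobject $(N,\psi_\sigma)\hookrightarrow(M,\varphi_\sigma)$ in $\DD(\cC)$, exactness of $\pi$ turns it into a monomorphism $N\hookrightarrow M$ in $\cC$; since $M$ is simple, either $N=0$, whence $(N,\psi_\sigma)=0$, or $N\hookrightarrow M$ is an isomorphism, whence $(N,\psi_\sigma)\hookrightarrow(M,\varphi_\sigma)$ is an isomorphism by conservativity of $\pi$. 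So $(M,\varphi_\sigma)$ is a simple descent datum whose underlying object is simple, and it maps to $M$. That the assignment $(M,\varphi_\sigma)\mapsto M$ is well defined on isomorphism classes and takes values in the stated target set is clear (apply $\pi$ to an isomorphism of descent data).

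For injectivity, let $(M,\varphi_\sigma)$ and $(N,\psi_\sigma)$ be simple descent data with $M,N$ simple in $\cC$ and $M\cong N$ in $\cC$. By Condition~\ref{con:bcthm}, $F'\otimes_F\Hom_{\DD(\cC)}((M,\varphi_\sigma),(N,\psi_\sigma))\cong\Hom_\cC(M,N)$, which is nonzero because $M\cong N\neq 0$; hence there is a nonzero morphism $f$ of descent data, and $\pi(f)\colon M\to N$ is a nonzero morphism between simple objects of $\cC$, so it is an isomorphism by Schur's lemma, and therefore $f$ is an isomorphism in $\DD(\cC)$ by conservativity of $\pi$. (Alternatively: transport $(\psi_\sigma)$ along an isomorphism $N\cong M$ to a descent datum on $M$ and invoke Proposition~\ref{prop:dd_unique}, noting that $\End_\cC(M)$ is a division algebra.) This gives $(M,\varphi_\sigma)\cong(N,\psi_\sigma)$ in $\DD(\cC)$, completing the argument.

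I do not expect a serious obstacle here; the only point demanding care is to keep the two notions of simplicity apart — $(M,\varphi_\sigma)$ simple in $\DD(\cC)$ versus $M$ simple in $\cC$ — which are reconciled precisely by the exactness and conservativity of $\pi$ exploited in the surjectivity step, and by the Hom-space identity of Condition~\ref{con:bcthm} exploited in the injectivity step.
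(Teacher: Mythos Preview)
Your proof is correct and takes essentially the same approach as the paper, which leaves the corollary unproved as an immediate consequence of Proposition~\ref{prop:dd_unique}; you simply spell out the details (surjectivity via exactness and conservativity of $\pi$, injectivity via Condition~\ref{con:bcthm} and Schur's lemma, which is precisely the content of Proposition~\ref{prop:dd_unique}).
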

	
	To judge the simplicity of $M$, see Proposition \ref{prop:existence_DD}.

	We next wish to generalize the obstruction class $\beta$ to existence of descent data in \cite[Section~12]{MR207712}. For applications to the context of Theorem \ref{thm:Loewy} in the next section, we would like to work without the self-conjugacy hypothesis. For this, let us introduce a fundamental notion on rationality problems:
	
	\begin{defn}[Field of rationality]
		For an $F$-linear $\Gamma_{F'/F}$-category $\cC$ and $M\in\cC$, we set
		$\Gamma_M\coloneqq \{\sigma\in\Gamma_{F'/F}:~{}^\sigma M\cong M\}$ and $F(M)\coloneqq (F')^{\Gamma_M}$.
	\end{defn}
	
	Though this makes sense in general, we morally expect that $\End_{\cC}(M)$ is a division algebra and that $\Gamma_M$ is open in $\Gamma_{F'/F}$. These will be satisfied later in our applications.
	
	We now start to define the Borel--Tits cocycles in the finite setting. In this case, we give an indirect but canonical formulation using the bijection of \cite[Chapter X, \S5, Proposition 9]{MR554237}:
	
	\begin{defn-prop}[Borel--Tits cocycle]\label{def-prop:bt_finite}
		Suppose that $\Gamma_{F'/F}$ is finite. Let $\cC$ be a semilinear abelian $\Gamma_{F'/F}$-category. Consider an $M\in\cC$ satisfying the equality $\End_{\cC}(M)=F'\id_M$.
		Then $\End_{\DS(\cC;F(M)/F)}(\Res_{F'/F(M)} M)$ is a central simple algebra over $F(M)$ splitting over $F'$. We set $\beta^{\BT}_M\in H^2(\Gamma_M,(F')^\times)$ as the inverse of the corresponding cocycle\footnote{According to \cite[Chapter X, \S5, Remark]{MR554237}, there are two mutually opposite well-known correspondences. As mentioned above, we adopt Serre's here.}.
	\end{defn-prop}
	
	\begin{proof}
		This is immediate from Corollary \ref{cor:bc_criterion} and the construction of the underlying object of $\Res_{F'/F(M)} M$.
	\end{proof}
	
	\begin{rem}\label{rem:relation}
		We need the inverse for the compatibility with Borel--Tits' definition in \cite[Section~12]{MR207712}. In fact, let $G$ be a connected reductive algebraic group over a field $F$ of characteristic zero, and $V$ be a self-conjugate irreducible representation of $\bar{F}\otimes_F G$. Let $\beta_V$ be Borel--Tits' 2-cocycle attached to $V$, and $C$ be the corresponding central division algebra. Then $V$ descends to an absolutely irreducible $D$-representation of $G$ on $D^n$ for some $n\geq 1$ (\cite[12.6. Proposition. (iv)]{MR207712}, \cite[2.5. Th\'eor\`eme]{MR277536}). To get left $D$-linear maps respecting the action of $G$, we multiply $D$ from the right side. The opposite algebra appears in this way. Correspondingly, we need the inverse for the cocycle. We will see that our definition agrees with Borel--Tits' by Example \ref{ex:rep}, Proposition \ref{prop:key_computation_BT}, Theorem \ref{thm:End}, and \cite[12.6 and 12.7]{MR207712}.
	\end{rem}
	
	Let us perform a key computation to relate $\beta^{\BT}_M$ with the existence problem of descent systems:
	
	\begin{prop}\label{prop:key_computation_BT}
		Let $\cC$ be a semilinear abelian $\Gamma_{F'/F}$-category with $\Gamma_{F'/F}$ finite, and $M$ be a self-conjugate object of $\cC$ satisfying $\End_{\cC}(M)=F'\id_M$. Choose an isomorphism $\varphi_\sigma:{}^\sigma M\cong M$ for each $\sigma\in\Gamma_{F'/F}$. Write
		\[\varphi_\sigma\circ {}^\sigma\varphi_\tau\circ \mu_{\sigma,\tau,M}^{-1}\circ
		\varphi^{-1}_{\sigma\tau}
		=\beta(\sigma,\tau)\id_M\]
		for $\beta(\sigma,\tau)\in F'$ ($\sigma,\tau\in\Gamma_{F'/F}$). Then $\beta$ is a 2-cocycle. Moreover, we have $\beta^{\BT}_M=\beta$ in $H^2(\Gamma_{F'/F},(F')^\times)$. In particular, $\beta$ only depends on the isomorphism class of $M$ as a cohomology class.
	\end{prop}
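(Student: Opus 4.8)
The statement has two parts — that $\beta$ is a $2$-cocycle, and that its class equals $\beta^{\BT}_M$ — and the ``in particular'' clause then follows formally. I would first dispatch the cocycle identity. Note that $\beta(\sigma,\tau)\in(F')^\times$: the endomorphism $\varphi_\sigma\circ{}^\sigma\varphi_\tau\circ\mu_{\sigma,\tau,M}^{-1}\circ\varphi_{\sigma\tau}^{-1}$ is a composite of isomorphisms, hence an automorphism of $M$, hence a nonzero scalar by $\End_\cC(M)=F'\id_M$. Rewrite the defining relation as $\varphi_\sigma\circ{}^\sigma\varphi_\tau\circ\mu_{\sigma,\tau,M}^{-1}=\beta(\sigma,\tau)\,\varphi_{\sigma\tau}$ and apply ${}^\sigma(-)$ to the instance for the pair $(\tau,\rho)$; by semilinearity the scalar $\beta(\tau,\rho)$ comes out as $\sigma(\beta(\tau,\rho))$. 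Combining this with naturality of the isomorphisms $\mu_{\sigma,\tau}$ applied to $\varphi_\rho$, one rewrites $\varphi_\sigma\circ{}^\sigma\varphi_\tau\circ{}^\sigma({}^\tau\varphi_\rho)$ in two ways as a scalar multiple of $\varphi_{\sigma\tau\rho}$ precomposed with a composite of two coherence isomorphisms $\mu$; those two composites of $\mu$'s coincide by axiom \ref{con:ass} evaluated at $M$, and equating the scalars gives exactly $\sigma(\beta(\tau,\rho))\,\beta(\sigma,\tau\rho)=\beta(\sigma\tau,\rho)\,\beta(\sigma,\tau)$, the $2$-cocycle identity for the natural $\Gamma_{F'/F}$-action on $(F')^\times$. (Normalization of $\beta$ follows from \ref{con:unit} and $\varphi_e=u_M$, as in the proof of Proposition \ref{prop:gamma_action}, but is not needed for the cohomology class.)

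The substantive part is to compute the central simple algebra $A\coloneqq\End_{\DS(\cC;F(M)/F)}(\Res_{F'/F(M)}M)$ of Definition-Proposition \ref{def-prop:bt_finite}. Since $M$ is self-conjugate, $\Gamma_M=\Gamma_{F'/F}$ and $F(M)=F$, so $A=\End_{\DS(\cC)}(\Res_{F'/F}M)$. By Proposition \ref{prop:res} the underlying object of $\Res_{F'/F}M$ is $\prod_{\sigma\in\Gamma_{F'/F}}{}^\sigma M$ (a finite product, hence also a coproduct) with descent isomorphisms assembled from permuting the factors and the $\mu_{\sigma,\tau,M}$, and $\Res_{F'/F}$ is right adjoint to the forgetful functor $\DS(\cC)\to\cC$; hence $A\cong\Hom_\cC(\prod_\sigma{}^\sigma M,M)\cong\prod_\sigma\Hom_\cC({}^\sigma M,M)$, and via the $\varphi_\sigma$ together with $\End_\cC(M)=F'\id_M$ each factor is $F'\varphi_\sigma$, so $A$ is free of rank $|\Gamma_{F'/F}|$ over $F'$ on the $\varphi_\sigma$. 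I would then carry out the (routine but delicate) computation of the multiplication of $A$ in this basis: write out the descent-compatible extensions of two such endomorphisms, compose them, use semilinearity and the $F'$-linearity of the $\varphi$'s, and apply $\varphi_\sigma\circ{}^\sigma\varphi_\tau\circ\mu_{\sigma,\tau,M}^{-1}=\beta(\sigma,\tau)\varphi_{\sigma\tau}$ to collapse the result. The outcome is that in $A$ one has $\varphi_\sigma\cdot a=\sigma(a)\cdot\varphi_\sigma$ for $a\in F'$ and $\varphi_\sigma\cdot\varphi_\tau=\beta(\sigma,\tau)\,\varphi_{\sigma\tau}$ (taking a normalized representative of $\beta$), i.e.\ $A$ is isomorphic as an $F$-algebra to the crossed product $(F',\Gamma_{F'/F},\beta)$; in particular this recovers the claim of Definition-Proposition \ref{def-prop:bt_finite} that $A$ is central simple over $F$ and split by $F'$.

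Feeding this into Definition-Proposition \ref{def-prop:bt_finite} finishes the proof. Under Serre's isomorphism $\Br(F'/F)\cong H^2(\Gamma_{F'/F},(F')^\times)$ \cite[Chapter X, \S5]{MR554237}, in the normalization adopted there, the Brauer class of the crossed product attached to a cocycle $c$ corresponds to $[c]$ up to the opposite-algebra ambiguity that the footnote to Definition-Proposition \ref{def-prop:bt_finite} flags; the inverse that definition builds into $\beta^{\BT}_M$ is arranged precisely so these conventions cancel, whence the identification $A\cong(F',\Gamma_{F'/F},\beta)$ yields $\beta^{\BT}_M=[\beta]$. The ``in particular'' is then automatic — $\beta^{\BT}_M$ is manifestly an isomorphism invariant of $M$ — and can also be seen directly: replacing each $\varphi_\sigma$ by $\lambda_\sigma\varphi_\sigma$ with $\lambda_\sigma\in(F')^\times$ multiplies $\beta$ by the coboundary $(\sigma,\tau)\mapsto\lambda_\sigma\,\sigma(\lambda_\tau)\,\lambda_{\sigma\tau}^{-1}$, and a chosen isomorphism $M\cong N$ transports a family $(\varphi_\sigma)$ on $M$ to one on $N$ with the same $\beta$. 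The step I expect to be the main obstacle is the computation of the multiplication on $A$: correctly accounting for every instance of $\mu_{\sigma,\tau,M}$ and of the unit isomorphisms $u$ buried in the descent structure of $\Res_{F'/F}M$, and verifying that the handedness of the resulting crossed product matches Serre's convention — and hence the inverse in Definition-Proposition \ref{def-prop:bt_finite} — rather than its opposite.
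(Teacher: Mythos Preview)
Your approach is sound and reaches the same conclusion, but it is organized differently from the paper's proof. The paper does not identify $A=\End_{\DS(\cC)}(\Res_{F'/F}M)$ as a crossed product directly. Instead, it works on the $F'$-side: it writes down the chain of isomorphisms
\[
F'\otimes_F A \;\cong\; \End_\cC\Bigl(\textstyle\prod_\tau {}^\tau M\Bigr)\;\xrightarrow{(\varphi_\tau)}\;\End_\cC\Bigl(\textstyle\prod_\tau M\Bigr)\cong M_{\Gamma_{F'/F}}(F'),
\]
transports the Galois action of each $\sigma$ across this chain, and shows explicitly that the resulting semilinear automorphism of $M_{\Gamma_{F'/F}}(F')$ is the composition of entrywise $\sigma$ with conjugation by an element of the form $\diag(\ldots)\circ w_\sigma$, where $w_\sigma$ is the permutation matrix for left translation by $\sigma$. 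Reading off the $2$-cocycle from these conjugating elements (via the description in \cite[Chapter~X, \S5]{MR554237}) then gives $\beta$ on the nose. The paper thus never isolates a crossed-product basis of $A$ itself; all bookkeeping with the coherence isomorphisms $\mu_{\sigma,\tau}$ happens inside the diagram for the Galois action.

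Your route---using the adjunction of Proposition~\ref{prop:res} to identify $A$ with $\Hom_\cC(\prod_\tau{}^\tau M,M)$ as an $F$-vector space and then computing the transported multiplication---is a legitimate alternative. It has the virtue of giving a complete structural description of $A$ (not just its Brauer class), at the cost of having to reconstruct the full descent-compatible endomorphism from its $e$-component before composing; this is where the $\mu$'s re-enter, and where, as you correctly flag, the handedness must be tracked to see whether one lands on $(F',\Gamma_{F'/F},\beta)$ or its opposite. One point to be careful about in your write-up: the adjunction bijection is only $F$-linear, so the phrase ``free of rank $|\Gamma_{F'/F}|$ over $F'$'' needs to be unpacked---you must first locate the copy of $F'$ inside $A$ (it corresponds, under the adjunction, to scalar multiples of the projection $\prod_\tau{}^\tau M\to {}^e M\xrightarrow{u_M}M$) and then check that left multiplication by this $F'$ on the basis elements $\varphi_\sigma\circ p_\sigma$ matches the $F'$-module structure on $\Hom_\cC(\prod_\tau{}^\tau M,M)$. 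This is routine but should not be elided.
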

	
	For generalities on correspondence of second Galois cohomology classes and central simple algebras, see \cite[Chapter X, \S5]{MR554237}. \cite[Construction 4.2]{hayashisuper} is available for its brief exposition to the extent necessary right below. 
	
	\begin{proof}
		To save space, we write $\mu_{\sigma,\tau,M}=\mu_{\sigma,\tau}$ for $\sigma,\tau\in\Gamma_{F'/F}$.
		
		For $\sigma\in\Gamma_{F'/F}$, consider the diagram
		\begin{equation}
			\begin{tikzcd}
				F'\otimes_F \End_{\DS(\cC)} (\Res_{F'/F} M)\ar[r, "\mathrm{\ref{con:bcthm}}", "\sim"']
				\ar[d, "\sigma"']
				&\End_{\cC}\left(\prod_{\tau\in\Gamma_{F'/F}} {}^\tau M\right)
				\ar[d, dashed]\ar[r, "(\varphi_\tau)"]
				&\End_{\cC}\left(\prod_{\tau\in\Gamma_{F'/F}} M\right)\ar[d, dashed]\\
				F'\otimes_F \End_{\DS(\cC)} (\Res_{F'/F} M)\ar[r, "\mathrm{\ref{con:bcthm}}", "\sim"']
				&\End_{\cC}\left(\prod_{\tau\in\Gamma_{F'/F}} {}^\tau M\right)\ar[r, "(\varphi_\tau)"]
				&\End_{\cC}\left(\prod_{\tau\in\Gamma_{F'/F}} M\right).
			\end{tikzcd}\label{diag:Galois_action}
		\end{equation}
		The left vertical arrow is given by the action of $\sigma$ on $F'$. Under the identification of $\End_{\cC}\left(\prod_{\tau\in\Gamma_{F'/F}} {}^\tau M\right)$ with $\prod_{\tau_1,\tau_2\in\Gamma_{F'/F}}\Hom_{\cC}({}^{\tau_1}M,{}^{\tau_2}M)$, the left square is commutative for $(f_{\tau_1,\tau_2})\mapsto (\mu_{\sigma,\sigma^{-1}\tau_2}\circ{}^\sigma f_{\sigma^{-1}\tau_1,\sigma^{-1}\tau_2}\circ \mu^{-1}_{\sigma,\sigma^{-1}\tau_1})$ since
		\[\mu_{\sigma,\sigma^{-1}\tau_2}\circ {}^\sigma (cf_{\sigma^{-1}\tau_1,\sigma^{-1}\tau_2}) \circ \mu^{-1}_{\sigma,\sigma^{-1}\tau_1}
		=\sigma(c)\mu_{\sigma,\sigma^{-1}\tau_2}\circ {}^\sigma f_{\sigma^{-1}\tau_1,\sigma^{-1}\tau_2}\circ \mu^{-1}_{\sigma,\sigma^{-1}\tau_1}
		=\sigma(c)f_{\tau_1,\tau_2}
		\]
		for
		$c\in F'$ and $f=(f_{\tau_1,\tau_2}:{}^{\tau_1}M\to {}^{\tau_2}M)\in \End_{\DS(\cC)} (\Res_{F'/F} M)$ by definition of $\End_{\DS(\cC)} (\Res_{F'/F} M)$.
		It is straightforward that the right vertical arrow which makes the right square commutative is given by
		\[(c_{\tau_1,\tau_2}\id_M)\mapsto(\sigma(c_{\sigma^{-1}\tau_1,\sigma^{-1}\tau_2})\varphi_{\tau_2}\circ \mu_{\sigma,\sigma^{-1}\tau_2}\circ {}^\sigma\varphi^{-1}_{\sigma^{-1}\tau_2}\circ{}^\sigma\varphi_{\sigma^{-1}\tau_1}\circ \mu^{-1}_{\sigma,\sigma^{-1}\tau_1}\circ \varphi^{-1}_{\tau_1}).\]
		Its composition from the right side with the entry-wise action of $\sigma$ on the scalars is
		\begin{equation}
			(c_{\tau_1,\tau_2}\id_M)\mapsto(c_{\sigma^{-1}\tau_1,\sigma^{-1}\tau_2}\varphi_{\tau_2}\circ \mu_{\sigma,\sigma^{-1}\tau_2}\circ {}^\sigma\varphi^{-1}_{\sigma^{-1}\tau_2}\circ{}^\sigma\varphi_{\sigma^{-1}\tau_1}\circ \mu^{-1}_{\sigma,\sigma^{-1}\tau_1}\circ \varphi^{-1}_{\tau_1}).\label{eq:Galois_action}
		\end{equation}

		Define $w_\sigma=((w_\sigma)_{\tau,\rho})\in \End_{\cC}(\prod_{\sigma\in\Gamma_{F'/F}} M)$ by
		\[(w_\sigma)_{\tau,\rho}=\begin{cases}
			\id_M&(\tau=\sigma\rho)\\
			0&(\mathrm{otherwise}).
		\end{cases}\]
		Then we have 
		\[w_\sigma\circ (c_{\tau_1,\tau_2}\id_M)\circ w^{-1}_\sigma =(c_{\sigma^{-1}\tau_1,\sigma^{-1}\tau_2}\id_M)\]
		for $\sigma\in\Gamma_{F'/F}$ and $f=(c_{\tau_1,\tau_2})\in \End_{\cC}(\prod_{\sigma\in\Gamma_{F'/F}} M)$, and $w_{\sigma}\circ w_\tau=w_{\sigma\tau}$ for elements $\sigma,\tau\in\Gamma_{F'/F}$. Therefore
		\eqref{eq:Galois_action} coincides with the conjugation of
		\[\diag(\varphi_{\tau} \circ \mu_{\sigma,\sigma^{-1}\tau}\circ {}^{\sigma}\varphi^{-1}_{\sigma^{-1}\tau}\circ \varphi^{-1}_{\sigma})\circ w_{\sigma}.\]

		It is now straightforward to check that the 2-cocycle attached to the composite isomorphism $F'\otimes_F \End_{\DS(\cC)} (\Res_{F'/F} M)
		\cong \End_{\cC}\left(\prod_{\tau\in\Gamma_{F'/F}} M\right)$ in the upper horizontal sequence of \eqref{diag:Galois_action} agrees with $\beta$ (use the elementary fact that if we are given a scalar map $f\in \End_{\cC}(M)$, say $f=c\id_M$ ($c\in F'$) and an isomorphism $g:M\cong N$, then we have $g\circ f\circ g^{-1}=c\id_N$).
	\end{proof}
	
	A similar argument to \cite[Theorem 2.2.5]{MR4627704} now implies
	
	\begin{cor}\label{cor:BT}
		Consider the setting of Definition-Proposition \ref{def-prop:bt_finite}.
		Then $M$ admits a descent system if and only if $M$ is self-conjugate and $\beta^{\mathrm{BT}}_M$ is trivial in $H^2(\Gamma_{F'/F},(F')^\times)$.
	\end{cor}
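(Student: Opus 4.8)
The plan is to reduce everything to Proposition \ref{prop:key_computation_BT}, which already identifies $\beta^{\BT}_M$ with the ``facile'' $2$-cocycle $\beta$ obtained from \emph{any} choice of isomorphisms $\varphi_\sigma\colon{}^\sigma M\cong M$; once this is in hand, the corollary is just the remark that $(M,\varphi_\sigma)$ being a descent system amounts to $\beta\equiv 1$ on the nose, whereas triviality of the class $[\beta]$ is precisely what permits rescaling the $\varphi_\sigma$ to achieve this.

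\emph{The ``only if'' direction.} Suppose $M$ carries a descent system $(\varphi_\sigma)$. Then each $\varphi_\sigma$ is an isomorphism ${}^\sigma M\cong M$, so $M$ is self-conjugate (in particular $\Gamma_M=\Gamma_{F'/F}$, so that the phrase ``$\beta^{\BT}_M$ is trivial in $H^2(\Gamma_{F'/F},(F')^\times)$'' makes sense). Condition \ref{con:DS} says $\varphi_{\sigma\tau}\circ\mu_{\sigma,\tau,M}=\varphi_\sigma\circ{}^\sigma\varphi_\tau$, i.e.\ the scalar $\beta(\sigma,\tau)$ of Proposition \ref{prop:key_computation_BT} equals $1$ for all $\sigma,\tau$. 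By that proposition this $\beta$ represents $\beta^{\BT}_M$, so $\beta^{\BT}_M$ is trivial. (If $M$ is not self-conjugate it admits no descent system and the first conjunct on the right already fails, so there is nothing to prove.)

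\emph{The ``if'' direction.} Assume $M$ is self-conjugate and $\beta^{\BT}_M$ is trivial. First I would fix isomorphisms $\varphi_\sigma\colon{}^\sigma M\cong M$, form the $2$-cocycle $\beta$ of Proposition \ref{prop:key_computation_BT}, and use that proposition to conclude $[\beta]=\beta^{\BT}_M=1$; hence $\beta=\partial c$ for a $1$-cochain $c\colon\Gamma_{F'/F}\to(F')^\times$ (normalized if convenient), say $\beta(\sigma,\tau)=c(\sigma)\,\sigma(c(\tau))\,c(\sigma\tau)^{-1}$. Setting $\varphi'_\sigma\coloneqq c(\sigma)^{-1}\varphi_\sigma$ and using semilinearity, ${}^\sigma(c(\tau)^{-1}\varphi_\tau)=\sigma(c(\tau))^{-1}\,{}^\sigma\varphi_\tau$, a direct computation gives
\[
\varphi'_\sigma\circ{}^\sigma\varphi'_\tau\circ\mu_{\sigma,\tau,M}^{-1}\circ(\varphi'_{\sigma\tau})^{-1}=c(\sigma)^{-1}\,\sigma(c(\tau))^{-1}\,c(\sigma\tau)\,\beta(\sigma,\tau)\,\id_M=\id_M,
\]
so $(M,\varphi'_\sigma)$ verifies \ref{con:DS} and is the desired descent system on $M$.

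The only real point of care is the bookkeeping of cocycle conventions: Definition-Proposition \ref{def-prop:bt_finite} uses Serre's normalization from \cite[Chapter X, \S5]{MR554237} together with the \emph{inverse} of the resulting cocycle, so the coboundary relation $\beta=\partial c$ and the exact exponents of $c$ in the definition of $\varphi'_\sigma$ must be taken with matching signs, and one should double-check that the rescaled family lands the scalar on $1$ rather than on its inverse. Everything else is a routine unwinding of \ref{con:DS} and the semilinearity axiom, parallel to \cite[Theorem 2.2.5]{MR4627704}.
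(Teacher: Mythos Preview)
Your argument is correct and is precisely the ``similar argument to \cite[Theorem 2.2.5]{MR4627704}'' that the paper invokes: once Proposition \ref{prop:key_computation_BT} identifies $\beta^{\BT}_M$ with the cocycle built from an arbitrary choice of $\varphi_\sigma$, the existence of a descent system is equivalent to being able to rescale the $\varphi_\sigma$ so that the cocycle becomes identically $1$, which is exactly triviality of the class. Your coboundary/rescaling computation is accurate as written (the sign worries you raise are unfounded here, since triviality of $[\beta]$ is insensitive to the inversion in Definition-Proposition \ref{def-prop:bt_finite}).
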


	\begin{ex}[{\cite[Section~VIII]{E1914}}]
		Put $F=\bR$ and $F'=\bC$. Write $\sigma\in\Gamma_{\bC/\bR}$ for the nontrivial element. Let $\cC$ be a semilinear abelian $\Gamma_{\bC/\bR}$-category with Condition \ref{con:bcthm}, and $M\in\cC$ be a simple object with $\End_{\cC}(M)=\bC\id_M$.
		Identify $H^2(\Gamma_{\bC/\bR},\bC^\times)$ with $\{\pm 1\}$. The sign corresponding to $\beta^{\BT}_M$ is called the index. We denote it by $\Index(M)$. Explicitly, this is given as follows: choose $\varphi:\bar{M}\cong M$. Then $\varphi\circ \bar{\varphi}\circ \mu^{-1}_{M,\sigma,\sigma}\circ u^{-1}_M$ is a nonzero real scalar map. The index is the sign of its scalar.
	\end{ex}

	We obtain a generalization of the full statement of the classical work \cite{MR1500635}:
	
	\begin{cor}
		Put $F'/F=\bC/\bR$. Let $\cC$ be a Loewy $\Gamma_{\bC/\bR}$-category, and $M$ be a simple object of $\cC$ with $\End_{\cC}(M)=\bC\id_M$.
		\begin{enumerate}
			\item The object $M$ admits a descent system $(\varphi_\sigma)$ if and only if $M$ is self-conjugate and $\Index (M)=1$. Moreover, if these equivalent conditions are satisfied, then $\End_{\DS(\cC)}((M,\varphi_\sigma))=\bR \id_{(M,\varphi_\sigma)}$.
			\item Suppose that $M$ does not admit any descent systems. Then the simple descent system corresponding to $M$ is $\Res_{\bC/\bR} M$. Moreover, we have
			\[\End_{\DS(\cC)}(\Res_{\bC/\bR} M)\cong\begin{cases}
				\bC&\mathrm{if}~\bar{M}\not\cong M;\\
				\bH&\mathrm{if}~\bar{M}\cong M.
			\end{cases}\]
		\end{enumerate}
	\end{cor}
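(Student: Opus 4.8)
The plan is to derive both parts entirely from the machinery already in place --- Corollary~\ref{cor:BT}, Proposition~\ref{prop:Loewy}, and the base-change isomorphism~\ref{con:bcthm} --- so that the only genuinely new input is the classification of finite-dimensional division algebras over $\bR$.

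For part~(1): the equivalence ``$M$ admits a descent system $\iff$ $M$ is self-conjugate and $\Index(M)=1$'' is exactly Corollary~\ref{cor:BT}, once one recalls that $\Index(M)$ is by definition the sign in $\{\pm 1\}\cong H^2(\Gamma_{\bC/\bR},\bC^\times)$ attached to $\beta^{\BT}_M$, so that $\Index(M)=1$ is the triviality of $\beta^{\BT}_M$. For the ``moreover'': if $(\varphi_\sigma)$ is a descent system on $M$, then $(M,\varphi_\sigma)\in\DD(\cC)=\DS(\cC)$, and applying~\ref{con:bcthm} to $(M,\varphi_\sigma)$ with itself gives an isomorphism $\bC\otimes_\bR\End_{\DS(\cC)}((M,\varphi_\sigma))\cong\End_\cC(M)=\bC\id_M$. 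Comparing $\bC$-dimensions, $\End_{\DS(\cC)}((M,\varphi_\sigma))$ is one-dimensional over $\bR$; since it contains $\bR\id$, it equals $\bR\id_{(M,\varphi_\sigma)}$.

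For part~(2): the first assertion --- that the simple descent system corresponding to $M$ is $\Res_{\bC/\bR}M$ --- is precisely the ``$M$ does not admit a descent datum'' case of Proposition~\ref{prop:Loewy}. In particular $\Res_{\bC/\bR}M$ is simple in $\DS(\cC)$, which is abelian by Proposition~\ref{prop:(co)limit}~(3), so by Schur's lemma $\End_{\DS(\cC)}(\Res_{\bC/\bR}M)$ is a division algebra over $\bR$. To identify it I would compute its $\bR$-dimension via~\ref{con:bcthm}: the underlying object of $\Res_{\bC/\bR}M$ is $\prod_{\sigma\in\Gamma_{\bC/\bR}}{}^\sigma M\cong M\oplus\bar M$ by the construction in the proof of Proposition~\ref{prop:res}, and $\bar M$ is again simple with only scalar endomorphisms because ${}^\sigma(-)$ is a semilinear auto-equivalence. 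Hence, by Schur's lemma, $\End_\cC(M\oplus\bar M)$ is $\bC\times\bC$ (of $\bC$-dimension $2$) when $\bar M\not\cong M$ and the $2\times 2$ matrix algebra over $\bC$ (of $\bC$-dimension $4$) when $\bar M\cong M$; by~\ref{con:bcthm} this forces $\dim_\bR\End_{\DS(\cC)}(\Res_{\bC/\bR}M)$ to be $2$, respectively $4$. By Frobenius's classification of finite-dimensional real division algebras, a two-dimensional one is $\bC$ and a four-dimensional one is $\bH$, which is the claim. (In the self-conjugate case one may instead invoke Definition-Proposition~\ref{def-prop:bt_finite}, which identifies $\End_{\DS(\cC)}(\Res_{\bC/\bR}M)$ as a central simple $\bR$-algebra split by $\bC$, hence of dimension $4$, and therefore $\bH$ once we know it is a division algebra.)

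There is no substantial obstacle: each case reduces to citing an earlier result and a dimension count, and the sole ingredient external to the paper is Frobenius's theorem. The minor point deserving care is that the invariants $\Index$, $\beta^{\BT}_M$ and the functor $\Res_{\bC/\bR}$ are all available for a Loewy $\Gamma_{\bC/\bR}$-category --- this uses semilinearity and that $\Gamma_{\bC/\bR}$ is finite --- and that $\DD(\cC)=\DS(\cC)$ satisfies~\ref{con:bcthm}, which holds by Corollary~\ref{cor:bc_criterion} since $\Gamma_{\bC/\bR}$ is finite.
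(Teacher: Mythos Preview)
Your proof is correct and follows essentially the same route as the paper: part~(1) via Corollary~\ref{cor:BT} (the paper cites its predecessor Proposition~\ref{prop:key_computation_BT}) together with the base-change isomorphism~\ref{con:bcthm} (the paper packages this as Proposition~\ref{prop:existence_DD}), and part~(2) via Proposition~\ref{prop:Loewy} and the dimension count of $\End_\cC(M\oplus\bar M)$ combined with Frobenius's theorem, exactly as in the paper.
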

	
	We will discuss its generalization in the next section. Here we give a simple argument, based on the fact that the finite-dimensional real division algebras are $\bR$, $\bC$, and $\bH$ (the Frobenius theorem).
	
	\begin{proof}
		The first part of (1) is a restatement of Proposition \ref{prop:key_computation_BT}. The equality $\End_{\DS(\cC)}((M,\varphi_\sigma))=\bR \id_{(M,\varphi_\sigma)}$ in (1) follows from Proposition \ref{prop:existence_DD}.
		
		For the first part of (2), see Proposition \ref{prop:Loewy}. We can determine the real division algebra $\End_{\DS(\cC)}(\Res_{\bC/\bR} M)$ by the isomorphism
		\[\bC\otimes_\bR \End_{\DS(\cC)}(\Res_{\bC/\bR} M)\cong \End_{\cC}(M\oplus \bar{M})\]
		and seeing the dimension of the right hand side.
	\end{proof}

	Though the condition $\End_{\cC}(M)=\bC\id_M$ is reasonable and expected to be satisfied in applications, one can easily find an easy counterexample to this since the finiteness conditions in the definition of Loewy $\Gamma_{\bC/\bR}$-categories are just as abelian categories:
	
	\begin{ex}
		Put $F'/F=\bC/\bR$. Let $\cC$ be the category of finite-dimensional vector spaces over the field $\bC(t)$ of complex rational functions of one variable. Then $\cC$ is a Loewy $\Gamma_{\bC/\bR}$-category for the complex conjugation. This is a semisimple abelian category. Moreover, the only simple object is $\bC(t)$ (up to isomorphisms), whose $\bC$-algebra of endomorphisms is $\bC(t)$.
	\end{ex}

	We next discuss the infinite case. We wish to introduce a cohomology class to the obstruction of existence of a descent datum on a semilinear abelian $\Gamma_{F'/F}$-category with Condition \ref{con:bcthm}. The self conjugacy is an obvious necessary condition. 
	
	One could then define $\beta^{\BT}_M$ by the numerical description in Proposition \ref{prop:key_computation_BT} if $\End_{\cC}(M)=F'\id_M$. However, $\beta^{\BT}_M$ is morally expected to be continuous. The cocycle $\beta^{\BT}_M$ should then arise from a descent datum in $\cC_{F''}$ for certain $F''\in\Lambda_{F'/F}$ in the setting of a Loewy datum. We thus need a functor $F''\otimes_F(-)$. If one wishes to see independence of choice of $F''$ for the cocycle $\beta$ in $H^2(\Gamma_{F'/F},(F')^\times)$, the cocycles for fields in $\Lambda_{F'/F}$ should be compared in an ambient field in $\Lambda_{F'/F}$. We need additional structures to guarantee the compatibility of the construction of $\beta$.
	
	\begin{defn}\label{defn:BTdatum}
		\begin{enumerate}
			\item A Borel--Tits datum consists of
			\begin{itemize}
				\item a locally small semilinear abelian $\Gamma_{F'/F}$-category $\cC=\cC_{F'}$,
				\item an abelian full subcategory $\DD(\cC)\subset\DS(\cC)$,
				\item a locally small semilinear abelian $\Gamma_{F''/F}$-category $\cC_{F''}$
				for each $F''\in\Lambda_{F'/F}$,
				\item an $F''$-linear functor $F'\otimes_{F''}(-):\cC_{F''}\to \cC$
				for $F''\in\Lambda_{F'/F}$,
				\item an $F'''$-linear functor $F''\otimes_{F'''}(-):\cC_{F'''}\to \cC_{F''}$
				for $F'',F'''\in\Lambda_{F'/F}$ with $F'''\subset F''$,
				\item a natural transformation $\alpha^{F''}_{\sigma}:{}^\sigma(F'\otimes_{F''}(-))\cong F'\otimes_{F''} {}^{\sigma|_{F''}}(-)$ for each field $F''\in\Lambda_{F'/F}$ and $\sigma\in\Gamma_{F'/F}$,
				\item a natural transformation $\alpha^{F''/F'''}_\sigma:
				{}^\sigma (F''\otimes_{F'''}(-))\cong F''\otimes_{F'''} {}^{\sigma|_{F'''}}(-)$
				for $F'',F'''\in\Lambda_{F'/F}$ with $F'''\subset F''$ and $\sigma\in\Gamma_{F''/F}$,
				\item a natural transformation
				\[\gamma^{F^{(2)}/F^{(3)}/F^{(4)}}:
				F^{(2)}\otimes_{F^{(3)}}(F^{(3)}\otimes_{F^{(4)}}(-))\cong F^{(2)}\otimes_{F^{(4)}}(-)\]
				for $F^{(i)}\in\Lambda_{F'/F}\cup\{F'\}$ ($2\leq i\leq 4$) with $F^{(i)}\subset F^{(i-1)}$ for $i=3,4$,
			\end{itemize}
			such that
			\begin{enumerate}[label=(BT\arabic*)]
				\item \label{con:strict_BT} $\DD(\cC)\subset\DS(\cC)$ is strict, i.e., closed under formation of isomorphisms in $\DS(\cC)$,
				\item \label{con:exact_BT} for $F''\in\Lambda_{F'/F}$, the functor $F'\otimes_{F''}(-)$ is exact,
				\item \label{con:bcthm_BT} for $F''\in\Lambda_{F'/F}$ and $M,N\in\cC_{F''}$, the canonical map
				\[F'\otimes_{F''} \Hom_{\cC_{F''}}(M,N)
				\to \Hom_{\cC}(F'\otimes_{F''}M,F'\otimes_{F''}N)\]
				is an isomorphism,
				\item \label{con:associative_BT} $\alpha^{F''}_\sigma$ and $\alpha^{F''/F'''}_\sigma$ are associative in $\sigma$,
				\item \label{con:natural_BT} $\gamma$ are natural in $F^{(\bullet)}$, i.e., for
				\[\begin{array}{cc}
					F^{(i)}\in\Lambda_{F'/F}\cup\{F'\}&(2\leq i\leq 5)
				\end{array}\]
				with $F^{(i)}\subset F^{(i-1)}$ for $3\leq i\leq 5$ and $M\in\cC_{F^{(5)}}$, we have
				\[\gamma^{F^{(2)}/F^{(4)}/F^{(5)}}_M\circ \gamma^{F^{(2)}/F^{(3)}/F^{(4)}}_{F^{(4)\otimes_{F^{(5)}}}M}
				=\gamma^{F^{(2)}/F^{(3)}/F^{(5)}}_M\circ (F^{(2)}\otimes_{F^{(3)}}\gamma^{F^{(3)}/F^{(4)}/F^{(5)}}_{M}),
				\]
				\item \label{con:esssurj_BT} the functor
				\begin{equation}
					\DS(\cC_{F''})\to \DS(\cC)\label{eq:bc_of_ds}
				\end{equation}
				of Lemma \ref{lem:baseupds} is essentially surjective onto $\DD(\cC)$, 
				\item \label{con:rationality} for every object $M\in\cC$, there exist $F''\in\Lambda_{F'/F}$ and $M_{F''}\in\cC_{F''}$ with an isomorphism
				$F'\otimes_{F''} M_{F''}\cong M$. We will call $M_{F''}$ an $F''$-form of $M$.
			\end{enumerate}
			If there is no risk of confusion, we say that a Borel--Tits datum on a semilinear abelian $\Gamma_{F'/F}$-category $\cC$ is given. In this case, we follow the above for the symbols.
			\item We say a Borel--Tits datum on a semilinear abelian $\Gamma_{F'/F}$-category $\cC$ is \emph{strong} if
			\begin{enumerate}[label=(SBT)]
				\item \label{con:Schur} $\End_{\cC}(M)=F'\id_M$ for every simple object $M\in\cC$.
			\end{enumerate}
		\end{enumerate}
	\end{defn}
	
	\begin{rem}\label{rem:Galois_descent}
		Consider a Borel--Tits datum on a semilinear abelian $\Gamma_{F'/F}$-category $\cC$ as above. Then for $F''\in\Lambda_{F'/F}$, the proof of Theorem \ref{thm:Ld->Lp} (1) implies that the functor \eqref{eq:bc_of_ds} is fully faithful (see Remark \ref{rem:overcondition}). Hence the functor \eqref{eq:bc_of_ds} in \ref{con:esssurj_BT} determines an equivalence of categories.
	\end{rem}
	
	\begin{rem}
		It is morally natural to assume the compatibility of $\alpha$ and $\gamma$, i.e., if we are given $F'',F'''\in\Lambda_{F'/F}$ with $F'''\subset F''$ and $\sigma\in\Gamma_{F'/F}$, the diagram
		\[\begin{tikzcd}
			{}^\sigma (F'\otimes_{F''} (F''\otimes_{F'''} (-)))
			\ar[rrr, "\id_{{}^\sigma(-)}\circ\gamma^{F'/F''/F'''}", "\sim"']
			\ar[d, "\alpha^{F''}_\sigma\circ \id_{F''\otimes_{F'''}(-)}"', "\sim"{sloped, above}]
			&&&{}^\sigma (F'\otimes_{F'''}(-))
			\ar[dd, "\alpha^{F'''}_{\sigma}", "\sim"{sloped, below}]\\
			F'\otimes_{F''} {}^{\sigma|_{F''}}(F''\otimes_{F'''} (-))
			\ar[d, "\id_{F'\otimes_{F''}(-)}\circ\alpha^{F''/F'''}"', "\sim"{sloped, above}]\\
			F'\otimes_{F''} (F''\otimes_{F'''} {}^{\sigma|_{F'''}}(-))
			\ar[rrr, "\gamma^{F'/F''/F'''}\circ \id_{{}^{\sigma|_{F'''}}(-)}", "\sim"']
			&&&F'\otimes_{F'''} {}^{\sigma|_{F'''}}(-)
		\end{tikzcd}\]
		commutes. In this case, the composite functor
		\[F'\otimes_{F''} (F''\otimes_{F'''}(-)):\DS(\cC_{F'''})\to \DS(\cC_{F''})\to \DD(\cC)\]
		is isomorphic to $F'\otimes_{F'''}(-)$. This will reduce the essential surjectivity of \eqref{con:esssurj_BT} to the case $F''=F$ (cf.~Example \ref{ex:Gamma={e}}). More cancellation structures of $F''$ and the compatibility with Galois twists should be also assumed. The lack of these hypotheses will be harmless in the present paper by virtue of Proposition \ref{prop:dd_unique}.
	\end{rem}
	
	\begin{rem}\label{rem:F(M)_finite}
		For every object $M$ of a semilinear abelian $\Gamma_{F'/F}$-category $\cC$ with a Borel--Tits datum, $\Gamma_M$ is open in $\Gamma_{F'/F}$ by Condition \ref{con:rationality}. In particular, $F'/F(M)$ is a Galois extension of Galois group $\Gamma_M$.
	\end{rem}
	
	\begin{ex}[{\cite[Section~12]{MR207712}}]\label{ex:affinegroupscheme}
		Let $G$ be an affine group scheme over $F$. 
		Then $\Rep(F''\otimes_F G)$ with $F''\in\Lambda_{F'/F}\cup\{F'\}$ and $\DD(\Rep(F'\otimes_FG))$ form a Borel--Tits datum in the standard manner (note: \ref{con:rationality} is verified in a similar way to \cite[Proposition 3.33]{hayashisuper}). This is strong if
		\begin{enumerate}
			\item[(i)] $F$ is perfect and $F'=\bar{F}$, or
			\item[(ii)] $F'\otimes_FG$ is a split connected reductive algebraic group over $F'$.
		\end{enumerate}
	\end{ex}
	
	\begin{ex}[\cite{MR4627704}]\label{ex:line_bundle}
		Let $X$ be an algebraic variety over $F$ with an action of a linear algebraic group $G$. For each $F''\in\Lambda_{F'/F}\cup\{F'\}$, let
		\[\Coh(F''\otimes_F X,F''\otimes_F G)\]
		denote the category of $F''\otimes_F G$-equivariant coherent $\cO_{F''\otimes_F X}$-modules. Let
		\[\DD(\Coh(F'\otimes_FX,F'\otimes_F G))\]
		be the category of descent data in the usual sense. They form a Borel--Tits datum in the standard manner. Any $F''\otimes_F G$-equivariant line bundle $\cL$ on $F''\otimes_FX$ satisfies $\End_{\Coh(F'\otimes_F X,F'\otimes_F G)}(\cL)=F'\id_\cL$ if one of the following conditions is satisfied:
		\begin{enumerate}
			\item[(i)] $X$ is proper, geometrically reduced, and geometrically connected over $F$;
			\item[(ii)] $X$ is homogeneous, i.e., $\bar{F}\otimes_FG$ acts transitively on $\bar{F}\otimes_FX$.
		\end{enumerate}
	\end{ex}

	Suppose that we are given a semilinear abelian $\Gamma_{F'/F}$-category $\cC$ with a Borel--Tits datum. We see that our idea on the Borel--Tits cocycle works in this setting. 
	
	\begin{lem}\label{lem:translation_isom}
		Consider objects $M,N\in\cC$ satisfying $M\cong N$. If $\End_{\cC}(M)$ is a division algebra, every nonzero map $M\to N$ is an isomorphism.
	\end{lem}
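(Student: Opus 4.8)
The plan is to run the standard Schur-type argument, using the hypothesis $M \cong N$ to transport the question back to $\End_{\cC}(M)$. First I would fix once and for all an isomorphism $g \colon M \cong N$, which exists by assumption. Given a nonzero morphism $f \colon M \to N$, I would form the endomorphism $h \coloneqq g^{-1} \circ f \in \End_{\cC}(M)$.

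The key step is then the observation that $h \neq 0$: since $g$ is an isomorphism, $g^{-1}$ is monic, so $g^{-1} \circ f = 0$ would force $f = 0$, contrary to hypothesis. Because $\End_{\cC}(M)$ is a division algebra, every nonzero element is invertible; hence $h$ is an automorphism of $M$. Therefore $f = g \circ h$ is a composite of isomorphisms in $\cC$, and thus an isomorphism. This proves the lemma.

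There is essentially no obstacle here; the only point requiring a word of care is the non-vanishing of $h$, which uses that $g$ (equivalently $g^{-1}$) is a monomorphism. I would keep the write-up to a few lines, noting in passing that this is the mechanism underlying Proposition \ref{prop:dd_unique} (where the same reasoning forces descent data on such $M$ to be unique up to isomorphism), so that the lemma slots cleanly into the subsequent discussion of $\beta^{\BT}_M$.
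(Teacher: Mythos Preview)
Your proof is correct and is essentially identical to the paper's own argument: the paper simply says to compose with the inverse of the given isomorphism $M\cong N$ to reduce to the case $N=M$, whereupon the division-algebra hypothesis finishes it. Your write-up just spells this reduction out explicitly.
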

	
	\begin{proof}
		Compose the inverse of the given isomorphism $M\cong N$, we may assume that $N=M$. Then the assertion follows from the hypothesis.
	\end{proof}
	
	\begin{lem}\label{lem:Galois_descent_of_self-conjugacy}
		Let $F''\in\Lambda_{F'/F}$, and $M\in\cC_{F''}$ with $\End_{\cC_{F''}}(M)=F''\id_M$ (equivalently, $\End_{\cC}(F'\otimes_{F''}M)=F'\id_{F'\otimes_{F''}M}$ by Condition \ref{con:bcthm_BT}). Then $M$ is self-conjugate if and only if so is $F'\otimes_{F''} M$.
	\end{lem}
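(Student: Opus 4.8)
The plan is to handle the two implications separately, the ``only if'' direction being purely formal and the ``if'' direction carrying the content.

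For the ``only if'' direction, I would assume $M$ is self-conjugate in $\cC_{F''}$, fix $\sigma\in\Gamma_{F'/F}$, and set $\bar\sigma=\sigma|_{F''}$. Choosing an isomorphism $\varphi_{\bar\sigma}\colon{}^{\bar\sigma}M\cong M$ in $\cC_{F''}$, I apply $F'\otimes_{F''}(-)$ to it and compose with the structure isomorphism $\alpha^{F''}_{\sigma,M}\colon{}^\sigma(F'\otimes_{F''}M)\cong F'\otimes_{F''}{}^{\sigma|_{F''}}M$; the composite is an isomorphism ${}^\sigma(F'\otimes_{F''}M)\cong F'\otimes_{F''}M$. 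Since $\sigma$ is arbitrary, $F'\otimes_{F''}M$ is self-conjugate. This uses only functoriality and the $\alpha^{F''}_\sigma$, not the endomorphism hypothesis.

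For the ``if'' direction, I would assume $F'\otimes_{F''}M$ is self-conjugate, fix $\bar\sigma\in\Gamma_{F''/F}$, and produce an isomorphism ${}^{\bar\sigma}M\cong M$ in $\cC_{F''}$. First lift $\bar\sigma$ to $\sigma\in\Gamma_{F'/F}$ along the surjection $\Gamma_{F'/F}\twoheadrightarrow\Gamma_{F''/F}$; combining $\alpha^{F''}_{\sigma,M}$ with the self-conjugacy isomorphism of $F'\otimes_{F''}M$ yields an isomorphism $F'\otimes_{F''}{}^{\bar\sigma}M\cong F'\otimes_{F''}M$ in $\cC$. By Condition \ref{con:bcthm_BT}, the $F''$-space $\Hom_{\cC_{F''}}(M,{}^{\bar\sigma}M)$ base-changes isomorphically to $\Hom_{\cC}(F'\otimes_{F''}M,F'\otimes_{F''}{}^{\bar\sigma}M)$, which is nonzero since it contains the (inverse of the) isomorphism just built; hence there is a nonzero morphism $h\colon M\to{}^{\bar\sigma}M$ in $\cC_{F''}$, and $F'\otimes_{F''}(h)$ is nonzero because $F'$ is faithfully flat over $F''$. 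Now $\End_{\cC}(F'\otimes_{F''}M)=F'\id$ is a division algebra (this is exactly the hypothesis $\End_{\cC_{F''}}(M)=F''\id_M$ transported along Condition \ref{con:bcthm_BT}) and $F'\otimes_{F''}M\cong F'\otimes_{F''}{}^{\bar\sigma}M$, so Lemma \ref{lem:translation_isom} forces $F'\otimes_{F''}(h)$ to be an isomorphism. Finally, $F'\otimes_{F''}(-)$ is exact by Condition \ref{con:exact_BT} and has the base-change property of Hom spaces by Condition \ref{con:bcthm_BT}, so it reflects isomorphisms by Lemma \ref{lem:conservative}; therefore $h$ is an isomorphism and $M$ is self-conjugate.

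The only real obstacle is that $F'\otimes_{F''}(-)$ is only full \emph{after base change}, not full on the nose, so one cannot simply lift the self-conjugacy isomorphism of $F'\otimes_{F''}M$ to $\cC_{F''}$ directly; instead one must descend a generic nonzero morphism, which is precisely why both the division-algebra property of $\End_{\cC}(F'\otimes_{F''}M)$ (to upgrade ``nonzero'' to ``invertible'' via Lemma \ref{lem:translation_isom}) and the conservativity of the base-change functor (Lemma \ref{lem:conservative}) must be invoked. Everything else is bookkeeping with the restriction $\sigma\mapsto\sigma|_{F''}$ and the isomorphisms $\alpha^{F''}_\sigma$.
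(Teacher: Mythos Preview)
Your proof is correct and follows essentially the same approach as the paper's: both directions are handled identically, the ``if'' direction proceeding by lifting $\bar\sigma$ to $\sigma$, using \ref{con:bcthm_BT} and $\alpha^{F''}_{\sigma,M}$ to see that the relevant Hom space in $\cC_{F''}$ is nonzero, then upgrading a nonzero morphism to an isomorphism via Lemma~\ref{lem:translation_isom} upstairs and reflecting it via Lemma~\ref{lem:conservative}. The only cosmetic difference is that the paper builds a map ${}^{\bar\sigma}M\to M$ while you build one $M\to{}^{\bar\sigma}M$, which is immaterial.
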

	
	\begin{proof}
		The ``only if'' direction follows from existence of the isomorphisms $\alpha^{F''}_\sigma$. For the converse, assume that $F'\otimes_{F''} M$ is self-conjugate. Consider an element $\bar{\sigma}\in\Gamma_{F''/F}$. Fix its lift $\sigma\in\Gamma_{F'/F}$. Then consider an isomorphism
		\[\begin{split}
			F'\otimes_{F''} \Hom_{\cC_{F''}}({}^{\bar{\sigma}} M,M)
			&\overset{\mathrm{\ref{con:bcthm_BT}}}{\cong}
			\Hom_{\cC}(F'\otimes_{F''} {}^{\bar{\sigma}} M,F'\otimes_{F''} M)\\
			&\overset{\alpha^{F''}_{\sigma,M}}{\cong}
			\Hom_{\cC}({}^\sigma (F'\otimes_{F''} M),F'\otimes_{F''} M)\\
			&\neq 0.
		\end{split}
		\]
		Hence we can find a nonzero morphism $\varphi_{\bar{\sigma}}:{}^{\bar{\sigma}} M_{F''}\to M_{F''}$. In view of Lemma \ref{lem:translation_isom}, $F'\otimes_{F''} \varphi_{\bar{\sigma}}$ is an isomorphism since this is nonzero. We conclude that $\varphi_{\bar{\sigma}}$ is an isomorphism by 
		Lemma \ref{lem:conservative}.
		
	\end{proof}

	\begin{cons}[Borel--Tits cocycle]\label{cons:BT}
		Let $M\in\cC$ be an object satisfying the equality $\End_{\cC}(M)=F'\id_{M}$. Choose $F''\in\Lambda_{F'/F}$ and an $F''$-form $M_{F''}\in\cC_{F''}$ of $M$ (Condition \ref{con:rationality}). In virtue of the existence of $\alpha^{F''}_\sigma$ for $\sigma\in\Gamma_{F'/F}$, we have
		\[\Gamma_{F'/F''}\subset \Gamma_M=\Gamma_{F'/F(M)}.\]
		According to Lemmas \ref{lem:self-conjugacy} and \ref{lem:Galois_descent_of_self-conjugacy}, $M_{F''}$ is self-conjugate with respect to the twists by $\Gamma_{F''/F(M)}$. We now define $\beta^{\BT}_M\in H^2(\Gamma_M,(F')^\times)$ as the image of
		\[\beta^{\BT}_{M_{F''}}\in H^2(\Gamma_{F''/F(M)},(F'')^\times).\]
	\end{cons}

	\begin{defn-prop}\label{defprop:BT}
		\begin{enumerate}
			\item The cocycle $\beta^{\BT}_M$ is independent of choice of $F''$ and $M_{F''}$.
			\item The object $M$ admits a descent datum if and only if $M$ is self-conjugate and $\beta^{\BT}_M$ is trivial in $H^2(\Gamma_{F'/F},(F')^\times)$.
		\end{enumerate}	
	\end{defn-prop}
	
	\begin{proof}
		For (1), suppose that we are given an $F'''$-form $M_{F'''}$ and an $F''''$-form $M_{F''''}$ of $M$ for some finite Galois extensions $F''',F''''\in\Lambda_{F'/F}$.
		Pick $F''\in\Lambda_{F'/F}$ with $F''',F''''\subset F''$. Then $F''\otimes_{F'''} M_{F'''}$ and $F''\otimes_{F''''} M_{F''''}$ are $F''$-forms of $M$ (use $\gamma$). Moreover, there is an isomorphism $F''\otimes_{F'''} M_{F'''}\cong F''\otimes_{F''''} M_{F''''}$ by a similar argument to Lemma \ref{lem:Galois_descent_of_self-conjugacy}.
		
		Fix an isomorphism $\varphi_{\bar{\sigma}}:{}^{\bar{\sigma}} M_{F'''}\cong M_{F'''}$ for each $\bar{\sigma}\in\Gamma_{F'''/F(M)}$. For $\sigma\in\Gamma_{F''/F(M)}$, let $\varphi_\sigma:{}^\sigma (F''\otimes_{F'''}M_{F'''} )\cong F''\otimes_{F'''}M_{F'''}$ be the isomorphism of Construction \ref{cons:basechange}. Then the associativity of $\alpha^{F''/F'''}$ (Condition \ref{con:associative_BT}) implies
		\[\varphi_\sigma\circ {}^\sigma\varphi_\tau\circ \mu_{\sigma,\tau,F''\otimes_{F'''}M_{F'''}}\circ \varphi^{-1}_{\sigma\tau}
		=F''\otimes_{F'''} (\varphi_{\bar{\sigma}}\circ{}^{\bar{\sigma}}
		\varphi_{\bar{\tau}}\circ\mu_{{\bar{\sigma}},{\bar{\tau}},M_{F'''}}
		\circ \varphi^{-1}_{{\bar{\sigma}}{\bar{\tau}}}).\]
		Hence the image of $\beta^{\BT}_{M_{F'''}}$ in $H^2(\Gamma_{F''/F(M)},(F'')^\times)$ agrees with $\beta^{\BT}_{F''\otimes_{F'''} M_{F'''}}$ by virtue of Proposition \ref{prop:key_computation_BT}. Similarly, the image of $\beta^{\BT}_{M_{F''''}}$ in $H^2(\Gamma_{F''/F(M)},(F'')^\times)$ agrees with $\beta^{\BT}_{F''\otimes_{F''''} M_{F''''}}$. Finally, Proposition \ref{prop:key_computation_BT} implies
		\[\beta^{\BT}_{F''\otimes_{F'''} M_{F'''}}=\beta^{\BT}_{F''\otimes_{F''''} M_{F''''}}.\] 
		Part (1) now follows since $\beta^{\BT}_M$ is their image in $H^2(\Gamma_M,(F')^\times)$.		
		
		We next verify (2). We may and do assume that $M$ is self-conjugate. In particular, we have $\Gamma_M=\Gamma_{F'/F}$.
		If $M$ admits a descent datum then $M$ admits an $F$-form $M_F$ by virtue of Condition \ref{con:esssurj_BT}. It is then clear that $\beta^{\BT}_{M_F}$ and therefore $\beta^{\BT}_M$ are trivial. Conversely, suppose that $\beta^{\BT}_M$ is trivial. Fix $F''\in\Lambda_{F'/F}$ and an $F''$-form $M_{F''}$ as in Construction \ref{cons:BT}. Recall that the canonical map $H^2(\Gamma_{F''/F},(F'')^\times)\to H^2(\Gamma_{F'/F},(F')^\times)$ is injective from the fundamental long exact sequence of Galois cohomology groups and Hilbert's theorem 90 (see \cite[Chapter X, \S4, Proposition 6]{MR554237}). Therefore $\beta^{\BT}_{M_{F''}}$ is trivial. Corollary \ref{cor:BT} implies that $M_{F''}$ admits a descent system. Apply $F'\otimes_{F''}(-)$ to obtain a descent datum on $M$ (Condition \ref{con:esssurj_BT}). This completes the proof.
	\end{proof}

	In the rest of this small section, let $\cC$ be a semilinear abelian $\Gamma_{F'/F}$-category with a Borel--Tits datum. Concerning the pseudo-absolute simplicity (cf.~\cite[Definition-Proposition 3.30]{hayashisuper}), we develop its generalities for $\cC$.

	\begin{lem}\label{lem:pseudosimple_finite}
	Let $F''\in\Lambda_{F'/F}$.
	\begin{enumerate}
		 \item For $M,N\in\cC_F$, the natural map
		\[F''\otimes_F \Hom_{\cC_F}(M,N)\to \Hom_{\cC_{F''}}(F''\otimes_F M,F''\otimes_F N)\]
		is an isomorphism.
		\item The functor $F''\otimes_{F}(-):\cC_F\to\cC_{F''}$ determines an equivalence 
		\[\cC_{F,\mathrm{fl}}\simeq \DS(\cC_{F'',\mathrm{fl}})\]
		of categories.
		\item The functor $F''\otimes_{F}(-)$ respects semisimple objects.
	\end{enumerate}
	\end{lem}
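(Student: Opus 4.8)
The plan is to deduce all three statements from the base change isomorphism \ref{con:bcthm_BT} at the top level $\cC=\cC_{F'}$, the transition isomorphisms $\gamma$ and $\alpha^{F''/F}$, and the finite Galois descent already available for the finite group $\Gamma_{F''/F}$ (Corollary \ref{cor:fl}, Lemma \ref{lem:well-defined}). For (1), I would apply the field extension functor $(-)\otimes_{F''}F'$, which is faithfully flat and hence conservative, to the canonical comparison map $\phi\colon F''\otimes_F\Hom_{\cC_F}(M,N)\to\Hom_{\cC_{F''}}(F''\otimes_F M,F''\otimes_F N)$. On the source, associativity of the tensor product identifies $F'\otimes_{F''}(F''\otimes_F\Hom_{\cC_F}(M,N))$ with $F'\otimes_F\Hom_{\cC_F}(M,N)$; on the target, \ref{con:bcthm_BT} together with the functor $\gamma^{F'/F''/F}$ identifies $F'\otimes_{F''}\Hom_{\cC_{F''}}(F''\otimes_F M,F''\otimes_F N)$ with $\Hom_{\cC}(F'\otimes_F M,F'\otimes_F N)$. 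Under these identifications $\phi\otimes_{F''}F'$ becomes the canonical base change map for $F'\otimes_F(-)$, which is an isomorphism by \ref{con:bcthm_BT} with $F''=F$; hence $\phi$ is an isomorphism. The only non-formal point is the commutativity of this square, which reduces to the naturality of $\gamma$ (\ref{con:natural_BT}) and unwinding the definition of the base change maps.

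For (2), the functor $F''\otimes_F(-)\colon\cC_F\to\cC_{F''}$ lifts, via Construction \ref{cons:basechange} and Lemma \ref{lem:baseupds} applied with $\alpha^{F''/F}$ (associative by \ref{con:associative_BT}), to a functor $\Psi\colon\cC_F=\DS(\cC_F)\to\DS(\cC_{F''})$. I would prove $\Psi$ fully faithful exactly as in the proof of Theorem \ref{thm:Ld->Lp} (1): by (1) and Proposition \ref{prop:gamma_action} the comparison map is $\Gamma_{F''/F}$-equivariant for a semilinear action, and passing to $\Gamma_{F''/F}$-invariants---classical Galois descent for a semilinear action of a finite group---identifies $\Hom_{\cC_F}(M,N)$ with $\Hom_{\DS(\cC_{F''})}(\Psi M,\Psi N)$. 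For essential surjectivity I would postcompose $\Psi$ with the equivalence $F'\otimes_{F''}(-)\colon\DS(\cC_{F''})\xrightarrow{\sim}\DD(\cC)$ of Remark \ref{rem:Galois_descent}: the composite is isomorphic, via $\gamma$, to $F'\otimes_F(-)\colon\cC_F\to\DD(\cC)$---the residual incompatibility of $\gamma$ with $\alpha$ being harmless by Proposition \ref{prop:dd_unique}, as observed after Definition \ref{defn:BTdatum}---and $F'\otimes_F(-)$ is essentially surjective onto $\DD(\cC)$ by \ref{con:esssurj_BT} with $F''=F$. Thus $\Psi$ is an equivalence $\cC_F\simeq\DS(\cC_{F''})$; being an equivalence of abelian categories it preserves and reflects finite length, so it restricts to an equivalence $\cC_{F,\mathrm{fl}}\simeq\DS(\cC_{F''})_{\mathrm{fl}}=\DS(\cC_{F'',\mathrm{fl}})$, the last equality being the argument of Corollary \ref{cor:fl} applied to the semilinear abelian $\Gamma_{F''/F}$-category $\cC_{F''}$.

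For (3), a semisimple object of $\cC_{F,\mathrm{fl}}$ is a finite direct sum of simple objects and $F''\otimes_F(-)$ is additive, so it suffices to treat a simple $S\in\cC_{F,\mathrm{fl}}$. By (2), $\Psi(S)=(F''\otimes_F S,\mathrm{can})$ is a simple object of $\DS(\cC_{F'',\mathrm{fl}})$, so Lemma \ref{lem:well-defined} (1)---applicable since $\cC_{F'',\mathrm{fl}}$ is an Artinian $F''/F$-linear abelian $\Gamma_{F''/F}$-category with $\Gamma_{F''/F}$ finite, and $\DS(\cC_{F'',\mathrm{fl}})$ is trivially closed under subobjects in itself---shows that $F''\otimes_F S$ is semisimple in $\cC_{F''}$. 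Together with Lemma \ref{lem:conservative}, which already yields that $F''\otimes_F(-)$ reflects semisimplicity, this gives (3).

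Throughout, the step I expect to require the most care is the coherence bookkeeping: identifying the relevant composites of the structure functors $F'\otimes_{F''}(-)$, $F''\otimes_F(-)$, compatibly with Galois twists, through $\gamma$ and $\alpha$. The Borel--Tits datum axioms secure these identifications only up to the (unassumed) compatibility relating $\gamma$ and $\alpha$, but this slack is absorbed by Proposition \ref{prop:dd_unique} as the paper arranges, leaving only the two routine diagram chases indicated in (1) and (2). Everything else is formal consequence of \ref{con:bcthm_BT}, Proposition \ref{prop:gamma_action}, Corollary \ref{cor:bc_criterion}, and the fact that equivalences of abelian categories preserve length.
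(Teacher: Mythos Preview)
Your approach matches the paper's almost exactly: part~(1) via faithfully flat descent applied to the comparison map (this is precisely the paper's ``apply the base change $F'\otimes_{F''}(-)$''), full faithfulness in~(2) via the argument of Theorem~\ref{thm:Ld->Lp}~(1) (which is what Remark~\ref{rem:overcondition} points to, combined with part~(1) in place of \ref{con:bcthm_Ld}), the restriction to finite length via Corollary~\ref{cor:fl}, and part~(3) via Lemma~\ref{lem:well-defined} applied to the simple descent system $\Psi(S)$.

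The one place requiring care is your essential surjectivity argument in~(2). You invoke Proposition~\ref{prop:dd_unique} to absorb the unassumed compatibility between $\gamma$ and $\alpha$, but that proposition applies only to objects whose endomorphism ring in $\cC$ is a division algebra; for a general $N\in\cC_F$ one has $\End_\cC(F'\otimes_F N)\cong F'\otimes_F\End_{\cC_F}(N)$ by~\ref{con:bcthm_BT}, and this need not be a division algebra even when $N$ is simple. The paper's remark after Definition~\ref{defn:BTdatum} that Proposition~\ref{prop:dd_unique} renders the missing compatibility ``harmless'' is a forward-looking claim about the specific later uses (Borel--Tits cocycles for objects with scalar endomorphisms), not a general mechanism. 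The paper's own proof of~(2) is equally terse at this point (``admits a quasi-inverse by~(1) and Remark~\ref{rem:overcondition}'' addresses only full faithfulness), so you are not doing worse than the paper; but you should be aware that neither argument cleanly closes essential surjectivity in the stated abstract generality without the $\alpha$--$\gamma$ compatibility. In every concrete instantiation the paper considers that compatibility holds on the nose and the issue disappears.
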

	
	\begin{proof}
		Part (1) is immediate from \ref{con:bcthm_BT} and the faithfully flat descent (apply the base change $F'\otimes_{F''}(-)$ to the map of (1)).
		
		For (2), consider the functor $F''\otimes_F(-):\cC_{F}\simeq \DS(\cC_{F''})$ of Lemma \ref{lem:baseupds}. This admits a quasi-inverse by (1) and Remark \ref{rem:overcondition}. We obtain the equivalence by restricting this by virtue of Corollary \ref{cor:fl}.
		
		Part (3) is a consequence of (2) and Lemma \ref{lem:well-defined}.
	\end{proof}
	
	\begin{thm}\label{thm:abs_simple}
		For a simple object $M$ of $\cC_F$, the following conditions are equivalent:
		\begin{enumerate}
			\renewcommand{\labelenumi}{(\alph{enumi})}
			\item $F'\otimes_F M$ is simple;
			\item $F''\otimes_FM$ is simple for every $F''\in\Lambda_{F'/F}$;
			\item $\End_{\cC} (F'\otimes_F M)$ is a division algebra over $F'$;
			\item $\End_{\cC_{F''}} (F''\otimes_F M)$ is a division algebra over $F''$ for every $F''\in\Lambda_{F'/F}$.
		\end{enumerate}
		Moreover, if the Borel--Tits datum is strong then these are also equivalent to:
		\begin{enumerate}
			\item[(e)] $\End_{\cC} (F'\otimes_F M)=F'\id_{F''\otimes_F M}$;
			\item[(f)] $\End_{\cC_{F''}} (F''\otimes_F M)=F''\id_{F''\otimes_F M}$ for every $F''\in\Lambda_{F'/F}$;
			\item[(g)] $\End_{\cC_F}(M)=F\id_M$.
		\end{enumerate}
	\end{thm}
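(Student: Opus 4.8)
The plan is to set $D\coloneqq\End_{\cC_F}(M)$, a division $F$-algebra by Schur's lemma, and to exploit the base-change-of-Hom isomorphisms, which give ring identifications $\End_{\cC}(F'\otimes_F M)\cong F'\otimes_F D$ (Condition \ref{con:bcthm_BT} with $F''=F$) and $\End_{\cC_{F''}}(F''\otimes_F M)\cong F''\otimes_F D$ for $F''\in\Lambda_{F'/F}$ (Lemma \ref{lem:pseudosimple_finite} (1)). A second ingredient is that, for every $F''\in\Lambda_{F'/F}$, the object $F''\otimes_F M$ is semisimple of finite length in $\cC_{F''}$: semisimplicity follows from Lemma \ref{lem:pseudosimple_finite} (3) since $M$, being simple, is semisimple, and finiteness of length holds because $M\in\cC_{F,\mathrm{fl}}$ corresponds under the equivalence of Lemma \ref{lem:pseudosimple_finite} (2) to an object of $\DS(\cC_{F'',\mathrm{fl}})$. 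I would then establish the equivalence of (a)--(d) through the cycle $(\mathrm{a})\Rightarrow(\mathrm{c})\Rightarrow(\mathrm{d})\Rightarrow(\mathrm{b})\Rightarrow(\mathrm{a})$, and afterwards deduce (e)--(g).

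Three of these implications are routine ring theory. Here $(\mathrm{a})\Rightarrow(\mathrm{c})$ is Schur's lemma. For $(\mathrm{c})\Rightarrow(\mathrm{d})$: the ring $F''\otimes_F D$ embeds as a subring of the division ring $F'\otimes_F D$, hence has no zero divisors; it is moreover a semisimple ring, being the endomorphism ring of a semisimple object of finite length, and a semisimple ring without zero divisors is a division ring. For $(\mathrm{d})\Rightarrow(\mathrm{b})$: decomposing $F''\otimes_F M\cong\bigoplus_i X_i^{\oplus m_i}$ with the $X_i$ pairwise non-isomorphic simple gives $\End_{\cC_{F''}}(F''\otimes_F M)\cong\prod_i M_{m_i}(\End_{\cC_{F''}}(X_i))$, which can be a division ring only when the decomposition reduces to a single $X_i$ with $m_i=1$, that is, $F''\otimes_F M$ is simple.

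The substantive step is $(\mathrm{b})\Rightarrow(\mathrm{a})$, which I expect to be the main obstacle; it is the categorical substitute for the Jacobson density argument. Let $L\subseteq F'\otimes_F M$ be a nonzero subobject. By Condition \ref{con:rationality}, choose $F''\in\Lambda_{F'/F}$ and an $F''$-form $L_{F''}$ of $L$; using $\gamma$ to identify $F'\otimes_F M$ with $F'\otimes_{F''}(F''\otimes_F M)$, the inclusion of $L$ becomes a nonzero morphism $\iota\colon F'\otimes_{F''}L_{F''}\to F'\otimes_{F''}(F''\otimes_F M)$, which by Condition \ref{con:bcthm_BT} is a finite sum $\sum_i c_i\otimes f_i$ with $c_i\in F'$ and $f_i\in\Hom_{\cC_{F''}}(L_{F''},F''\otimes_F M)$. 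Now enlarge $F''$ to some $F'''\in\Lambda_{F'/F}$ containing all the $c_i$; applying Condition \ref{con:bcthm_BT} to the extension $F'''/F''$ exhibits $\iota$ as the image under the exact faithful functor $F'\otimes_{F'''}(-)$ of a morphism $\iota_{F'''}\colon F'''\otimes_{F''}L_{F''}\to F'''\otimes_{F''}(F''\otimes_F M)$ in $\cC_{F'''}$, whose target is identified with $F'''\otimes_F M$ by $\gamma$ and hence is simple by hypothesis (b). Since $\iota\neq0$, also $\iota_{F'''}\neq0$, so $\iota_{F'''}$ is an epimorphism; hence $\iota$ is an epimorphism by exactness (Condition \ref{con:exact_BT}), and therefore $L$, which is the image of $\iota$, equals $F'\otimes_F M$. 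Thus $F'\otimes_F M$ has no nonzero proper subobject, and it is nonzero because $F'\otimes_F(-)$ reflects zero objects (Lemma \ref{lem:conservative}), so it is simple. The delicate point is exactly this spreading out: any single morphism into $F'\otimes_F M$ involves only finitely many scalars from $F'$ and is therefore already defined over a finite subextension, even though $L$ itself need not visibly descend.

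Finally, when the Borel--Tits datum is strong, each of (e), (f), (g) asserts that the relevant endomorphism ring consists of scalars, which, in view of the identifications above and the equalities $\dim_{F'}(F'\otimes_F D)=\dim_{F''}(F''\otimes_F D)=\dim_F D$, amounts to $\dim_F D=1$; hence (e), (f), (g) are mutually equivalent. To connect them with (a)--(d): if $\dim_F D=1$ then $F'\otimes_F D=F'$ is a division ring, which is (c); conversely (a) together with Condition \ref{con:Schur} forces $\End_{\cC}(F'\otimes_F M)=F'\id_{F'\otimes_F M}$, i.e.\ $\dim_F D=1$. This completes the plan.
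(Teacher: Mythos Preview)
Your proof is correct and follows essentially the same route as the paper: the same cycle $(\mathrm{a})\Rightarrow(\mathrm{c})\Rightarrow(\mathrm{d})\Rightarrow(\mathrm{b})\Rightarrow(\mathrm{a})$, with the spreading-out argument for $(\mathrm{b})\Rightarrow(\mathrm{a})$ (descend a subobject to some $F''$, then enlarge to $F'''$ so that the inclusion descends, and use simplicity of $F'''\otimes_F M$) matching the paper's proof almost verbatim. The only cosmetic differences are that the paper handles $(\mathrm{c})\Rightarrow(\mathrm{d})$ by reflecting invertibility through Lemma~\ref{lem:conservative} rather than via your no-zero-divisors/semisimple-ring argument, and derives the strong-case equivalences $(\mathrm{a})\Leftrightarrow(\mathrm{e})\Leftrightarrow(\mathrm{g})$ by quoting Proposition~\ref{prop:existence_DD} instead of your direct dimension count; one small imprecision in your write-up is that the Hom base change for $F'''/F''$ is not literally Condition~\ref{con:bcthm_BT} but is obtained from it together with $\gamma$ and faithful flatness, exactly as in the proof of Lemma~\ref{lem:pseudosimple_finite}~(1).
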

	
	\begin{proof}
		Schur's lemma implies (a) $\Rightarrow$ (c). We have (d) $\Rightarrow$ (b) from Lemma \ref{lem:pseudosimple_finite} (3).
		
		Assume that (c) is satisfied. Let $F''\in\Lambda_{F'/F}$. Pick any nonzero element $f\in \End_{\cC_{F''}} (F''\otimes_F M)$. We identify $F'\otimes_{F''} f$ with an element of $\End_{\cC}(F'\otimes_F M)$ by conjugation of $\gamma^{F'/F''/F}_M$. This is nonzero from \ref{con:bcthm_BT}. We now conclude that $f$ is an isomorphism by (c), \ref{con:bcthm_BT}, and Lemma \ref{lem:conservative}. This shows (c) $\Rightarrow$ (d).
		
		Assume that (b) is satisfied. We wish to prove (a). Let $N'\subset F'\otimes_F M$ be a nonzero subobject. Let $i$ be the corresponding monomorphism. One can find $F''\in\Lambda_{F'/F}$ and an $F''$-form $N'_{F''}$ of $N'$ by Condition \ref{con:rationality}. Consider the isomorphism
		\[F'\otimes_{F''}\Hom_{\cC_{F''}}(N'_{F''},F'\otimes_F M)
		\cong \Hom_{\cC}(N',F'\otimes_F M)
		\]
of \ref{con:bcthm_BT} (use the identification map $F'\otimes_{F''}N'_{F''}\cong N'$ and $\gamma^{F'/F''/F}_M$). Then the preimage of $i\in \Hom_{\cC}(N',F'\otimes_F M)$ lies in $F'''\otimes_{F''}\Hom_{\cC_{F''}}(N'_{F''},F'\otimes_F M)$ for certain $F'''\in\Lambda_{F'/F''}$. In virtue of Condition \ref{con:natural_BT} and Lemma \ref{lem:pseudosimple_finite} (1), we may replace $F''$ with $F'''$ to assume that $i$ descends to $i_{F''}:M_{F''}\to F''\otimes_F M$. This is a monomorphism by Lemma \ref{lem:conservative} (see the kernel). We conclude that $i$ is an isomorphism by Schur's lemma and the assumption (b). This completes the proof of the first part.

		To prove the latter part, assume that the given Borel--Tits datum is strong. Then Proposition \ref{prop:existence_DD} shows (a) $\iff$ (e) $\iff$ (g). The implications (g) $\Rightarrow$ (f) $\Rightarrow$ (e) follow from Lemma \ref{lem:pseudosimple_finite} (1) and \ref{con:bcthm_BT}. This completes the proof.
	\end{proof}
	
	\begin{cor}\label{cor:semisimple_preservation}
		 Assume that $\cC_{F,\mathrm{fl}}$ is locally finite-dimensional, i.e., every Hom space of this category is of finite dimension. Then $F'\otimes_F(-)$ respects semisimple objects and thus the finite length property.
	\end{cor}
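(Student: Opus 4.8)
The plan is to reduce to the case of a simple object and then to show that, for a simple $S\in\cC_F$, the base change $F'\otimes_F S$ is a \emph{finite} direct sum of simple objects of $\cC$; this gives semisimplicity and finiteness of length simultaneously. The reduction is routine: $F'\otimes_F(-)$ is additive, so the semisimplicity statement reduces to simple objects, and for a general $M\in\cC_{F,\mathrm{fl}}$ the exact functor $F'\otimes_F(-)$ carries a composition series of $M$ to a finite filtration of $F'\otimes_F M$ whose graded pieces are base changes of simple objects, so finiteness of length also reduces to the simple case.

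So fix $S\in\cC_F$ simple and put $D\coloneqq\End_{\cC_F}(S)$, a division algebra which is finite-dimensional over $F$ by the locally finite-dimensional hypothesis. For each $F''\in\Lambda_{F'/F}$, Lemma~\ref{lem:pseudosimple_finite}~(2) and~(3) show that $F''\otimes_F S$ is semisimple of finite length in $\cC_{F''}$, while Lemma~\ref{lem:pseudosimple_finite}~(1) identifies $\End_{\cC_{F''}}(F''\otimes_F S)$ with $F''\otimes_F D$, of dimension $\dim_F D$ over $F''$. Since the length of a semisimple object never exceeds the dimension over the base field of its (semisimple Artinian) endomorphism algebra, the lengths of the objects $F''\otimes_F S$ are bounded uniformly in $F''$ by $\dim_F D$; hence I may fix $F_0\in\Lambda_{F'/F}$ for which the length $n$ of $F_0\otimes_F S$ is as large as possible, and decompose $F_0\otimes_F S\cong T_1\oplus\dots\oplus T_n$ with each $T_k\in\cC_{F_0}$ simple. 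For any $F''\in\Lambda_{F'/F}$ containing $F_0$, applying $\gamma$ gives $F''\otimes_F S\cong\bigoplus_{k=1}^n(F''\otimes_{F_0}T_k)$, and each summand is nonzero (the functor $F''\otimes_{F_0}(-)$ reflects zero objects), semisimple, of finite length; comparing total lengths against the maximality of $n$ forces every $F''\otimes_{F_0}T_k$ to be simple.

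The remaining point is that $F'\otimes_{F_0}T_k$ is simple for each $k$, which is exactly the implication (b)$\Rightarrow$(a) of Theorem~\ref{thm:abs_simple} with the base field $F$ replaced by $F_0$; I would reprove it in this setting. Given a nonzero subobject $N'\subset F'\otimes_{F_0}T_k$ with its monomorphism $i$, Condition~\ref{con:rationality} provides an $F''$-form of $N'$ for some $F''\in\Lambda_{F'/F}$, and enlarging $F''$ within $\Lambda_{F'/F}$ (using \ref{con:bcthm_BT}, the transition maps $\gamma$, and the evident analogue of Lemma~\ref{lem:pseudosimple_finite}~(1) for $\cC_{F_0}\to\cC_{F''}$, proved by the same faithfully flat descent) one may assume that $F_0\subset F''$ and that $i$ is defined over $F''$; the descended map is then, by Lemma~\ref{lem:conservative}, a monomorphism with nonzero source into the simple object $F''\otimes_{F_0}T_k$, hence an isomorphism, so $N'=F'\otimes_{F_0}T_k$. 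Granting this, $F'\otimes_F S\cong\bigoplus_{k=1}^n F'\otimes_{F_0}T_k$ is a finite direct sum of simple objects of $\cC$, which is semisimple and of finite length, as required.

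The step I expect to be the main obstacle is this last one: making sure the Borel--Tits datum supplies everything needed to rerun the pseudo-absolute simplicity argument with base field $F_0$ rather than $F$. The key observation that makes it work is that the normal closure over $F$ of any finite extension of $F_0$ inside $F'$ again lies in $\Lambda_{F'/F}$, so that the categories $\cC_{F''}$, the base-change functors $F''\otimes_{F_0}(-)$, and the natural transformations $\gamma$ occurring in the argument are all part of the given datum. A secondary point to get right is the uniform length bound and the maximality argument in the middle paragraph, which is where the locally finite-dimensional hypothesis is used.
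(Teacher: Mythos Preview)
Your proof is correct and follows essentially the same strategy as the paper's: bound the length of $F''\otimes_F S$ uniformly by $\dim_F\End_{\cC_F}(S)$, pass to a finite Galois extension where this length is maximal so that all simple summands remain simple under further finite base change, and then invoke the implication (b)$\Rightarrow$(a) of Theorem~\ref{thm:abs_simple} for those summands. The paper reaches the maximal extension by an iterative construction rather than a direct maximization, but the content is the same; if anything, you are more explicit than the paper about the need to rerun the (b)$\Rightarrow$(a) argument with base field $F_0$ instead of $F$, and about why the Borel--Tits datum supplies the required ingredients at that level.
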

	
	\begin{proof}
		It will suffice to prove that $F'\otimes_F M$ for an arbitrary simple object $M$ of $\cC_F$. Set $F^0=F$. We construct $F^{i}\in\Lambda_{\bar{F}/F^{i-1}}$ for $i\geq 1$ as follows: If there are a simple subobject $N\subset F^{i-1}\otimes_F M$ and an extension $F''/F^{i-1}$ in $\Lambda_{F'/F^{i-1}}$ such that $F''\otimes_{F^{i-1}} N$ is not simple then we set $F^{i}=F'$; Otherwise, we put $F^{i}=F^{i-1}$. This procedure is potentially stationary. In fact, we write $\ell_i$ for the length of $F^i\otimes_F M$. If $F^{i}\neq F^{i-1}$ then we have $\ell_{i}>\ell_{i-1}$ by Lemma \ref{lem:pseudosimple_finite} (3). On the other hand, we have
		\[\dim_F \End_{\cC_F}(M)=\dim_{F^i}\End_{\cC_{F^i}}(F^i\otimes_F M)\geq \ell_i\]
		again by Lemma \ref{lem:pseudosimple_finite} (3).
		
		We are now able to find $F''\in\Lambda_{F'/F}$ such that every simple summand of $F''\otimes_F M$ satisfies the first four equivalent conditions of Theorem \ref{thm:abs_simple}. The assertion now follows since $F''\otimes_F M$ is semisimple (Lemma \ref{lem:pseudosimple_finite} (3)). This completes the proof.
	\end{proof}

	\subsection{Relation of Loewy's classification scheme and Borel--Tits cocycles}\label{sec:relation}
	
	In this section, we discuss what the Borel--Tits cocycles represent in the context of Theorem \ref{thm:Loewy}.
	
	We start with the finite case.
	
	\begin{thm}\label{thm:End}
		Let $F'/F$ be a finite Galois extension of fields, $\cC$ be a Loewy $\Gamma_{F'/F}$-category, $M$ be a simple object of $\cC$ with $\End_{\cC}(M)=F'\id_M$, and $S$ be the simple descent system corresponding to $M$.
		\begin{enumerate}
			\item The object $M$ is self-conjugate if and only if $\End_{\DS(\cC)}(S)$ is central.
			\item If the equivalent conditions of (1) are satisfied, $(\beta^{\BT}_{M})^{-1}$ corresponds to the central division algebra $\End_{\DS(\cC)}(S)$.
		\end{enumerate}
	\end{thm}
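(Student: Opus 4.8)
The strategy is to extract everything from the underlying object $\pi(S)\in\cC$, using Lemma~\ref{lem:well-defined} and Condition~\ref{con:bcthm} (available since $\DD(\cC)=\DS(\cC)$ when $\Gamma_{F'/F}$ is finite). By Lemma~\ref{lem:well-defined}, $\pi(S)$ is nonzero and semisimple, its simple summands are exactly the $\Gamma_{F'/F}$-twists of $M$, and these occur with one common multiplicity $d\geq 1$: the structure isomorphisms $\varphi_\sigma\colon{}^\sigma\pi(S)\cong\pi(S)$ carry the $({}^\tau M)$-isotypic component onto the $({}^{\sigma\tau}M)$-isotypic one, and the action of $\Gamma_{F'/F}$ on the set of twists is transitive, so all multiplicities coincide. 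Hence $\pi(S)\cong\bigoplus_{i=1}^{k}({}^{\sigma_i}M)^{\oplus d}$ with $k=[\Gamma_{F'/F}:\Gamma_M]$ the number of distinct twists. Since $\cC$ is semilinear we have $\End_\cC({}^\sigma M)=F'\id_{{}^\sigma M}$ for all $\sigma$, and distinct twists admit no nonzero morphisms, so $\End_\cC(\pi(S))\cong\prod_{i=1}^{k}M_d(F')$; applying Condition~\ref{con:bcthm} gives $F'\otimes_F\End_{\DS(\cC)}(S)\cong\prod_{i=1}^{k}M_d(F')$. In particular $\End_{\DS(\cC)}(S)$ is a division algebra (Schur's lemma) of dimension $kd^2$ over $F$.

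Part~(1) is now immediate: the center $Z$ of $\End_{\DS(\cC)}(S)$ is a field containing $F$, and $F'\otimes_F Z$ is the center of $\prod_{i=1}^{k}M_d(F')$, namely $\prod_{i=1}^{k}F'$, so $[Z:F]=k$ (one checks moreover $Z\cong F(M)$). Thus $\End_{\DS(\cC)}(S)$ is central over $F$ exactly when $k=1$, i.e. when every Galois twist of $M$ is isomorphic to $M$, i.e. when $M$ is self-conjugate.

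For part~(2) assume these conditions, so $k=1$, $F(M)=F$, $\pi(S)\cong M^{\oplus d}$, and $D:=\End_{\DS(\cC)}(S)$ is a central division $F$-algebra with $F'\otimes_F D\cong M_d(F')$, hence $[D]\in\Br(F'/F)$. I compare $D$ with $\End_{\DS(\cC)}(\Res_{F'/F}M)$. Any simple subquotient of $\Res_{F'/F}M$ in $\DS(\cC)$ has underlying object a subquotient of the semisimple object $\pi(\Res_{F'/F}M)\cong\bigoplus_{\sigma}{}^\sigma M\cong M^{\oplus n}$ (with $n=|\Gamma_{F'/F}|$), hence a nonzero power of $M$, so it corresponds to $M$ under Theorem~\ref{thm:Loewy} and is isomorphic to $S$. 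Therefore $\Res_{F'/F}M$ has finite length with all composition factors $\cong S$, and composition length $m:=n/d$. By Condition~\ref{con:bcthm} once more, $F'\otimes_F\End_{\DS(\cC)}(\Res_{F'/F}M)\cong\End_\cC(M^{\oplus n})\cong M_n(F')$, so $\dim_F\End_{\DS(\cC)}(\Res_{F'/F}M)=n^2=m^2d^2=m^2\dim_F D$. Since the endomorphism algebra of a finite-length object all of whose composition factors are isomorphic to one simple $T$ has $F$-dimension at most $(\mathrm{length})^2\cdot\dim_F\End(T)$, with equality only for semisimple objects, it follows that $\Res_{F'/F}M\cong S^{\oplus m}$. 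Hence $\End_{\DS(\cC)}(\Res_{F'/F}M)\cong M_m(D)$, which represents the same class as $D$ in $\Br(F'/F)$. By Definition-Proposition~\ref{def-prop:bt_finite} (with $F(M)=F$) this class corresponds, under Serre's bijection \cite[Chapter~X, \S5, Proposition~9]{MR554237}, to $(\beta^{\BT}_M)^{-1}$; so $(\beta^{\BT}_M)^{-1}$ represents $D=\End_{\DS(\cC)}(S)$, which is part~(2).

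The step I expect to cost the most is the semisimplicity of $\Res_{F'/F}M$ used in part~(2): averaging over $\Gamma_{F'/F}$ is unavailable since the characteristic is unrestricted, so I handle it by the dimension comparison above together with the cited elementary fact about endomorphism algebras of finite-length objects with a single composition factor (the associated graded of the natural filtration on $\End_{\DS(\cC)}(\Res_{F'/F}M)$ embeds into the endomorphism algebra of the semisimplification, with equal $F$-dimension only in the semisimple case). Alternatively one can bypass it by a crossed-product computation in the spirit of \cite[Theorem~2.2.5]{MR4627704}: choosing $\varphi_\sigma\colon{}^\sigma M\cong M$ with factor set $\beta$ as in Proposition~\ref{prop:key_computation_BT}, realize $\pi(S)\cong M\otimes_{F'}V_0$ for $V_0$ a simple module over the crossed product $(F'/F,\beta)$, and identify $\End_{\DS(\cC)}(S)$ with the endomorphism algebra of $V_0$ over that crossed product, whose Brauer class is the inverse of that of $(F'/F,\beta)$, i.e. $(\beta^{\BT}_M)^{-1}$.
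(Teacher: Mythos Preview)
Your argument is correct, and for part~(1) it is essentially the paper's: both compute $F'\otimes_F\End_{\DS(\cC)}(S)$ via Condition~\ref{con:bcthm} and read off the center. The substantive difference is in part~(2), specifically in how you establish $\Res_{F'/F}M\cong S^{\oplus m}$. You do this by a dimension count: you bound $\dim_F\End_{\DS(\cC)}(\Res_{F'/F}M)$ from above by $m^2\dim_F D$ using the composition series, and force equality (hence semisimplicity) by computing the dimension directly from Condition~\ref{con:bcthm}. This works, but the inequality-with-equality-iff-semisimple step, while elementary, is the kind of thing that deserves a sentence or two more of justification than you give it. The paper sidesteps this entirely: since $\pi(\Res_{F'/F}M)\cong\prod_\sigma{}^\sigma M$ is visibly semisimple in $\cC$, Lemma~\ref{lem:conservative} (applied to $\pi$, which satisfies the base-change hypothesis by Condition~\ref{con:bcthm}) immediately gives that $\Res_{F'/F}M$ is semisimple in $\DS(\cC)$; then Theorem~\ref{thm:Loewy} identifies all simple summands with $S$. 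This is cleaner and avoids any dimension bookkeeping. Your crossed-product alternative is also valid and is acknowledged by the paper (in the remark following the theorem) as a parallel route available in more concrete settings. One small aside: your parenthetical ``one checks moreover $Z\cong F(M)$'' is true but not needed for the theorem as stated, and you do not actually check it; the paper defers that identification to Corollary~\ref{cor:LBT}.
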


	\begin{proof}
		Apply Lemma \ref{lem:conservative} to the forgetful functor $\pi:\DS(\cC)\to \cC$ to see that $\pi(S)$ is semisimple. In view of Theorem \ref{thm:Loewy}, there is an isomorphism $\Res_{F'/F} M\cong S^n$ for some positive integer $n$. Condition \ref{con:bcthm} implies
		\[F'\otimes_F \End_{\DS(\cC)} (S^n)\cong F'\otimes_F \End_{\DS(\cC)} (\Res_{F'/F} M)
		\cong \End_{\cC}\left(\prod_{\sigma\in\Gamma_{F'/F}} {}^\sigma M\right).\]
		This easily leads us to (1). Moreover, if the equivalent conditions of (1) are satisfied then $\End_{\DS(\cC)} (S)$ and $\End_{\DS(\cC)}(\Res_{F'/F} M)$ are central simple algebras which determine the same similarity class in the Brauer group of $F'/F$. Part (2) now follows from Proposition \ref{prop:key_computation_BT}.
	\end{proof}
	
	\begin{rem}
		See \cite[Section~4]{hayashisuper} for a theoretical proof of Theorem \ref{thm:End} (2) without computations of Proposition \ref{prop:key_computation_BT} in the setting of supercomodules, though it does not work in the present general formalism. In fact, we established a super analog of the present work for representations of affine group superschemes in \cite[Sections 3 and 4]{hayashisuper}. The arguments in \cite[Section~4]{hayashisuper} readily adapt to the non-super setting by removing the super-specific components.
	\end{rem}
	
	Theorem \ref{thm:End} (2) tells us that the division algebra of endomorphisms of the simple descent system attached to a self-conjugate simple object is determined by its Borel--Tits cocycle. We can generalize this for general simple objects by restricting the action of the Galois group.
	
	\begin{lem}\label{lem:restricting_action}
		Assume that $F'/F$ is finite. Let $F''$ be an intermediate extension of $F'/F$. Let $\cC$ be a Loewy $\Gamma_{F'/F}$-category.
		\begin{enumerate}
			\item The $F'$-linear abelian category $\cC$ is a Loewy $\Gamma_{F'/F''}$-category by restricting the action of $\Gamma_{F'/F}$ to $\Gamma_{F'/F''}$.
			\item The forgetful functor $\pi:\DS(\cC;F'/F)\to \DS(\cC;F'/F'')$ admits a right adjoint functor $\Res_{F'/F''}$. 
		\end{enumerate}
	\end{lem}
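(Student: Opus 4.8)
For Part (1), the plan is to deduce everything from Corollary \ref{cor:finite_loewy}. Since $F'/F$ is finite Galois and $F\subseteq F''\subseteq F'$, the extension $F'/F''$ is again finite Galois, with group the subgroup $\Gamma_{F'/F''}\subseteq\Gamma_{F'/F}$. Restricting the endofunctors ${}^\sigma(-)$ and the coherence isomorphisms $u,\mu_{\sigma,\tau}$ to indices in $\Gamma_{F'/F''}$ makes $\cC$ an abelian $\Gamma_{F'/F''}$-category (Conditions \ref{con:ass} and \ref{con:unit} hold a fortiori), which is $F'/F''$-linear and semilinear because every $\sigma\in\Gamma_{F'/F''}$ fixes $F''$ pointwise; local and essential smallness, and the finite length of all objects, are properties of the underlying abelian category and are unchanged. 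Corollary \ref{cor:finite_loewy} with $F$ replaced by $F''$ then gives that $\cC$ is a Loewy $\Gamma_{F'/F''}$-category.

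Part (2) is the ``relative'' version of Proposition \ref{prop:res}, to which it reduces when $F''=F'$ (then $\DS(\cC;F'/F')\cong\cC$ by Example \ref{ex:Gamma={e}} and $\pi$ becomes the forgetful functor $\DS(\cC;F'/F)\to\cC$). The plan is to construct $\Res_{F'/F''}$ directly, following the recipe of that proposition but with the indexing group $\Gamma_{F'/F}$ replaced by the finite set of left cosets $\Gamma_{F'/F}/\Gamma_{F'/F''}$, using the given descent isomorphisms $\psi_\sigma$ ($\sigma\in\Gamma_{F'/F''}$) to absorb the $\Gamma_{F'/F''}$-part. Concretely, write $G=\Gamma_{F'/F}$, $H=\Gamma_{F'/F''}$, fix representatives $\rho_1=e,\dots,\rho_n$ with $G=\bigsqcup_i\rho_iH$, and for $(N,\psi_\sigma)\in\DS(\cC;F'/F'')$ let $\Res_{F'/F''}(N,\psi_\sigma)$ have underlying object the finite biproduct $\bigoplus_{i=1}^n{}^{\rho_i}N$. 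For $\tau\in G$ the isomorphism $\varphi^0_\tau:{}^\tau\!\bigoplus_i{}^{\rho_i}N\to\bigoplus_i{}^{\rho_i}N$ is defined as the composite of the canonical isomorphism ${}^\tau\!\bigoplus_i{}^{\rho_i}N\cong\bigoplus_i{}^{\tau\rho_i}N$ coming from $\mu_{\tau,\rho_i}$, the reindexing along the permutation $i\mapsto j(i)$ given by $\tau\rho_i\in\rho_{j(i)}H$, and, writing $\tau\rho_i=\rho_{j(i)}h_i$ with $h_i\in H$, the isomorphisms ${}^{\tau\rho_i}N\cong{}^{\rho_{j(i)}}({}^{h_i}N)\xrightarrow{{}^{\rho_{j(i)}}\psi_{h_i}}{}^{\rho_{j(i)}}N$. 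Note that normality of $H$ is not needed here, since $G$ acts on $G/H$ by left translation for any intermediate $F''$.

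It then remains to check that $\bigl(\bigoplus_i{}^{\rho_i}N,\varphi^0_\sigma\bigr)$ satisfies \ref{con:DS}, that the construction is functorial in $(N,\psi_\sigma)$, and that there is a natural bijection $\Hom_{\DS(\cC;F'/F)}((M,\varphi),\Res_{F'/F''}(N,\psi))\cong\Hom_{\DS(\cC;F'/F'')}((M,\varphi|_H),(N,\psi))$. This bijection should send $f=(f_i):M\to\bigoplus_i{}^{\rho_i}N$ to its $\rho_1=e$ component $f_1:M\to N$; the remaining components are forced by compatibility of $f$ with $\varphi_{\rho_i}$ and $\varphi^0_{\rho_i}$, and the $G$-equivariance of $f$ reduces precisely to $f_1$ respecting $(\varphi_\sigma|_H)$ and $(\psi_\sigma)$. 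Each of these assertions is the computation already carried out in the proof of Proposition \ref{prop:res} for $H=\{e\}$, and the $F$-linearity of the adjunction carries over verbatim. The main obstacle is thus purely organizational bookkeeping: keeping track of the coherence maps $\mu_{\sigma,\tau}$ and of the ``$H$-part'' elements $h_i$ produced by coset reindexing — these compose correctly, since $\tau_1\tau_2\rho_i=\rho_{l(k(i))}h^{(1)}_{k(i)}h^{(2)}_i$ whenever $\tau_2\rho_i=\rho_{k(i)}h^{(2)}_i$ and $\tau_1\rho_{k(i)}=\rho_{l(k(i))}h^{(1)}_{k(i)}$ — so that the cocycle identity \ref{con:DS} for $(N,\psi_\sigma)$ together with the coherences \ref{con:ass} and \ref{con:unit} for $\cC$ yield \ref{con:DS} for $\Res_{F'/F''}(N,\psi_\sigma)$. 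If in addition $F''/F$ is Galois, one may instead endow $\DS(\cC;F'/F'')$ with a natural $\Gamma_{F''/F}$-category structure, identify $\DS(\cC;F'/F)$ with its category of descent systems, and apply Proposition \ref{prop:res} directly, finite products in $\DS(\cC;F'/F'')$ existing because it is abelian.
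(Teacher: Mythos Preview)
Your proof is correct and follows essentially the same construction as the paper. The only presentational difference is that the paper realizes $\Res_{F'/F''}(N,\psi_\sigma)$ as a sub-descent-system of $\pi(\Res_{F'/F}N)$—embedding ${}^\sigma N$ diagonally into $\prod_{\tau\in\Gamma_{F'/F''}}{}^{\sigma\tau}N$ for each coset and observing the image is independent of the representative—so that the cocycle condition \ref{con:DS} is inherited rather than checked by hand; your explicit coset-representative bookkeeping accomplishes the same thing directly.
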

	
	\begin{proof}
		Part (1) is clear. For (2), take any descent system $(Y,\psi_\tau)\in \DS(\cC;F'/F'')$. For each $\sigma\in\Gamma_{F'/F}$, we define a morphism ${}^\sigma Y\to \prod_{\tau\in\Gamma_{F'/F''}} {}^{\sigma \tau} Y$ by
		\[{}^\sigma Y\overset{{}^\sigma \psi^{-1}_\tau}{\cong} {}^{\sigma}({}^\tau Y)
		\overset{\mu_{\sigma,\tau}}{\cong} {}^{\sigma\tau}Y.\]
		Its image is independent of the choice of $\sigma$ in the coset $\sigma\Gamma_{F'/F''}$ by the cocycle condition of $(\psi_\tau)$. The coproduct of these images for all cosets determines a subsystem of $\pi(\Res_{F'/F} M)\in\DS(\cC;F'/F'')$. Moreover, one can easily check that it exhibits the right adjoint functor.
	\end{proof}
	
	\begin{thm}\label{thm:LBT}
		Let $F'/F$ be a finite Galois extension of fields, $\cC$ be a Loewy $\Gamma_{F'/F}$-category, and $M\in\cC$ be a simple object.
		Let $S_{F(M)}\in\DS(\cC;F'/F(M))$ be the simple descent system corresponding to $M$.
		\begin{enumerate}
			\item The object $M\in\cC$ is self-conjugate with respect to the action of $\Gamma_M$.
			\item The descent system $S\coloneqq\Res_{F(M)/F} S_{F(M)}$ is simple. Moreover, $S$ corresponds to $M$.
			\item We have $\End_{\DS(\cC;F'/F(M))}(S_{F(M)})\cong \End_{\DS(\cC;F'/F)}(S)$.
		\end{enumerate}
	\end{thm}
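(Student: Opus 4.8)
\emph{Part (1)} is formal, so I would dispose of it first. One checks that $\Gamma_M$ is a subgroup of $\Gamma_{F'/F}$: it is closed under multiplication, since ${}^{\sigma\tau}M\cong{}^\sigma({}^\tau M)\cong{}^\sigma M\cong M$ via $\mu_{\sigma,\tau,M}$ and the fact that each ${}^\sigma(-)$ preserves isomorphisms, and closed under inversion, by applying ${}^{\sigma^{-1}}(-)$ to an isomorphism ${}^\sigma M\cong M$ and using $u$ and $\mu$. As $F'/F$ is finite Galois, the Galois correspondence gives $\Gamma_M=\Gamma_{F'/F(M)}$; viewing $\cC$ as a $\Gamma_M$-category by restriction (Lemma \ref{lem:restricting_action} (1)), the assertion that $M$ is self-conjugate for the $\Gamma_M$-action is then just the defining property of $\Gamma_M$.

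For (2) and (3) I would set $\Gamma''=\Gamma_M=\Gamma_{F'/F(M)}$, let $\pi_1\colon\DS(\cC;F'/F)\to\DS(\cC;F'/F(M))$ and $\pi_2\colon\DS(\cC;F'/F(M))\to\cC$ be the forgetful functors, so $\pi=\pi_2\circ\pi_1$, and write $\Res_{F(M)/F}$ for the right adjoint of $\pi_1$ supplied by Lemma \ref{lem:restricting_action} (2), so that $S=\Res_{F(M)/F}S_{F(M)}$. The first real step is to read off, from the construction in the proof of Lemma \ref{lem:restricting_action}, that the underlying object of $S$ is the biproduct $\bigoplus_{\bar\sigma\in\Gamma_{F'/F}/\Gamma''}{}^{\tilde\sigma}\pi_2(S_{F(M)})$ (one summand per left coset $\bar\sigma$, with chosen representative $\tilde\sigma$), and that $\pi_1 S$ decomposes in $\DS(\cC;F'/F(M))$ along the orbits of $\Gamma''$ acting on $\Gamma_{F'/F}/\Gamma''$ by left translation. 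The orbit of the trivial coset $\bar e$ is a fixed point, and its summand, with the induced descent data, is $S_{F(M)}$ itself; hence $\pi_1 S\cong S_{F(M)}\oplus R_0$ in $\DS(\cC;F'/F(M))$. Moreover, by Theorem \ref{thm:Loewy} and Lemma \ref{lem:well-defined} every simple summand of $\pi_2(S_{F(M)})$ is a $\Gamma''$-twist of $M$, hence isomorphic to $M$, so $\pi_2(S_{F(M)})\cong M^{\oplus a}$ for some $a\ge1$, and therefore the simple constituents of $\pi_2(R_0)$ are exactly the twists ${}^{\tilde\sigma}M$ with $\bar\sigma\neq\bar e$, none of which is isomorphic to $M$.

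Next: by Lemma \ref{lem:restricting_action} (1) and Corollary \ref{cor:finite_loewy}, $(\cC,\DS(\cC;F'/F(M)))$ is a Loewy pair, so Condition \ref{con:bcthm} applies and gives $F'\otimes_{F(M)}\Hom_{\DS(\cC;F'/F(M))}(R_0,S_{F(M)})\cong\Hom_\cC(\pi_2 R_0,\pi_2 S_{F(M)})$, which vanishes by Schur's lemma since ${}^{\tilde\sigma}M\not\cong M$ for $\bar\sigma\neq\bar e$; symmetrically $\Hom(S_{F(M)},R_0)=0$. The adjunction $\pi_1\dashv\Res_{F(M)/F}$ then yields $\End_{\DS(\cC;F'/F)}(S)\cong\Hom_{\DS(\cC;F'/F(M))}(\pi_1 S,S_{F(M)})=\Hom_{\DS(\cC;F'/F(M))}(S_{F(M)}\oplus R_0,S_{F(M)})=\End_{\DS(\cC;F'/F(M))}(S_{F(M)})$, and I would check that this bijection is a ring isomorphism by chasing the counit: it is $f\mapsto\varepsilon_{S_{F(M)}}\circ\pi_1 f$ followed by restriction to the $S_{F(M)}$-summand, and it is multiplicative because $\pi_1 f$ carries that summand into itself (as $\Hom(S_{F(M)},R_0)=0$). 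This is precisely statement (3), and in particular $\End_{\DS(\cC;F'/F)}(S)$ is a division ring, being isomorphic to $\End_{\DS(\cC;F'/F(M))}(S_{F(M)})$ and $S_{F(M)}$ simple.

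Finally, for the remainder of (2): the underlying object $\pi(S)\cong\bigoplus_{\bar\sigma}({}^{\tilde\sigma}M)^{\oplus a}$ is semisimple, and $\pi\colon\DS(\cC;F'/F)\to\cC$ is exact with the base-change property of Hom-spaces (Condition \ref{con:bcthm}), hence reflects semisimple objects by Lemma \ref{lem:conservative}; so $S$ is semisimple, and a semisimple object with division endomorphism ring is simple. Since $M$ is a direct summand of $\pi(S)$ we get $\Hom_\cC(\pi(S),M)\neq0$, so by the characterization \ref{con:Loewy_bij} in Theorem \ref{thm:Loewy} the simple descent system $S$ corresponds to $M$. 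I expect the main obstacle to be the very first step: unpacking the terse construction in the proof of Lemma \ref{lem:restricting_action} to pin down the coset-block decomposition of $\pi_1 S$, to identify the trivial-coset block (with its descent data) as $S_{F(M)}$, and then to verify that the resulting adjunction bijection on endomorphisms respects composition; once this bookkeeping is settled the rest is routine.
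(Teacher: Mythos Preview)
Your proof is correct and follows the same structure as the paper's: establish (3) first (the paper calls this ``straightforward from the construction of $\Res_{F(M)/F}$ and the definition of $F(M)$'', while you supply the coset-block details and the adjunction argument), then deduce that $S$ is simple as a semisimple object with division endomorphism ring, and finally verify the correspondence with $M$. Your only deviation is in this last step, where you observe directly that $M$ is a summand of $\pi(S)$, whereas the paper instead produces a nonzero map $S\to\Res_{F'/F}M$ by applying $\Res_{F(M)/F}$ to an embedding $S_{F(M)}\hookrightarrow\Res_{F'/F(M)}M$ and using the transitivity $\Res_{F(M)/F}\circ\Res_{F'/F(M)}\cong\Res_{F'/F}$.
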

	
	Combine this with Theorem \ref{thm:End} to conclude that if $M\in\cC$ is a simple object with $\End_{\cC}(M)=F'\id_{M}$ then $(\beta^{\BT}_M)^{-1}$ represents
	$\End_{\DS(\cC;F'/F)}(S)$, where $S\in \DS(\cC;F'/F)$ is the simple descent system corresponding to $M$.
	
	\begin{proof}
		Part (1) is clear. The assertion (3) is a straightforward from the construction of $\Res_{F(M)/F}$ and the definition of $F(M)$. Moreover, $\pi(\Res_{F(M)/F} S_{F(M)})$ is semisimple by Lemma \ref{lem:well-defined} (1). Lemma \ref{lem:conservative} then implies that $\Res_{F(M)/F} S_{F(M)}$ is semisimple. Since $\End_{\DS(\cC;F'/F(M))}(S_{F(M)})$ is a division algebra, $\Res_{F(M)/F} S_{F(M)}$ is simple (use (3)).
		
		It remains to find a nonzero morphism $\Res_{F(M)/F} S_{F(M)}\to \Res_{F'/F} M$ for (2).
		Choose a monomorphism $i:S_{F(M)}\hookrightarrow \Res_{F'/F(M)} M$.
		It is clear from the construction of $\Res_{F(M)/F}$ that $\Res_{F(M)/F}i$ is nonzero. We identify $\Res_{F(M)/F}i$ with a nonzero map to $\Res_{F'/F} M$ through the canonical isomorphism
		\[\Res_{F(M)/F}\circ \Res_{F'/F(M)}\cong \Res_{F'/F}\]
		(pass to the left adjoint functors). This completes the proof.
	\end{proof}
	
	For general $\Gamma_{F'/F}$, we consider a Borel--Tits datum which is also a Loewy datum. Henceforth let $F'/F$ be a Galois extension of fields of Galois group $\Gamma_{F'/F}$.
	
	\begin{defn}\label{defn:LBT}
		A Loewy--Borel--Tits datum consists of
		\begin{itemize}
			\item a locally small and essentially small semilinear abelian $\Gamma_{F'/F}$-category $\cC$,
			\item an abelian full subcategory $\DD(\cC)\subset\DS(\cC)$,
			\item a locally small and essentially small semilinear abelian $\Gamma_{F''/F}$-category $\cC_{F''}$
			for each $F''\in\Lambda_{F'/F}$,
			\item an $F''$-linear functor $F'\otimes_{F''}(-):\cC_{F''}\to \cC$
			for $F''\in\Lambda_{F'/F}$,
			\item an $F'''$-linear functor $F''\otimes_{F'''}(-):\cC_{F'''}\to \cC_{F''}$
			for $F'',F'''\in\Lambda_{F'/F}$ with $F'''\subset F''$,
			\item a natural transformation $\alpha^{F''}_{\sigma}:{}^\sigma(F'\otimes_{F''}(-))\cong F'\otimes_{F''} {}^{\sigma|_{F''}}(-)$ for each field $F''\in\Lambda_{F'/F}$ and $\sigma\in\Gamma_{F'/F}$,
			\item a natural transformation $\alpha^{F''/F'''}_\sigma:
			{}^\sigma (F''\otimes_{F'''}(-))\cong F''\otimes_{F'''} {}^{\sigma|_{F'''}}(-)$
			for $F'',F'''\in\Lambda_{F'/F}$ with $F'''\subset F''$ and $\sigma\in\Gamma_{F''/F}$,
			\item a natural transformation
			\[\gamma^{F^{(2)}/F^{(3)}/F^{(4)}}:
			F^{(2)}\otimes_{F^{(3)}}(F^{(3)}\otimes_{F^{(4)}}(-))\cong F^{(2)}\otimes_{F^{(4)}}(-)\]
			for $F^{(i)}\in\Lambda_{F'/F}\cup\{F'\}$ ($2\leq i\leq 4$) with $F^{(i)}\subset F^{(i-1)}$ for $i=3,4$,
		\end{itemize}
		such that
		\begin{enumerate}[label=(LBT\arabic*)]
			\item \label{con:fin_length_LBT} every object of $\cC$ is of finite length,
			\item \label{con:strict_LBT} $\DD(\cC)\subset\DS(\cC)$ is strict,
			\item \label{con:subobj_LBT} $\DD(\cC)$ is closed under formation of subobjects in $\DS(\cC)$,
			\item \label{con:exact_LBT} for $F''\in\Lambda_{F'/F}$, the functor $F'\otimes_{F''}(-)$ is exact,
			\item \label{con:bcthm_LBT} for $F''\in\Lambda_{F'/F}$ and $M,N\in\cC_{F''}$, the canonical map
			\[F'\otimes_{F''} \Hom_{\cC_{F''}}(M,N)
			\to \Hom_{\cC}(F'\otimes_{F''}M,F'\otimes_{F''}N)\]
			is an isomorphism,
			\item \label{con:associative_LBT} $\alpha^{F''}_\sigma$ and $\alpha^{F''/F'''}_\sigma$ are associative in $\sigma$,
			\item \label{con:natural_LBT} $\gamma$ are natural in $F^{(\bullet)}$,
			\item \label{con:esssurj_LBT} the functor $F'\otimes_{F''}(-):\DS(\cC_{F''})\to \DS(\cC)$ is essentially surjective onto $\DD(\cC)$, and
			\item \label{con:rationality_LBT} for every object $M\in\cC$, there exist $F''\in\Lambda_{F'/F}$ and $M_{F''}\in\cC_{F''}$ with
			\[F'\otimes_{F''} M_{F''}\cong M.\]
		\end{enumerate}
		We say a Loewy--Borel--Tits datum on a semilinear abelian $\Gamma_{F'/F}$-category $\cC$ is \emph{strong} if Condition \ref{con:Schur} is satisfied.
	\end{defn}

	\begin{ex}\label{ex:rep}
		Let $G$ be an affine group scheme over $F$.
		Then $\Rep(F''\otimes_F G)$ with $F''\in\Lambda_{F'/F}\cup\{F'\}$ and $\DD(\Rep(F'\otimes_FG))$ form a Loewy--Borel--Tits datum in the standard manner (recall Example \ref{ex:affinegroupscheme}).
	\end{ex}

	\begin{defn}[Field of rationality]\label{defn:fld_rat}
		Consider a Loewy--Borel--Tits datum as above, and $M\in\cC$ be a simple object. Then set $F(M)$ as the subfield of $F'$ fixed by all $\sigma\in\Gamma_{F'/F}$ with ${}^\sigma M\cong M$. This is finite over $F$ by \ref{con:rationality_LBT}.
	\end{defn}

	\begin{cor}\label{cor:LBT}
		Consider a Loewy--Borel--Tits datum as above. Let $M$ be a simple object of $\cC$, and $S$ be the corresponding simple descent datum.
		
		Pick $F''\in\Lambda_{F'/F}$ and $M_{F''}\in\cC_{F''}$ as in \ref{con:rationality_LBT}. Let $S_{F''}$ be the simple descent system corresponding to $M_{F''}$.
		\begin{enumerate}
			\item We have $F(M)=F(M_{F''})$.
			\item There is an isomorphism $S\cong F'\otimes_{F''} S_{F''}$.
			\item The center of $\End_{\DD(\cC)}(S)$ coincides with $F(M)$.
			\item We have $\End_{\DS(\cC_{F''})}(S_{F''})\cong\End_{\DD(\cC)}(S)$.
			\item The cocycle $(\beta^{\BT}_{M})^{-1}$ represents the central division algebra $\End_{\DD(\cC)}(S)$ over $F(M)$.
		\end{enumerate}
	\end{cor}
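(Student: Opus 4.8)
The plan is to transport the finite‑Galois results Theorems~\ref{thm:End} and~\ref{thm:LBT}, applied to the Loewy $\Gamma_{F''/F}$‑category $\cC_{F''}$ and the object $M_{F''}$, up to $\cC$ along the base change functor $F'\otimes_{F''}(-)$. Two preliminary observations make this possible. First, since $F'\otimes_{F''}(-)\colon\cC_{F''}\to\cC$ is exact \ref{con:exact_LBT} and has the base change property for Hom spaces \ref{con:bcthm_LBT}, Lemma~\ref{lem:conservative} shows that $M_{F''}$ is simple, and \ref{con:bcthm_LBT} forces $\dim_{F''}\End_{\cC_{F''}}(M_{F''})=\dim_{F'}\End_{\cC}(M)=1$; as the datum is strong we have $\End_{\cC}(M)=F'\id_{M}$ (which is in any case presupposed by the notation $\beta^{\BT}_M$ in parts~(3) and~(5)), whence $\End_{\cC_{F''}}(M_{F''})=F''\id_{M_{F''}}$. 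Thus $\cC_{F''}$ is a Loewy $\Gamma_{F''/F}$‑category (Theorem~\ref{thm:basic_Loewy_datum}~(1)) containing a simple object $M_{F''}$ with scalar endomorphisms, $S_{F''}$ is well defined, and by Construction~\ref{cons:BT} the class $\beta^{\BT}_M$ is the inflation of $\beta^{\BT}_{M_{F''}}\in H^2(\Gamma_{F''/F(M)},(F'')^\times)$.

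For~(1): using the isomorphisms $\alpha^{F''}_\sigma$ I would show that $\sigma\in\Gamma_M$ if and only if its restriction $\bar\sigma\in\Gamma_{F''/F}$ lies in $\Gamma_{M_{F''}}$ — one direction by applying $F'\otimes_{F''}(-)$ to an isomorphism ${}^{\bar\sigma}M_{F''}\cong M_{F''}$ and composing with $\alpha^{F''}_{\sigma,M_{F''}}$, the other by noting that $\Hom_{\cC_{F''}}({}^{\bar\sigma}M_{F''},M_{F''})$ base changes under \ref{con:bcthm_LBT} to $\Hom_{\cC}({}^{\sigma}M,M)\neq 0$ and then invoking Schur's lemma on the simple objects ${}^{\bar\sigma}M_{F''}$, $M_{F''}$. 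Hence $\Gamma_M$ is the full preimage of $\Gamma_{M_{F''}}$ under the restriction surjection $\Gamma_{F'/F}\twoheadrightarrow\Gamma_{F''/F}$ (so in particular $\Gamma_{F'/F''}\subseteq\Gamma_M$, as already noted in Construction~\ref{cons:BT}), and taking fixed fields via the Galois correspondence for $F''/F$ yields $F(M)=(F'')^{\Gamma_{M_{F''}}}=F(M_{F''})$. Part~(2) is immediate from Theorem~\ref{thm:basic_Loewy_datum}~(2) once one observes that a Loewy--Borel--Tits datum underlies a Loewy datum, so the simple descent datum attached to $M$ is $F'\otimes_{F''}S_{F''}$. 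Part~(4) then follows because $F'\otimes_{F''}(-)\colon\DS(\cC_{F''})\to\DD(\cC)$ is $F$‑linear and fully faithful (Theorem~\ref{thm:Ld->Lp}~(1)), hence induces the $F$‑algebra isomorphism $\End_{\DS(\cC_{F''})}(S_{F''})\cong\End_{\DD(\cC)}(F'\otimes_{F''}S_{F''})\cong\End_{\DD(\cC)}(S)$.

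For~(3) and~(5): applying Theorem~\ref{thm:LBT} together with the remark following it to the Loewy $\Gamma_{F''/F}$‑category $\cC_{F''}$ and the simple object $M_{F''}$ (which has scalar endomorphisms) shows that $(\beta^{\BT}_{M_{F''}})^{-1}$ represents the central division algebra $\End_{\DS(\cC_{F''};F''/F)}(S_{F''})$ over $F(M_{F''})$. Now $\beta^{\BT}_M$ is the inflation of $\beta^{\BT}_{M_{F''}}$ along $\Gamma_{F'/F(M)}\twoheadrightarrow\Gamma_{F''/F(M)}$ (using $F(M)=F(M_{F''})$ from~(1)), and on Brauer classes this inflation is the inclusion $\Br(F''/F(M))\hookrightarrow\Br(F'/F(M))$ — the same fact invoked in the proof of Definition-Proposition~\ref{defprop:BT}~(2) — so $(\beta^{\BT}_M)^{-1}$ represents the same central division algebra over $F(M)$, which by~(4) is $\End_{\DD(\cC)}(S)$. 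This gives~(5), and~(3) follows since a central division algebra over $F(M)$ has center $F(M)$.

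The main obstacle I expect is organizational rather than conceptual: one must keep careful track of the several descent‑category structures in play — $\DS(\cC_{F''})=\DS(\cC_{F''};F''/F)$, the coarser $\DS(\cC_{F''};F''/F(M_{F''}))$ used to compute $\beta^{\BT}_{M_{F''}}$ via Theorem~\ref{thm:End}, and $\DD(\cC)$ — check that every identification is $F$‑linear so that ``central over $F(M)$'' is preserved, and confirm that the equivalence $F'\otimes_{F''}(-)$ carries the simple descent system of $M_{F''}$ precisely to that of $M$, which is exactly the content of Theorem~\ref{thm:basic_Loewy_datum}~(2).
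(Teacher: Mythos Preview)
Your proposal is correct and follows essentially the same route as the paper's proof: both transport the finite-Galois results (Theorems~\ref{thm:End} and~\ref{thm:LBT}) from $\cC_{F''}$ to $\cC$ along the equivalence $F'\otimes_{F''}(-)\colon\DS(\cC_{F''})\simeq\DD(\cC)$, using~(1) and~(4) to reduce~(3) and~(5) to the finite case. Your write-up is in fact slightly more explicit than the paper's in two places---you spell out the inflation-of-Brauer-classes step for~(5), and you cite Theorem~\ref{thm:basic_Loewy_datum}~(2) directly for~(2) rather than the terser pointer to Remark~\ref{rem:Galois_descent} plus \ref{con:bcthm_LBT}---but the underlying argument is the same.
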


	\begin{proof}
		Recall that if $\sigma\in\Gamma_M$ then it restricts to an element of $\Gamma_{M_{F''}}$ (see Construction \ref{cons:BT}). Conversely, if $\sigma\in\Gamma_{F'/F}$ is a lift of an element of $\Gamma_{M_{F''}}\subset \Gamma_{F''/F}$ then $\sigma$ lies in $\Gamma_M$ (use $\alpha^{F''}_{\sigma,M_{F''}}$). These imply (1).
		
		Remark \ref{rem:Galois_descent} implies (2) and (4) (recall also Condition \ref{con:bcthm_LBT} for (2)). Part (3) is reduced to the finite case (Theorems \ref{thm:End} and \ref{thm:LBT}) by (1) and (4). Part (5) is a consequence of Theorem \ref{thm:LBT} (4).
	\end{proof}

	\begin{rem}
		We used $\gamma$ in the proof of Definition-Proposition \ref{defprop:BT} (1). In the Loewy--Borel--Tits setting above, we can verify the independence without $\gamma$ by relating the cocycle with $\End_{\DD(\cC)}(S)$.
	\end{rem}
	
	\begin{rem}
		Corollary \ref{cor:LBT} could be reduced to the self-conjugate case if we are given more data to consider a variant notion to Loewy--Borel--Tits data which is stable under restriction of the Galois group (recall Lemma \ref{lem:restricting_action} for the finite case).
	\end{rem}

	\section{Application to $(\fg,K)$-modules}\label{sec:(g,K)mod}
	
	In this section, we aim to discuss the application of the preceding arguments to $(\fg,K)$-modules and to develop further results. In this section, Harish-Chandra pairs $(\fg,K)$ will be over a field $F$ of characteristic zero with $\dim_F \fg<\infty$ and $K$ reductive unless specified otherwise.

\subsection{Structure of a Loewy--Borel--Tits datum}\label{sec:goal}

In this section, we verify that our theories on rationality are applicable to $(\fg,K)$-modules.

\begin{defn}\label{defn:HCmod}
	Let $(\fg,K)$ be a Harish-Chandra pair over a field $F$ of characteristic zero. 
	\begin{enumerate}
		\item A $\fg$-module $V$ is called \emph{finitely generated} (resp.~\emph{$Z(\fg)$-finite}) if it is so as a $U(\fg)$-module (resp.~$\Ann_{Z(\fg)}(V)$ is of finite codimension in $Z(\fg)$).
		\item A $(\fg,K)$-module is called finitely generated (resp.~$Z(\fg)$-finite) if so is it as a $\fg$-module.
		\item For a character $\chi:Z(\fg)\to F$, a $(\fg,K)$-module $X$ has infinitesimal character $\chi$ if $Z(\fg)$ acts on $X$ by $\chi$. 
		\item A $K$-module $V$ is called \emph{admissible} if $\dim_F \Hom_K(W,V)<\infty$ for every finite-dimensional representation $W$ of $K$. 
		\item We say a $(\fg,K)$-module is \emph{admissible} if so is it as a $K$-module.
	\end{enumerate}
	Let $(\fg,K)\cmod_{\mathrm{fg}}$ (resp.~$(\fg,K)\cmod_{\mathrm{fg},\adm}$) be the category of finitely generated (resp.~finitely generated and admissible) $(\fg,K)$-modules. For a homomorphism
	$\chi:Z(\fg)\to F$,
	let $(\fg,K)\cmod_{\chi}$ denote the category of $(\fg,K)$-modules with infinitesimal character $\chi$. Set
	$(\fg,K)\cmod_{\mathrm{fg},\chi}=(\fg,K)\cmod_{\mathrm{fg}}\cap (\fg,K)\cmod_{\chi}$.
\end{defn}

The following assertion is straightforward:

\begin{prop}\label{prop:subquot}
	Assume that $\dim_F\fg<\infty$.
	Then
	\[\begin{array}{cc}
		(\fg,K)\cmod_{\mathrm{fg}},&(\fg,K)\cmod_{\mathrm{fg},\adm}
	\end{array}\]
	are closed under formations of subquotient in the category $(\fg,K)\cmod$ of $(\fg,K)$-modules. In particular, they are abelian subcategories of $(\fg,K)\cmod$.
\end{prop}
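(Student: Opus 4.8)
The plan is to verify the two asserted closure properties and then deduce the abelian structure formally. Since $(\fg,K)\cmod$ is abelian and the two subcategories in question are full, it is enough to show that each of them is closed under passing to subobjects, to quotients, and to finite direct sums inside $(\fg,K)\cmod$: closure under kernels, cokernels and images then drops out (a kernel being a subobject of the source, a cokernel a quotient of the target, an image a subobject of the target), and a nonempty full subcategory closed under kernels, cokernels and finite direct sums is abelian with exact inclusion. So the whole statement reduces to the three closure checks for each of the two subcategories.

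For $(\fg,K)\cmod_{\mathrm{fg}}$ the only non-formal point is closure under subobjects, since a quotient of a finitely generated module, and a finite direct sum of finitely generated modules, are visibly finitely generated. Here I would use the hypothesis $\dim_F\fg<\infty$: the PBW filtration on $U(\fg)$ has associated graded ring the polynomial algebra $\Sym(\fg)$, which is Noetherian, and an exhaustive filtered ring whose associated graded is left and right Noetherian is itself left and right Noetherian; hence a $U(\fg)$-submodule of a finitely generated $U(\fg)$-module is again finitely generated. Since a $(\fg,K)$-submodule is in particular a $\fg$-submodule, and ``finitely generated'' for a $(\fg,K)$-module is defined through the underlying $\fg$-module, this gives the claim. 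This Noetherianity of $U(\fg)$ is the main — essentially the only — substantive ingredient, and it is entirely standard, so I do not anticipate a real obstacle.

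The admissibility condition concerns only the restriction to $K$, and here I would use that $K$, being reductive over a field of characteristic zero, is linearly reductive, so every short exact sequence of $K$-modules splits. Thus for a short exact sequence $0\to V'\to V\to V''\to 0$ of $(\fg,K)$-modules there is a $K$-module isomorphism $V\cong V'\oplus V''$, whence $\Hom_K(W,V')$ and $\Hom_K(W,V'')$ are subspaces of $\Hom_K(W,V)$ for every finite-dimensional $K$-module $W$; so admissibility of $V$ forces admissibility of $V'$ and $V''$, and admissibility obviously passes to finite direct sums as well. Intersecting this with the previous paragraph shows $(\fg,K)\cmod_{\mathrm{fg},\adm}$ is likewise closed under subobjects, quotients and finite direct sums. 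From the closure properties both full subcategories contain the zero object and are stable under kernels, cokernels and images, so the inclusions into $(\fg,K)\cmod$ are exact and both inherit an abelian structure, which is the ``in particular'' assertion. The only care needed throughout is the bookkeeping that ``finitely generated'' is a condition on the underlying $\fg$-module and ``admissible'' a condition on the underlying $K$-module, so each may be tested after the appropriate forgetful functor; there is no genuine difficulty.
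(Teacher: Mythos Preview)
Your argument is correct and is exactly the standard verification the paper has in mind; the paper itself gives no proof, simply declaring the assertion ``straightforward'' immediately before stating it. The two substantive points you identify---Noetherianity of $U(\fg)$ from $\dim_F\fg<\infty$ via PBW, and splitting of $K$-module sequences from linear reductivity of $K$ in characteristic zero---are precisely the facts that make it so.
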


We set $(\fg,K)\cmod_{\mathrm{fl},\adm}\coloneqq((\fg,K)\cmod_{\mathrm{fg},\adm})_{\mathrm{fl}}$.
In view of Proposition \ref{prop:subquot}, $(\fg,K)\cmod_{\mathrm{fl},\adm}$ consists of the admissible $(\fg,K)$-modules of finite length.

The main goal of this section is to verify:

\begin{thm}\label{thm:hc_setting}
	Let $F'/F$ be a Galois extension of fields of characteristic zero, and $(\fg,K)$ be a Harish-Chandra pair over $F$ with $\dim_F\fg<\infty$ and $K$ reductive. Then the categories
	$(F''\otimes_F\fg,F''\otimes_F K)\cmod_{\mathrm{fl},\adm}$
	with $F''\in\Lambda_{F'/F}\cup\{F'\}$ are naturally endowed with the structure of a Loewy--Borel--Tits datum. In particular, we have a bijection
	\[\Gamma_{F'/F}\backslash \Simple((F'\otimes_F\fg,F'\otimes_F K)\cmod_{\mathrm{fl},\adm})
	\cong \Simple ((\fg,K)\cmod_{\mathrm{fl},\adm}).
	\]
\end{thm}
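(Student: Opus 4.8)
The plan is to exhibit the structural data demanded by Definition \ref{defn:LBT} and then verify the axioms \ref{con:fin_length_LBT}--\ref{con:rationality_LBT}, after which the displayed bijection is formal. For the data, put $\cC_{F''}=(F''\otimes_F\fg,F''\otimes_F K)\cmod_{\mathrm{fl},\adm}$ for each $F''\in\Lambda_{F'/F}\cup\{F'\}$; this is abelian because the full subcategory of finite-length objects of the abelian category $(F''\otimes_F\fg,F''\otimes_F K)\cmod_{\mathrm{fg},\adm}$ (Proposition \ref{prop:subquot}) is a Serre subcategory, and it is essentially small. The groups $\Gamma_{F'/F}$ and $\Gamma_{F''/F}$ act semilinearly on $\cC$ and $\cC_{F''}$ through the base-change functors for $(\fg,K)$-modules of \cite{MR3853058}, and the functors $F'\otimes_{F''}(-)$, $F''\otimes_{F'''}(-)$ (for $F'''\subset F''$) together with the natural isomorphisms $\alpha^{F''}_\sigma$, $\alpha^{F''/F'''}_\sigma$, $\gamma^{F^{(2)}/F^{(3)}/F^{(4)}}$ are the canonical ones attached to iterated extension of scalars. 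Finally let $\DD(\cC)\subset\DS(\cC)$ be the full subcategory of descent systems whose associated semilinear $\Gamma_{F'/F}$-action on the underlying module is continuous; this is an abelian subcategory, it is strict (\ref{con:strict_LBT}), and it is closed under subobjects (\ref{con:subobj_LBT}) because the action restricted to a $\Gamma_{F'/F}$-stable submodule remains continuous. Condition \ref{con:fin_length_LBT} is then immediate by construction of $\cC$.

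The routine axioms go as follows. Exactness of $F'\otimes_{F''}(-)$ (\ref{con:exact_LBT}), associativity of the $\alpha$'s in $\sigma$ (\ref{con:associative_LBT}) and naturality of the $\gamma$'s (\ref{con:natural_LBT}) are the same diagram chases as in the representation-category example (Examples \ref{ex:affinegroupscheme}, \ref{ex:rep}) and come from the standard formalism of flat base change. Base change of Hom spaces (\ref{con:bcthm_LBT}) holds because $U(F''\otimes_F\fg)$ is left Noetherian (as $\dim_F\fg<\infty$), so for finitely generated $M$ the functor $\Hom_{U(F''\otimes_F\fg)}(M,-)$ commutes with the flat extension $F'\otimes_{F''}(-)$, compatibly with the passage to $K$-equivariance in \cite{MR3853058}. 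For \ref{con:rationality_LBT}, a finite-length---hence finitely presented---module $M$ over $F'$ admits a presentation and a $K$-module structure involving only finitely many scalars, so it descends to a finitely generated $(F''\otimes_F\fg,F''\otimes_F K)$-module $M_{F''}$ for some $F''\in\Lambda_{F'/F}$, and $M_{F''}\in\cC_{F''}$ by Lemma \ref{lem:conservative} applied to $F'\otimes_{F''}(-)$, which reflects the finite-length property (admissibility likewise descends by faithfully flat descent of Hom spaces). Then \ref{con:esssurj_LBT} follows by classical Galois descent of modules: a descent system in $\DD(\cC)$ is a continuous semilinear $\Gamma_{F'/F}$-action, and after enlarging the $F''$ of \ref{con:rationality_LBT} so that this action stabilises $M_{F''}$ and is $F''$-semilinear on it, the datum arises via Construction \ref{cons:basechange} and Lemma \ref{lem:baseupds} from a descent system on $M_{F''}$ for $\Gamma_{F''/F}$.

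The crux is the well-definedness of the base-change functors, i.e.\ that they carry finite-length admissible modules to finite-length admissible modules. For a finite subextension $F''/F'''$ this is elementary: $F''\otimes_{F'''}M$ is a finite direct sum of copies of $M$ as an $F'''\otimes_F\fg$-module, hence of finite length there, hence of finite length over $F''\otimes_F\fg$; admissibility is clear. The genuine difficulty is $F'\otimes_{F''}(-):\cC_{F''}\to\cC$ with $F'/F''$ infinite. There, finite generation and admissibility of $F'\otimes_{F''}M_{F''}$ are routine (Noetherianity of $U(F'\otimes_F\fg)$; and $\Hom_{F'\otimes_F K}(W,F'\otimes_{F''}M_{F''})$ is finite-dimensional since $W$ is defined over a finite subextension over which the relevant Hom space already base-changes), but being Artinian is exactly Harish-Chandra's finiteness theorem. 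I would deduce it from the algebraic proof of Section \ref{sec:rem}: by Proposition \ref{prop:fl->Zf}, $M_{F''}$ is $Z(F''\otimes_F\fg)$-finite; since the center of the enveloping algebra commutes with base change (being the common kernel of the operators $[\,y,-\,]$, $y\in\fg$) and $Z$-finiteness is manifestly stable under base change, $F'\otimes_{F''}M_{F''}$ is finitely generated, admissible and $Z(F'\otimes_F\fg)$-finite, whence of finite length by Section \ref{sec:rem}. This is where all the real work resides; everything else is bookkeeping. (Alternatively one could observe afterwards, as in Corollary \ref{cor:semisimple_preservation}, that $\cC_F$ is locally finite-dimensional and conclude preservation of finite length that way, but to avoid circularity I prefer the direct route.)

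With the datum in hand, a Loewy--Borel--Tits datum is in particular a Loewy datum, so Theorem \ref{thm:Ld->Lp} exhibits $(\cC,\DD(\cC))$ as a Loewy pair and Theorem \ref{thm:Loewy} yields the bijection $\Gamma_{F'/F}\backslash\Simple(\cC)\cong\Simple(\cC_F)$.
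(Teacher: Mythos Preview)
Your overall plan and most of the verification are fine and match the paper closely. The genuine gap is in your ``crux'' paragraph: you deduce preservation of finite length under the infinite base change $F'\otimes_{F''}(-)$ by invoking $Z(\fg)$-finiteness and then ``finite length by Section~\ref{sec:rem}.'' But the only result in Section~\ref{sec:rem} converting finitely generated, $Z(\fg)$-finite (admissible) back into finite length is Theorem~\ref{thm:fin_length}, and that theorem requires $\fg$ reductive and $(\fg,\fk)$ symmetric. Neither hypothesis is present in Theorem~\ref{thm:hc_setting}, which assumes only $\dim_F\fg<\infty$ and $K$ reductive. So your direct route does not apply in the stated generality; it would only prove the theorem under the extra assumptions of Proposition~\ref{prop:fl->Zf}~(3) and Theorem~\ref{thm:fin_length}.

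The alternative you set aside is exactly what the paper does, and it is not circular. Lemma~\ref{lem:BT_HC} already gives a Borel--Tits datum on the categories $(F''\otimes_F\fg,F''\otimes_FK)\cmod_{\mathrm{fg},\adm}$ (no finite-length restriction needed for this), and Lemma~\ref{lem:loc_fin_HC} shows that the finite-length admissible subcategory over $F$ is locally finite-dimensional using only admissibility and Schur-type restriction to a $K$-isotypic component. Corollary~\ref{cor:semisimple_preservation} then applies to that ambient Borel--Tits datum and yields that $F'\otimes_{F}(-)$ preserves semisimplicity, hence finite length; the same argument with $F$ replaced by any $F''\in\Lambda_{F'/F}$ handles $F'\otimes_{F''}(-)$. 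None of this feeds back into the statement being proved, so there is no circularity. Replace your $Z(\fg)$-finite argument with this route and the proof goes through exactly as in the paper.
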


The main difficulty for this is to show that the base change functor is well-defined. The key to its proof will be Corollary \ref{cor:semisimple_preservation}. 

\begin{lem}[{\cite[Proposition 3.6]{MR3770183}}]\label{lem:descent_Z(g)-fin}
	Let $F''/F$ be any extension of fields of characteristic zero. Let $\fg$ be a finite-dimensional Lie algebra, and $K$ be a reductive algebraic group over $F$.
	\begin{enumerate}
		\item A $U(\fg)$-module $V$ is finitely generated if and only if so is $F''\otimes_F V$ as a $U(F''\otimes_F \fg)$-module.
		\item A $K$-module $V$ is admissible if and only if so is $F''\otimes_F V$ as a $(F''\otimes_F K)$-module.
		\item A finitely generated $U(\fg)$-module $V$ is $Z(\fg)$-finite and only if $F''\otimes_F V$ is $Z(F''\otimes_F \fg)$-finite.
	\end{enumerate}
\end{lem}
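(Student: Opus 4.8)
All three assertions describe the behaviour of a property of a module under the faithfully flat extension $F''/F$, so my plan is to treat the three parts separately, using faithfully flat descent throughout; since $\fg$ is finite-dimensional one has $U(F''\otimes_F\fg)\cong F''\otimes_F U(\fg)$ by the PBW theorem, and $\Hom_F(\fg,-)$ commutes with $\otimes_F F''$. For (1) the ``only if'' direction is immediate, as the images of a finite generating set of $V$ generate $F''\otimes_F V$ over $F''\otimes_F U(\fg)$. For ``if'', I would write the finitely many generators of $F''\otimes_F V$ as finite sums $\sum_j a_j\otimes v_j$ ($a_j\in F''$, $v_j\in V$), let $V_0\subseteq V$ be the finitely generated $U(\fg)$-submodule generated by the (finitely many) $v_j$ occurring, and note that $F''\otimes_F V_0=F''\otimes_F V$ inside $F''\otimes_F V$; faithful flatness of $F''/F$ then forces $V_0=V$.

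For (2) I would use the semisimplicity of $\Rep(K)$ and $\Rep(F''\otimes_F K)$ (characteristic zero, $K$ reductive) together with the base-change isomorphism $\Hom_{F''\otimes_F K}(F''\otimes_F W,F''\otimes_F V)\cong F''\otimes_F\Hom_K(W,V)$ valid for $W$ finite-dimensional, which follows from $\Hom_{F''}(F''\otimes_F W,F''\otimes_F V)\cong F''\otimes_F\Hom_F(W,V)$ and the commutation of the formation of $K$-invariants with $\otimes_F F''$. Taking $W''=F''\otimes_F W$ gives the implication ``$F''\otimes_F V$ admissible $\Rightarrow$ $V$ admissible'' at once. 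For the converse the key point is that every finite-dimensional representation $W''$ of $F''\otimes_F K$ is a direct summand of $F''\otimes_F W_0$ for some finite-dimensional representation $W_0$ of $K$: one embeds $W''$ into a finite sum of copies of the regular representation $\cO(F''\otimes_F K)=F''\otimes_F\cO(K)$, observes that the finite-dimensional image is contained in $F''\otimes_F C_0$ for a finite-dimensional $K$-subcomodule $C_0\subseteq\cO(K)$ (local finiteness of $\cO(K)$), and splits off $W''$ using semisimplicity. Then $\Hom_{F''\otimes_F K}(W'',F''\otimes_F V)$ is a direct summand of the finite-dimensional space $F''\otimes_F\Hom_K(C_0^{\oplus n},V)$, hence finite-dimensional whenever $V$ is admissible.

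For (3) I would first record that $Z(F''\otimes_F\fg)=F''\otimes_F Z(\fg)$: presenting $Z(\fg)$ as the kernel of $U(\fg)\to U(\fg)^{\dim_F\fg}$, $u\mapsto([x_i,u])_i$ for a basis $(x_i)$ of $\fg$, this kernel commutes with the flat base change. Next, for $z=\sum_k c_k\otimes z_k\in F''\otimes_F Z(\fg)$ with the $c_k$ linearly independent over $F$, one has $z\cdot(1\otimes v)=\sum_k c_k\otimes z_k v$, which vanishes for all $v\in V$ precisely when each $z_k$ annihilates $V$; hence $\Ann_{Z(F''\otimes_F\fg)}(F''\otimes_F V)=F''\otimes_F\Ann_{Z(\fg)}(V)$, so that $Z(F''\otimes_F\fg)/\Ann(F''\otimes_F V)\cong F''\otimes_F\bigl(Z(\fg)/\Ann_{Z(\fg)}(V)\bigr)$ and the $F''$-dimension of the former equals the $F$-dimension of the latter; one is finite iff the other is. (This argument in fact uses no finiteness hypothesis on $V$.)

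The step I expect to be the main obstacle is the reduction in (2) from an arbitrary finite-dimensional representation of $F''\otimes_F K$ to base changes of representations of $K$: when $F''/F$ is infinite, possibly transcendental, one cannot simply restrict scalars along $F''/F$, so the comodule/coefficient-space argument indicated above is what I would use to circumvent this. Everything else reduces to routine faithfully flat descent.
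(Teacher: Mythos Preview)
Your argument is correct and, for parts (1) and (2), follows essentially the same route as the paper. The paper calls (1) ``elementary'' and for (2) proves exactly your key lemma---that every finite-dimensional $F''\otimes_F K$-representation is a direct summand of some $F''\otimes_F W_0$---though its construction of $W_0$ is slightly different: it simply restricts scalars of $\tau'$ along $F\hookrightarrow F''$ (viewing $\tau'$ as a possibly infinite-dimensional $K$-module over $F$) and takes the $K$-submodule generated by a finite spanning set, which is finite-dimensional by local finiteness. Your concern that ``one cannot simply restrict scalars'' for infinite $F''/F$ is therefore unfounded; restriction of scalars of comodules works regardless, and the paper uses it. Your regular-representation argument is a fine alternative.

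For (3) your argument genuinely differs from the paper's and is in fact cleaner. The paper uses the finite generating set $\{v_i\}$ to write $\Ann_{Z(\fg)}(V)=\bigcap_i\Ker(Z(\fg)\to V,\ a\mapsto av_i)$ as a \emph{finite} intersection of kernels, then invokes exactness of $F''\otimes_F(-)$ to commute the base change past the finite intersection. This is why the paper needs the finitely-generated hypothesis. Your direct computation with $z=\sum c_k\otimes z_k$ (with the $c_k$ linearly independent over $F$) establishes $\Ann_{Z(F''\otimes_F\fg)}(F''\otimes_F V)=F''\otimes_F\Ann_{Z(\fg)}(V)$ without any finiteness assumption on $V$, so you are right that the hypothesis is superfluous for this part.
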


Though (3) is not relevant here, let us record it for further discussions on the finiteness assumption later (Proposition \ref{prop:fl->Zf}).

\begin{proof}
	Part (1) is elementary. For (2), observe that every finite-dimensional representation $\tau'$ of $F'\otimes_F K$ is a direct summand of $F'\otimes_F \tau$ for a certain finite-dimensional representation $\tau$ of $K$. We can find such $\tau$ as follows: choose a finite generating set $S$ of $\tau'$ as a vector space over $F'$, and let $\tau$ be a $K$-submodule of the restriction of the scalar of $\tau'$ to $F$ generated by $S$. This is of finite dimension (\cite[Chapter I, 2.13]{MR2015057}, see also \cite[Section~2.1]{MR3853058} if necessary), and is a desired representation by the complete reducibility of representations of $F'\otimes_F K$. The equivalence is now an easy consequence of \cite[Chapter I, 2.10 (7)]{MR2015057}.
	
	We next prove (3). Since $\dim_F \fg<\infty$, we have a canonical isomorphism 
	\[F''\otimes_F Z(\fg)\cong Z(F''\otimes_F \fg).\]
	Let $V$ be a finitely generated $U(\fg)$-module with a finite generating subset $\{v_i\}_{i=1}^n$. Then it is evident by definition that $\Ann_{Z(\fg)}(V)=\cap_{1\leq i\leq n} \Ker(Z(\fg)\to V;~a\mapsto av_i)$. Since $F''\otimes_F(-)$ is exact, we conclude
	\[F''\otimes_F \Ann_{Z(\fg)}(V)\cong \Ann_{Z(F''\otimes_F \fg)}(F''\otimes_F V).\]
	The assertion is immediate from this isomorphism.
\end{proof}

\begin{lem}\label{lem:BT_HC}
	Let $F'/F$ be a Galois extension of fields, and $(\fg,K)$ be a Harish-Chandra pair over $F$. Then $(F''\otimes_F \fg,F''\otimes_F K)\cmod_{\mathrm{fg}}$ with $F''\in\Lambda_{F'/F}\cup\{F'\}$ are naturally endowed with the structure of a Borel--Tits datum for the base change functor of \cite[Section~3.1]{MR3853058}. A similar assertion holds true for finitely generated and admissible modules.
\end{lem}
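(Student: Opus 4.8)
The plan is to verify, one by one, the axioms \ref{con:strict_BT}--\ref{con:rationality} of Definition \ref{defn:BTdatum}. I would take $\cC=\cC_{F'}$ to be $(F'\otimes_F\fg,F'\otimes_F K)\cmod_{\mathrm{fg}}$ (resp.\ its full subcategory $(F'\otimes_F\fg,F'\otimes_F K)\cmod_{\mathrm{fg},\adm}$ of admissible objects), $\cC_{F''}$ the analogous category over $F''$ for $F''\in\Lambda_{F'/F}$, the functors $F'\otimes_{F''}(-)$ and $F''\otimes_{F'''}(-)$ the base change functors of \cite[Section~3.1]{MR3853058}, and $\alpha^{F''}_\sigma$, $\alpha^{F''/F'''}_\sigma$, $\gamma$ the evident canonical coherence isomorphisms for iterated base change. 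As preliminaries I would note that each of these categories is a locally small semilinear abelian $\Gamma_{F''/F}$-category: the Galois twists ${}^\sigma(-)$ are $F$-linear auto-equivalences of the full module category, quasi-inverse to ${}^{\sigma^{-1}}(-)$, hence preserve finite generation and admissibility; and by Lemma \ref{lem:descent_Z(g)-fin} (1)--(2) the base change functors land in the asserted categories. I would define $\DD(\cC)\subset\DS(\cC)$ as in Example \ref{ex:repG}, i.e.\ the full subcategory of descent systems whose associated semilinear $\Gamma_{F'/F}$-action on the underlying module is continuous (smooth); since sub- and quotient modules of smooth modules are smooth and (co)limits in $\DS(\cC)$ are computed in $\cC$ (Proposition \ref{prop:(co)limit} (2)), $\DD(\cC)$ is an abelian full subcategory, and it is visibly closed under isomorphism in $\DS(\cC)$, which gives \ref{con:strict_BT}.

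Next I would clear the ``formal'' axioms. For \ref{con:exact_BT}: $F'$ is free over $F''$, so base change is exact on the full module category, and $(F''\otimes_F\fg,F''\otimes_F K)\cmod_{\mathrm{fg}}$ (resp.\ its admissible version) is closed under subquotients by Proposition \ref{prop:subquot}, so exactness is inherited. Axioms \ref{con:associative_BT} and \ref{con:natural_BT} hold because all the maps involved are the canonical isomorphisms built from the associativity constraint of the tensor product, whose coherence is standard. For \ref{con:bcthm_BT}: since $\dim_F\fg<\infty$ the ring $U(F''\otimes_F\fg)$ is left Noetherian, so a finitely generated module is finitely presented, and therefore $\Hom$ of the underlying $U$-modules commutes with the (free, hence flat) base change $F'\otimes_{F''}(-)$; one then checks this isomorphism identifies the subspace of $K$-equivariant maps with that of $(F'\otimes_F K)$-equivariant maps, which is the base-change statement for $\Hom$ of $(\fg,K)$-modules and which I would invoke from \cite{MR3853058,MR3770183}.

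The two substantive axioms are \ref{con:rationality} and \ref{con:esssurj_BT}. For \ref{con:rationality}: given a finitely generated $(F'\otimes_F\fg,F'\otimes_F K)$-module $M$, its $K$-action is locally finite, so $M$ is generated over $\fg$ by a finite-dimensional $F'\otimes_F K$-submodule $W$; the representation $W$, together with the finitely many structure constants describing the $\fg$-action on $\fg\cdot W$, involves only finitely many elements of $F'$, which — as $F'/F$ is algebraic — lie in some $F''\in\Lambda_{F'/F}$; descending these data produces an $F''$-form $M_{F''}$, along the lines of \cite[Proposition~3.33]{hayashisuper}. For \ref{con:esssurj_BT}: by Remark \ref{rem:overcondition} the functor $\DS(\cC_{F''})\to\DS(\cC)$ of Lemma \ref{lem:baseupds} is fully faithful, so it suffices to identify its essential image with $\DD(\cC)$. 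An object of the image has the form $F'\otimes_{F''}S$ with $S\in\DS(\cC_{F''})$; since $\Gamma_{F''/F}$ is finite, $S$ descends, by Galois (faithfully flat) descent for $(\fg,K)$-modules, to some $X_F\in\cC_F$ — finite generation and admissibility of $X_F$ following from Lemma \ref{lem:descent_Z(g)-fin} — and then $F'\otimes_{F''}S\cong F'\otimes_F X_F$ carries its canonical, hence smooth, descent system, so it lies in $\DD(\cC)$; conversely, a smooth descent system descends to $\cC_F$ by Galois descent and is recovered as the base change of its $F''$-form, so it is in the image. The admissible case runs identically, using throughout the admissible halves of Lemma \ref{lem:descent_Z(g)-fin} and Proposition \ref{prop:subquot}.

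The main obstacle I anticipate is the Galois-descent input shared by \ref{con:esssurj_BT} and \ref{con:rationality}: both amount to faithfully flat descent, but one must verify that the descent datum is simultaneously compatible with the $\fg$-action and the algebraic $K$-action, and that it transports the finiteness conditions downward — the former handled as in \cite{MR3853058} and \cite[Proposition~3.33]{hayashisuper}, the latter by Lemma \ref{lem:descent_Z(g)-fin}. The remaining technical point, \ref{con:bcthm_BT}, is comparatively routine once one has the Noetherianity of $U(F''\otimes_F\fg)$ and the known base-change behaviour of $\Hom$ for $(\fg,K)$-modules.
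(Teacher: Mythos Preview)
Your outline matches the paper's proof in structure and in almost every detail: the same choice of $\DD(\cC)$ (continuous descent systems), the same dispatch of \ref{con:strict_BT}, \ref{con:exact_BT}, \ref{con:associative_BT}, \ref{con:natural_BT} as formal, the same citation for \ref{con:bcthm_BT}, and the same reduction of \ref{con:esssurj_BT} to ordinary Galois descent for $(\fg,K)$-modules together with Lemma~\ref{lem:descent_Z(g)-fin}.

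The one genuine gap is in your sketch of \ref{con:rationality}. Knowing a finite-dimensional generating $F'\otimes_FK$-submodule $W$ together with ``the structure constants describing the $\fg$-action on $\fg\cdot W$'' does \emph{not} determine $M$: two different quotients of $U(F'\otimes_F\fg)\otimes_{F'} W$ can share the same $W$ and the same map $\fg\otimes W\to\fg\cdot W$ (e.g.\ for $\fg$ one-dimensional abelian, $F'[x]/(x^2)$ versus $F'[x]/(x^3)$ with $W$ the line spanned by $1$). What pins $M$ down is the kernel of the presentation $U(F'\otimes_F\fg)\otimes_{F'} W\twoheadrightarrow M$, and it is a finite generating set of \emph{that} kernel (available by Noetherianity of the enveloping algebra) whose Galois stabilizer you must show to be open. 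The paper implements this cleanly by first restricting scalars: it views $M$ as a $(\fg,K)$-module over $F$, takes $W$ to be a finite-dimensional $K$-submodule over $F$ generating $M$, and presents $M$ as a quotient of $F'\otimes_F(U(\fg)\otimes_F W)$; since this carrier already has its canonical continuous $\Gamma_{F'/F}$-action, any finite subset---in particular a finite generating set $S'$ of the kernel---has open centralizer $\Gamma_{S'}$, so $M$ descends to $(F')^{\Gamma_{S'}}$, and one then passes to the Galois closure to land in $\Lambda_{F'/F}$. Your idea becomes correct once you replace ``$\fg\cdot W$'' by ``a finite generating set of $\ker\big(U(F'\otimes_F\fg)\otimes_{F'}W\to M\big)$''; the paper's restriction-of-scalars trick simply makes the continuity of the Galois action on the presenting module automatic rather than something to be arranged.
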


\begin{proof}
We only prove it for finitely generated modules. The latter statement is verified in the same lines.

The base change functors respect finitely generated modules by Lemma \ref{lem:descent_Z(g)-fin}. The natural transformations $\alpha$, $\gamma$ in Definition \ref{defn:BTdatum} are given in the usual manner. We define descent data in the standard way, i.e., a descent system on a finitely generated $(F'\otimes_F \fg,F'\otimes_F K)$-module is called a descent datum if the corresponding action of $\Gamma_{F'/F}$ on the underlying vector space over $F'$ is continuous.
	We wish to prove that these give a Borel--Tits datum.
	
	Conditions \ref{con:strict_BT}, \ref{con:exact_BT}, \ref{con:associative_BT}, \ref{con:natural_BT} are clear. For \ref{con:bcthm_BT}, see \cite[Theorem 3.1.6]{MR3853058}.
	
	Condition \ref{con:esssurj_BT} is an easy consequence of the corresponding assertion for vector spaces and Lemma \ref{lem:descent_Z(g)-fin}. In fact, both categories in \ref{con:esssurj_BT} are naturally equivalent to $(\fg,K)\cmod_{\mathrm{fg}}$.
	
	It remains to prove Condition \ref{con:rationality}. Suppose that we are given a finitely generated $(F'\otimes_F\fg,F'\otimes_F K)$-module $M$. Then one can find a finite generating subset $S$ of $M$ as a $U(F'\otimes_F\fg)$-module. We regard $M$ as a $(\fg,K)$-module by restriction of scalar, and generate $S$ as a $K$-submodule to obtain a finite-dimensional generating $K$-submodule $W$ (\cite[Propositions 2.1.4 and 2.1.6]{MR3853058}).
	
	Consider the $F'\otimes_F K$-equivariant surjective map $p:F'\otimes_F (U(\fg)\otimes_F W)\to M$ defined by the action. Since the $(F'\otimes_F\fg,F'\otimes_F K)$-module $F'\otimes_F (U(\fg)\otimes_F W)$ is finitely generated, so is the kernel of $p$ (recall that $U(F'\otimes_F\fg)$ is Noetherian). Pick a finite generating subset $S'$. We consider the canonical continuous action of $\Gamma$ on $F'\otimes_F (U(\fg)\otimes_F W)$. Then the centralizer subgroup $\Gamma_{S'}\subset\Gamma$ for $S'$ is open since $S'$ is finite. Hence $M$ is defined over the subfield $(F')^{\Gamma_{S'}}\subset F'$ which is finite over $F$. Let $M_{(F')^{\Gamma_{S'}}}$ be the corresponding form.
	
	Let $F''$ be the Galois closure $F''$ of $(F')^{\Gamma_{S'}}$ in $F'$. Then $F''$ lies in $\Lambda_{F'/F}$. Moreover, we get an $(F''\otimes_F\fg,F''\otimes_F K)$-form
	$M_{F''}\coloneqq F''\otimes_{(F')^{\Gamma_{S'}}} M_{(F')^{\Gamma_{S'}}}$ of $M$.
	Lemma \ref{lem:descent_Z(g)-fin} implies that $M_{F''}$ is finitely generated.
	This completes the proof.
\end{proof}

\begin{lem}\label{lem:loc_fin_HC}
	Consider the setting of Theorem \ref{thm:hc_setting}. Then $(\fg, K)\cmod_{\mathrm{fl},\adm}$ is locally finite-dimensional.
\end{lem}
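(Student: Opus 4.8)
The plan is to reduce the finiteness of $\Hom$-spaces to the admissibility of the target together with the finite generation of the source, exactly as in the classical Harish–Chandra module setting. Fix $M,N\in(\fg,K)\cmod_{\mathrm{fl},\adm}$. Since $M$ is of finite length it is Noetherian, hence finitely generated as a $(\fg,K)$-module; therefore, by \cite[Propositions 2.1.4 and 2.1.6]{MR3853058} (the same input used in the proof of Lemma \ref{lem:BT_HC}), there is a finite-dimensional $K$-submodule $W\subseteq M$ that generates $M$ over $U(\fg)$.

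Next I would consider the restriction map $\Hom_{(\fg,K)}(M,N)\to\Hom_K(W,N)$, $f\mapsto f|_W$. It is $F$-linear, and it is injective: a $(\fg,K)$-morphism is in particular $\fg$-equivariant, so it is determined by its values on any set of $U(\fg)$-generators of $M$, and being $K$-equivariant it does carry $W$ into $N$ as $K$-modules, so $f|_W$ indeed lies in $\Hom_K(W,N)$. Because $\mathrm{char}\,F=0$ and $K$ is reductive, $W$ decomposes as a finite direct sum of irreducible $K$-modules $\tau_1,\dots,\tau_r$, so $\Hom_K(W,N)\cong\bigoplus_{i=1}^{r}\Hom_K(\tau_i,N)$; each summand is finite-dimensional over $F$ precisely because $N$ is admissible. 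Hence $\dim_F\Hom_K(W,N)<\infty$, and a fortiori $\dim_F\Hom_{(\fg,K)}(M,N)<\infty$, which is the claim.

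I do not expect a genuine obstacle here: the only non-formal ingredient is that a finitely generated $(\fg,K)$-module is generated, as a $U(\fg)$-module, by a finite-dimensional $K$-stable subspace, and this rests on the local finiteness of algebraic representations of $K$ and is already invoked in Section \ref{sec:goal}. The only points requiring a little care are that ``finite length'' really yields ``finitely generated'' (it does), and that in the category at hand admissibility is available for the target $N$ while finite generation is available for the source $M$ — both of which hold for every object of $(\fg,K)\cmod_{\mathrm{fl},\adm}$.
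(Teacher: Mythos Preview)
Your proof is correct. The paper takes a slightly different organizational route: it first invokes the general reduction (b) $\Rightarrow$ (a) of Example \ref{ex:End} (in a finite-length abelian category, local finite-dimensionality follows once $\End$ of each simple object is finite-dimensional) and then, for a simple admissible $V$, restricts an endomorphism to a single nonzero isotypic component $V(\tau)$, which is finite-dimensional by admissibility. You bypass that reduction and handle $\Hom_{(\fg,K)}(M,N)$ for arbitrary $M,N$ directly, restricting to a finite-dimensional $K$-generating subspace $W\subset M$. The underlying mechanism is the same---restrict a $(\fg,K)$-map to a finite-dimensional $K$-stable subspace on which it is determined, then use admissibility of the target---so the difference is organizational rather than substantive. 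Your version is a bit more self-contained; the paper's version isolates the purely categorical step in Example \ref{ex:End} for reuse.
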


\begin{proof}
	It will suffice to prove that $\dim_F\End_{\fg,K}(V)<\infty$ for an arbitrary irreducible and admissible $(\fg,K)$-module $V$ by Proposition \ref{prop:subquot} and (b) $\Rightarrow$ (a) in Example \ref{ex:End}. Let $\tau$ be an irreducible representation of $K$ with $V(\tau)\neq 0$. Then we obtain an injective map $\End_{\fg,K}(V)\hookrightarrow \End_K(V(\tau))$ by restriction. The assertion now follows since $V$ is admissible.
\end{proof}

\begin{proof}[Proof of Theorem \ref{thm:hc_setting}]
	We wish to prove that 
	$(F''\otimes_F\fg,F''\otimes_F K)\cmod_{\mathrm{fl},\adm}$
	form
	a Loewy--Borel--Tits datum for the base change formalism of $(\fg,K)$-modules in \cite[Section~3.1]{MR3853058}, where $F''$ runs through $\Lambda_{F'/F}\cup\{F'\}$. We define a descent datum as a descent system whose corresponding action of $\Gamma_{F'/F}$ is continuous. 
	
	The base change functors respect these categories from Lemmas \ref{lem:BT_HC}, \ref{lem:loc_fin_HC}, and Corollary \ref{cor:semisimple_preservation}.
	
	Conditions \ref{con:fin_length_LBT}, \ref{con:strict_LBT}, \ref{con:subobj_LBT}, \ref{con:exact_LBT}, \ref{con:associative_LBT} are clear. For \ref{con:bcthm_LBT}, see \cite[Theorem 3.1.6]{MR3853058}. Conditions \ref{con:esssurj_LBT} and \ref{con:rationality_LBT} are immediate from Lemmas \ref{lem:BT_HC} and \ref{lem:descent_Z(g)-fin}. This completes the proof.
\end{proof}

\begin{rem}\label{rem:assumption}
	It is evident by the proof that the characteristic of $F$ can be positive in Lemma \ref{thm:hc_setting} (1) and (3). Similarly, the assertion of Lemma \ref{lem:BT_HC} for finitely generated modules keeps true without the assumptions on $F$ and $K$.
\end{rem}

\subsection{Absolute irreducibility}\label{sec:abs_irr}

In Theorem \ref{thm:abs_simple}, we discussed generalities on the pseudo-absolute simplicity (irreducibility). Here we aim to verify a stronger result in the setting of $(\fg,K)$-modules:

\begin{thm}\label{thm:abs_irr_hc}
	Let $(\fg,K)$ be a Harish-Chandra pair over a field $F$ of characteristic zero with $\dim_F \fg<\infty$ and $K$ reductive. Then for an irreducible admissible $(\fg,K)$-module $V$, the following conditions are equivalent:
	\begin{enumerate}
		\renewcommand{\labelenumi}{(\alph{enumi})}
		\item $\bar{F}\otimes_F V$ is irreducible;
		\item $F'\otimes_F V$ is irreducible for every finite extension $F'/F$ of fields;
		\item $F'\otimes_F V$ is irreducible for every extension $F'/F$ of fields;
		\item $\End_{\fg,K}(V)=F\id_V$.
	\end{enumerate}
	If these equivalent conditions are satisfied then $V$ is called absolutely irreducible (\cite[Definition 3.3]{MR3770183}).
\end{thm}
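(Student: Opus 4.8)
The plan is to run the cycle of implications $(c)\Rightarrow(b)\Rightarrow(a)\Rightarrow(d)\Rightarrow(c)$; the first two are trivial, since they only restrict the class of field extensions considered. The crux is an \emph{absolute Schur lemma}: for any algebraically closed field $\Omega$ of characteristic zero and any irreducible admissible $(\Omega\otimes_F\fg,\Omega\otimes_F K)$-module $W$, one has $\End(W)=\Omega\,\id_W$. To prove it, fix a nonzero endomorphism $\phi$ of $W$ and an irreducible $\Omega\otimes_F K$-representation $\tau$ with $W(\tau)\neq0$; then $\phi$ restricts to an endomorphism of $\Hom(\tau,W)$ commuting with the action of the invariant subalgebra $U(\Omega\otimes_F\fg)^{\Omega\otimes_F K}$. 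By \cite[3.5.4.~Proposition]{MR929683} this Hom space is a simple module over that subalgebra, and it is finite-dimensional because $W$ is admissible, so by Jacobson's density theorem (equivalently, Schur's lemma over an algebraically closed field) the induced map is multiplication by some $c\in\Omega$; since $W$ is irreducible it is generated by $W(\tau)$, whence $\phi=c\,\id_W$. The structural input \cite[3.5.4.~Proposition]{MR929683} is proved analytically over $\bC$ in the literature, but it is available in the present purely algebraic setting by the results of Section~\ref{sec:rem}.

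Granting this, $(a)\Rightarrow(d)$ is immediate: $\bar F\otimes_F V$ is admissible by Lemma~\ref{lem:descent_Z(g)-fin}~(2) and irreducible by $(a)$, so $\End_{\bar F\otimes_F\fg,\bar F\otimes_F K}(\bar F\otimes_F V)=\bar F\,\id$; combining this with the base change property of Hom spaces (\cite[Theorem~3.1.6]{MR3853058}) and with the finite-dimensionality of $\End_{\fg,K}(V)$ (Lemma~\ref{lem:loc_fin_HC}, since the irreducible admissible module $V$ lies in $(\fg,K)\cmod_{\mathrm{fl},\adm}$) forces $\dim_F\End_{\fg,K}(V)=1$, i.e.\ $(d)$.

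For $(d)\Rightarrow(c)$ I would first extract $(d)\Rightarrow(a)$: by Corollary~\ref{cor:semisimple_preservation} (valid since $(\fg,K)\cmod_{\mathrm{fl},\adm}$ is locally finite-dimensional by Lemma~\ref{lem:loc_fin_HC}) the module $W:=\bar F\otimes_F V$ is semisimple, say $W\cong\bigoplus_i W_i^{\oplus m_i}$ with pairwise non-isomorphic irreducible admissible summands $W_i$; the absolute Schur lemma gives $\End(W)\cong\prod_i M_{m_i}(\bar F)$, of dimension $\sum_i m_i^2$, which equals $\dim_F\End_{\fg,K}(V)=1$, so $W$ is irreducible. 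Now, for an arbitrary extension $F'/F$, pick an algebraic closure $\Omega$ of $F'$ with an embedding $\bar F\hookrightarrow\Omega$, so that $\Omega\otimes_F V\cong\Omega\otimes_{\bar F}W$. I claim $\Omega\otimes_{\bar F}W$ is irreducible: for each $\tau$ with $W(\tau)\neq0$ the Hom space $\Hom(\tau,W)$ is a finite-dimensional simple module over $A:=U(\bar F\otimes_F\fg)^{\bar F\otimes_F K}$ with scalar endomorphisms, so by Jacobson density $A$ surjects onto $\End_{\bar F}(\Hom(\tau,W))$; applying $\Omega\otimes_{\bar F}(-)$ and using that invariants and enveloping algebras commute with this base change, $\Omega\otimes_{\bar F}\Hom(\tau,W)\cong\Hom(\tau,\Omega\otimes_{\bar F}W)$ is again simple over $U(\Omega\otimes_F\fg)^{\Omega\otimes_F K}$; then any nonzero submodule $M'$ of $\Omega\otimes_{\bar F}W$ meets some isotypic component, so $M'(\tau)$ exhausts $(\Omega\otimes_{\bar F}W)(\tau)$, and since $W$ irreducible implies $W(\tau)$ generates $W$ we get $M'=\Omega\otimes_{\bar F}W$. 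Finally, faithful flatness of $\Omega/F'$ shows that a nonzero proper $(F'\otimes_F\fg,F'\otimes_F K)$-submodule of $F'\otimes_F V$ would base change to one of $\Omega\otimes_{F'}(F'\otimes_F V)=\Omega\otimes_{\bar F}W$, which is impossible; hence $F'\otimes_F V$ is irreducible, which is $(c)$, and the cycle closes.

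I expect the main obstacle to be $(d)\Rightarrow(c)$ for \emph{transcendental} extensions $F'/F$, to which the Loewy--Borel--Tits machinery of Section~\ref{sec:lbt} (in particular Theorem~\ref{thm:abs_simple}, which handles only algebraic $F'/F$) does not apply; the route above circumvents this by transferring the question to the extension $\Omega/\bar F$ of algebraically closed fields and invoking Jacobson's density theorem on the $U(\fg)^K$-modules $\Hom(\tau,W)$. A second point demanding care is checking that \cite[3.5.4.~Proposition]{MR929683}, and more generally the finiteness and structure theory of admissible modules, hold over an arbitrary characteristic-zero base field and not merely over $\bC$ --- which is exactly what Section~\ref{sec:rem} supplies.
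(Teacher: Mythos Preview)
Your proof is correct and follows the same route as the paper: the heart of the argument is reducing to an extension $\Omega/\bar F$ of algebraically closed fields and then applying Jacobson's density theorem to the finite-dimensional simple $U(\fg)^K$-module $\Hom_K(\tau,W)$ to conclude that each isotypic component of a nonzero submodule exhausts the ambient one. The paper organizes the easy implications slightly differently, pulling $(a)\Leftrightarrow(b)\Leftrightarrow(d)$ directly from the general framework (Lemma~\ref{lem:BT_HC} and Theorem~\ref{thm:abs_simple}) rather than rederiving the absolute Schur lemma and invoking Corollary~\ref{cor:semisimple_preservation}, but this is cosmetic.

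Two small corrections. First, the algebraic version of \cite[3.5.4.~Proposition]{MR929683} you need is provided by Lemma~\ref{lem:K-type} (in the same Section~\ref{sec:abs_irr}), not by Section~\ref{sec:rem}, which concerns Harish-Chandra's finiteness theorem. Second, when you assert that a nonzero submodule $M'\subset\Omega\otimes_{\bar F}W$ ``meets some isotypic component'' and then write $M'(\tau)$, you are tacitly using that every irreducible $\Omega\otimes_F K$-representation is of the form $\Omega\otimes_{\bar F}\tau$ for some irreducible $\bar F\otimes_F K$-type $\tau$; the paper isolates this as Lemma~\ref{lem:bc_bij} and invokes it explicitly at that step.
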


\begin{lem}\label{lem:bc_bij}
	Let $K$ be an affine group scheme over an algebraically closed field $F$, and $F'$ be any field extension of $F$. Then the base change functor determines a bijection
	\begin{equation}
		\Simple(\Rep K)\cong\Simple(\Rep (F'\otimes_F K))\label{eq:bc}
	\end{equation}
\end{lem}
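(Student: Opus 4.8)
The plan is to show that the base change functor $\Phi\coloneqq F'\otimes_F(-)\colon\Rep K\to\Rep(F'\otimes_F K)$, which is $F$-linear and exact, carries simple objects to simple objects and induces a bijection on isomorphism classes of simple objects. So the argument breaks into three parts: preservation of simplicity, injectivity of the induced map $\Simple(\Rep K)\to\Simple(\Rep(F'\otimes_F K))$, and surjectivity. Throughout I will use two standard facts about representations of an affine group scheme over a field: every simple representation is finite-dimensional, and every simple representation embeds into the regular representation $\cO(K)$ (see \cite[Chapter I, \S2]{MR2015057}).

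For preservation, let $V\in\Rep K$ be simple; then $V$ is finite-dimensional and, since $F$ is algebraically closed, $\End_K(V)=F\id_V$ by Example \ref{ex:End}. Let $C\subseteq\cO(K)$ be the coefficient coalgebra of $V$, a finite-dimensional subcoalgebra over which $V$ is a comodule, and let $A\subseteq\End_F(V)$ be the image of the induced algebra map $C^\vee\to\End_F(V)$. Then the subrepresentations of $V$ are exactly the $A$-submodules, so $V$ is a simple faithful $A$-module with $\End_A(V)=F$, whence $A=\End_F(V)$ by the Jacobson density theorem. Since the coefficient coalgebra of $\Phi(V)$ is $F'\otimes_F C$ and the corresponding algebra is $F'\otimes_F A=\End_{F'}(F'\otimes_F V)$ (using flatness of $F'$ over $F$), the representation $\Phi(V)$ has no proper nonzero invariant subspaces, hence is simple. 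If one prefers, this step may instead be cited from the literature on absolute irreducibility over algebraically closed fields, cf.\ \cite[Definition 3.3]{MR3770183}.

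For injectivity, I will use that $\Hom_K(V,W)=(\Hom_F(V,W))^K$ and that $(-)^K$ commutes with the flat base change $F'\otimes_F(-)$, so that the canonical map $F'\otimes_F\Hom_K(V,W)\to\Hom_{F'\otimes_F K}(\Phi(V),\Phi(W))$ is an isomorphism for all $V,W\in\Rep K$; if $V,W$ are simple with $\Phi(V)\cong\Phi(W)$, then the right-hand side is nonzero, hence so is $\Hom_K(V,W)$, and $V\cong W$ by Schur's lemma. For surjectivity, let $W'$ be a simple object of $\Rep(F'\otimes_F K)$; it is finite-dimensional and embeds into $\cO(F'\otimes_F K)=F'\otimes_F\cO(K)$, which is the filtered union of the subcoalgebras $F'\otimes_F C$ with $C$ running over the finite-dimensional subcoalgebras of $\cO(K)$, so $W'$ embeds as a subrepresentation of $\Phi(C)$ for some such $C$. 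Viewing $C$ as a finite-dimensional object of $\Rep K$ and choosing a composition series of it, the exactness of $\Phi$ together with the preservation of simplicity just proved shows that $\Phi(C)$ admits a composition series whose subquotients are the $\Phi(S)$, $S$ a simple subquotient of $C$. Since $W'$ is a simple submodule of $\Phi(C)$, the Jordan--Hölder theorem gives $W'\cong\Phi(S)$ for some such $S$, so $W'$ is in the image.

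The step I expect to require the most care is the preservation of simplicity, and within it the single point that no new invariant subspace appears after base change; the density argument above handles this, the only bookkeeping being that the coefficient coalgebra, and hence the finite-dimensional algebra acting on $V$, is compatible with $F'\otimes_F(-)$. The remaining steps are formal consequences of exactness, flat base change of $K$-invariants, and Jordan--Hölder, and do not use reductivity of $K$ or any hypothesis on $F'/F$ beyond its being a field extension.
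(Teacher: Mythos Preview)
Your proof is correct. The preservation-of-simplicity step and the injectivity step match the paper's approach: the paper simply says ``apply Jacobson's density theorem to the dual algebra $F[K]^\vee$'' and cites \cite{MR252485}, while you unpack this through the coefficient coalgebra and the algebra $A$ acting on $V$; likewise the paper leaves the injectivity implicit, and your base-change-of-Hom argument is the standard way to fill that in.

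Your surjectivity argument, however, takes a genuinely different route. The paper argues as follows: given a simple $U\in\Rep(F'\otimes_F K)$, restrict scalars to view $U$ as a (possibly infinite-dimensional) representation of $K$ over $F$, choose a simple $K$-subrepresentation $V\subset\Res_{F'/F}U$, and then the extension--restriction adjunction gives a nonzero map $F'\otimes_F V\to U$, which is an isomorphism since both sides are simple. You instead embed $W'$ into the regular representation $F'\otimes_F\cO(K)$, descend to some finite-dimensional subcoalgebra $C\subset\cO(K)$, and use a composition series of $C$ together with Jordan--H\"older. The paper's argument is shorter and uses only the adjunction; your argument has the mild advantage of staying entirely within finite-dimensional representations (avoiding the need to note that $\Res_{F'/F}U$, which is infinite-dimensional over $F$ when $[F':F]=\infty$, still admits a simple subcomodule), at the cost of invoking the embedding into the regular representation and Jordan--H\"older. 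Both are perfectly fine.
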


In fact, we see from the argument below that a similar assertion holds true for right comodules over coalgebras.

\begin{proof}
	We apply Jacobson's density theorem to the dual algebra $F[K]^\vee$ of the coordinate ring of $K$ to see that $F'\otimes_F(-)$ respects irreducible representations (\cite[PROPOSITION 1.1.1, SECTION 2.1]{MR252485}). In particular, we obtain the injective map \eqref{eq:bc} from left to right.
	
	To see that this map is surjective, let $U$ be any irreducible representation of $F'\otimes_F K$. Choose an irreducible subrepresentation $V$ of $\Res_{F'/F} U$. Then we have $\Hom_{F'\otimes_F K}(F'\otimes_F V,U)\neq 0$. Since $F'\otimes_FV$ and $U$ are irreducible, they are isomorphic to each other.
\end{proof}

\begin{lem}\label{lem:K-type}
	Let $(\fg,K)$ be a Harish-Chandra pair over an algebraically closed field of characteristic zero with $\dim_F \fg<\infty$ and $K$ reductive. Then for an irreducible admissible $(\fg,K)$-module $V$ and an irreducible representation $\tau$ of $K$, $\Hom_K(\tau,V)$ is an irreducible $U(\fg)^K$-module for the action on the target.
\end{lem}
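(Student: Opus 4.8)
The plan is to deduce this from \cite[3.5.4.~Proposition]{MR929683}: the proof there is purely algebraic, and the only inputs it uses are that $K$ is reductive over a field of characteristic zero (so that $\Rep K$ is semisimple and $\Hom_K(V_\tau,-)$ is exact), that $\dim_F\fg<\infty$ (so that $U(\fg)$ is Noetherian and carries a locally finite $\Ad(K)$-action with finite isotypic multiplicities in each filtration degree), that $V$ is admissible (so that $\Hom_K(V_\tau,V)$ is finite-dimensional), and that $F$ is algebraically closed (so that $\End_K(V_\tau)=F\cdot\id$ and Jacobson's density theorem applies over $F$); all of these hold in our setting, so the argument transfers verbatim. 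For completeness I would recall the structure of that argument. We may assume $V(\tau)\neq 0$, since otherwise $\Hom_K(V_\tau,V)=0$ and the statement is about nonzero isotypic components. First I would pass to the Hecke algebra $R=R(\fg,K)$ (as in \cite[Chapter~I]{MR1330919}): the category of $(\fg,K)$-modules is equivalent to that of approximately unital $R$-modules, and for each $\tau$ there is an idempotent $e_\tau\in R$ projecting onto the $\tau$-isotypic component, so $e_\tau V=V(\tau)$ for every $(\fg,K)$-module $V$. Since $U(\fg)^K$ consists of $\Ad(K)$-invariants it commutes with $\mathcal R(K)$, hence with $e_\tau$, so $U(\fg)^K e_\tau\subseteq e_\tau R e_\tau$, and the resulting action of $U(\fg)^K$ on $V(\tau)=\Hom_K(V_\tau,V)\otimes_F V_\tau$ is through the first tensor factor and coincides there with the given $U(\fg)^K$-action on $\Hom_K(V_\tau,V)$.

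Next I would invoke the elementary ring-theoretic fact that if $e$ is an idempotent of a ring $R$ and $M$ is a simple $R$-module with $eM\neq 0$, then $eM$ is a simple $eRe$-module; applied with $M=V$ (simple by hypothesis) and $e=e_\tau$, this gives that $V(\tau)$ is a simple $e_\tau R e_\tau$-module. The remaining, and only delicate, step is to descend simplicity from $e_\tau R e_\tau$ to $U(\fg)^K$. For this one analyzes $e_\tau R e_\tau$ using the factorization $R=U(\fg)\cdot\mathcal R(K)$ together with the $\Ad(K)$-isotypic decomposition of $U(\fg)$: only the finitely many constituents $\sigma$ of $\End_F(V_\tau)$ contribute, and one identifies $e_\tau R e_\tau$ with a full matrix algebra $M_{\dim_F V_\tau}(\cB)$ over an algebra $\cB$ in such a way that $\Hom_K(V_\tau,V)$ becomes the Morita partner of $V(\tau)$ and that the embedding $U(\fg)^K\hookrightarrow\cB$ induces the given action. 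Simplicity of $V(\tau)$ over $e_\tau R e_\tau$ then forces simplicity of $\Hom_K(V_\tau,V)$ over $\cB$, and a final density argument — using that $\Hom_K(V_\tau,V)$ is finite-dimensional and $F=\bar F$, so $\End_{\cB}(\Hom_K(V_\tau,V))=F$ — shows that already the image of $U(\fg)^K$ acts irreducibly.

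The main obstacle is exactly this last identification: pinning down $e_\tau R e_\tau$ precisely enough to recognize the $U(\fg)^K$-module structure on $\Hom_K(V_\tau,V)$, which is the substantive content of \cite[3.5.4.~Proposition]{MR929683} and its supporting lemmas; in the write-up the cleanest course is to cite it, after remarking that its proof does not use anything beyond the four properties listed above. An alternative, more self-contained route that avoids the Hecke algebra is a direct generation argument: given a nonzero $U(\fg)^K$-submodule $W\subseteq\Hom_K(V_\tau,V)$, its image $W(V_\tau)\subseteq V(\tau)$ is a nonzero $K$-stable subspace, the $(\fg,K)$-submodule $U(\fg)\cdot W(V_\tau)$ it generates is all of $V$ by simplicity, and projecting $V=U(\fg)\cdot W(V_\tau)$ back onto $V(\tau)$ via the $K$-equivariant projection $p_\tau$ shows that $\Hom_K(V_\tau,V)$ is generated out of $W$ by the finitely many $\Ad(K)$-isotypic components of $U(\fg)$ occurring in $\End_F(V_\tau)$; absorbing those extra invariant operators into $U(\fg)^K$ is once more the delicate point, and it is handled by the same density argument on the finite-dimensional space $\Hom_K(V_\tau,V)$.
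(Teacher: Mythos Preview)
Your proposal is correct and follows essentially the same route as the paper: both defer to Wallach's \cite[3.5.4.~Proposition]{MR929683} (the paper also cites 3.9.7 and 3.9.8) after noting that the argument there is algebraic once the Hecke algebra is in hand. The one point the paper treats more carefully is the construction of the Hecke algebra itself: rather than citing \cite[Chapter~I]{MR1330919}, whose $R(\fg,K)$ is built from $K$-finite distributions on a compact Lie group and so is not literally available over a general algebraically closed field, the paper gives an explicit algebraic definition $H=U(\fg)\otimes_{U(\fk)}F[K]^c$ via the algebraic Peter--Weyl decomposition of $F[K]$, which is precisely the place where transferring Wallach's argument requires work.
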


\begin{proof}
	This is verified along the same lines as \cite[3.5.4. Proposition, 3.9.7, 3.9.8]{MR929683}. Here we only outline how to define the Hecke algebra $H$ in an algebraic way.
	
	To save space, set $S\coloneqq \Simple(\Rep K)$. We will identify $\tau\in S$ with an irreducible representation of $K$. For $\tau\in S$, we put the canonical structure of a coalgebra on $\End_F(\tau)$. We also regard it as a representation of $K$ for the action on the target. Then the algebraic Peter--Weyl theorem asserts an isomorphism
	\[\oplus_{\tau\in S}\End_F(\tau)\cong F[K]\]
	as coalgebras and representations of $K$, where $F[K]$ is regarded as the right regular representation of $K$. If we identify $\End_F(\tau)^c=\End_F(\tau)^\vee$ with $\End_F(\tau)$ via the trace pairing, we have a commutative diagram
	\[\begin{tikzcd}
		\prod_{\tau\in S} \End_F(\tau)\ar[r, "\sim"]&F[K]^\vee\\
		\oplus_{\tau\in S} \End_F(\tau)\ar[r, "\sim"]\ar[u, hook]&F[K]^c\ar[u,hook]
	\end{tikzcd}\]
	of canonical maps. Recall that $F[K]^\vee$ is naturally endowed with the structure of an algebra (\cite[Proposition 1.1.1]{MR252485}). Correspondingly, $\prod_{\tau\in S} \End_F(\tau)$ is the product of the algebras $\End_F(\tau)$. By restriction, we obtain the structures of approximately unital algebras in the sense of \cite[DEFINITION A.3]{MR1330919} on the bottom terms.
	
	Write the evaluation map $F[K]^c\otimes_F F[K]\to F$ (i.e., the counit) as
	$T\otimes a\mapsto T(a)$.
	Let
	\begin{equation}
		F[K]\otimes_F F[K]^c\to F[K]^c;~a\otimes T\mapsto aT\label{eq:scalar_multiplication}
	\end{equation}
	be the map given by
	\[F[K]\otimes_F F[K]^c\otimes_FF[K]\to F;~a\otimes T\otimes b\mapsto T(ab).\]
	
	We define a map
	\begin{equation}
		F[K]^c\otimes_F U(\fg)\cong U(\fg)\otimes_F F[K]\label{eq:symmetry}
	\end{equation}
	by
	\[\begin{split}
		F[K]^c\otimes_F U(\fg)
		&\to F[K]^c\otimes_F U(\fg)\otimes_F F[K]\\
		&\cong F[K]\otimes_F F[K]^c\otimes_F U(\fg)\\
		&\to F[K]^c \otimes_F U(\fg).
	\end{split}
	\]
	The first map is defined by the coaction on $U(\fg)$. The second map is given by switching the factors as $T\otimes u\otimes a\mapsto a\otimes T\otimes u$. The last map is given by \eqref{eq:scalar_multiplication}. Its inverse is constructed in a similar way.
	
	We now define a product on $U(\fg)\otimes_F F[K]^c$ by
	\[\begin{split}
		U(\fg)\otimes_F F[K]^c
		\otimes_F U(\fg)\otimes_F F[K]^c
		&\cong U(\fg)\otimes_F U(\fg)\otimes_F F[K]^c\otimes_F F[K]^c\\
		&\to U(\fg)\otimes_F F[K]^c,
	\end{split}
	\]
	where the first isomorphism is obtained by applying \eqref{eq:symmetry} to the middle two factors, and the last map is given by the multiplication. One can prove that $U(\fg)\otimes_F F[K]^c$ is an (approximately unital) algebra for this map.
	
	Consider the differential action of $U(\fk)$ on $F[K]^c$. Then the product map in the previous paragraph descends to that on $U(\fg)\otimes_{U(\fk)} F[K]^c$. The vector space $H\coloneqq U(\fg)\otimes_{U(\fk)} F[K]^c$ is an (approximately unital) algebra for this.
\end{proof}

\begin{proof}[Proof of Theorem \ref{thm:abs_irr_hc}]
	The equivalences (a) $\iff$ (b) $\iff$ (d) follow from Lemma \ref{lem:BT_HC} and Theorem \ref{thm:abs_simple}. It is clear that (c) implies (b). It remains to prove (a) $\Rightarrow$ (c). We may replace $F$ with the algebraic closure $\bar{F}$ to assume that $F$ is algebraically closed. Replacing $F'$ in (c) with $\bar{F}'$, we may prove that $F'\otimes_F V$ is irreducible if $F'/F$ is an extension of algebraically closed fields.
	
	Set $S\coloneqq\Simple(\Rep K)$.
	Let $\tau\in S$. Then $V(\tau)$ is nonzero if and only if the subspace $(F'\otimes_F V)(F'\otimes_F \tau)$ is nonzero. Moreover, if $V(\tau)\neq 0$ then the left $U(F'\otimes_F \fg)^{F'\otimes_F K}$-module $\Hom_{F'\otimes_F K}(F'\otimes_F \tau,F'\otimes_F V)$ is simple by Lemma \ref{lem:K-type} and Jacobson's density theorem.
	
	Let $W$ be a nonzero $(F'\otimes_F \fg,F'\otimes_F K)$-submodule of $F'\otimes_F V$. In view of Lemma \ref{lem:bc_bij}, one can find $\tau\in S$ with $W(F'\otimes_F \tau)\neq 0$. We fix this $\tau$. Then the previous paragraph implies
	$W(F'\otimes_F \tau)=(F'\otimes_F V)(F'\otimes_F \tau)$.
	Since $V$ is irreducible, $V$ is generated by $V(\tau)$. Hence $F'\otimes_F V$ is generated by
	\[F'\otimes_F V(\tau)\cong (F'\otimes_F V)(F'\otimes_F \tau)=W(F'\otimes_F \tau).\]
	This shows $F'\otimes_F V=W$. Hence we conclude that $F'\otimes_F V$ is irreducible. This completes the proof.
\end{proof}

\begin{cor}\label{cor:rationality_Z(g)-fin,finlength}
	Consider the setting above Lemma \ref{lem:fg/Z(g)}. Let $V$ be an irreducible admissible $(\fg,K)$-module. Then there exists a finite extension $F'/F$ in $\bar{F}$ such that $F'\otimes_F V$ is decomposed into finitely many absolutely irreducible $(F'\otimes_F \fg,F'\otimes_F K)$-submodules. In particular, $F''\otimes_F V$ is semisimple as an $(F''\otimes_F\fg,F''\otimes_FK)$-module for any field extension $F''/F$.
\end{cor}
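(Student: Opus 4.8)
The plan is to reduce the statement to the structure of a Loewy--Borel--Tits datum on Harish-Chandra modules together with the procedure carried out in the proof of Corollary~\ref{cor:semisimple_preservation}. Since $F$ has characteristic zero, $\bar{F}=F^{\sep}$ and $\bar{F}/F$ is Galois, so Theorem~\ref{thm:hc_setting} applies to $\bar{F}/F$: the categories $(F''\otimes_F\fg,F''\otimes_F K)\cmod_{\mathrm{fl},\adm}$ with $F''\in\Lambda_{\bar{F}/F}\cup\{\bar{F}\}$ carry the structure of a Loewy--Borel--Tits datum, and $(\fg,K)\cmod_{\mathrm{fl},\adm}$ is locally finite-dimensional by Lemma~\ref{lem:loc_fin_HC}. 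The module $V$ lies in $(\fg,K)\cmod_{\mathrm{fl},\adm}$, being irreducible (hence of finite length) and admissible.

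First I would re-run the procedure from the proof of Corollary~\ref{cor:semisimple_preservation} for the simple object $V$: one passes from an intermediate field $F^{i-1}$ to a finite Galois subextension of $\bar{F}$ whenever some simple summand of $F^{i-1}\otimes_F V$ becomes reducible after a further finite base change, and this terminates for the same reason as there (the lengths of the $F^{i}\otimes_F V$ are bounded by $\dim_F\End_{\fg,K}(V)$, via Lemma~\ref{lem:pseudosimple_finite}). One thereby obtains a finite extension $F'/F$ in $\bar{F}$ such that $F'\otimes_F V$ is semisimple of finite length and every simple summand $N$ of $F'\otimes_F V$ satisfies the first four equivalent conditions of Theorem~\ref{thm:abs_simple} for the pair $(F'\otimes_F\fg,F'\otimes_F K)$; in particular $\bar{F}\otimes_{F'}N$ is simple. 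As $\bar{F}$ is an algebraic closure of $F'$, Theorem~\ref{thm:abs_irr_hc} then shows that $N$ is absolutely irreducible. Thus $F'\otimes_F V=N_1\oplus\cdots\oplus N_n$ with each $N_i$ an absolutely irreducible $(F'\otimes_F\fg,F'\otimes_F K)$-submodule, which is the first claim.

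For the ``in particular'', let $E/F$ be an arbitrary field extension. I would choose a maximal ideal $\fm\subset F'\otimes_F E$, set $L\coloneqq(F'\otimes_F E)/\fm$, and observe that $L$ is a field receiving (automatically injective) $F$-algebra maps from both $F'$ and $E$. By associativity of base change, $L\otimes_F V\cong\bigoplus_{i=1}^n L\otimes_{F'}N_i$, and each $L\otimes_{F'}N_i$ is irreducible because $N_i$ is absolutely irreducible (Theorem~\ref{thm:abs_irr_hc}); hence $L\otimes_F V$ is semisimple of finite length. On the other hand $L\otimes_F V\cong L\otimes_{E}(E\otimes_F V)$, and the functor $L\otimes_{E}(-)$ between the relevant categories of finitely generated modules is exact and satisfies the base change isomorphism for Hom spaces (\cite[Theorem~3.1.6]{MR3853058}); by Lemma~\ref{lem:conservative} it reflects semisimplicity and the finite length property. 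Since $E\otimes_F V$ is finitely generated (Lemma~\ref{lem:descent_Z(g)-fin}(1)), it follows that $E\otimes_F V$ is semisimple (indeed of finite length).

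Most of the argument is inherited: the procedure in the proof of Corollary~\ref{cor:semisimple_preservation} and the Jacobson-density input behind Theorem~\ref{thm:abs_irr_hc} do the substantive work. The only new ingredient is handling an arbitrary, possibly transcendental, extension $E/F$ in the second assertion, and the one point requiring care there is that all modules in play are finitely generated, so that Lemma~\ref{lem:conservative} and the cited Hom base change legitimately apply; this is guaranteed by Lemma~\ref{lem:descent_Z(g)-fin}. I do not anticipate a genuine obstacle.
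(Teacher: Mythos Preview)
Your proof is correct and follows essentially the same route as the paper: the first assertion is obtained by rerunning the procedure from the proof of Corollary~\ref{cor:semisimple_preservation} and then invoking Theorem~\ref{thm:abs_irr_hc}, exactly as the paper indicates. Your treatment of the ``in particular'' clause via a common overfield $L$ of $F'$ and $E$ and reflection of semisimplicity through Lemma~\ref{lem:conservative} is the natural way to handle an arbitrary (possibly transcendental) extension, and simply spells out what the paper leaves implicit in its one-line proof.
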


\begin{proof}
This is immediate from the proof of Corollary \ref{cor:semisimple_preservation} (see also Theorem \ref{thm:abs_irr_hc}).
\end{proof}

\begin{cor}\label{cor:descent_flZf}
	Consider the setting above Lemma \ref{lem:fg/Z(g)}. Then for a $(\fg,K)$-module $V$ and a field extension $F'/F$, $V$ is $Z(\fg)$-finite of finite length if and only if $F'\otimes_F V$ is $Z(F'\otimes_F\fg)$-finite of finite length.
\end{cor}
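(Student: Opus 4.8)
The plan is to prove the two implications separately. In both, the assertions about $Z(\fg)$-finiteness are immediate from Lemma~\ref{lem:descent_Z(g)-fin} once finite generation over the enveloping algebra is known, so the substantive content is the finite-length statement, and only in the ``only if'' direction. Throughout I use that base change along a field extension $F'/F$ is exact and faithfully flat (in particular faithful) on $(\fg,K)$-modules, since the forgetful functor to $F$-vector spaces is exact and faithful.

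For the ``if'' direction, assume $F'\otimes_F V$ is $Z(F'\otimes_F\fg)$-finite of finite length. Being of finite length it is finitely generated over $U(F'\otimes_F\fg)$, so $V$ is finitely generated over $U(\fg)$ by Lemma~\ref{lem:descent_Z(g)-fin}~(1), and then $Z(\fg)$-finite by Lemma~\ref{lem:descent_Z(g)-fin}~(3). Moreover $V$ has finite length: an exact faithful functor reflects zero objects, hence reflects the finite-length property, because any infinite strictly ascending or descending chain of $(\fg,K)$-submodules of $V$ would, after applying $F'\otimes_F(-)$, yield such a chain in $F'\otimes_F V$.

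For the ``only if'' direction, assume $V$ is $Z(\fg)$-finite of finite length. Then $V$ is Noetherian, hence finitely generated over $U(\fg)$, so Lemma~\ref{lem:descent_Z(g)-fin}~(3) gives that $F'\otimes_F V$ is $Z(F'\otimes_F\fg)$-finite. For the finite length of $F'\otimes_F V$, choose a composition series of $V$; since $F'\otimes_F(-)$ is exact it suffices to treat a simple subquotient, i.e.\ to show $F'\otimes_F W$ has finite length for $W$ a simple, finitely generated, $Z(\fg)$-finite $(\fg,K)$-module. Here I would invoke Harish-Chandra's finiteness theorem in the algebraic form established in Section~\ref{sec:rem} to conclude that $W$ is admissible, so that $W$ lies in $(\fg,K)\cmod_{\mathrm{fl},\adm}$. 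Corollary~\ref{cor:rationality_Z(g)-fin,finlength} then provides a finite extension $F_0/F$ over which $F_0\otimes_F W$ is a finite direct sum of absolutely irreducible $(F_0\otimes_F\fg,F_0\otimes_F K)$-modules (these being admissible by Lemma~\ref{lem:descent_Z(g)-fin}~(2)). Picking a field $L$ that receives $F$-embeddings of both $F'$ and $F_0$ (for example a residue field of the nonzero ring $F'\otimes_F F_0$), we obtain $L\otimes_F W\cong L\otimes_{F_0}(F_0\otimes_F W)$, which is a finite direct sum of irreducibles by the absolute irreducibility of the summands (Theorem~\ref{thm:abs_irr_hc}), hence of finite length; since $L/F'$ is faithfully flat, $F'\otimes_F W$ is of finite length, and reassembling the composition series gives that $F'\otimes_F V$ is of finite length.

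The main obstacle is precisely the reduction just described: that a simple, finitely generated, $Z(\fg)$-finite $(\fg,K)$-module is admissible. This is a form of Harish-Chandra's finiteness theorem, and it is the step where the hypotheses of the ambient setting (and the algebraic proof in Section~\ref{sec:rem}) are indispensable --- a simple module is already of finite length, so without such an input a simple $Z(\fg)$-finite module need not be admissible. Everything else is formal: the behaviour of finite length under exact faithful functors, and the preservation of finite length under arbitrary base change of admissible finite-length modules, which is exactly the mechanism behind Corollaries~\ref{cor:semisimple_preservation} and~\ref{cor:rationality_Z(g)-fin,finlength}. A further point to watch is that $F'/F$ is an arbitrary field extension, not necessarily finite, algebraic, or Galois, which is why the common-overfield argument together with faithful flatness is used in place of the Galois-descent results of Section~\ref{sec:lbt}.
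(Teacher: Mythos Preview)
Your proof is correct and follows the same route as the paper's: reflection of finite length via exactness and conservativity of $F'\otimes_F(-)$, both directions for $Z(\fg)$-finiteness via Lemma~\ref{lem:descent_Z(g)-fin}, and preservation of finite length via Corollary~\ref{cor:rationality_Z(g)-fin,finlength} applied to simple subquotients. You have spelled out two steps the paper leaves implicit---the appeal to Proposition~\ref{prop:fl->Zf}~(3) (Harish-Chandra's finiteness) to obtain admissibility of simple subquotients, and the common-overfield reduction via Theorem~\ref{thm:abs_irr_hc} to pass from the finite extension $F_0$ to an arbitrary $F'$---but these are precisely what the paper's terse citation of Corollary~\ref{cor:rationality_Z(g)-fin,finlength} is meant to encode.
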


\begin{proof}
	It is easy to show that a $(\fg,K)$-module of finite length is finitely generated (see Proposition \ref{prop:fl->Zf}). Since $F'\otimes_F(-)$ is exact and conservative, it reflects the finite length property. The assertion now follows from Lemma \ref{lem:descent_Z(g)-fin} and Corollary \ref{cor:rationality_Z(g)-fin,finlength}.
\end{proof}

\subsection{More on Borel--Tits cocycles}\label{sec:more}

Recall that categories of finitely generated $(\fg,K)$-modules form a Borel--Tits datum (Lemma \ref{lem:BT_HC}). If $F$ is algebraically closed, irreducible $(\fg,K)$-modules only have scalar endomorphisms (\cite[Proposition 5.4.6 (1)]{hayashijanuszewski}). We thus obtain the notion of Borel--Tits cocycles for irreducible $(\bar{F}\otimes_F\fg,\bar{F}\otimes_FK)$-modules.

In this section, we aim to generalize \cite[Theorem 5.5 and Proposition 5.8]{MR3770183}. We also use our theories of rationality to give a minimal field of definition of an irreducible module.

Throughout this small section, let $(\fg,K)$ be a Harish-Chandra pair over a field $F$ of characteristic zero with $\dim_F \fg<\infty$ and $K$ reductive.

\subsubsection{Local-global principle}

Suppose that $F$ is a number field. Let $V$ be a self-conjugate irreducible admissible $(\bar{F}\otimes_F \fg,\bar{F}\otimes_F K)$-module. For each place $v$ of $F$, we fix an embedding $\bar{F}\hookrightarrow \bar{F}_v$.

\begin{prop}\label{prop:lg}
	\begin{enumerate}
		\item For each place $v$, $\bar{F}_v\otimes_{\bar{F}} V$ is irreducible and self-conjugate. Moreover, the image of $\beta^{\BT}_V\in H^2(\Gamma_F,\bar{F}^\times)$ in $H^2(\Gamma_{F_v},\bar{F}^\times_v)$ agrees with $\beta^{\BT}_{\bar{F}_v\otimes_{\bar{F}} V}$.
		\item The cocycle $\beta^{\BT}_V$ is trivial if and only if so are $\beta^{\BT}_{\bar{F}_v\otimes_{\bar{F}} V}$ for all places $v$ of $F$.
	\end{enumerate}
\end{prop}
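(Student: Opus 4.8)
The plan is to reduce part (2) to part (1) via the Albert--Brauer--Hasse--Noether theorem, so that the substance lies in the cocycle comparison of part (1).

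First I would check that $\bar F_v\otimes_{\bar F}V$ is again irreducible, admissible and self-conjugate, and that its field of rationality over $F_v$ is $F_v$, so that $\beta^{\BT}_{\bar F_v\otimes_{\bar F}V}\in H^2(\Gamma_{F_v},\bar F_v^\times)$ is defined inside the Loewy--Borel--Tits datum of Theorem \ref{thm:hc_setting} for $(F_v\otimes_F\fg,F_v\otimes_F K)$. For irreducibility: since $\bar F$ is algebraically closed and $V$ is irreducible admissible, $\End_{\bar F\otimes_F\fg,\bar F\otimes_F K}(V)=\bar F\,\id_V$ (Schur's lemma together with admissibility, cf.\ Lemma \ref{lem:loc_fin_HC} and Example \ref{ex:End}), so $V$ is absolutely irreducible by Theorem \ref{thm:abs_irr_hc} applied over $\bar F$; in particular $\bar F_v\otimes_{\bar F}V$ is irreducible, using the fixed embedding $\bar F\hookrightarrow\bar F_v$. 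Admissibility follows from Lemma \ref{lem:descent_Z(g)-fin}(2), and finite length from irreducibility. For self-conjugacy: the fixed embedding induces the restriction map $\Gamma_{F_v}\to\Gamma_F$, $\tau\mapsto\sigma\coloneqq\tau|_{\bar F}$, and since base change commutes with Galois twists we obtain ${}^\tau(\bar F_v\otimes_{\bar F}V)\cong\bar F_v\otimes_{\bar F}{}^\sigma V\cong\bar F_v\otimes_{\bar F}V$; hence the field of rationality is $F_v$ and $\beta^{\BT}_{\bar F_v\otimes_{\bar F}V}$ indeed lives in $H^2(\Gamma_{F_v},\bar F_v^\times)$.

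For the comparison, choose $F''\in\Lambda_{\bar F/F}$ and an $F''$-form $M_{F''}$ of $V$ (Condition \ref{con:rationality_LBT}); then $\beta^{\BT}_V$ is, by Construction \ref{cons:BT}, the image of $\beta^{\BT}_{M_{F''}}\in H^2(\Gamma_{F''/F},(F'')^\times)$ (note $F(V)=F$ by self-conjugacy). Set $F''_w\coloneqq F''\cdot F_v\subseteq\bar F_v$, a finite Galois extension of $F_v$, so that $F''_w\otimes_{F''}M_{F''}$ is an $F''_w$-form of $\bar F_v\otimes_{\bar F}V$, and standard local theory identifies $\Gamma_{F''_w/F_v}$ with the decomposition subgroup $\Gamma_{F''/F''\cap F_v}\subseteq\Gamma_{F''/F}$ via $\tau\mapsto\tau|_{F''}$. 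Applying the explicit cocycle formula of Proposition \ref{prop:key_computation_BT} to the base-changed isomorphisms (exactly as in the proof of Definition-Proposition \ref{defprop:BT}(1)), $\beta^{\BT}_{F''_w\otimes_{F''}M_{F''}}$ is the pullback of $\beta^{\BT}_{M_{F''}}$ along $\Gamma_{F''_w/F_v}\hookrightarrow\Gamma_{F''/F}$. Feeding this through Construction \ref{cons:BT} once more and using the evident commuting square of inflation and localization maps for group cohomology, $\beta^{\BT}_{\bar F_v\otimes_{\bar F}V}$ equals the image of $\beta^{\BT}_V$ under the localization map $H^2(\Gamma_F,\bar F^\times)\to H^2(\Gamma_{F_v},\bar F_v^\times)$. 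This proves (1).

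For (2): identifying $H^2(\Gamma_F,\bar F^\times)=\Br(F)$ and $H^2(\Gamma_{F_v},\bar F_v^\times)=\Br(F_v)$ (we are in characteristic zero, so $\bar F=F^{\sep}$), the maps of (1) become the usual $\Br(F)\to\Br(F_v)$, and the Albert--Brauer--Hasse--Noether exact sequence $0\to\Br(F)\to\bigoplus_v\Br(F_v)\to\bQ/\bZ\to 0$ shows that $\beta^{\BT}_V$ vanishes if and only if all $\beta^{\BT}_{\bar F_v\otimes_{\bar F}V}$ do. The main obstacle I expect is purely bookkeeping: pinning down the ambient fields $\bar F$ versus $\bar F_v$, arranging a single finite form $M_{F''}$ that serves over both $F$ and $F_v$ after passing to the compositum $F''_w$, and assembling the two applications of Construction \ref{cons:BT} and Proposition \ref{prop:key_computation_BT} into one commuting diagram; the remaining ingredients are either formal or standard citations.
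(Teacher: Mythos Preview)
Your proposal is correct and follows essentially the same approach as the paper's proof: irreducibility via Theorem \ref{thm:abs_irr_hc}, self-conjugacy from the compatibility of base change with Galois twists, the cocycle comparison via the explicit numerical description of Construction \ref{cons:BT} and Proposition \ref{prop:key_computation_BT} applied to a finite-level form, and part (2) from Albert--Brauer--Hasse--Noether. The paper's proof is simply a terser version of what you wrote, without spelling out the compositum $F''_w$ or the decomposition-group identification.
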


\begin{proof}
	The irreducibility follows from Theorem \ref{thm:abs_irr_hc}. The self-conjugacy is evident. Therefore $\beta^{\BT}_{\bar{F}_v\otimes_{\bar{F}} V}$ is defined. The last statement of (1) follows from the numerical description of the Borel--Tits cocycle (Construction \ref{cons:BT} and Proposition \ref{prop:key_computation_BT}). Part (2) is a consequence of the Albert--Brauer--Hasse--Noether theorem (\cite[Theorem 14.11]{MR4174395}).
\end{proof}

\begin{rem}
	We can remove the self-conjugacy assumption on $V$ by replacing $F$ with $F(V)$.
\end{rem}

\subsubsection{Computation by $K$-type}

In this section, we give a straightforward generalization of \cite[Proposition 5.8]{MR3770183}:

\begin{prop}\label{prop:mult_one}
	Let $V$ be an irreducible $(\bar{F}\otimes_F\fg,\bar{F}\otimes_FK)$-module, and $\tau$ be a self-conjugate $\bar{F}\otimes_FK$-type of multiplicity one in $V$. Then we have $\beta^{\BT}_{V}=\beta^{\BT}{\tau}$.
\end{prop}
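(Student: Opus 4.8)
The plan is to reduce the statement to a finite Galois layer, where the Borel--Tits cocycle is the explicit $2$-cocycle of Proposition~\ref{prop:key_computation_BT}, and then to transport that cocycle from $V$ to $\tau$ along the multiplicity-one embedding of $\tau$ into $V$. First I would fix a finite Galois extension $F''\in\Lambda_{\bar F/F}$ large enough that $V$ admits an $(F''\otimes_F\fg,F''\otimes_F K)$-form $V_{F''}$ (Lemma~\ref{lem:BT_HC} and Condition~\ref{con:rationality}; note $V$ is finitely generated), that $\tau$ admits an $F''\otimes_F K$-form $\tau_{F''}$ (the Borel--Tits datum of Example~\ref{ex:affinegroupscheme}), and that $F''\supseteq F(V)$; recall $F(V)$ is finite over $F$ by Remark~\ref{rem:F(M)_finite}, while $F(\tau)=F$ because $\tau$ is self-conjugate. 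Since $V$ and $\tau$ are irreducible over the algebraically closed $\bar F$, we have $\End(V)=\bar F\id_V$ (\cite[Proposition 5.4.6 (1)]{hayashijanuszewski}) and $\End(\tau)=\bar F\id_\tau$, hence by base change of Hom spaces $\End(V_{F''})=F''\id$ and $\End(\tau_{F''})=F''\id$; so $\beta^{\BT}_V$ and $\beta^{\BT}_\tau$ are defined, and by Construction~\ref{cons:BT} they are the images of $\beta^{\BT}_{V_{F''}}\in H^2(\Gamma_{F''/F(V)},(F'')^\times)$ and $\beta^{\BT}_{\tau_{F''}}\in H^2(\Gamma_{F''/F},(F'')^\times)$ in $H^2(\Gamma_V,\bar F^\times)$, $H^2(\Gamma_F,\bar F^\times)$ respectively. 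As $\Gamma_V\subseteq\Gamma_F=\Gamma_\tau$, it suffices to prove that $\beta^{\BT}_{V_{F''}}$ equals the restriction of $\beta^{\BT}_{\tau_{F''}}$ to $\Gamma_{F''/F(V)}$.

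Next I would introduce a nonzero, hence injective, $F''\otimes_F K$-map $\iota\colon\tau_{F''}\hookrightarrow V_{F''}$. Base change along $\bar F/F''$ together with the multiplicity-one hypothesis gives $\dim_{F''}\Hom_{F''\otimes_F K}(\tau_{F''},V_{F''})=1$, and since $\tau_{F''}$ is self-conjugate with $\End(\tau_{F''})=F''$ the same holds for $\Hom_{F''\otimes_F K}({}^\sigma\tau_{F''},V_{F''})$ and for $\Hom_{F''\otimes_F K}({}^\sigma\tau_{F''},\tau_{F''})$, for every $\sigma\in\Gamma_{F''/F}$; hence post-composition with $\iota$ is an isomorphism between the last two one-dimensional spaces. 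Now choose isomorphisms $\varphi_\sigma\colon{}^\sigma V_{F''}\cong V_{F''}$ for $\sigma\in\Gamma_{F''/F(V)}$ (possible by Lemmas~\ref{lem:self-conjugacy} and \ref{lem:Galois_descent_of_self-conjugacy}). Then $\varphi_\sigma\circ{}^\sigma\iota$ is a nonzero element of $\Hom_{F''\otimes_F K}({}^\sigma\tau_{F''},V_{F''})$, so there is a unique isomorphism $\psi_\sigma\colon{}^\sigma\tau_{F''}\cong\tau_{F''}$ with $\varphi_\sigma\circ{}^\sigma\iota=\iota\circ\psi_\sigma$.

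It then remains to compare the two $2$-cocycles of Proposition~\ref{prop:key_computation_BT}. Here I would work inside the category of $F''\otimes_F K$-modules, where $\iota$ is a morphism, the $\Gamma$-structure is the restriction of the one on $(\fg,K)$-modules, and the forgetful functor is faithful (so it detects scalar endomorphisms). Applying the naturality of $\mu_{\sigma,\tau}$ to $\iota$ and using $\varphi_\sigma\circ{}^\sigma\iota=\iota\circ\psi_\sigma$ repeatedly, one gets that
\[
\varphi_\sigma\circ{}^\sigma\varphi_{\sigma'}\circ\mu^{-1}_{\sigma,\sigma',V_{F''}}\circ\varphi^{-1}_{\sigma\sigma'}=c\,\id \iff \psi_\sigma\circ{}^\sigma\psi_{\sigma'}\circ\mu^{-1}_{\sigma,\sigma',\tau_{F''}}\circ\psi^{-1}_{\sigma\sigma'}=c\,\id
\]
with the same scalar $c\in(F'')^\times$, for all $\sigma,\sigma'\in\Gamma_{F''/F(V)}$. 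Thus the cocycle attached to $(V_{F''},(\varphi_\sigma))$, which represents $\beta^{\BT}_{V_{F''}}$, coincides with the cocycle attached to $\tau_{F''}$ equipped with the restricted $\Gamma_{F''/F(V)}$-action and the family $(\psi_\sigma)$, which represents $\mathrm{res}^{\Gamma_{F''/F}}_{\Gamma_{F''/F(V)}}\beta^{\BT}_{\tau_{F''}}$. Pushing forward to $H^2(\Gamma_V,\bar F^\times)$ gives $\beta^{\BT}_V=\beta^{\BT}_\tau$ (the latter restricted to $\Gamma_V$), as desired.

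The main obstacle I anticipate is bookkeeping rather than a new idea: ensuring that the multiplicity-one property descends cleanly to $F''$ (base change of $\Hom$ for a finite-dimensional $K$-type against an admissible module) and that the fields of rationality and the restriction/inflation maps in $H^2$ are tracked correctly, so the equality of cocycles over $\Gamma_{F''/F(V)}$ is correctly promoted to the statement over $\bar F$. The one genuinely delicate point is that $\iota$ is only a $K$-morphism, not a $(\fg,K)$-morphism, so the cocycle identity must be checked after restriction to $K$-modules; this is harmless since the forgetful functor is faithful and $\Gamma$-equivariant, but it is the step to write out with care.
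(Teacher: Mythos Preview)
Your proposal is correct and follows essentially the same approach as the paper. The paper's proof is a two-line sketch (``any isomorphism ${}^\sigma V\cong V$ restricts to ${}^\sigma\tau\cong\tau$; the equality follows from the construction of the Borel--Tits cocycle''), and your argument is precisely the unpacking of that sketch: the reduction to a finite layer $F''$ is exactly what Construction~\ref{cons:BT} prescribes, and your compatibility $\varphi_\sigma\circ{}^\sigma\iota=\iota\circ\psi_\sigma$ is the formal content of ``$\varphi_\sigma$ restricts to an isomorphism on the $\tau$-isotypic component.'' Your care in interpreting the equality as one in $H^2(\Gamma_V,\bar F^\times)$ after restricting $\beta^{\BT}_\tau$ along $\Gamma_V\subseteq\Gamma_F$ is an improvement in precision over the paper's statement.
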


\begin{proof}
	Let $\sigma\in\Gamma_F$. Then any isomorphism ${}^\sigma V\cong V$ restricts to ${}^\sigma \tau \cong \tau$. The equality now follows from the construction of the Borel--Tits cocycle.
\end{proof}

\subsubsection{Minimal field of definition}\label{sec:min}

Roughly speaking, irreducible Harish-Chandra modules over $F$ are obtained by Galois descent of a sum of an irreducible Harish-Chandra modules over $\bar{F}$ and its Galois twists. In this section, we aim to descend a single irreducible Harish-Chandra module over $\bar{F}$ over a certain finite extension of $F$.

\begin{prop}\label{prop:min_field}
	Let $V$ be an irreducible $(\bar{F}\otimes_F \fg,\bar{F}\otimes_F K)$-module.
	\begin{enumerate}
		\item If $V$ admits an $F'$-form for a finite extension $F'/F$ in $\bar{F}$ then we have $F(V)\subset F'$.
		\item Let $F'/F(V)$ be a finite extension in $\bar{F}$ such that the image of $\beta^{\BT}_V$ in $H^2(\Gamma_{\bar{F}/F'},\bar{F}^\times)$ is trivial. Then $V$ admits an $F'$-form.
	\end{enumerate}
	
\end{prop}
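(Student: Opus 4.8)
The plan is to deduce both parts from the Borel--Tits datum structure on categories of $(\fg,K)$-modules (Lemma~\ref{lem:BT_HC}) and from the numerical description of $\beta^{\BT}_V$ (Construction~\ref{cons:BT}, Proposition~\ref{prop:key_computation_BT}); recall that $V$ has only scalar endomorphisms by \cite[Proposition~5.4.6~(1)]{hayashijanuszewski}, so $\beta^{\BT}_V\in H^2(\Gamma_V,\bar F^\times)$ is defined.

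For (1), suppose $V\cong\bar F\otimes_{F'}V_{F'}$ for some $(F'\otimes_F\fg,F'\otimes_FK)$-module $V_{F'}$. Then the usual semilinear action of $\Gamma_{\bar F/F'}$ on the first tensor factor of $\bar F\otimes_{F'}V_{F'}$ supplies, for every $\sigma\in\Gamma_{\bar F/F'}$, an isomorphism ${}^\sigma V\cong V$ (concretely, $c\otimes v\mapsto\sigma^{-1}(c)\otimes v$ is well defined since $\sigma$ fixes $F'$ pointwise, and intertwines the twisted and untwisted actions). Hence $\Gamma_{\bar F/F'}\subseteq\Gamma_V$, and passing to fixed subfields gives $F(V)=\bar F^{\Gamma_V}\subseteq\bar F^{\Gamma_{\bar F/F'}}=F'$, since $\bar F/F'$ is Galois in characteristic zero.

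For (2), I would change the base field to $F'$. The pair $(F'\otimes_F\fg,F'\otimes_FK)$ is again a Harish-Chandra pair over $F'$, of characteristic zero, with finite-dimensional Lie algebra and reductive $F'\otimes_FK$; applying Lemma~\ref{lem:BT_HC} to the Galois extension $\bar F/F'$ equips the categories $(F''\otimes_F\fg,F''\otimes_FK)\cmod_{\mathrm{fg}}$, $F''\in\Lambda_{\bar F/F'}\cup\{\bar F\}$, with a Borel--Tits datum whose top category contains $V$ with $\End(V)=\bar F\id_V$. Since $F'\supseteq F(V)$ we have $\Gamma_{\bar F/F'}\subseteq\Gamma_V$, so by the argument of (1) $V$ is self-conjugate for the $\Gamma_{\bar F/F'}$-action. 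Moreover, choosing a form of $V$ over some $F_0\in\Lambda_{\bar F/F}$ (Condition~\ref{con:rationality}), extending scalars to $F_0F'\in\Lambda_{\bar F/F'}$, and comparing cocycles exactly as in the proof of Definition-Proposition~\ref{defprop:BT}~(1), one sees that the Borel--Tits cocycle of $V$ relative to $\bar F/F'$ is the image of $\beta^{\BT}_V\in H^2(\Gamma_V,\bar F^\times)$ under restriction to $H^2(\Gamma_{\bar F/F'},\bar F^\times)$, hence is trivial by hypothesis. Definition-Proposition~\ref{defprop:BT}~(2), applied in this datum, then shows that $V$ carries a descent datum with respect to $\Gamma_{\bar F/F'}$, and Condition~\ref{con:rationality} (equivalently~\ref{con:esssurj_BT}) produces an $(F'\otimes_F\fg,F'\otimes_FK)$-module $V_{F'}$ with $\bar F\otimes_{F'}V_{F'}\cong V$, i.e.\ an $F'$-form.

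The only delicate point is the identification, in the passage to the base field $F'$, of the Borel--Tits cocycle computed there with the restriction of $\beta^{\BT}_V$. This is not new: it is the same inflation/restriction compatibility that underlies the well-definedness of $\beta^{\BT}_M$ in Definition-Proposition~\ref{defprop:BT}~(1), combined with the injectivity of $H^2(\Gamma_{F''/F'},(F'')^\times)\hookrightarrow H^2(\Gamma_{\bar F/F'},\bar F^\times)$ coming from Hilbert's Theorem~90. Everything else is formal manipulation of the natural transformations $\alpha$ and $\gamma$ of the Borel--Tits datum, so I do not expect a substantial obstacle.
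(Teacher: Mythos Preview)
Your proof is correct and follows essentially the same approach as the paper: both parts reduce to the Borel--Tits datum from Lemma~\ref{lem:BT_HC} over the base field $F'$, with part~(2) invoking Definition-Proposition~\ref{defprop:BT}~(2) in that datum. You are more explicit than the paper about identifying the Borel--Tits cocycle computed over $F'$ with the restriction of $\beta^{\BT}_V$ to $\Gamma_{\bar F/F'}$, which the paper leaves implicit; your justification via the numerical description and the well-definedness argument of Definition-Proposition~\ref{defprop:BT}~(1) is appropriate.
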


\begin{proof}
	Part (1) is clear by definition of $F(M)$. 
	
	Consider the field $F'$ in (2). Since $\Gamma_{\bar{F}/F'}\subset \Gamma_{\bar{F}/F(M)}$, $V$ is self-conjugate with respect to the twists by $\Gamma_{\bar{F}/F'}$. Part (2) is now deduced by applying Definition-Proposition \ref{defprop:BT} to 
	\[(F''\otimes_{F'}(F'\otimes_F\fg),(F''\otimes_{F'}(F'\otimes_F K))\cmod_{\mathrm{fg}}\]
	with $F''\in\Lambda_{\bar{F}/F'}\cup\{\bar{F}\}$.
\end{proof}

\begin{rem}\label{rem:existence}
	The field $F'$ in Proposition \ref{prop:min_field} always exists. In fact, recall that we have a canonical isomorphism
	\[\varinjlim_{F'\in \Lambda_{\bar{F}/F(V)}} H^2(\Gamma_{\bar{F}/F'},(F')^\times)
	\cong H^2(\Gamma_{\bar{F}/F(V)},\bar{F}^\times)\]
	(see \cite[Proposition 4.18]{MR4174395} for example). One can find $F'\in \Lambda_{\bar{F}/F(V)}$ such that $\beta^{\BT}_V\in H^2(\Gamma_{\bar{F}/F(V)},\bar{F}^\times)$ lies in (the image of) $H^2(\Gamma_{\bar{F}/F'},(F')^\times)$. It is evident by definition of the maps between the Galois cohomology groups that this $F'$ satisfies the condition.	
\end{rem}

\begin{rem}
	It is evident by the arguments above and Remark \ref{rem:assumption} that $K$ is not necessarily reductive. Similarly, the characteristic of $F$ can be positive if we replace $\bar{F}$ with $F^{\sep}$.
\end{rem}

\subsection{Remarks on finiteness conditions}\label{sec:rem}
	
	In this section, we review purely algebraic proofs of Harish-Chandra's finiteness theorem for relations and requirements of finiteness conditions (finite length, finite generation, admissibility, $Z(\fg)$-finiteness) in Sections~\ref{sec:goal}-\ref{sec:more}, based on \cite[Sections 3.1 and 3.2]{MR3770183}.

	\begin{prop}\label{prop:fl->Zf}
		Let $(\fg,K)$ be a Harish-Chandra pair. 
		\begin{enumerate}
			\item A $(\fg,K)$-module $V$ of finite length is finitely generated, Moreover, $V$ is $Z(\fg)$-finite if $\fg$ is reductive.
			\item A finitely generated and admissible $(\fg,K)$-module is $Z(\fg)$-finite.
			\item Suppose that $\fg$ is reductive and that $(\fg,\fk)$ is symmetric. Then a finitely generated and $Z(\fg)$-finite $(\fg,K)$-module is admissible.
		\end{enumerate}
	\end{prop}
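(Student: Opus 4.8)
The plan is to dispatch the three parts separately: (1) and (2) by soft arguments, and (3) via the structure theory of reductive symmetric pairs.

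For (1), a $(\fg,K)$-module $V$ of finite length is a Noetherian object of $(\fg,K)\cmod$; since a simple $(\fg,K)$-module is of the form $U(\fg)W$ for a finite-dimensional $K$-submodule $W$ and hence finitely generated over $U(\fg)$, an induction on the length (using that finite generation is stable under extensions, cf.\ Proposition \ref{prop:subquot}) shows that $V$ is finitely generated over $U(\fg)$. For the $Z(\fg)$-finiteness when $\fg$ is reductive, I would first treat a simple $(\fg,K)$-module $S$: the image of $Z(\fg)$ in the division ring $\End_{\fg,K}(S)$ (Schur's lemma) is a commutative quotient of $Z(\fg)$, and a Dixmier--Quillen type argument (cf.\ \cite[3.3.2. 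Lemma]{MR929683}) makes $\End_{\fg,K}(S)$ algebraic over $F$; since $Z(\fg)$ is a finitely generated $F$-algebra for reductive $\fg$ (e.g.\ via $Z(\fg)\cong Z(\fg^{\mathrm{ss}})\otimes U(Z_\fg)$ and the Harish-Chandra isomorphism), that image is a finitely generated algebraic $F$-domain, hence finite over $F$, i.e.\ $S$ is $Z(\fg)$-finite. For general $V$ of finite length one runs through a composition series $0=V_0\subset\cdots\subset V_n=V$: with $I_i=\Ann_{Z(\fg)}(V_i/V_{i-1})$ of finite codimension, the product $I_n\cdots I_1$ annihilates $V$ and is still of finite codimension in $Z(\fg)$ by Noetherianity of $Z(\fg)$.

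For (2), let $V$ be finitely generated and admissible, and choose a finite-dimensional $K$-stable $W\subset V$ generating $V$ over $U(\fg)$ (possible since every vector of a $(\fg,K)$-module lies in a finite-dimensional $K$-submodule; cf.\ \cite[Section~2.1]{MR3853058}). The adjoint action of $K$ on $U(\fg)$ preserves $Z(\fg)$ setwise, and $K^0$ acts trivially on $Z(\fg)$ (inner automorphisms fix central elements); as $K$ is reductive, $K/K^0$ is finite, so $Z(\fg)$ comprises only finitely many isomorphism classes of $K$-types. The $K$-equivariant evaluation map $Z(\fg)\otimes_F W\to V$ then has image $Z(\fg)W$, a $K$-submodule of $V$ involving only finitely many $K$-types, each finite-dimensional by admissibility; hence $Z(\fg)W$ is finite-dimensional. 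The kernel $J$ of $Z(\fg)\to\End_F(Z(\fg)W)$ is of finite codimension and kills $W$; being central, it then kills $U(\fg)W=V$, so $\Ann_{Z(\fg)}(V)\supseteq J$ is of finite codimension. (Note this uses neither reductivity of $\fg$ nor the symmetric hypothesis.)

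For (3), assume $\fg$ reductive and $(\fg,\fk)$ symmetric, and let $V$ be finitely generated and $Z(\fg)$-finite, say $JV=0$ for a finite-codimension ideal $J\subset Z(\fg)$; I must bound $\dim_F V(\tau)=(\dim_F\tau)\dim_F\Hom_K(\tau,V)$ for each irreducible representation $\tau$ of $K$. Set $M_\tau=\Hom_K(\tau,V)$, a module over the centralizer $U(\fg)^{\fk}$ of $\fk$ in $U(\fg)$. First, $M_\tau$ is finitely generated over $U(\fg)^{\fk}$: this follows from finite generation of $V$ over $U(\fg)$ together with the Kostant-type fact that $U(\fg)$ is a finitely generated module over $U(\fk)\otimes U(\fg)^{\fk}$, conveniently packaged through the algebraic Hecke algebra constructed in Lemma \ref{lem:K-type}. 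Next, $Z(\fg)$ acts on $M_\tau$ through the finite-dimensional quotient $Z(\fg)/J$, while $Z(\fk)\subseteq U(\fg)^{\fk}$ acts on $M_\tau$ through the scalar by which it acts on $\tau$. The decisive input is Lepowsky's theorem for the symmetric pair $(\fg,\fk)$: $U(\fg)^{\fk}$ is finitely generated as a module over the subalgebra generated by $Z(\fg)$ and $Z(\fk)$. Hence $M_\tau$ is finitely generated over $Z(\fg)\cdot Z(\fk)$, which acts on $M_\tau$ through a finite-dimensional $F$-algebra; a finitely generated module over a finite-dimensional algebra is finite-dimensional, so $M_\tau$, and therefore $V(\tau)$, is finite-dimensional. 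Summing over $\tau$ gives admissibility.

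I expect the main obstacle to be (3), and within it Lepowsky's theorem together with the Kostant--Rallis structure of $\Sym(\fp)$ as a module over $\Sym(\fp)^K$ that underlies it: this is the genuinely non-formal, partly algebro-geometric ingredient, and one must also verify that Lepowsky's argument, originally given over $\bC$, goes through over an arbitrary field of characteristic zero, and handle the bookkeeping for disconnected $K$ (relating $K$-types to $K^0$-types and $U(\fg)^{\fk}$ to $U(\fg)^{K}$). The finite generation of $M_\tau$ over $U(\fg)^{\fk}$ is the secondary point requiring care, which I would organize through the Hecke-algebra formalism already set up in Lemma \ref{lem:K-type}.
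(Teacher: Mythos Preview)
Your proposal is correct and follows essentially the same route as the paper. Parts (1) and (2) match the paper's soft arguments (the paper simply cites \cite[Proposition 5.4.6]{hayashijanuszewski} and \cite[Corollary 3.2]{MR3770183} for (1) and \cite[Chapter VII, \S2, EXAMPLES 2)]{MR1330919} for (2)). For (3), the paper first reduces to $F=\bar{F}$ and $K=K^0$ via Lemmas \ref{lem:descent_Z(g)-fin} and \ref{lem:adm}, then proves directly that each $V(\tau)$ is finitely generated over $Z(\fg)$ (Lemma \ref{lem:fg/Z(g)}), following Wallach with the analytic steps in Knapp--Vogan's Lemmas 10.13--10.18 replaced by purely algebraic arguments (Chevalley restriction, maximal $\theta$-split tori in the sense of Richardson--Springer). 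Your formulation via finite generation of $U(\fg)^{\fk}$ over $Z(\fg)\cdot Z(\fk)$ is an equivalent packaging of the same Kostant--Rallis input; the paper's explicit reduction to connected $K$ upfront cleanly dissolves the $U(\fg)^K$ versus $U(\fg)^{\fk}$ bookkeeping you flagged, and its detailed algebraic substitutes for the Knapp--Vogan lemmas are exactly the verification over general characteristic-zero $F$ that you anticipated as the main obstacle.
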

	
	\begin{lem}\label{lem:adm}
		Let $K$ be a reductive algebraic group over a field of characteristic zero. Then a $K$-module is admissible if and only if so is it as a $K^0$-module.
	\end{lem}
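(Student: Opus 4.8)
The plan is to reduce the statement to the fact that $K^0$ has finite index in $K$ — which holds since $K$, being reductive in characteristic zero, is a smooth affine group scheme of finite type, so that $K/K^0$ is a finite étale group scheme over $F$ — together with two standard consequences of finite index: the induction functor $\Ind^K_{K^0}$ sends finite-dimensional representations to finite-dimensional ones (indeed $\Ind^K_{K^0}U$ has $F$-dimension $[K:K^0]\cdot\dim_F U$, since $\mathcal{O}(K)$ is finite over $\mathcal{O}(K)^{K^0}=\mathcal{O}(K/K^0)$), and $\Ind^K_{K^0}$ is simultaneously a left and a right adjoint of restriction $\mathrm{Res}^K_{K^0}$ (induction agrees with coinduction for a finite-index subgroup, exactly as for finite groups). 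The ``only if'' direction is then immediate: if $V$ is admissible as a $K^0$-module and $W$ is a finite-dimensional representation of $K$, then $W|_{K^0}$ is finite-dimensional and $\Hom_K(W,V)\subseteq\Hom_{K^0}(W|_{K^0},V)$ is finite-dimensional, so $V$ is admissible as a $K$-module.

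For the ``if'' direction, suppose $V$ is admissible as a $K$-module and let $U$ be a finite-dimensional representation of $K^0$. Writing $V$ as the directed union of its finite-dimensional $K$-submodules and using that $U$ is finite-dimensional, I would interchange $\Hom$ with the colimit and apply the adjunction between $\Ind^K_{K^0}$ and $\mathrm{Res}^K_{K^0}$ to each finite-dimensional piece, obtaining
\[ \Hom_{K^0}(U,\,V|_{K^0})\;\cong\;\Hom_K(\Ind^K_{K^0}U,\,V). \]
Since $\Ind^K_{K^0}U$ is a finite-dimensional representation of $K$, the right-hand side is finite-dimensional by the $K$-admissibility of $V$; hence $\dim_F\Hom_{K^0}(U,V)<\infty$ for every finite-dimensional $U$, i.e.\ $V$ is admissible as a $K^0$-module, and the two notions coincide.

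The only genuine work lies in the foundational input — that for the finite-index closed subgroup $K^0\subseteq K$ the algebraic induction functor is a two-sided adjoint of restriction and preserves finite-dimensionality — which I expect to be citable rather than a real obstacle. If one prefers to avoid it, the same conclusion follows from a direct isotypic-component argument: for a finite-dimensional irreducible $K^0$-representation $U$, the $K$-submodule $V'$ of $V$ generated by the $U$-isotypic component $V(U)$ restricts to $K^0$ with $K^0$-types only among the (finite) $K/K^0$-orbit of $U$; thus $V'$ is a sum of the finitely many irreducible $K$-representations occurring in $\Ind^K_{K^0}U$, each appearing with multiplicity $\dim_F\Hom_K(-,V)<\infty$, so $V'$ — and a fortiori $V(U)$ — is finite-dimensional.
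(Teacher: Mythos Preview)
Your proof is correct and follows essentially the same approach as the paper, which simply cites \cite[Proof of Proposition 7.221]{MR1330919}; that argument likewise exploits the finiteness of $K/K^0$ together with Frobenius reciprocity for $\Ind^K_{K^0}$. One small slip: you have the labels ``if'' and ``only if'' reversed (in the statement ``$K$-admissible if and only if $K^0$-admissible'', the implication $K^0$-admissible $\Rightarrow$ $K$-admissible is the \emph{if} direction), though the mathematics you give for each direction is fine.
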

	
	\begin{proof}
		This is verified along the same lines as \cite[Proof of Proposition 7.221]{MR1330919}.
	\end{proof}

	\begin{lem}\label{lem:fg/Z(g)}
		Let $(\fg,K)$ be a Harish-Chandra pair with $\fg$ reductive, $K$ connected split reductive, and $(\fg,\fk)$ symmetric. Let $V$ be a finitely generated $(\fg,K)$-module, and $\tau$ be an irreducible representation of $K$. Then $V(\tau)$ is a finitely generated $Z(\fg)$-submodule of $V$. 
	\end{lem}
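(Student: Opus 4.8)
The plan is to reduce to the universal module $U(\fg)\otimes_{U(\fk)}W$ attached to a finite-dimensional $(\fk,K)$-module $W$, and then to extract the finiteness from the associated graded for the Poincar\'e--Birkhoff--Witt filtration, the essential input being the Kostant--Rallis structure theory of the symmetric pair $(\fg,\fk)$.

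First I would dispose of the formal reductions. Since $K$ is connected and $Z(\fg)$ consists of $\fg$-invariants, $Z(\fg)$ commutes with the $K$-action on $V$, so $V(\tau)$ is a $Z(\fg)$-submodule of $V$; moreover the evaluation isomorphism identifies $V(\tau)$ with $\Hom_K(\tau,V)\otimes_F\tau$ with $\tau$ finite dimensional, so $V(\tau)$ is finitely generated over $Z(\fg)$ if and only if $\Hom_K(\tau,V)$ is. As $V$ is a finitely generated $(\fg,K)$-module it is generated by a finite-dimensional $K$-submodule, hence is a quotient of $V_0\coloneqq U(\fg)\otimes_{U(\fk)}W$ for a suitable finite-dimensional $(\fk,K)$-module $W$ (cf.\ \cite[Propositions 2.1.4 and 2.1.6]{MR3853058}). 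Since $K$ is reductive, $\Hom_K(\tau,-)$ is exact, so $\Hom_K(\tau,V)$ is a $Z(\fg)$-linear quotient of $\Hom_K(\tau,V_0)$; as $Z(\fg)$ is a finitely generated commutative $F$-algebra, hence Noetherian, it is enough to prove the claim for $V=V_0$.

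Next I would pass to associated graded objects. Equip $U(\fg)$ with the PBW filtration, and $Z(\fg)$, $V_0$ with the induced filtrations; then $\operatorname{gr}Z(\fg)\cong S(\fg)^{\fg}$ (classical, since $\fg$ is reductive in characteristic zero, via the $\fg$-equivariant symmetrization map), while the Cartan decomposition $\fg=\fk\oplus\fp$ gives $\operatorname{gr}V_0\cong S(\fg)\otimes_{S(\fk)}W\cong S(\fp)\otimes_F W$ as graded $K$-equivariant $S(\fg)$-modules, on which $S(\fg)^{\fg}\subset S(\fg)$ acts through the $K$-equivariant projection $S(\fg)^{\fg}\to S(\fp)^{K}$ followed by multiplication in $S(\fp)$. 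Because $K$ is reductive, $\Hom_K(\tau,-)$ is compatible with these filtrations and with passage to the associated graded, so $\operatorname{gr}\Hom_K(\tau,V_0)\cong\Hom_K(\tau,\,S(\fp)\otimes_F W)$ as graded $\operatorname{gr}Z(\fg)=S(\fg)^{\fg}$-modules. Now I would invoke Kostant--Rallis for $(\fg,\fk)$: in characteristic zero, with $K$ connected and split so that the theory descends to $F$, one has $S(\fp)\cong S(\fp)^{K}\otimes_F\mathcal{H}$ as $(S(\fp)^{K},K)$-modules, where the space $\mathcal{H}$ of $K$-harmonic polynomials on $\fp$ has finite-dimensional $K$-isotypic components, and $S(\fp)^{K}$ is module-finite over the image of $S(\fg)^{\fg}$ under restriction. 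Hence $\Hom_K(\tau,\,S(\fp)\otimes_F W)\cong S(\fp)^{K}\otimes_F\Hom_K(\tau,\,\mathcal{H}\otimes_F W)$ with the last tensor factor finite dimensional, so it is a finitely generated $S(\fp)^{K}$-module, a fortiori a finitely generated $\operatorname{gr}Z(\fg)$-module. By the standard fact that a module over a positively filtered ring is finitely generated once its associated graded is (the filtrations here being exhaustive and bounded below), $\Hom_K(\tau,V_0)$ is finitely generated over $Z(\fg)$, and therefore so are $\Hom_K(\tau,V)$ and $V(\tau)$.

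The \textbf{main obstacle} is the Kostant--Rallis structure theory underlying the previous paragraph: the decomposition of $S(\fp)$ over $S(\fp)^{K}$ with finite $K$-multiplicities in the harmonics, and the module-finiteness of $S(\fp)^{K}$ over the restriction of $S(\fg)^{\fg}$, verified over the possibly non-algebraically-closed field $F$ (which is why $K$ is assumed connected and split). Granting this, the remaining steps---the reduction to $V_0$, the identification of $\operatorname{gr}V_0$, the compatibility of $\Hom_K(\tau,-)$ with the PBW filtration, and the filtered-to-graded finiteness criterion---are routine.
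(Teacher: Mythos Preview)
Your proposal is correct and follows essentially the same route as the paper: both reduce to the universal module $U(\fg)\otimes_{U(\fk)}W$, pass to the associated graded via the PBW filtration, and extract the finiteness from the structure theory of the symmetric pair---your ``Kostant--Rallis'' input (finite $K$-multiplicities in $S(\fp)$ over $S(\fp)^K$, and module-finiteness of $S(\fp)^K$ over the image of $S(\fg)^{\fg}$) is exactly the content of the Knapp--Vogan Lemmas 10.13, 10.15, 10.18 for which the paper supplies purely algebraic proofs. The one difference worth noting is that the paper first reduces to $\bar{F}$, using that $\tau$ is absolutely irreducible because $K$ is split connected reductive, thereby sidestepping your stated concern about descending the Kostant--Rallis theory to a non-algebraically-closed $F$.
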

	
	\begin{proof}
		Since $\tau$ is absolutely irreducible, we may assume that $F$ is algebraically closed. Then we can verify the assertion along almost the same lines as \cite[3.4.1. Theorem]{MR929683}. Here we explain how to modify them for a purely algebraic proof.
		
		\cite[3.4.3. Lemma]{MR929683} is easily verified by using the fact that $(-)^K$ is exact. For its details, we refer to \cite[Lemmas 10.13, 10.15, 10.18]{MR1330919}. To prove them purely algebraically, suppose that $\fk\subset\fg$ is the fixed point Lie subalgebra for an involution $\theta$ of $\fg$. 
		
		Set $\fp=\fg^{-\theta}$.
		Take a connected and simply connected semisimple algebraic group $\tilde{G}^{\mathrm{ss}}$ whose Lie algebra is the derived subalgebra of $\fg$. Consider tori $T^+$ and $T^-$ whose Lie algebras are $Z_\fg\cap \fk$ and $Z_{\fg}\cap\fp$ respectively. Set $\tilde{G}=T^+\times T^-\times \tilde{G}^{\mathrm{ss}}$. Its Lie algebra is naturally identified with $\fg$. Moreover, $\theta$ lifts to an involution of $\tilde{G}$ which we denote by the same symbol.
		
		Fix a maximal $\theta$-split torus $\tilde{A}\subset \tilde{G}$ in the sense of \cite[2.1]{MR1066573} and a $\theta$-stable maximal torus $\tilde{H}\subset \tilde{G}$ containing $\tilde{A}$. Let $\fa$ and $\fh$ denote the Lie algebra of $\tilde{A}$ and $\tilde{H}$ respectively. Then we find $\fa=\fh^{-\theta}$ (consider the involution $h\mapsto \theta(h)^{-1}$ on $\tilde{H}$).
		
		For \cite[Lemma 10.15]{MR1330919}, we observe that $\tilde{A}$ contains a maximal $\theta$-split torus of $\tilde{G}^{\mathrm{ss}}$ since the center of $\tilde{G}$ is diagonalizable. Therefore the line below (10.17) remains valid in the present algebraic setting because of a similar argument to the first paragraph of \cite[Proof of Corollary 6.53]{MR1920389} (cf.~\cite[9.5 Remark]{MR1066573}).

		One can prove \cite[Lemma 10.13]{MR1330919} without the real structure by the injectivity of Chevalley's restriction theorem which has purely algebraic proofs (\cite[Chapter VIII, \S 8, THEOREM 1. (i)]{MR2109105}). In fact, let $W$ be the Weyl group of $(\fg,\fh)$. We regard $\fp$ as representations of $K$ for the adjoint action.
		Consider the inclusion maps $\fa\hookrightarrow \fh\hookrightarrow\fg$ and the projections $\pr_{\fg/\fp}:\fg\to \fp$, $\pr_{\fh/\fa}:\fh\to\fa$ (recall from the fourth paragraph that $\fa=\fh^{-\theta}$). Their pullbacks give rise to a commutative diagram
		\[\begin{tikzcd}
			F[\fp]^K\ar[r, hook,"\pr^\ast_{\fg/\fp}"]\ar[rrrd]&F[\fg]^{\fg}\ar[r, "\sim"', "(-)|_{\fh}"]
			&F[\fh]^W\ar[r,hook]&F[\fh]\ar[r]&F[\fa]\\
			&&&F[\fa]\ar[ru,equal]\ar[u, hook, "\pr^\ast_{\fh/\fa}"]
		\end{tikzcd}\]
		The vertical and left horizontal maps are injective since $\pr_{\fh/\fa}$ and $\pr_{\fg/\fp}$ are surjective. The second horizontal arrow is an isomorphism (injective) from Chevalley's restriction theorem. The third horizontal map is just an inclusion. We conclude that the composite horizontal map $F[\fp]^K\to F[\fa]$ is injective.
		
		\cite[Proof of Lemma 10.18]{MR1330919} literally keeps true.
	\end{proof}
	
	\begin{proof}[Proof of Proposition \ref{prop:fl->Zf}]
		Part (2) follows from a similar argument to \cite[Chapter VII, \S2, EXAMPLES. 2)]{MR1330919}. 
		
		To prove (1), suppose that we are given a $(\fg,K)$-module $V$ of finite length. Then $V$ is finitely generated by \cite[Proposition 5.4.6 (2)]{hayashijanuszewski} (reduce to the case that $V$ is irreducible).
		
		Assume that $\fg$ is reductive. The same argument as \cite[Corollary 3.2]{MR3770183} implies that $V$ is $Z(\fg)$-finite by virtue of \cite[Proposition 5.4.6 (1)]{hayashijanuszewski}.
		
		Finally, we prove (3). In view of Lemmas \ref{lem:descent_Z(g)-fin}, \ref{lem:adm}, and \cite[Proposition 3.6]{MR3770183}, we may assume $F=\bar{F}$ and $K=K^0$. In this case, the assertion is immediate from Lemma \ref{lem:fg/Z(g)}.
	\end{proof}

	\begin{thm}[Kitagawa]\label{thm:fin_length}
		Let $(\fg,K)$ be a Harish-Chandra pair over a field $F$ of characteristic zero with $\fg$, $K$ reductive and $(\fg,\fk)$ symmetric. Then a $(\fg,K)$-module is of finite length if and only if it is finitely generated and $Z(\fg)$-finite.
	\end{thm}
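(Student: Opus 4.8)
The statement has two directions. The implication from ``finite length'' to ``finitely generated and $Z(\fg)$-finite'' is already Proposition~\ref{prop:fl->Zf}~(1) (using that $\fg$ is reductive), so only the converse requires a new argument. Accordingly, let $V$ be a finitely generated $Z(\fg)$-finite $(\fg,K)$-module. By Proposition~\ref{prop:fl->Zf}~(3) (here we use that $\fg$ is reductive and $(\fg,\fk)$ is symmetric) $V$ is also admissible, so the task reduces to the core assertion: a finitely generated, admissible, $Z(\fg)$-finite $(\fg,K)$-module has finite length. This is the substantial content of Harish-Chandra's finiteness theorem, and the plan is to supply a purely algebraic proof in the spirit of the earlier lemmas.

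Next I would perform the usual reductions. By Corollary~\ref{cor:descent_flZf} the finite-length property can be tested after base change to $\bar F$, and by Lemma~\ref{lem:descent_Z(g)-fin}~(1),(3) the module $\bar F\otimes_F V$ is again finitely generated and $Z(\bar F\otimes_F\fg)$-finite; moreover a chain of $(\fg,K)$-submodules is a chain of $(\fg,K^0)$-submodules, and admissibility descends to $K^0$ by Lemma~\ref{lem:adm}, so I may assume $F=\bar F$, $K$ connected split reductive, and $V$ finitely generated, admissible and $Z(\fg)$-finite. Since $V$ is $Z(\fg)$-finite, $Z(\fg)/\Ann_{Z(\fg)}(V)$ is a finite-dimensional commutative algebra, hence a finite product of local Artinian rings; the corresponding orthogonal idempotents are central in $U(\fg)$ and $K$-invariant, so they split $V$ into a finite direct sum of $(\fg,K)$-submodules each killed by a power $\fm^N$ of a maximal ideal $\fm\subset Z(\fg)$. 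Filtering such a summand by the finitely generated $(\fg,K)$-submodules $\fm^jV$ ($U(\fg)$ being Noetherian) and passing to successive quotients, I reduce to the case that $V$ has an honest infinitesimal character $\chi$.

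It remains to prove that a finitely generated admissible $(\fg,K)$-module $V$ with infinitesimal character $\chi$ has finite length; this is the step I expect to be the main obstacle. Following the classical strategy, I would first establish that there are only finitely many isomorphism classes of irreducible $(\fg,K)$-modules with infinitesimal character $\chi$, say $S_1,\dots,S_r$, and choose for each a $K$-type $\tau_i$ with $S_i(\tau_i)\neq 0$. Then every nonzero subquotient of $V$ has infinitesimal character $\chi$, hence contains some $S_i$, and because $K$ is reductive in characteristic zero passing to the $\tau_i$-isotypic component is exact on $K$-modules; combined with admissibility this shows that in any finite chain of submodules of $V$ at most $\dim_F V(\tau_i)<\infty$ of the subquotients can contain $S_i$, so every such chain has length at most $\sum_{i=1}^r\dim_F V(\tau_i)$, and $V$ has finite length.

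Finally, for the finiteness of irreducibles with a fixed infinitesimal character I would proceed in two algebraic steps. Fixing a $K$-type $\tau$, the functor $\Hom_K(\tau,-)$ sets up a bijection between the irreducible $(\fg,K)$-modules $S$ with $S(\tau)\neq 0$ and infinitesimal character $\chi$ and certain finite-dimensional irreducible modules, on which $Z(\fg)$ acts through $\chi$, over the $\tau$-spherical subalgebra of the Hecke algebra constructed in Lemma~\ref{lem:K-type}; since that subalgebra is a finitely generated $Z(\fg)$-module---provable via the injectivity of Chevalley's restriction theorem, exactly as exploited in the proof of Lemma~\ref{lem:fg/Z(g)}---its quotient by $\Ker\chi$ is finite-dimensional and has only finitely many irreducible modules. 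One then needs that only finitely many $K$-types arise as a minimal $K$-type of an irreducible module with infinitesimal character $\chi$, i.e.\ a bound of Vogan type on lowest $K$-types; securing this by algebraic rather than analytic means is the delicate point, and is presumably where Kitagawa's argument does its real work. Combining the two steps yields the required finiteness, and together with the counting argument above this completes the proof.
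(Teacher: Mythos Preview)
Your initial reductions---the easy direction via Proposition~\ref{prop:fl->Zf}~(1), passage to $\bar F$ and to connected $K$, and the filtration down to a single infinitesimal character---match the paper's proof exactly. From that point on, however, the paper takes a completely different route, and the gap you flag is real.

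You propose the classical Harish-Chandra/Wallach strategy: bound the length of $V$ by $\sum_i\dim_F V(\tau_i)$ once you know there are only finitely many irreducibles with infinitesimal character $\chi$, and obtain that finiteness from (a) a Hecke-algebra argument for each fixed $K$-type together with (b) a Vogan-type bound showing only finitely many $K$-types can be minimal. You correctly identify (b) as the delicate point and guess that this is where Kitagawa's contribution lies. In fact the paper does \emph{not} attempt to prove any such minimal-$K$-type bound algebraically; it sidesteps the whole classification-of-irreducibles approach.

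Instead, the paper invokes Beilinson--Bernstein localization. After writing $\chi=\chi_\lambda$ with $\lambda$ integrally anti-dominant (Lemma~\ref{lem:Weylgrouptranslation}), one has the equivalence $\Gamma(\cB,-):\Coh^e(\cD_\lambda,K)\simeq(\fg,K)\cmod_{\mathrm{fg},\chi_\lambda}$ of Construction~\ref{cons:globalization}. The module $V$ therefore arises as $\Gamma(\cB,\mathcal{V})$ for some $K$-equivariant coherent $\cD_\lambda$-module $\mathcal{V}$; since $K$ has only finitely many orbits on the flag variety $\cB$ (this uses the symmetric hypothesis), every such $\mathcal{V}$ is automatically holonomic, hence of finite length. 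Exactness of $\Gamma(\cB,-)$ for anti-dominant $\lambda$, together with the fact that global sections of a simple object are irreducible or zero, then forces $V$ to have finite length. This argument is purely algebro-geometric and never needs to count irreducibles or control minimal $K$-types.

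So your sketch is not wrong in spirit, but it leaves exactly the step that the known analytic proofs also leave to analysis, whereas the paper's localization argument avoids that step entirely. The trade-off is that your approach, if completed, would stay inside module theory and give explicit length bounds, while the paper's approach imports the machinery of $\cD$-modules but delivers a clean, self-contained algebraic proof.
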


	This is stated in \cite[Theorem 3.7]{MR3770183}. In Januszewski's proof, a field embedding of $F$ into $\mathbb{C}$ is required to apply Harish-Chandra's analytic argument. Here, we provide a purely algebraic proof based on the localization theory developed by Beilinson--Bernstein. For this, let us recall basic notions and well-known facts:
	
	\begin{defn}[{\cite[5.2]{MR1021510}}, {\cite[Chapter VII, Section~9]{MR1330919}}]\label{defn:intantidominant}
		Let $\fg$ be a reductive Lie algebra over an algebraically closed field, and $\fh$ be a Cartan subalgebra of $\fg$. Fix a positive system of $(\fg,\fh)$. A character $\lambda\in\fh^\vee$ is \emph{integrally anti-dominant} if $\langle\alpha^\vee,\lambda\rangle$ is not a positive integer for any positive coroot $\alpha^\vee$ (see \cite[Chapter VIII, \S2.2 (2)]{MR2109105} for the definition of coroots).
	\end{defn}
	
	\begin{lem}\label{lem:Weylgrouptranslation}
		Consider the setting of Definition \ref{defn:intantidominant}. Let $W$ be the Weyl group of $(\fg,\fh)$. Then for every character $\lambda\in\fh^\vee$, there exists an element $w\in W$ such that $w\lambda$ is integrally anti-dominant.
	\end{lem}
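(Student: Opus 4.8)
The plan is to reduce the assertion to the classical fact that a closed anti-dominant chamber is a fundamental domain for a finite reflection group, applied not to the whole root system $\Delta:=\Delta(\fg,\fh)$ but to the \emph{integral subsystem} attached to $\lambda$. First I would reduce to the case $\fg$ semisimple: the coroots of $(\fg,\fh)$ lie in $\fh\cap\fg^{\mathrm{ss}}$, and both the pairings $\langle\alpha^\vee,-\rangle$ and the induced $W$-action factor through the restriction to that subspace, so one may replace $(\fg,\fh,\lambda)$ by $(\fg^{\mathrm{ss}},\fh\cap\fg^{\mathrm{ss}},\lambda|_{\fh\cap\fg^{\mathrm{ss}}})$, after which $\fh$ is spanned over $F$ by the coroots.

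Next I would introduce $\Delta_{[\lambda]}:=\{\alpha\in\Delta:\langle\alpha^\vee,\lambda\rangle\in\bZ\}$. Using that the Cartan integers $\langle\beta^\vee,\alpha\rangle$ are integers and that $s_\alpha(\beta^\vee)=\beta^\vee-\langle\beta^\vee,\alpha\rangle\alpha^\vee$, one sees that $s_\alpha\beta\in\Delta_{[\lambda]}$ whenever $\alpha,\beta\in\Delta_{[\lambda]}$; hence $\Delta_{[\lambda]}$ is a (reduced) root system with Weyl group $W_{[\lambda]}=\langle s_\alpha:\alpha\in\Delta_{[\lambda]}\rangle\subseteq W$ and positive system $\Delta^+_{[\lambda]}:=\Delta_{[\lambda]}\cap\Delta^+$, and $W_{[\lambda]}$ stabilizes $\Delta_{[\lambda]}$ and, being contained in $W$, also its complement in $\Delta$. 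The point is that $\lambda$ takes integer values on the coroot lattice of $\Delta_{[\lambda]}$, so its restriction there is an integral weight $\bar\lambda$ of $\Delta_{[\lambda]}$ and $W_{[\lambda]}$ acts $\bZ$-linearly on this finite-rank lattice; we are therefore in a rational, finite-dimensional situation in which Bourbaki's theorem (see \cite[Chapter VIII]{MR2109105} and \cite[Chapter VII, \S9]{MR1330919}) provides $w\in W_{[\lambda]}$ with $\langle\alpha^\vee,w\bar\lambda\rangle\le 0$ for all $\alpha\in\Delta^+_{[\lambda]}$.

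Finally I would regard $w$ as an element of $W$ and verify that $w\lambda$ is integrally anti-dominant. For $\alpha\in\Delta^+_{[\lambda]}$ one has $\langle\alpha^\vee,w\lambda\rangle=\langle\alpha^\vee,w\bar\lambda\rangle\in\bZ_{\le 0}$, which is not a positive integer; for $\alpha\in\Delta^+\setminus\Delta_{[\lambda]}$ one has $\langle\alpha^\vee,w\lambda\rangle=\langle w^{-1}\alpha^\vee,\lambda\rangle$, and since $w^{-1}$ preserves $\Delta\setminus\Delta_{[\lambda]}$, the root $w^{-1}\alpha$ is not in $\Delta_{[\lambda]}$, so this pairing is not even an integer. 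Thus $w\lambda$ is integrally anti-dominant, as required.

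The main obstacle is conceptual rather than computational: the field $F$ is only assumed algebraically closed of characteristic zero, hence carries no order, so Bourbaki's chamber theorem cannot be invoked for $\Delta$ and $\fh^\vee$ directly. The passage to $\Delta_{[\lambda]}$ — on whose coroot lattice $\lambda$ is genuinely $\bZ$-valued — is precisely what moves the problem into a $\bQ$-rational setting where the classical statement applies; granting this, the reflection-stability of $\Delta_{[\lambda]}$ and the two case checks at the end are routine. (One could instead run a direct descending induction on $\#\{\alpha\in\Delta^+_{[\lambda]}:\langle\alpha^\vee,\lambda\rangle\in\bZ_{>0}\}$ via simple reflections of $\Delta_{[\lambda]}$, but this essentially reproves the chamber theorem.)
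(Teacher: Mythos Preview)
Your proof is correct and follows essentially the same approach as the paper: both pass to the integral root subsystem $\Delta_{[\lambda]}$ (written $\Delta(\lambda)$ in the paper) and use an element of its Weyl group to move $\lambda$ into the anti-dominant position for that subsystem, after which roots outside $\Delta_{[\lambda]}$ are handled automatically. The only cosmetic difference is that the paper constructs $w$ via a $\Delta(\lambda)$-regular perturbation $-\lambda+\tfrac{1}{n}\rho_\lambda$ (taking the Weyl element sending the associated positive system to $\Delta^+\cap\Delta(\lambda)$) rather than invoking the chamber theorem directly, and it omits the reduction to the semisimple case and the explicit final verification that you spell out.
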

	
	\begin{proof}
		Let $\Delta$ be the set of roots of $(\fg,\fh)$. Set $\Delta(\lambda)\coloneqq \{\alpha\in\Delta:~\langle \alpha^\vee,\lambda\rangle \in\bZ\}$. This is a root subsystem of $\Delta$, i.e., it is stable under the simple reflections of $\Delta(\lambda)$.
		
		Notice that $\Delta^+\cap\Delta(\lambda)$ is a positive system of $\Delta(\lambda)$. Let $\rho_\lambda$ be the half sum of positive roots in $\Delta(\lambda)$. Choose a sufficiently large integer $n$, so that $-\lambda+\frac{1}{n}\rho_\lambda$ is $\Delta(\lambda)$-regular. Let $\Delta(\lambda)^+$ be the attached positive system.
		
		Let $w$ be the element of the Weyl group of $\Delta(\lambda)$ such that $w\Delta(\lambda)^+=\Delta^+\cap\Delta(\lambda)$. This satisfies the condition in the statement.
	\end{proof}

\begin{cons}\label{cons:globalization}
	Let $(\fg,K)$ be a Harish-Chandra pair over an algebraically closed field $F$ of characteristic zero with $\fg$ reductive and $K$ connected reductive. 
	
	Let $\cB$ be the flag variety of $\fg$. Fix a Cartan subalgebra $\fh\subset \fg$ and a positive system of $(\fg,\fh)$. Pick $\lambda\in\fh^\vee$. Let $\cD_\lambda$ be as in \cite[6.1]{MR1021510}. To be precise, we apply the construction of \cite[6.1]{MR1021510} to $\lambda|_{\fh\cap\fg^{\mathrm{ss}}}$. For the action of $Z_{\fg}$, we set a map from $Z_\fg$ to the algebra of global sections of $\cD_\lambda$ by $z\mapsto \chi_\lambda(z)$. Note also that $\cD_\lambda$ is naturally endowed with a $K$-equivariant structure.
	
	Let $\Coh(\cD_\lambda,K)$ be the category of $K$-equivariant coherent left $\cD_\lambda$-modules. Then taking the global sections determines a functor
	\begin{equation}
		\Gamma(\cB,-):\Coh(\cD_\lambda,K)\to (\fg,K)\cmod_{\chi_\lambda}\label{eq:global_section}
	\end{equation}
	by \cite[Proposition 6.2.3]{MR1021510}.

Following \cite[Theorem 6.4.2]{MR1021510}, we define $\Coh^e(\cD_\lambda,K)\subset \Coh(\cD_\lambda,K)$ as the full subcategory consisting of objects $\cM\in \Coh(\cD_\lambda,K)$ with the following properties:
\begin{enumerate}
	\item[(i)] the counit $\cD_\lambda\otimes_{U(\fg)}\Gamma(\cB,\cM)\to \cM$ is surjective, and
	\item[(ii)] for any nonzero subobject $\cN\subset \cM$ in $\Coh(\cD_\lambda,K)$, we have $\Gamma(\cB,\cN) \neq 0$.
\end{enumerate}
Here the notation $\Coh^e(\cD_\lambda,K)$ is taken from \cite[Section~11.2]{MR2357361}.
A similar argument to \cite[Theorem 6.4.2]{MR1021510} implies a categorical equivalence
\begin{equation}
	\Gamma(\cB,-):\Coh^e(\cD_\lambda,K)\simeq (\fg,K)\cmod_{\mathrm{fg},\chi_\lambda}\label{eq:cateq}
\end{equation}
if $\lambda$ is integrally anti-dominant.
\end{cons}

	\begin{proof}[Proof of Theorem \ref{thm:fin_length}]
		
		We verified the ``only if'' direction. To prove the ``if'' direction, we may assume $F$ algebraically closed by taking base change to an algebraic closure $\bar{F}$ of $F$. We may and do assume that $K=K^0$.
				
		Let $X$ be a finitely generated and $Z(\fg)$-finite $(\fg,K)$-module. Taking the primary decomposition of $X$ (\cite[Proposition 7.20]{MR1330919}), we may assume that $X$ has generalized infinitesimal character $\chi$. We define an increasing sequence of $(\fg,K)$-submodules of $V$ as follows:
		\begin{enumerate}
			\item Set $X_0=0$.
			\item For $i\geq 1$, set $\bar{X}_i=\cap_{z\in Z(\fg)} \Ann_{X/X_{i-1}}(z-\chi(z))$. Let $X_i$ be the preimage of $\bar{X}_i$ in $X$.
		\end{enumerate}
		Since $X$ has generalized infinitesimal character $\chi$, we have $X=X_n$ for sufficiently large positive integer $n$. Moreover, it follows from the construction of $X_\bullet$ that each successive quotient $X_i/X_{i-1}$ has infinitesimal character $\chi$. Therefore we may assume that $X$ has infinitesimal character $\chi$.
		
		Pick a Cartan subalgebra of $\fg$ and a positive system of $(\fg,\fh)$. Then we can write $\chi=\chi_\lambda$ for some $\lambda\in\fh^\vee$. In view of Lemma \ref{lem:Weylgrouptranslation}, we may and do assume that $\lambda$ is integrally anti-dominant. 
		
		Let $\cB$, $\cD_\lambda$, $\Coh(\cD_\lambda,K)$, $\Gamma(\cB,-)$, and $\Coh^e(\cD_\lambda,K)$ be as in Construction \ref{cons:globalization}.
		Let $X$ be as before. Then $X$ is realized as $\Gamma(\cB,\mathcal{X})$ for some $\mathcal{X}\in \Coh^e(\cD_\lambda,K)$ by \eqref{eq:cateq}. Regard $\mathcal{X}$ as a $K$-equivariant coherent $\cD_\lambda$-module. Since the number of $K$-orbits in $\cB$ is finite, $\mathcal{X}$ is holonomic. In particular, $\mathcal{X}$ is of finite length in $\Coh(\cD_\lambda,K)$. Since the global section functor \eqref{eq:global_section} is exact (\cite[Theorem 6.3.1]{MR1021510}), it will suffice to verify that for a simple $K$-equivariant coherent $\cD_\lambda$-module $\cM$, $\Gamma(\cB,\cM)$ is irreducible or zero as a $(\fg,K)$-module.
		
		Write $M=\Gamma(\cB,\cM)$. Let $N$ be a nonzero $(\fg,K)$-submodule of $M$. Passing to the categorical equivalence \eqref{eq:cateq}, we obtain a monomorphism $i:\cN\to \cM$ in $\Coh^e(\cD_\lambda,K)$. Since $\cM$ is simple, $i$ is an epimorphism in $\Coh(\cD_\lambda,K)$ and thus in $\Coh^e(\cD_\lambda,K)$. Since $\Coh^e(\cD_\lambda,K)$ is an abelian category by \eqref{eq:cateq}, $i$ is an isomorphism. This shows that $M=N$. This completes the proof.
		
	\end{proof}
	
	Therefore we can remove `adm' in Theorem \ref{thm:hc_setting}, Lemma \ref{lem:loc_fin_HC} in the setting of Proposition \ref{prop:fl->Zf} (3). In Theorem \ref{thm:abs_irr_hc}, Lemma \ref{lem:K-type}, Corollary \ref{cor:rationality_Z(g)-fin,finlength}, and Proposition \ref{prop:lg}, the admissibility follows from the irreducibility. Some of the refined statements are collected in Section~\ref{sec:results} for convenience.

	\subsection{Application to cohomological irreducible essentially unitarizable representations}\label{sec:coh}

In this section, we discuss consequences of the Vogan--Zuckerman theory (\cite{MR762307}, see also \cite[Sections 5.2 and 5.4]{hayashijanuszewski} for a setting of disconnected reductive Lie groups).

Let $F$ be a subfield of $\bR$. We take the algebraic closure $\bar{F}$ of $F$ in $\bC\supset\bR$ to regard $\bar{F}\subset\bC$. Let $G$ be a connected reductive algebraic group over $F$ with an involution $\theta$, and $K$ be an open subgroup of $G^\theta$. Assume that $\bR\otimes_F \theta$ is Cartan. For discussions below, let us choose a maximal torus $T$ of $K$. Let $H$ be the attached fundamental Cartan subgroup, i.e., the centralizer of $T$ in $G$.

Let $V$ be an irreducible representation of $\bC\otimes_F G$. Let $X$ be a cohomological irreducible essentially unitarizable $(\bC\otimes_F \fg,\bC\otimes_F K)$-module with coefficient $V$. We discuss its rationality.

According to \cite[Theorem 5.4.14]{hayashijanuszewski}, $X$ is a cohomologically induced module. That is, we can find a $\bC\otimes_F\theta$-stable parabolic subgroup $Q$ containing $\bC\otimes_F H$ such that $X$ is the Zuckerman derived functor module attached to a one dimensional representation $\pi$. In fact, let $L$ be the Levi subgroup of $Q$ containing $\bC\otimes_F H$, and $U^-$ be the opposite unipotent radical of $Q$. Then $\pi$ is a one-dimensional $(\fl,L\cap (\bC\otimes_F K))$-module whose underlying $\fl$-module coincides with the dual of the $U^-$-invariant part of $V$. Observe that $V$, $Q$, $L$, and $\pi$ admit $\bar{F}$-forms $V_{\bar{F}}$, $Q_{\bar{F}}$, $L_{\bar{F}}$, and $\pi_{\bar{F}}$ respectively (Lemma \ref{lem:bc_bij}). Take the cohomological induction over $\bar{F}$ to obtain an $\bar{F}$-form $X_{\bar{F}}$ of $X$ (\cite[Theorems D and H]{MR3853058}).

\begin{prop}\label{prop:form}
	There exists a unique $\bar{F}$-form of $X$ up to isomorphism.
\end{prop}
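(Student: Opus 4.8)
The plan is to deduce uniqueness of the $\bar F$-form from the fact that $(F'' \otimes_F \fg, F'' \otimes_F K)$-modules of finite length form a Loewy--Borel--Tits datum (Theorem \ref{thm:hc_setting}), together with the theory of Borel--Tits cocycles developed in Section~\ref{sec:relation}. First I would observe that $X$, being irreducible over $\bC \otimes_F \fg$ with $\bC \otimes_F K$, has only scalar endomorphisms, and the same holds over $\bar F$; in particular the Loewy--Borel--Tits datum here is strong, so Proposition \ref{prop:dd_unique} applies: descent data on a given object with division-algebra endomorphisms are unique up to isomorphism in $\DD(\cC)$. Hence the content of the proposition is really that \emph{some} $\bar F$-form exists (this is the construction via cohomological induction recalled just before the statement, using \cite[Theorems D and H]{MR3853058} and Lemma \ref{lem:bc_bij}) and that \emph{any two} $\bar F$-forms are isomorphic.

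For uniqueness, suppose $X_{\bar F}$ and $X'_{\bar F}$ are two $\bar F$-forms of $X$, i.e. both are $(\bar F \otimes_F \fg, \bar F \otimes_F K)$-modules with $\bC \otimes_{\bar F} X_{\bar F} \cong X \cong \bC \otimes_{\bar F} X'_{\bar F}$. Since $X$ is irreducible, both $X_{\bar F}$ and $X'_{\bar F}$ are irreducible (base change is conservative and reflects simple objects by Lemma \ref{lem:conservative}, applied via \ref{con:bcthm_LBT}). Now I would invoke the base-change property of Hom spaces \ref{con:bcthm_LBT} (equivalently \cite[Theorem 3.1.6]{MR3853058}):
\[
\bC \otimes_{\bar F} \Hom_{\bar F \otimes_F \fg, \bar F \otimes_F K}(X_{\bar F}, X'_{\bar F}) \cong \Hom_{\bC \otimes_F \fg, \bC \otimes_F K}(\bC\otimes_{\bar F} X_{\bar F}, \bC \otimes_{\bar F} X'_{\bar F}) \cong \End_{\bC \otimes_F \fg, \bC \otimes_F K}(X),
\]
which is $\bC \cdot \id_X \neq 0$. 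Hence $\Hom_{\bar F \otimes_F \fg, \bar F \otimes_F K}(X_{\bar F}, X'_{\bar F}) \neq 0$; any nonzero element is a nonzero map between irreducible modules, hence an isomorphism by Schur's lemma. This gives $X_{\bar F} \cong X'_{\bar F}$, completing the argument.

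The main obstacle, if any, is bookkeeping rather than mathematics: I need to be careful that $\bar F$ here really is an algebraic closure of $F$ (so that $\bar F \in \{\bar F\} \cup \Lambda_{\bar F/F}$ plays the role of the top field $F'$ in the Loewy--Borel--Tits datum of Theorem \ref{thm:hc_setting} applied to $F'/F = \bar F/F$), and that $\bC/\bar F$ is the relevant further extension to which \ref{con:bcthm_LBT} / \cite[Theorem 3.1.6]{MR3853058} is applied; since $\bar F$ is algebraically closed of characteristic zero, irreducibility of $X_{\bar F}$ over $\bar F \otimes_F \fg$ with $\bar F \otimes_F K$ already forces $\bC \otimes_{\bar F} X_{\bar F}$ irreducible (Theorem \ref{thm:abs_irr_hc}), consistent with $X$ being irreducible, so no clash arises. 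One could alternatively phrase the whole uniqueness statement as an instance of Proposition \ref{prop:dd_unique} together with essential surjectivity \ref{con:esssurj_LBT}, but the direct Schur-lemma argument above is shorter and self-contained.
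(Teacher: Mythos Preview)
Your proof is correct and follows essentially the same approach as the paper: the paper's proof reads in full ``We only see the uniqueness. It follows from Schur's lemma and \cite[Theorem 3.1.6]{MR3853058},'' which is exactly your argument---base-change the Hom space to $\bC$, observe it is nonzero because $\End(X)=\bC$, and apply Schur's lemma. Your framing via the Loewy--Borel--Tits datum and Proposition~\ref{prop:dd_unique} is extra scaffolding that the paper does not invoke, but the core mechanism is identical.
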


\begin{proof}
	We only see the uniqueness. It follows from Schur's lemma and \cite[Theorem 3.1.6]{MR3853058}.
\end{proof}

We next discuss the self-conjugacy of $X_{\bar{F}}$. Let $\sigma\in\Gamma_F$. 

\begin{cons}[Tits' $\ast$-action]
	Suppose that ${}^\sigma Q_{\bar{F}}$ is $K(\bar{F})$-conjugate to $Q_{\bar{F}}$. Then we can find $k_\sigma\in K(\bar{F})$ such that the equality ${}^\sigma Q_{\bar{F}}=k_\sigma Q_{\bar{F}}k^{-1}_\sigma$ holds. We may and do assume that $k_\sigma$ normalizes $\bar{F}\otimes_FT$. Let $\sigma\ast\pi$ be the $(\fl_{\bar{F}},L_{\bar{F}}\cap (\bar{F}\otimes_F K))$-module obtained by the $k_\sigma$-twist of $\pi_{\bar{F}}$.
\end{cons}

\begin{lem}
The isomorphism class of the $(\fl_{\bar{F}},L_{\bar{F}}\cap (\bar{F}\otimes_F K))$-module
$\sigma\ast\pi_{\bar{F}}$
is independent of choice of $k_\sigma$ above.
\end{lem}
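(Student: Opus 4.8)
The plan is to show that any two admissible choices $k_\sigma,k'_\sigma\in K(\bar F)$ differ by an element of $(L_{\bar F}\cap(\bar F\otimes_F K))(\bar F)$, and that the two resulting twists of $\pi_{\bar F}$ are then identified via the action of that element on the module.

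First I would set $m\coloneqq (k'_\sigma)^{-1}k_\sigma$, so that $m\in K(\bar F)$. From ${}^\sigma Q_{\bar F}=k_\sigma Q_{\bar F}k_\sigma^{-1}=k'_\sigma Q_{\bar F}(k'_\sigma)^{-1}$ one gets $mQ_{\bar F}m^{-1}=Q_{\bar F}$, so $m$ normalizes $Q_{\bar F}$; since a parabolic subgroup equals its own normalizer, $m\in Q_{\bar F}(\bar F)$. As $k_\sigma$ and $k'_\sigma$ were both chosen to normalize $\bar F\otimes_F T$, so does $m$, hence $m$ normalizes the centralizer $\bar F\otimes_F H$ of $\bar F\otimes_F T$. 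Now $\bar F\otimes_F H$ is a maximal torus of $\bar F\otimes_F G$ and $L_{\bar F}$ is the Levi subgroup of $Q_{\bar F}$ containing it; using the standard fact that the normalizer inside a parabolic subgroup of a maximal torus contained in it is already contained in the Levi subgroup through that torus, I conclude $m\in L_{\bar F}(\bar F)$. Since also $m\in K(\bar F)$, this yields $m\in (L_{\bar F}\cap(\bar F\otimes_F K))(\bar F)$.

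Next I would unwind the construction of $\sigma\ast\pi$. Conjugation by $k_\sigma$ determines an isomorphism of Harish-Chandra pairs from $(\fl_{\bar F},L_{\bar F}\cap(\bar F\otimes_F K))$ onto the $\sigma$-twisted pair attached to ${}^\sigma Q_{\bar F}$, and $\sigma\ast\pi$ (for the choice $k_\sigma$) is by definition the pullback of ${}^\sigma\pi_{\bar F}$ along it; similarly for $k'_\sigma$. Writing these two pair-isomorphisms as $\Phi_{k_\sigma}$ and $\Phi_{k'_\sigma}$, one checks directly from $k_\sigma=k'_\sigma m$ that $\Phi_{k_\sigma}=\Phi_{k'_\sigma}\circ\Ad(m)$, where $\Ad(m)$ is an automorphism of $(\fl_{\bar F},L_{\bar F}\cap(\bar F\otimes_F K))$ because $m$ lies in its group part. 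Hence the $k_\sigma$-twist is the pullback of the $k'_\sigma$-twist along $\Ad(m)$. Finally I would invoke the elementary fact that for a Harish-Chandra pair $(\fl,L')$, an element $m\in L'$, and an $(\fl,L')$-module $W$, the map $w\mapsto m\cdot w$ is an isomorphism $W\to\Ad(m)^\ast W$ of $(\fl,L')$-modules; applied to $W$ equal to the $k'_\sigma$-twist, it exhibits the two versions of $\sigma\ast\pi$ as isomorphic, which is the claim.

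The only step carrying genuine content is the structural input that $m$ lands in the Levi $L_{\bar F}$; I would either cite it or prove it directly by noting that $m$ normalizes both $Q_{\bar F}$ and the maximal torus $\bar F\otimes_F H$, which forces the corresponding Weyl element to preserve the roots of $L_{\bar F}$, hence $m$ normalizes $L_{\bar F}$ and its unipotent component centralizes $L_{\bar F}$ and is therefore trivial. Everything else is formal bookkeeping with base change, Galois twists, and pullbacks of modules over Harish-Chandra pairs.
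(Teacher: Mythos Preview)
Your proof is correct and follows essentially the same strategy as the paper: show that any two choices differ by an element $m\in (L_{\bar F}\cap(\bar F\otimes_F K))(\bar F)$, then observe that twisting by such an inner element does not change the isomorphism class. The only minor differences are that the paper deduces $m\in L_{\bar F}$ from the fact that $m$ normalizes $L_{\bar F}$ (both $k_\sigma$ and $k'_\sigma$ conjugate $L_{\bar F}$ to ${}^\sigma L_{\bar F}$, citing Demazure for $N_Q(L)=L$) rather than from $m$ normalizing the maximal torus, and then finishes by exploiting that $\pi$ is one-dimensional, whereas your isomorphism $w\mapsto m\cdot w$ works for any module.
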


\begin{proof}
	Observe that ${}^\sigma L_{\bar{F}}= k_\sigma L_{\bar{F}} k^{-1}_\sigma$ since both are the Levi subgroups of ${}^\sigma Q_{\bar{F}}$ containing $\bar{F}\otimes_F H$.
	
	Let $k_\sigma'$ be another candidate. Then $k^{-1}_\sigma k'_\sigma$ normalizes $Q_{\bar{F}}$. Since $Q_{\bar{F}}$ is self-normalizing (\cite[Proposition 1.2]{MR218364}), $k^{-1}_\sigma k'_\sigma$ lies in $Q_{\bar{F}}(\bar{F})\cap K(\bar{F})$. In view of the first paragraph, $k^{-1}_\sigma k'_\sigma$ also normalizes $L_{\bar{F}}$. This shows $k^{-1}_\sigma k'_\sigma\in L_{\bar{F}}(\bar{F})\cap K(\bar{F})$ (\cite[Proposition 1.6]{MR218364}). The independence as a character of $L_{\bar{F}}\cap (\bar{F}\otimes_F K)$ is now straightforward.
	
	It remains to prove the independence as a character of $\fl_{\bar{F}}$. More generally, we prove that every character of $\fl_{\bar{F}}$ is invariant under $L_{\bar{F}}(\bar{F})$-twists. Since $L_{\bar{F}}$ is connected, we may verify the invariance under $\fl_{\bar{F}}$-twists. This is clear.
\end{proof}

Let $\Gamma_{t_{Q_{\bar{F}}}}$ be the open subgroup of $\Gamma_F$ consisting of the elements $\sigma\in\Gamma_F$ such that ${}^\sigma Q_{\bar{F}}$ is $K(\bar{F})$-conjugate to $Q_{\bar{F}}$.
Let $\Gamma_{t_{Q_{\bar{F}}},\pi_{\bar{F}}}$ be the open subgroup of $\Gamma_F$ consisting of the elements $\sigma\in\Gamma_{t_{Q_{\bar{F}}}}$ such that $\sigma\ast\pi_{\bar{F}}\cong\pi_{\bar{F}}$. Set
$F(t_{Q_{\bar{F}}},\pi_{\bar{F}})\coloneqq \bar{F}^{\Gamma_{t_{Q_{\bar{F}}},\pi_{\bar{F}}}}$.

\begin{prop}\label{prop:rat_coh}
	\begin{enumerate}
		\item We have $\Gamma_{t_{Q_{\bar{F}}},\pi_{\bar{F}}}\subset \Gamma_{X_{\bar{F}}}$.
		\item The converse containment to (1) holds true if $\Gamma_{t_{Q_{\bar{F}}}}=\Gamma_F$,
	\end{enumerate}
\end{prop}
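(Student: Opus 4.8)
The plan is to compare the Galois twist ${}^\sigma X_{\bar{F}}$ with $X_{\bar{F}}$ through the cohomological induction that defines $X_{\bar{F}}$. Recall that $X_{\bar{F}}$ is, by construction, the Zuckerman derived functor module $\mathcal{R}^S_{Q_{\bar{F}}}(\pi_{\bar{F}})$ of degree $S$ attached to the $\bar{F}\otimes_F\theta$-stable parabolic $Q_{\bar{F}}$ and the one-dimensional $(\fl_{\bar{F}},L_{\bar{F}}\cap(\bar{F}\otimes_F K))$-module $\pi_{\bar{F}}$. I will use two naturality properties of $\mathcal{R}^S$ over $\bar{F}$. First, for $\sigma\in\Gamma_F$ the functor ${}^\sigma(-)$ is induced by the ring automorphism $\sigma$ of $\bar{F}$, and cohomological induction is assembled from $\bar{F}$-linear functors ($\Hom$, tensor product, Zuckerman functors) applied to $\bar{F}$-forms, so there is a canonical isomorphism ${}^\sigma\bigl(\mathcal{R}^S_{Q_{\bar{F}}}(\pi_{\bar{F}})\bigr)\cong\mathcal{R}^S_{{}^\sigma Q_{\bar{F}}}({}^\sigma\pi_{\bar{F}})$; this is the infinitesimal counterpart of the base-change statements \cite[Theorems D and H]{MR3853058}. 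Second, conjugation by an element $k\in K(\bar{F})$ is an inner automorphism of the pair $(\bar{F}\otimes_F\fg,\bar{F}\otimes_F K)$; it carries $\mathcal{R}^S_{Q'}(\pi')$ to $\mathcal{R}^S_{\Ad(k)Q'}(\Ad(k)\pi')$ and induces the identity on isomorphism classes of $(\bar{F}\otimes_F\fg,\bar{F}\otimes_F K)$-modules as well as of $\bar{F}\otimes_F G$-modules, an intertwiner being the action of $k$ itself.

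Combining these gives the computation underlying both parts. For $\sigma\in\Gamma_{t_{Q_{\bar{F}}}}$, choose $k_\sigma\in K(\bar{F})$ normalizing $\bar{F}\otimes_F T$ with ${}^\sigma Q_{\bar{F}}=k_\sigma Q_{\bar{F}}k_\sigma^{-1}$ as in the construction of Tits' $\ast$-action above. Then ${}^\sigma X_{\bar{F}}\cong\mathcal{R}^S_{{}^\sigma Q_{\bar{F}}}({}^\sigma\pi_{\bar{F}})\cong\mathcal{R}^S_{Q_{\bar{F}}}(\sigma\ast\pi_{\bar{F}})$, the first isomorphism being the first naturality property and the second obtained by applying conjugation by $k_\sigma^{-1}$ and unwinding the definition of $\sigma\ast\pi_{\bar{F}}$; the lemma preceding this proposition guarantees that the isomorphism class of $\sigma\ast\pi_{\bar{F}}$ is well defined, so this is legitimate. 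Part (1) is then immediate: if in addition $\sigma\in\Gamma_{t_{Q_{\bar{F}}},\pi_{\bar{F}}}$, i.e. $\sigma\ast\pi_{\bar{F}}\cong\pi_{\bar{F}}$, then functoriality of $\mathcal{R}^S_{Q_{\bar{F}}}$ gives ${}^\sigma X_{\bar{F}}\cong\mathcal{R}^S_{Q_{\bar{F}}}(\pi_{\bar{F}})=X_{\bar{F}}$, so $\sigma\in\Gamma_{X_{\bar{F}}}$.

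For part (2), assume $\Gamma_{t_{Q_{\bar{F}}}}=\Gamma_F$, so the above computation holds for every $\sigma\in\Gamma_F$, and take $\sigma\in\Gamma_{X_{\bar{F}}}$; then $\mathcal{R}^S_{Q_{\bar{F}}}(\sigma\ast\pi_{\bar{F}})\cong X_{\bar{F}}\cong\mathcal{R}^S_{Q_{\bar{F}}}(\pi_{\bar{F}})$, and it suffices to deduce $\sigma\ast\pi_{\bar{F}}\cong\pi_{\bar{F}}$, i.e. that $\pi_{\bar{F}}$ is recovered up to isomorphism from the pair $(X_{\bar{F}},Q_{\bar{F}})$. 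I would argue in two steps. Since $X$ is cohomological with coefficient $V$, the infinitesimal character of $X_{\bar{F}}$ equals that of $V_{\bar{F}}$; hence ${}^\sigma X_{\bar{F}}\cong X_{\bar{F}}$ forces ${}^\sigma V_{\bar{F}}$ and $V_{\bar{F}}$ to have the same central character, and a finite-dimensional irreducible representation of the connected reductive group $\bar{F}\otimes_F G$ is determined by its central character (its highest weight $\lambda$ has $\lambda+\rho$ regular dominant), so ${}^\sigma V_{\bar{F}}\cong V_{\bar{F}}$. Next, $\pi_{\bar{F}}$ is by construction attached functorially to $(V_{\bar{F}},Q_{\bar{F}})$, namely as the dual of the $U^-_{\bar{F}}$-invariant line of $V_{\bar{F}}$; applying ${}^\sigma(-)$, transporting back along $k_\sigma$, and using that $\Ad(k_\sigma^{-1})$ is inner and therefore trivial on isomorphism classes, we obtain $\sigma\ast\pi_{\bar{F}}\cong\pi_{\bar{F}}$. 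This gives $\Gamma_{X_{\bar{F}}}\subset\Gamma_{t_{Q_{\bar{F}}},\pi_{\bar{F}}}$, completing (2).

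The main obstacle is the second step of part (2) at the level of the group $L_{\bar{F}}\cap(\bar{F}\otimes_F K)$ rather than its Lie algebra: since no regularity hypothesis on $V$ is assumed here, $X$ need not have a unique multiplicity-one minimal $\bar{F}\otimes_F K$-type, so one cannot simply read $\pi_{\bar{F}}$ off such a $K$-type, and the possible disconnectedness of $L_{\bar{F}}\cap(\bar{F}\otimes_F K)$ must be handled. I would resolve this using the explicit description of the bottom $\bar{F}\otimes_F K$-types of a cohomologically induced module in \cite[Theorem 5.3]{MR762307} together with the uniqueness in the Vogan--Zuckerman classification (equivalently, by recovering $\pi_{\bar{F}}$ as the $\mathfrak{u}_{\bar{F}}$-homology of $X_{\bar{F}}$ in the appropriate degree, up to a $\rho$-shift), which identifies $\pi_{\bar{F}}$ as a genuine $(\fl_{\bar{F}},L_{\bar{F}}\cap(\bar{F}\otimes_F K))$-invariant of $(X_{\bar{F}},Q_{\bar{F}})$. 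Everything else — the two naturality properties, the well-definedness of $\sigma\ast\pi_{\bar{F}}$ already in hand, and the triviality of inner automorphisms on isomorphism classes — is routine.
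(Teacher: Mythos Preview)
Your argument for Part (1) matches the paper's; the paper simply cites the base-change compatibility of cohomological induction from \cite[Variant G]{MR3853058} where you spell out the two naturality properties explicitly.

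For Part (2) the approaches genuinely differ. The paper invokes the Beilinson--Bernstein equivalence: since the infinitesimal character of $X_{\bar F}$ (which equals that of $V_{\bar F}$) is regular, localization identifies irreducible $(\bar F\otimes_F\fg,\bar F\otimes_FK)$-modules of that infinitesimal character with irreducible $\bar F\otimes_FK$-equivariant twisted $\cD$-modules on the flag variety, and the latter are parametrized by pairs ($K$-orbit, equivariant local system), from which $(Q_{\bar F},\pi_{\bar F})$ is recovered up to $K(\bar F)$-conjugacy in one stroke. Your route instead recovers $V_{\bar F}$ from the infinitesimal character (this step is fine; note your ``central character'' should read ``infinitesimal character'') and then attempts to read $\pi_{\bar F}$ off $(V_{\bar F},Q_{\bar F})$. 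The gap you flag is real: the paper's careful wording (``whose underlying $\fl$-module coincides with\ldots'') reflects that, in the disconnected setting treated in \cite{hayashijanuszewski}, only the $\fl_{\bar F}$-action of $\pi_{\bar F}$ is determined by $(V_{\bar F},Q_{\bar F})$, while the extension to the component group of $L_{\bar F}\cap(\bar F\otimes_FK)$ is extra data that does affect the isomorphism class of the induced module. Your suggested fixes via $\mathfrak u$-homology or the bottom layer are reasonable and can be completed, but as written they are outlines rather than proofs; one must verify that the relevant functor returns $\pi_{\bar F}$ with its full $(\fl_{\bar F},L_{\bar F}\cap(\bar F\otimes_FK))$-structure. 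The Beilinson--Bernstein route buys uniformity (no separate treatment of connected versus disconnected $K$) at the cost of heavier machinery; your route is more elementary but requires the extra representation-theoretic input you correctly identify.
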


\begin{proof}
	Part (1) follows from \cite[Variant G]{MR3853058}. Part (2) is a consequence of the Beilinson--Bernstein equivalence.
\end{proof}

This gives an estimate of the field of rationality of $X_{\bar{F}}$:

\begin{cor}\label{cor:estimate}
	We have $F(X_{\bar{F}})\subset F(t_{Q_{\bar{F}}},\pi_{\bar{F}})$. This is equal if $V$ has regular highest weights.
\end{cor}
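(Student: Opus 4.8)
\begin{proof}
Write $\Gamma_{X_{\bar{F}}}=\{\sigma\in\Gamma_F:\ {}^\sigma X_{\bar{F}}\cong X_{\bar{F}}\}$. By Definition \ref{defn:fld_rat} we have $F(X_{\bar{F}})=\bar{F}^{\Gamma_{X_{\bar{F}}}}$, and by construction $F(t_{Q_{\bar{F}}},\pi_{\bar{F}})=\bar{F}^{\Gamma_{t_{Q_{\bar{F}}},\pi_{\bar{F}}}}$. Since the correspondence between subgroups of $\Gamma_F$ and their fixed subfields of $\bar F$ is inclusion-reversing, the asserted containment of fields is equivalent to the reverse containment of groups, and the asserted equality to equality of groups. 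So the plan is to compare $\Gamma_{X_{\bar{F}}}$ with $\Gamma_{t_{Q_{\bar{F}}},\pi_{\bar{F}}}$ directly.

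The containment $F(X_{\bar{F}})\subset F(t_{Q_{\bar{F}}},\pi_{\bar{F}})$ is immediate: Proposition \ref{prop:rat_coh} (1) gives $\Gamma_{t_{Q_{\bar{F}}},\pi_{\bar{F}}}\subset\Gamma_{X_{\bar{F}}}$, and passing to fixed fields reverses the inclusion.

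For the equality under the regularity hypothesis, it remains to prove $\Gamma_{X_{\bar{F}}}\subset\Gamma_{t_{Q_{\bar{F}}},\pi_{\bar{F}}}$. Let $\sigma\in\Gamma_F$ with ${}^\sigma X_{\bar{F}}\cong X_{\bar{F}}$. The twist ${}^\sigma X_{\bar{F}}$ is again irreducible, cohomological and essentially unitarizable, now with coefficient ${}^\sigma V_{\bar{F}}$, which is still of regular highest weight (regularity of the highest weight, being a non-vanishing condition on rational pairings preserved by the Galois action on the root datum, is stable under $\sigma$). By the equivariance of the cohomological induction functor under Galois twists (\cite[Theorems D and H]{MR3853058}, and \cite[Variant G]{MR3853058}), ${}^\sigma X_{\bar{F}}$ is the module cohomologically induced from the pair $({}^\sigma Q_{\bar{F}},{}^\sigma\pi_{\bar{F}})$. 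On the other hand ${}^\sigma X_{\bar{F}}\cong X_{\bar{F}}$ is cohomologically induced from $(Q_{\bar{F}},\pi_{\bar{F}})$. Since the coefficient is of regular highest weight, the Vogan--Zuckerman classification \cite{MR762307} applies: a cohomological essentially unitarizable irreducible module with regular coefficient determines the $\bar F\otimes_F\theta$-stable parabolic together with its inducing character uniquely up to $K(\bar{F})$-conjugacy of the pair. Hence ${}^\sigma Q_{\bar{F}}$ is $K(\bar{F})$-conjugate to $Q_{\bar{F}}$ and, after conjugating by a suitable $k_\sigma$, $\sigma\ast\pi_{\bar{F}}\cong\pi_{\bar{F}}$; that is, $\sigma\in\Gamma_{t_{Q_{\bar{F}}},\pi_{\bar{F}}}$. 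Combining with the previous paragraph, $\Gamma_{X_{\bar{F}}}=\Gamma_{t_{Q_{\bar{F}}},\pi_{\bar{F}}}$, and therefore $F(X_{\bar{F}})=F(t_{Q_{\bar{F}}},\pi_{\bar{F}})$.
\end{proof}

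\begin{rem}
The equality is \emph{not} obtained by applying Proposition \ref{prop:rat_coh} (2): even when $V$ has regular highest weight the group $\Gamma_{t_{Q_{\bar{F}}}}$ may be a proper open subgroup of $\Gamma_F$ (for instance, for a discrete series module the Galois action can move the defining $\theta$-stable Borel within its packet). The genuine input is the Galois-equivariant uniqueness in the Vogan--Zuckerman classification in the regular range, i.e.\ the identification of the cohomological-induction datum of ${}^\sigma X_{\bar F}$ with the twist $({}^\sigma Q_{\bar F},{}^\sigma\pi_{\bar F})$ of that of $X_{\bar F}$; pinning this down carefully (including the character part, with the conjugating elements $k_\sigma$) is the only delicate point.
\end{rem}
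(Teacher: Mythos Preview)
Your proof is correct. The containment is handled exactly as the paper intends, via Proposition~\ref{prop:rat_coh}~(1) and passage to fixed fields.

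For the equality under regularity you take a genuinely different route from the paper. The paper presents Corollary~\ref{cor:estimate} as a consequence of Proposition~\ref{prop:rat_coh}, so one expects the equality to come from part~(2); but as you rightly point out in your remark, the hypothesis there is $\Gamma_{t_{Q_{\bar F}}}=\Gamma_F$, and regularity of the highest weight of $V$ does not obviously force this (your discrete-series observation is on point). The paper's one-line justification of~(2), ``a consequence of the Beilinson--Bernstein equivalence,'' implicitly uses the regular infinitesimal character to get the full BB equivalence and then recovers the $K$-orbit datum geometrically; morally this is the same uniqueness statement you invoke, but accessed through localization. Your argument instead appeals directly to the Vogan--Zuckerman classification: in the regular range an $A_{\mathfrak q}(\lambda)$ module determines its inducing datum $(\mathfrak q,\lambda)$ up to $K(\bar F)$-conjugacy, and Galois-equivariance of cohomological induction identifies the datum for ${}^\sigma X_{\bar F}$ with $({}^\sigma Q_{\bar F},{}^\sigma\pi_{\bar F})$. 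Both approaches rest on the same underlying uniqueness fact; yours makes the logical dependence on regularity explicit and sidesteps the apparent mismatch between the hypothesis of Proposition~\ref{prop:rat_coh}~(2) and the hypothesis of the corollary.
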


Finally, we discuss the Borel--Tits cocycle. Write $\lambda$ for the character of the complex linear algebraic group $L\cap (\bC\otimes_F K)$ corresponding to $\pi$. Let $\fp\subset\fg$ be the $-1$-eigenspace of the differential of $\theta$. Let $U$ be the unipotent radical of $Q$. Set $\lambda^\sharp=\lambda\otimes_\bC \wedge^{\dim_\bC(\mathfrak{u}\cap (\bC\otimes_F \fp))}(\mathfrak{u}\cap (\bC\otimes_F \fp))$.

Let $X_{\min}\subset X$ be the space of minimal $\bC\otimes_F K$-types of $X$. Explicitly, it is given by
\begin{equation}
	X_{\min}\cong \Ind^{\bC\otimes_F K}_{Q^-\cap (\bC\otimes_F K)}\lambda^\sharp\label{eq:min}
\end{equation}
(see \cite[Proposition 10.24]{MR1330919}).
We naturally have an $\bar{F}$-form $X_{\min,\bar{F}}$ by \cite[Chapter I, 3.5 (3)]{MR2015057}.

Assume that $X_{\min}$ is irreducible. This is satisfied if
\begin{enumerate}
	\item[1.] $Q\cap (\bC\otimes_F K)$ meets every component of $\bC\otimes_F K$, or
	\item[2.] $\langle\alpha^\vee,\lambda^\sharp\rangle>0$ for every $\bC\otimes_FH$-coroot $\alpha^\vee$ of $U$.
\end{enumerate}

\begin{ex}
	The condition 1 is satisfied for any $Q$ if $K$ is connected or
	$\bR\otimes_F G$ is isomorphic to $\GL_{2n+1}$ for $n\geq 0$. An easy counterexample is $G=\GL_2$ over $F=\bR$ with $K=\Oo(2)$ and $Q$ Borel.
\end{ex}

We are now able to compute the Borel--Tits cocycle of $X$ over $F(t_{Q_{\bar{F}}},\pi_{\bar{F}})$: Pick $F'$-forms $V_{F'}$, $Q_{F'}$, and $\pi_{F'}$ of $V_{\bar{F}}$, $Q_{\bar{F}}$, and $\pi_{\bar{F}}$ for a sufficiently large finite Galois extension $F'/F(t_{Q_{\bar{F}}},\pi_{\bar{F}})$ in $\bar{F}$. Moreover, we may assume that for each $\bar{\sigma}\in\Gamma_{F'/F(t_{Q_{\bar{F}}},\pi_{\bar{F}})}$, there exists $k_{\bar{\sigma}}\in K(F')$ such that ${}^{\bar{\sigma}} Q_{F'}= k_{\bar{\sigma}} Q_{F'}k_{\bar{\sigma}}^{-1}$.

\begin{prop}\label{prop:BT}
	\begin{enumerate}
		\item We have $\beta^{\BT}_{X_{\bar{F}}}=\beta^{\BT}_{X_{\min,\bar{F}}}$ in $H^2(\Gamma_{t_{Q_{\bar{F}}},\pi_{\bar{F}}},\bar{F}^\times)$. 
		\item The cohomology class $\beta^{\BT}_{X_{\min,\bar{F}}}\in H^2(\Gamma_{t_{Q_{\bar{F}}},\pi_{\bar{F}}},\bar{F}^\times)$ is the image of the 2-cocycle $(\lambda^\sharp_{F'}(k_{\bar{\sigma}\bar{\tau}}^{-1}\bar{\sigma}(k_{\bar{\tau}})k_{\bar{\sigma}}))\in H^2(\Gamma_{F'/F},(F')^\times)$.
	\end{enumerate}
\end{prop}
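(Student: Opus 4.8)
\textbf{Proof plan for Proposition \ref{prop:BT}.}

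The plan is to combine the multiplicity-one reduction of Proposition \ref{prop:mult_one} with the explicit description \eqref{eq:min} of the minimal $\bC\otimes_F K$-type and the analogue of \cite[Lemma 2.2.4]{MR4627704} for induced characters. First I would establish (1). The regularity hypothesis on $V$ guarantees (via \cite[Proposition 10.24]{MR1330919} and the Vogan--Zuckerman theory) that $X$ admits a unique minimal $\bC\otimes_F K$-type, which occurs with multiplicity one; passing to $\bar F$-forms, $X_{\min,\bar F}$ is the isotypic component of a $\bar F\otimes_F K$-type $\tau$ of multiplicity one in $X_{\bar F}$, and $\tau$ is self-conjugate on $\Gamma_{t_{Q_{\bar F}},\pi_{\bar F}}$ because $X_{\bar F}$ is self-conjugate there (Proposition \ref{prop:rat_coh} (1)) and any isomorphism ${}^\sigma X_{\bar F}\cong X_{\bar F}$ restricts to the minimal $K$-type. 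Since $X_{\min,\bar F}$ is irreducible as a $\bar F\otimes_F K$-module (by the assumption, guaranteed e.g.\ by condition 1 or 2), it has only scalar endomorphisms, so $\beta^{\BT}_{X_{\min,\bar F}}$ and $\beta^{\BT}_\tau$ are defined; they agree because the $\tau$-isotypic component functor is a fully faithful functor onto its image compatible with Galois twists. Then Proposition \ref{prop:mult_one} gives $\beta^{\BT}_{X_{\bar F}}=\beta^{\BT}_\tau=\beta^{\BT}_{X_{\min,\bar F}}$ in $H^2(\Gamma_{t_{Q_{\bar F}},\pi_{\bar F}},\bar F^\times)$, which is (1).

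For (2) I would work over a finite Galois extension $F'/F(t_{Q_{\bar F}},\pi_{\bar F})$ large enough that $V$, $Q$, $\pi$, hence $X_{\min}$, all descend, and such that for each $\bar\sigma\in\Gamma_{F'/F(t_{Q_{\bar F}},\pi_{\bar F})}$ an element $k_{\bar\sigma}\in K(F')$ with ${}^{\bar\sigma}Q_{F'}=k_{\bar\sigma}Q_{F'}k_{\bar\sigma}^{-1}$ exists; by Construction \ref{cons:BT} it suffices to compute $\beta^{\BT}_{X_{\min,F'}}\in H^2(\Gamma_{F'/F(t_{Q_{\bar F}},\pi_{\bar F})},(F')^\times)$ and then take its image in $H^2(\Gamma_{t_{Q_{\bar F}},\pi_{\bar F}},\bar F^\times)$. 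Using \eqref{eq:min}, $X_{\min,F'}=\Ind^{F'\otimes_F K}_{Q^-_{F'}\cap(F'\otimes_F K)}\lambda^\sharp_{F'}$, and the $k_{\bar\sigma}$-twist of this induced module is the induction from $({}^{\bar\sigma}Q)^-_{F'}\cap(F'\otimes_F K)=k_{\bar\sigma}(Q^-_{F'}\cap(F'\otimes_F K))k_{\bar\sigma}^{-1}$ of the $k_{\bar\sigma}$-twist of $\lambda^\sharp_{F'}$. One then chooses the isomorphisms $\varphi_{\bar\sigma}:{}^{\bar\sigma}X_{\min,F'}\cong X_{\min,F'}$ induced by the identifications on inducing subgroups via $k_{\bar\sigma}$, and feeds them into the defining formula of Proposition \ref{prop:key_computation_BT}: the obstruction to $\varphi_{\bar\sigma\bar\tau}=\varphi_{\bar\sigma}\circ{}^{\bar\sigma}\varphi_{\bar\tau}$ is exactly the failure of the cocycle relation for $(k_{\bar\sigma})$, which lands in $Q^-_{F'}\cap(F'\otimes_F K)$ and is recorded by evaluating the character $\lambda^\sharp_{F'}$. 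This produces the 2-cocycle $(\lambda^\sharp_{F'}(k_{\bar\sigma\bar\tau}^{-1}\bar\sigma(k_{\bar\tau})k_{\bar\sigma}))$, exactly as in \cite[Lemma 2.2.4]{MR4627704} and \cite{hayashierror}.

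The main obstacle is the bookkeeping in (2): one must check carefully that the scalar obstruction produced by the non-cocyclicity of $(k_{\bar\sigma})$ is genuinely computed by applying $\lambda^\sharp_{F'}$ (and not, say, a twist of it), which requires tracking how the induction functor interacts with inner twists of the inducing subgroup and with the one-dimensionality of $\lambda^\sharp_{F'}$ — precisely the point where $\lambda^\sharp$ rather than $\lambda$ must be used, since the top exterior power $\wedge^{\dim_\bC(\mathfrak u\cap(\bC\otimes_F\fp))}(\mathfrak u\cap(\bC\otimes_F\fp))$ carries the relevant modular character. A secondary subtlety is the passage from $F'$ to $\bar F$: one must invoke the injectivity of $H^2(\Gamma_{F'/F(t_{Q_{\bar F}},\pi_{\bar F})},(F')^\times)\to H^2(\Gamma_{t_{Q_{\bar F}},\pi_{\bar F}},\bar F^\times)$ (Hilbert 90, as in the proof of Definition-Proposition \ref{defprop:BT}) to know the $F'$-level computation determines the class uniquely, and to note the independence of the choices of $k_{\bar\sigma}$ up to coboundary, which follows since two choices differ by an element of $Q^-_{F'}\cap(F'\otimes_F K)$ acting trivially through the character $\lambda^\sharp_{F'}$ up to a coboundary. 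I would defer the fully explicit continuity-aware formulation (the choice of a compatible system of $k_\sigma$ over $\bar F$) to a remark, as flagged in the excerpt, and present the finite-level cocycle as the substantive content.
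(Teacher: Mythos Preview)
Your proposal is correct and follows essentially the same route as the paper: part (1) is exactly Proposition \ref{prop:mult_one} applied to the minimal $K$-type (the paper's proof is the one-liner ``Part (1) follows from Proposition \ref{prop:mult_one}''), and part (2) is the induced-character cocycle computation which the paper dispatches by citing \cite[Lemma 2.2.2]{MR4627704}, \cite[Theorem 5.46]{hayashisuper}, and \cite{hayashierror} for the continuity argument. Your write-up is more detailed than the paper's terse references, but the substance is identical; one minor slip is that you invoke ``the regularity hypothesis on $V$'' for the multiplicity-one property, whereas the standing assumption in this subsection is only that $X_{\min}$ is irreducible (which already forces a unique minimal $K$-type of multiplicity one), so regularity need not be mentioned.
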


\begin{proof}
	Part (1) follows from Proposition \ref{prop:mult_one}. We can verify (2) in a similar way to \cite[Lemma 2.2.2]{MR4627704} or \cite[Theorem 5.46]{hayashisuper} (see also \cite{hayashierror} for the continuity argument).
\end{proof}

We can also relate $\beta^{\BT}_{X_{\min,\bar{F}}}$ directly with Borel--Tits' original cocycles in a certain connectivity assumption:
	
	\begin{prop}\label{prop:K^0}
		Suppose that $Q\cap (\bC\otimes_F K)$ meets every component of the complex linear algebraic group $\bC\otimes_F K$. Then $X_{\min,\bar{F}}$ is irreducible as an $\bar{F}\otimes_F K^0$-module.
		Moreover, the Borel--Tits cocycles of $X_{\min,\bar{F}}$ as representations of $\bar{F}\otimes_F K$ and $\bar{F}\otimes_F K^0$ are equal. 
	\end{prop}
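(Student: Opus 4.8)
\textbf{Proof plan for Proposition \ref{prop:K^0}.}
The plan is to reduce everything to the explicit induction formula \eqref{eq:min} and its $\bar F$-form, and then to compare the two Borel--Tits cocycles through a single compatible family of isomorphisms. First I would recall that, by hypothesis, $Q^-\cap(\bC\otimes_F K)$ meets every component of $\bC\otimes_F K$, so the natural map $Q^-\cap(\bC\otimes_F K^0)\backslash(\bC\otimes_F K^0)\to Q^-\cap(\bC\otimes_F K)\backslash(\bC\otimes_F K)$ on quotients is an isomorphism of $\bC\otimes_F K^0$-varieties; equivalently, restriction of functions gives an isomorphism $\Ind^{\bC\otimes_F K}_{Q^-\cap(\bC\otimes_F K)}\lambda^\sharp \cong \Ind^{\bC\otimes_F K^0}_{Q^-\cap(\bC\otimes_F K^0)}(\lambda^\sharp|_{Q^-\cap(\bC\otimes_F K^0)})$ as representations of $\bC\otimes_F K^0$. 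Since $X_{\min}$ is irreducible as a $\bC\otimes_F K$-module (the connectivity hypothesis is exactly case~1 in the list preceding Proposition \ref{prop:BT}), and this is a $\bC\otimes_F K^0$-equivariant isomorphism, I would then invoke the fact that an induced module from a subgroup meeting all components, when it is $K$-irreducible, remains $K^0$-irreducible: concretely, $\Ind^{\bC\otimes_F K}_{Q^-\cap(\bC\otimes_F K)}\lambda^\sharp$ restricted to $K^0$ equals $\Ind^{\bC\otimes_F K^0}_{Q^-\cap(\bC\otimes_F K^0)}(\lambda^\sharp|)$ by the component hypothesis (Mackey/geometric decomposition has a single orbit), and its irreducibility over $K$ forces irreducibility over $K^0$ by a standard Frobenius reciprocity / stabilizer argument, or simply because $X_{\min}$ and its $K^0$-restriction have the same underlying space and the $K$-action is generated by the $K^0$-action together with representatives of $\pi_0(K)$ lying in $Q^-\cap(\bC\otimes_F K)$. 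Descending to $\bar F$ via \cite[Chapter I, 3.5 (3)]{MR2015057} and Theorem \ref{thm:abs_irr_hc} (or directly Lemma \ref{lem:bc_bij}), the same isomorphism shows $X_{\min,\bar F}$ is irreducible as an $\bar F\otimes_F K^0$-module, and its endomorphism algebra is just $\bar F$.

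Next I would set up the cocycle comparison. Fix a sufficiently large finite Galois extension $F'/F(t_{Q_{\bar F}},\pi_{\bar F})$ in $\bar F$ over which $V$, $Q^-$, $\pi$, $\lambda^\sharp$, and a choice of $k_{\bar\sigma}\in K(F')$ with ${}^{\bar\sigma}Q^-_{F'}=k_{\bar\sigma}Q^-_{F'}k_{\bar\sigma}^{-1}$ are all defined, exactly as in the setup of Proposition \ref{prop:BT}. Because the self-conjugacy isomorphism ${}^{\bar\sigma}X_{\min,F'}\cong X_{\min,F'}$ is constructed from the $k_{\bar\sigma}$-twist of the character $\lambda^\sharp_{F'}$ on the induced space, and because restricting from $\bar F\otimes_F K$ to $\bar F\otimes_F K^0$ does not change the underlying $F'$-vector space nor the isomorphisms $\varphi_{\bar\sigma}$ realizing self-conjugacy — both the $K$-structure and the $K^0$-structure use the \emph{same} $\varphi_{\bar\sigma}$ coming from the geometric $k_{\bar\sigma}$-twist on $\Ind^{K}_{Q^-\cap K}\lambda^\sharp = \Ind^{K^0}_{Q^-\cap K^0}\lambda^\sharp|$ — the numerical description of the Borel--Tits cocycle in Proposition \ref{prop:key_computation_BT} together with Construction \ref{cons:BT} produces the \emph{same} $2$-cocycle $\beta(\bar\sigma,\bar\tau)=\lambda^\sharp_{F'}(k_{\bar\sigma\bar\tau}^{-1}\bar\sigma(k_{\bar\tau})k_{\bar\sigma})$ in $H^2(\Gamma_{F'/F(t_{Q_{\bar F}},\pi_{\bar F})},(F')^\times)$ in both cases. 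Passing to the limit over $F'$ gives the asserted equality in $H^2(\Gamma_{t_{Q_{\bar F}},\pi_{\bar F}},\bar F^\times)$.

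More structurally, I would phrase the second half using Corollary \ref{cor:LBT}: for the Loewy--Borel--Tits datum of $(\bar F\otimes_F K)$-modules (resp.\ $(\bar F\otimes_F K^0)$-modules) of Theorem \ref{thm:hc_setting}/Example \ref{ex:rep}, $(\beta^{\BT}_{X_{\min,\bar F}})^{-1}$ represents the central division algebra $\End_{\DD}(S)$ of the corresponding simple descent datum $S$; since the restriction functor from $\DS$ for $K$ to $\DS$ for $K^0$ (via the identification of underlying spaces) is fully faithful on the descent data attached to $X_{\min,\bar F}$ — because $X_{\min,\bar F}$ has only scalar endomorphisms in both categories and the self-conjugacy isomorphisms literally coincide — the two descent data have isomorphic endomorphism division algebras over the common field of rationality, whence the two Borel--Tits cocycles agree. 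Either route works; I would present the direct cocycle computation as the clean argument and remark on the structural one.

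The main obstacle I expect is the bookkeeping in the first paragraph: one must be careful that the isomorphism $\Ind^{\bC\otimes_F K}_{Q^-\cap K}\lambda^\sharp \cong \Ind^{\bC\otimes_F K^0}_{Q^-\cap K^0}(\lambda^\sharp|)$ of $K^0$-modules is canonical enough to be simultaneously $\Gamma$-equivariant for the $k_{\bar\sigma}$-twists — i.e.\ that the geometric $k_{\bar\sigma}$-twist on the $K$-side restricts to exactly the geometric $k_{\bar\sigma}$-twist on the $K^0$-side, with $k_{\bar\sigma}$ chosen (as we may, since $k_{\bar\sigma}$ can be taken to normalize $\bar F\otimes_F T\subset Q^-\cap K$) to be compatible with both homogeneous-space presentations. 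Once that compatibility is pinned down, the cocycle identity is formal. The irreducibility over $K^0$, while needing the connectivity hypothesis, is then a routine consequence of the single-orbit decomposition of the restricted induced module.
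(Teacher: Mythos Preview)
Your approach is the paper's: deduce $K^0$-irreducibility from the induction formula \eqref{eq:min} together with the component hypothesis, and deduce the cocycle equality from the numerical description (Proposition \ref{prop:key_computation_BT}, Construction \ref{cons:BT}) applied to one and the same family of twist isomorphisms $\varphi_{\bar\sigma}$, which serve simultaneously in the $K$- and $K^0$-categories. The paper's proof is exactly this, stated in two sentences.

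One imprecision worth fixing in your first paragraph: the step ``irreducibility over $K$ forces irreducibility over $K^0$ \dots\ because the $K$-action is generated by the $K^0$-action together with representatives of $\pi_0(K)$ lying in $Q^-\cap(\bC\otimes_F K)$'' does not stand on its own. Knowing that component representatives lie in $P=Q^-\cap(\bC\otimes_F K)$ only tells you that such a $p$ permutes the $K^0$-submodules of $X_{\min}$ (since $K^0$ is normal); it does not by itself force $p$ to fix each one. The clean argument---and presumably what the paper means by ``immediate from \eqref{eq:min}''---is that your Mackey identification already gives $X_{\min}|_{\bC\otimes_F K^0}\cong \Ind^{\bC\otimes_F K^0}_{Q^-\cap(\bC\otimes_F K^0)}(\lambda^\sharp|)$, an induction of a character from a parabolic subgroup of the \emph{connected} reductive group $\bC\otimes_F K^0$; this is irreducible or zero by Borel--Weil, and nonzero since $X_{\min}\neq 0$. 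With that in place the cocycle comparison goes through exactly as you wrote.
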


\begin{proof}
	The first part is immediate from \eqref{eq:min}. The equality of the Borel--Tits cocycles then follows by the numerical description in Proposition \ref{prop:key_computation_BT} (also recall Construction \ref{cons:BT}).
\end{proof}

	\section*{Acknowledgments}
	
	The purely algebraic proof of the finite length result is due to Masatoshi Kitagawa. The author is indebted to him for his patient explanation of the proof.
	The author also thanks Yuki Yamamoto for helpful comments.


\begin{thebibliography}{10}
	
	
	
	\bibitem{MR207712}
	A.~Borel and J.~Tits.
	\newblock Groupes r\'eductifs.
	\newblock {\em Inst. Hautes \'Etudes Sci. Publ. Math.}, (27):55--150, 1965.
	
	\bibitem{MR1728312}
	N.~Bourbaki.
	\newblock {\em Lie groups and {L}ie algebras. {C}hapters 1--3}.
	\newblock Elements of Mathematics (Berlin). Springer-Verlag, Berlin, 1998.
	\newblock Translated from the French, Reprint of the 1989 English translation.
	
	\bibitem{MR2109105}
	N.~Bourbaki.
	\newblock {\em Lie groups and {L}ie algebras. {C}hapters 7--9}.
	\newblock Elements of Mathematics (Berlin). Springer-Verlag, Berlin, 2005.
	\newblock Translated from the 1975 and 1982 French originals by Andrew
	Pressley.
	
	\bibitem{E1914}
	E.~Cartan.
	\newblock Les groupes projectifs continus r\'{e}els qui ne laissent invariante
	aucune multiplicit\'{e}.
	\newblock {\em Journal de Math\'{e}matiques Pures et Appliqu\'{e}es},
	10:149--186, 1914.
	
	
	\bibitem{MR218364}
	M.~Demazure.
	\newblock Sous-groupes paraboliques des groupes r\'eductifs,
	\newblock in:~{\em Sch\'emas en {G}roupes ({S}\'em. {G}\'eom\'etrie
		{A}lg\'ebrique, {I}nst. {H}autes \'Etudes {S}ci., 1963/64)}, pages Fasc. 7,
	Expos\'e 26, 91. Inst. Hautes \'Etudes Sci., Paris, 1965.
	
	\bibitem{zbMATH02646565}
	G.~Frobenius and I.~Schur.
	\newblock {\"U}ber die reellen {Darstellungen} der endlichen {Gruppen}.
	\newblock {\em Berl. Ber.}, 1906:186--208, 1906.
	
	
	
	\bibitem{MR1126178}
	R.~Gordon.
	\newblock {$G$}-categories.
	\newblock {\em Mem. Amer. Math. Soc.}, 101(482):xvi+129, 1993.
	
	\bibitem{MR4174395}
	D.~Harari.
	\newblock {\em Galois cohomology and class field theory}.
	\newblock Universitext. Springer, Cham, [2020] \copyright 2020.
	\newblock Translated from the 2017 French original by Andrei Yafaev.
	
	\bibitem{MR3053412}
	M.~Harris.
	\newblock Beilinson-{B}ernstein localization over {$\Bbb Q$} and periods of
	automorphic forms.
	\newblock {\em Int. Math. Res. Not. IMRN}, (9):2000--2053, 2013.
	
	\bibitem{MR4073199}
	M.~Harris.
	\newblock Beilinson-{B}ernstein localization over {$\Bbb{Q}$} and periods of
	automorphic forms: erratum.
	\newblock {\em Int. Math. Res. Not. IMRN}, (3):957--960, 2020.
	
	\bibitem{MR3853058}
	T.~Hayashi.
	\newblock Flat base change formulas for {$(\mathfrak{g},K)$}-modules over
	{N}oetherian rings.
	\newblock {\em J. Algebra}, 514:40--75, 2018.
	
	
	\bibitem{MR4627704}
	T.~Hayashi.
	\newblock Half-integrality of line bundles on partial flag schemes of classical
	{L}ie groups.
	\newblock {\em Bull. Sci. Math.}, 188:Paper No. 103317, 2023.
	
	\bibitem{hayashisuper}
	T.~Hayashi.
	\newblock {C}lassification of irreducible representations of affine group
	superschemes and the division superalgebras of their endomorphisms.
	\newblock Math. Z. 309, 33 (2025).
	
	\bibitem{hayashierror}
	T.~Hayashi.
	\newblock Corrigendum to ``Half-integrality of line bundles on partial flag schemes of classical Lie groups'' [Bull. Sci. Math. 188 (2023) 103317].
	\newblock {\em Bull. Sci. Math.}, 202:Paper No. 103626, 2025.
	
	
	
	
	\bibitem{hayashijanuszewski}
	T.~Hayashi and F.~Januszewski.
	\newblock Families of twisted {$\mathcal D$}-modules and arithmetic models of
	{H}arish-{C}handra modules, 2018.
	\newblock arXiv:1808.10709v5.
	
	\bibitem{MR2357361}
	R.~Hotta, K.~Takeuchi, and T.~Tanisaki.
	\newblock {\em {$D$}-modules, perverse sheaves, and representation theory},
	volume 236 of {\em Progress in Mathematics}.
	\newblock Birkh\"auser Boston, Inc., Boston, MA, 2008. Translated from the 1995 Japanese edition by Takeuchi.

	
	\bibitem{MR102534}
	N.~Iwahori.
	\newblock On real irreducible representations of {L}ie algebras.
	\newblock {\em Nagoya Math. J.}, 14:59--83, 1959.
	
	\bibitem{MR2015057}
	J.~C. Jantzen.
	\newblock {\em Representations of algebraic groups}, volume 107 of {\em
		Mathematical Surveys and Monographs}.
	\newblock American Mathematical Society, Providence, RI, second edition, 2003.
	
	\bibitem{MR3770183}
	F.~Januszewski.
	\newblock Rational structures on automorphic representations.
	\newblock {\em Math. Ann.}, 370(3-4):1805--1881, 2018.
	
	\bibitem{MR1021510}
	M.~Kashiwara.
	\newblock Representation theory and {$D$}-modules on flag varieties,
	\newblock in:~Orbites unipotentes et repr\'esentations, III, Number 173-174, pages 9, 55--109. 1989.
	
	\bibitem{MR1920389}
	A.~W. Knapp.
	\newblock {\em Lie groups beyond an introduction}, volume 140 of {\em Progress
		in Mathematics}.
	\newblock Birkh\"auser Boston, Inc., Boston, MA, second edition, 2002.
	
	\bibitem{MR1330919}
	A.~W. Knapp and D.~A. Vogan, Jr.
	\newblock {\em Cohomological induction and unitary representations}, volume~45
	of {\em Princeton Mathematical Series}.
	\newblock Princeton University Press, Princeton, NJ, 1995.
	
	\bibitem{MR1500635}
	A.~Loewy.
	\newblock {\"U}ber die {R}educibilit\"{a}t der {R}eellen {G}ruppen linearer
	homogener {S}ubstitutionen.
	\newblock {\em Trans. Amer. Math. Soc.}, 4(2):171--177, 1903.
	
	\bibitem{MR1712872}
	S.~Mac~Lane.
	\newblock {\em Categories for the working mathematician}, volume~5 of {\em
		Graduate Texts in Mathematics}.
	\newblock Springer-Verlag, New York, second edition, 1998.
	
	
	
	\bibitem{MR0861356}
	M.~Niwa.
	\newblock Relations between various notions of {$G$}-categories.
	\newblock {\em Mem. Fac. Ed. Shiga Univ. Natur. Sci. Ped. Sci.}, (35):13--18,
	1985.
	
	\bibitem{MR2041548}
	A.~L. Onishchik.
	\newblock {\em Lectures on real semisimple {L}ie algebras and their
		representations}.
	\newblock ESI Lectures in Mathematics and Physics. European Mathematical
	Society (EMS), Z\"urich, 2004.
	
	\bibitem{MR2439563}
	A.~Raghuram and F.~Shahidi.
	\newblock On certain period relations for cusp forms on {${\rm GL}_n$}.
	\newblock {\em Int. Math. Res. Not. IMRN}, pages Art. ID rnn 077, 23, 2008.
	
	\bibitem{MR1066573}
	R.~W. Richardson and T.~A. Springer.
	\newblock The {B}ruhat order on symmetric varieties.
	\newblock {\em Geom. Dedicata}, 35(1-3):389--436, 1990.
	
	\bibitem{MR554237}
	J-P. Serre.
	\newblock {\em Local fields}, volume~67 of {\em Graduate Texts in Mathematics}.
	\newblock Springer-Verlag, New York-Berlin, 1979.
	\newblock Translated from the French by Marvin Jay Greenberg.
	
	
	
	
	\bibitem{MR252485}
	M.~E. Sweedler.
	\newblock {\em Hopf algebras}.
	\newblock Mathematics Lecture Note Series. W. A. Benjamin, Inc., New York,
	1969.
	
	\bibitem{MR0711065}
	R.~W. Thomason.
	\newblock The homotopy limit problem.
	\newblock In {\em Proceedings of the {N}orthwestern {H}omotopy {T}heory
		{C}onference ({E}vanston, {I}ll., 1982)}, volume~19 of {\em Contemp. Math.},
	pages 407--419. Amer. Math. Soc., Providence, RI, 1983.
	

	\bibitem{MR277536}
	J.~Tits.
	\newblock Repr\'esentations lin\'eaires irr\'eductibles d'un groupe r\'eductif
	sur un corps quelconque.
	\newblock {\em J. Reine Angew. Math.}, 247:196--220, 1971.
	
	\bibitem{MR762307}
	D.~A. Vogan, Jr. and G.~J. Zuckerman.
	\newblock Unitary representations with nonzero cohomology.
	\newblock {\em Compositio Math.}, 53(1):51--90, 1984.
	
	
	\bibitem{MR929683}
	N.~R. Wallach.
	\newblock {\em Real reductive groups. {I}}, volume 132 of {\em Pure and Applied
		Mathematics}.
	\newblock Academic Press, Inc., Boston, MA, 1988.
	
\end{thebibliography}
\end{document}